\documentclass{article}

\usepackage[a4paper]{geometry}

\usepackage{bm}

\usepackage{amsmath}
\usepackage{amssymb}
\usepackage{amsthm}

% graphicx
%\usepackage[dvipdfmx]{graphicx}
% xcolor
%\usepackage[dvipdfmx,svgnames]{xcolor}

% Load tikz
\usepackage{tikz}
\usetikzlibrary{matrix,arrows}

% Xy-Pic
\usepackage[all,2cell]{xy}
\UseAllTwocells

% Commutative diagram
\usepackage{pb-diagram}
\usepackage{pb-xy}

% bold math
\usepackage{bm}

% Hyperref
\usepackage{hyperref}
\hypersetup
{
    colorlinks,	%
    citecolor=black,%
    filecolor=black,%
    linkcolor=black,%
    urlcolor=gray
}

% colors for comments!
\usepackage{color}

%Theorem-like environments
\theoremstyle{plain}
  \newtheorem{theorem}{Theorem}[section]
  \newtheorem{corollary}[theorem]{Corollary}
  \newtheorem{lemma}[theorem]{Lemma}
  
  \newtheorem{proposition}[theorem]{Proposition}

\theoremstyle{definition}
  \newtheorem{definition}[theorem]{Definition}
  
  \newtheorem{ex}[theorem]{Example}
  
  \newenvironment{example}{\begin{ex}}{\qed\end{ex}}

\theoremstyle{remark}
  \newtheorem{remark}[theorem]{Remark}

\numberwithin{equation}{section}
  
%% Frequently used symbols
% Limits (requires AMS-LaTeX)

% Path ends

% Sets
%  \newcommand{\set}[1]{\left\{#1\right\}}
  \newcommand{\set}[2]{%
  \left\{#1 \mathrel{}\middle|\mathrel{}  #2\right\}
  }

% Quotient by relation
  \newcommand{\quotient}[2]{%
  \left(#1\right)
  \hspace{-4pt}\raisebox{-5pt}{$\bigg/$}\hspace{-2pt}\raisebox{-12pt}{$#2$}%
  }

% Arrows
  \newcommand{\rarrow}[1]{\buildrel #1 \over \longrightarrow}
  \newcommand{\larrow}[1]{\buildrel #1 \over \longleftarrow}

% Numbers
  \newcommand{\R}{\mathbb{R}}

  \newcommand{\N}{\mathbb{N}}
  \newcommand{\Z}{\mathbb{Z}}
  
% spaces

% Categories
  \newcommand{\category}[1]{\operatorname{\mathbf{#1}}}
  
  \newcommand{\Cats}{\category{Cat}}
  \newcommand{\Posets}{\category{Poset}}
  \newcommand{\Sets}{\category{Set}}
  \newcommand{\Spaces}{\category{Top}}

% More symbols

  \newcommand{\op}{{\operatorname{\mathrm{op}}}} 
  
  \newcommand{\pr}{\operatorname{\mathrm{pr}}}

  \newcommand{\Sd}{\operatorname{\mathrm{Sd}}}
  
  \newcommand{\inj}{\mathrm{inj}}
  \newcommand{\Conv}{\mathrm{Conv}}

  \newcommand{\Cr}{\operatorname{\mathrm{Cr}}}

  \newcommand{\FP}{\mathrm{FP}}

\title{\textbf{Discrete Morse Theory and Classifying Spaces}} 
\author{Vidit Nanda, Dai Tamaki, and Kohei Tanaka}
\date{}

%\subjclass[2010]{Primary }

\begin{document}

\maketitle

%\begin{abstract}
% To an acyclic partial matching $\mu$, in the sense of discrete Morse
% theory, on a regular finite CW complex 
% $X$, we associate a poset-enriched category $C(\mu)$ whose
% objects are the critical cells of $\mu$ and whose morphisms are
% ``flow paths'' associated to $\mu$. The notion of flow paths is newly
% introduced in this paper as an extension of Forman's gradient flows. 
% We show that the classifying space of this
% category, regarded as a $2$-category, is homotopy equivalent to the
% original CW complex $X$.
% This result can be regarded as a discrete analogue of the work of
% Cohen, Jones and Segal on Morse theory and classifying spaces in early
% 90's.   
%\end{abstract}

\begin{abstract}
 The aim of this paper is to develop a refinement of Forman's discrete
 Morse theory. To an acyclic partial matching $\mu$ on a finite regular
 CW complex $X$, Forman introduced a discrete analogue of gradient
 flows. Although Forman's gradient flow has been proved to be useful in
 practical computations of homology groups, it is not sufficient to
 recover the homotopy type of $X$. Forman also proved the existence of a
 CW complex which is homotopy equivalent to $X$ and whose cells are in
 one-to-one correspondence with the critical cells of $\mu$, but the
 construction is ad hoc and does not have a combinatorial description.  
 By relaxing the definition of Forman's gradient flows, we introduce the
 notion of flow paths, which contains enough information to reconstruct
 the homotopy type of $X$, while retaining a combinatorial description. 
 The critical difference from Forman's gradient flows is the existence
 of a partial order on the set of flow paths, from which a $2$-category
 $C(\mu)$ is constructed. It is shown that the classifying space of
 $C(\mu)$ is homotopy equivalent to $X$ by using homotopy theory of
 $2$-categories. This result can be also regarded as a discrete analogue
 of the unpublished work of Cohen, Jones, and Segal on Morse theory in
 early 90's. 
\end{abstract} 

\tableofcontents

\section{Introduction}
\label{discrete_CJS_intro}

The goal of this paper is to describe a new combinatorial theory of
gradient flow on cell complexes which provides direct insight into
discrete Morse homotopy, and hence to extend Forman's discrete Morse
theory \cite{Forman95,Forman98-2}. 

\subsection{Discrete Morse Theory}

The central objects of study in Forman's adaptation of Morse theory to
CW complexes are discrete Morse functions, which assign (real) values to
cells. 
Every discrete Morse function $f$ on a regular CW complex $X$ imposes a 
partial pairing on its constituent cells, i.e.\ a bijection
$\mu_{f} : D(\mu_{f})\to U(\mu_{f})$ between disjoint subsets
$D(\mu_{f}),U(\mu_{f})$ of the face poset $F(X)$.
The unpaired cells are analogous to critical points from smooth Morse
theory, while the paired cells generate combinatorial gradient paths
between critical ones.
Such gradient paths have been proved to be quite useful in computing
homology and cohomology in various contexts such as topological
combinatorics \cite{math/9705219,Shareshian01}, hyperplane arrangements
\cite{0705.2874,0705.3107,0711.1517}, cohomology of algebraic structures
\cite{Batzies-Welker02,cs.DM/0504090,Skoeldberg06,Joellenbeck-Welker2009,math.GT/0603177}, 
and topological data analysis \cite{Mischaikow-Nanda2013}.

Compared to these nice applications to homology, it is disappointing to
see the failure of the use of Forman's gradient paths to the study of
homotopy types. 
Recall that a gradient path, in the sense of Forman, from a critical
cell $c$ to another $c'$ is an alternating sequence of cells
\[
 c\succ d_1 \prec u_1 \succ \cdots \prec u_{k-1}
 \succ d_{k} \prec u_{k} 
 \succ \cdots \prec u_{n} \succ c',
\]
where $d_{i}\in D(\mu_{f})$ and $u_{i}\in U(\mu_{f})$ with
$\mu_{f}(d_i)=u_i$, and all the face 
relations $\prec$ and $\succ$ appearing in this sequence are of
codimension $1$, which imposes the dimension constraint
$\dim c=\dim c'+1$.
This failure is best illustrated by the following simple example.
Consider the ``height function'' $h$ on the boundary of a $3$-simplex
$\Delta^3=[v_{0},v_{1},v_{2},v_{3}]$, whose associated partial matching
$\mu_{h}$ is 
indicated by arrows in Figure \ref{height_function_on_tetrahedron}. 
\begin{figure}[ht]
 \begin{center}
  \begin{tikzpicture}
   \draw [dotted] (0.5,2.5) -- (0,0);
   \draw (0,0) -- (1,1.5) -- (-1,2) -- (0,0);
   \draw (-1,2) -- (0.5,2.5) -- (1,1.5);
   \draw [fill] (0,0) circle (2pt);
   \draw [fill] (0.5,2.5) circle (2pt);
   \draw [fill] (-1,2) circle (2pt);
   \draw [fill] (1,1.5) circle (2pt);

   % vertices
   
   \draw (0,-0.4) node {$v_{0}$};
   \draw (1.4,1.4) node {$v_{1}$};
   \draw (0.6,2.9) node {$v_{2}$};
   \draw (-1.4,2.1) node {$v_{3}$};

   % partial matching

   \draw [->] (1,1.3) -- (0.8,1);
   \draw [->] (0.48,2.3) -- (0.38,1.8);
   \draw [->] (-1,1.8) -- (-0.8,1.4);
   \draw [->] (0,1.75) -- (0,1.4);
   \draw [->] (0.75,2) -- (0.65,1.6);
   \draw [->] (-0.25,2.25) -- (-0.15,1.75);

  \end{tikzpicture}
 \end{center}
 \caption{A partial matching on $\partial\Delta^3$}
 \label{height_function_on_tetrahedron}
\end{figure}
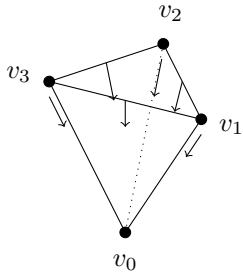
For example, the vertex $[v_{1}]$ is matched with the edge
$[v_{0},v_{1}]$ and the edge $[v_{1},v_{2}]$ is matched with the face
$[v_{0},v_{1},v_{2}]$. The remaining two cells, the bottom vertex
$[v_{0}]$ and the top face $[v_{1},v_{2},v_{3}]$, are critical. 
There is no gradient path between these two critical cells because of
the dimension gap, although there is an apparent deformation indicated
by the arrows, which results in the minimal cell decomposition of a
sphere $S^2=e^0\cup e^2$.

In fact, Forman generalized the deformation in the previous example and
showed that matched pairs of cells can be collapsed together 
without changing the homotopy type. Thus we obtain a new cell complex
$X_{f}$, whose cells are indexed by the set $\Cr(f)$ of critical cells
of $f$. 
However, there is no explicit description of the resulting cell complex
in terms of gradient paths.
In smooth Morse theory, on the other hand, there is a well-known
construction of a cell decomposition of a smooth manifold $M$ from 
gradient flows generated by a Morse-Smale function $f:M\to\R$, whose
cells are indexed by critical points
\cite{Kalmbach1975,Franks79}. 

%There should be a correct notion of gradient paths for discrete Morse
%functions which can be used to recover the homotopy type of the original
%cell complex.

\subsection{Main Result}

In this paper, we introduce \emph{flow paths} of acyclic partial
matchings. These generalize Forman's gradient paths, and they may be
used to explicitly recover the homotopy type of the original cell
complex.
%In this paper, we provide a remedy by introducing the notion of
%\emph{flow paths}, which extends Forman's gradient paths.

\begin{definition}
 A \emph{flow path} from a critical cell $c$ to another $c'$ is a
 sequence of cells of the following form
 \begin{multline*}
  c \succ u_1\succ \cdots \succ u_{i_1-1} \succ d_{i_1} 
  \prec \mu_{f}(d_{i_1})=u_{i_1} \succ \cdots  \\
  \succ u_{i_2-1} \succ d_{i_2} \prec \mu_{f}(d_{i_2})=u_{i_2} \succ
  \cdots \succ u_n \succ c'
 \end{multline*}
 or
 \begin{multline*}
  c \succ d_1 \prec \mu_{f}(d_1)=u_1 \succ \cdots \succ u_{i_1-1}
  \succ d_{i_1} \prec \mu_{f}(d_{i_1})=u_{i_1} \succ \cdots \\
  \succ u_{i_2-1} \succ d_{i_2} \prec \mu_{f}(d_{i_2})=u_{i_2} \succ
  \cdots \succ u_n \succ c',   
 \end{multline*}
 where $u_{i}\in U(\mu_{f})$ and $d_{i}\in D(\mu_{f})$.
\end{definition}

There are two differences from Forman's gradient paths. We allow
descending sequences of cells in $U(\mu_{f})$ to appear in
a flow path and the face relations $u_{i_{k}-1}\succ d_{i_{k}}$ have no
codimension restrictions.

This simple extension of gradient paths turns out to contain enough
information to reconstruct the homotopy type. The crucial property is
the existence of a partial order on the set $\FP(\mu_{f})$ of all flow
paths, which induces a partial order on the set $C(\mu_{f})(c,c')$ of
flow paths from $c'$ to $c$.

Recall that there is a standard way of constructing a simplicial
complex $BP$ from a poset $P$, called the \emph{order
complex} \cite{Bjorner95}. Regarding posets as a special class of small
categories, the order complex construction has been extended to small
categories as the \emph{classifying space} construction \cite{SegalBG}. 
By regarding the poset $C(\mu_{f})(c,c')$ as a small category, our flow
category $C(\mu_{f})$ becomes a category enriched over the category of
small categories, i.e.\ a (strict) $2$-category \cite{math.CT/0702535}.
The classifying space construction has been also extended to 
$2$-categories, with which our main theorem can be stated as follows.

\begin{theorem}
 \label{main1}
 Let $X$ be a finite regular CW complex and $f$ a discrete Morse function
 on the face poset $F(X)$.
 Then the classifying space of the flow category $C(\mu_{f})$ is
 homotopy equivalent to $X$.
\end{theorem}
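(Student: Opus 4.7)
The plan is to deduce Theorem~\ref{main1} from a comparison of classifying spaces of $2$-categories, so that the statement reduces to applying a suitable $2$-categorical Quillen's Theorem~A. Since $X$ is a finite regular CW complex, the order complex of its face poset $F(X)$ is canonically homeomorphic to the barycentric subdivision of $X$; in particular $X$ is homotopy equivalent to the classifying space of $F(X)$ regarded as a $1$-category, and hence also to that of $F(X)$ regarded as a locally discrete $2$-category. It therefore suffices to produce a $2$-functor between $F(X)$ and $C(\mu_f)$ and to show that it induces a homotopy equivalence of classifying spaces.

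The natural candidate is a $2$-functor $\Phi\colon F(X)^{\op}\to C(\mu_f)$ that sends each face $\sigma$ of $X$ to the critical cell it ``flows to'' under $\mu_f$, and sends each codimension-one face relation to a short flow path built from the $V$-paths of $\mu_f$. The acyclicity of $\mu_f$ ensures that this flow assignment terminates in finitely many steps, while the partial order on $\FP(\mu_f)$ is precisely the structure required to absorb the combinatorial choices that appear when extending $\Phi$ over composable chains of face relations: distinct choices of flow path representing the same face relation are related by $2$-morphisms in $C(\mu_f)$, so $\Phi$ becomes well-defined as a (possibly lax) $2$-functor.

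With $\Phi$ in hand, the main tool I would invoke is a $2$-categorical analogue of Quillen's Theorem~A: $\Phi$ induces a homotopy equivalence on classifying spaces provided that the lax comma $2$-categories of $\Phi$ over each object and each $1$-morphism of $C(\mu_f)$ have contractible classifying spaces. For a critical cell $c$, the fiber should be the ``basin of attraction'' of $c$, namely the sub-poset of $F(X)$ consisting of faces whose forward $\mu_f$-flow terminates at $c$; for a flow path $\phi$, the fiber should consist of chains in $F(X)$ that refine $\phi$. The central content of the theorem is then reduced to verifying that these fibers are contractible.

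The main obstacle will be precisely these contractibility verifications, especially over $1$-morphisms. Over an object $c$ I expect Forman's collapsing argument to inductively cancel matched pairs inside the basin of attraction, exhibiting it as a poset collapsible onto the single element $c$ and hence with contractible classifying space. Over a flow path $\phi$ the picture is more delicate, because the partial order on $\FP(\mu_f)$ enters in an essential way: each covering relation corresponds to a local deformation (typically the replacement of a direct relation $u\succ d$ by the detour through the matched pair $d\prec \mu_f(d)$, or the insertion of such a pair), and one must show that the resulting poset of refinements of $\phi$ has a terminal or cone-like structure. Once these two families of contractibility statements are established, they combine with the chosen form of $2$-categorical Theorem~A to prove that $\Phi$ realizes a homotopy equivalence between $X\simeq |BF(X)|$ and the classifying space of $C(\mu_f)$.
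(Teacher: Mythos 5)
Your high-level plan — reduce to a $2$-categorical Quillen's Theorem A applied to a (co)lax functor landing in $C(\mu_f)$, then verify contractibility of homotopy fibers — is indeed the engine the paper uses. But the functor you want to build is not available, and that gap is not a technicality.

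The problem is the very first step of your construction: the assignment $\Phi\colon F(X)^{\op}\to C(\mu_f)$ sending a face $\sigma$ to ``the critical cell it flows to'' is not well-defined. Discrete gradient flow from a cell is genuinely multivalued: different points in the interior of the same cell $\sigma$ can and do flow to different critical cells, and different reduced flow paths emanating from $\sigma$ can have different targets. (This is already forced by dimension: a critical $k$-cell $c$ has critical $(k-1)$-cells in its discrete unstable set with nonzero incidence number, and the union of flow lines out of $c$ hits more than one of them whenever the Morse complex has a nontrivial boundary.) The ambiguity you hope to absorb via $2$-morphisms cannot be absorbed: $2$-morphisms in $C(\mu_f)$ only compare $1$-morphisms with the \emph{same} source and target critical cells, so two ``choices'' of $\Phi(\sigma)$ landing on different critical cells are not mediated by any cell of $C(\mu_f)$. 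There is no lax, colax, or pseudo structure that repairs a map that is not even single-valued on objects.

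What the paper does instead, and what your outline is missing, is to refine $F(X)$ before mapping to $C(\mu_f)$. One first builds the stable subdivision $\Sd_{\mu}(X)$ of $X$, indexed by reduced flow paths: each cell $e_\lambda$ of $X$ is partitioned into pieces $e_\gamma$, one for each reduced flow path $\gamma$ with $\iota(\gamma)=e_\lambda$, and each such $\gamma$ has an unambiguous terminal critical cell $\tau(\gamma)$. The face poset of this subdivision is proved to be isomorphic to the poset $\overline{\FP}(\mu)$ of reduced flow paths, so $X \cong B\overline{\FP}(\mu)$. It is on $\overline{\FP}(\mu)$, not on $F(X)$, that the collapsing map $\tau\colon \gamma\mapsto\tau(\gamma)$ is well-defined; it becomes a \emph{normal colax} functor $\overline{\FP}(\mu)\to\overline{C}(\mu)$, and only then is the $2$-categorical Theorem~A (in its prefibered form, Corollary \ref{prefibered_2QuillenA}) brought to bear, with the fiber contractibility established via a filtration and closure operators. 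The remaining pieces — the equivalence $C(\mu)\simeq\overline{C}(\mu)$ via the reduction functor, and the comparison of the various classifying-space models for $2$-categories — are further steps your sketch elides but are needed to complete the chain. So the missing idea is the intermediate refinement: you must replace $F(X)$ by the face poset of a flow-adapted subdivision before any functor to $C(\mu_f)$ can be defined at all.
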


Note that there are several ways to take classifying spaces of
$2$-categories. One of the most popular models is the classifying
space of the topological category obtained by taking the classifying
space of each morphism category. This construction is denoted by $B^2$
in this paper. We also make use of the ``normal colax version''
$B^{ncl}$ in the proof of the theorem. 
It is known that these two, including several other constructions, are
homotopy equivalent to each other. 
This is proved by Carrasco, Cegarra, and Garz{\'o}n in
\cite{0903.5058}. 

The classifying space $B^2C(\mu_{f})$ is described
by the combinatorial data associated to a discrete Morse function or an
acyclic partial matching.
Thus it provides a systematic way of reconstructing the homotopy type
of the original cell complex purely in terms of Morse data.

For example, this description as the classifying space of a $2$-category
or a topological category can serve as an alternative systematic
approach to computing homology via discrete Morse theory. 
Given a multiplicative homology theory $h_*(-)$ satisfying the strong
form of the K{\"u}nneth isomorphism, the homotopy equivalence in Theorem 
\ref{main1} gives rise to a spectral sequence
\[
 E^2 \cong H_*(h_*(BC(\mu_{f}))) \Longrightarrow
 h_*(B^2C(\mu_{f})) 
 \cong h_*(X) 
\]
by \cite{SegalBG}, where $h_*(BC(\mu_{f}))$ is the category
enriched over graded $h_*$-modules whose set of objects is
$\Cr(\mu_{f})$ and whose module
of morphisms from $c$ to $c'$ is $h_*(BC(\mu_{f})(c,c'))$. And
$H_*(-)$ is the homology of linear categories, studied, for example, in
\cite{Watts1966}. 

As a version of Morse theory, it would be desirable to find a cell
decomposition of $B^2C(\mu_{f})$ or $B^{ncl}C(\mu_{f})$
indexed by $\Cr(\mu_{f})$. Our categorical construction is also useful
to study this problem. We discuss this problem in a separate paper.

One of the sources of inspiration for this work is an unpublished
manuscript of Ralph Cohen, John Jones, and Graeme Segal
\cite{Cohen-Jones-SegalMorse} that appeared in early 90's, in which they 
proposed a way to reconstruct the homotopy type (and even a
homeomorphism type) of a smooth manifold from a topological category
consisting of critical points and moduli spaces of gradient flows
associated to a Morse-Smale function. Our main theorem can be regarded
as a realization of their proposal in a discrete setting.
Another possible use of our result is, therefore, to give an alternative
proof of the homotopy-equivalence part of Cohen-Jones-Segal Morse theory
and its extensions. 
As is done by Gallais \cite{0803.2616}, it is possible to use
discrete Morse theory to approximate smooth Morse theory.
The original motivation of the paper
\cite{Cohen-Jones-SegalMorse} seems to develop a toy model for Floer
homotopy theory \cite{Cohen-Jones-Segal94,Cohen-Jones-Segal95}.
It would be interesting if the theory of flow paths developed in this
paper provides an alternative approach to Floer homotopy theory.

\subsection{Organization of the Paper}

The paper is organized as follows.

\begin{itemize}
 \item We recall basics of discrete Morse theory in
       \S\ref{discrete_morse_function} to fix 
       notation and terminology. The notion of flow paths is introduced
       in \S\ref{flow_on_poset} together with a partial order on the set
       $\FP(\mu)$ of all flow paths with respect to a partial matching
       $\mu$. We also introduce reduced flow paths 
       and compare the set $\overline{\FP}(\mu)$ of reduced
       flow paths with $\FP(\mu)$.
       In \S\ref{continuous_path}, we introduce geometric flows
       associated to flow paths to define a subdivision
       $\Sd_{\mu}(X)$ of $X$. 
       
 \item \S\ref{flow_category} is the main body of this paper.
       After reviewing notation and terminology for small categories in 
       \S\ref{small_category}, the flow category $C(\mu)$ and its
       reduced version  
       $\overline{C}(\mu)$ are introduced in 
       \S\ref{category_of_flows}.

       Theorem \ref{main1} is proved as follows
       \begin{enumerate}
	\item In the first half of \S\ref{collapsing_functor}, we
	      construct a normal colax functor 
	      $\tau:\FP(\mu)\to C(\mu)$, called the collapsing functor, by
	      assigning the terminal cell to 
	      each flow path.  
	      It is shown that the collapsing functor can be restricted
	      to a functor
	      $\tau:\overline{\FP}(\mu) \rarrow{}\overline{C}(\mu)$.

	\item In the second half of \S\ref{collapsing_functor}, we show
	      that $\tau$ induces a homotopy equivalence between  
	      ``normal colax'' classifying spaces
	      \[
	      B^{ncl}\tau:B\overline{\FP}(\mu) = B^{ncl}\overline{\FP}(\mu)
	      \rarrow{\simeq} B^{ncl}\overline{C}(\mu)
	      \]
	      by a $2$-categorical version of Quillen's Theorem A.

	\item In \S\ref{stable_subdivision}, we show that the face poset
	      of the subdivision $\Sd_{\mu}(X)$ constructed in
	      \S\ref{continuous_path} is isomorphic to
	      $\overline{\FP}(\mu)$. Thus we obtain  
	      a chain of homotopy equivalences 
	      \[
	      X \cong BF(\Sd_{\mu}(X)) \cong B\overline{\FP}(\mu) \simeq
	      B^{ncl}\overline{C}(\mu) \simeq B^2\overline{C}(\mu)
	      \simeq B^2C(\mu).
	      \]
       \end{enumerate}

 \item An appendix on one of our main tools, i.e.\ homotopy
       theory of small categories, is attached at the end of this paper
       with the hope of making this article self-contained. 

       Although homotopy theory of small
       categories has been an indispensable tool in topology and
       combinatorics since Segal \cite{SegalBG} and Quillen
       \cite{Quillen73}, there seems to be no standard 
       reference for novices.
\end{itemize}

\subsection{Acknowledgments}

This work was initiated by VN when he met DT during the workshop in
Hakata, Japan on applied topology organized by Yasu Hiraoka in 2011. We
would like to thank Yasu for organizing the meeting.
VN and DT would also like to thank Rob Ghrist for his encouragement
during the workshop and other occasions, without which this work would
not have been completed.
Finally DT appreciates the kindness of Martin Guest who handed him a
copy of Cohen-Jones-Segal paper \cite{Cohen-Jones-SegalMorse} when DT was
a graduate student at the University of Rochester in early 90's.

DT is supported by JSPS KAKENHI Grant Number JP23540082 and JP15K04870.
KT is supported by JSPS KAKENHI Grant Number JP15K17535.

\section{Discrete Morse Theory}
\label{morse_theory}

The main aim of this section is to introduce the notion of flow paths
for discrete Morse functions.

\subsection{Discrete Morse Functions}
\label{discrete_morse_function}

Let us briefly recall Forman's discrete Morse theory.
% based on the terminology and notation introduced in \S\ref{cell_complex}.
Throughout the rest of this section, we fix a % finite
regular CW complex $X$, whose face poset is denoted by $F(X)$. 
We denote the partial order in $F(X)$ by $e\preceq e'$, which means that
$e$ is a face of $e'$.
Moreover, we use $\prec_1$ to denote the \emph{cover}
relation, i.e.\ $e\prec_1 e'$ if and only if $e \preceq e'$ and
$\dim e = \dim e'-1$.

\begin{definition}
 For a function $f : F(X)\to \R$ and a cell $e\in F(X)$, define
 \begin{eqnarray*}
  N_f^{+}(e) & = & \set{e'\in F(X)}{e\prec_1 e',
   f(e)\ge f(e')} \\
  N_f^{-}(e) & = & \set{e'\in F(X)}{e'\prec_1 e,
   f(e')\ge f(e)}.
 \end{eqnarray*}
 $f$ is called a \emph{discrete Morse function}
 if $|N_f^{+}(e)|\le 1$ and $|N_f^{-}(e)|\le 1$ for all $e\in F(X)$.
 A cell $e\in F(X)$ is said to be \emph{critical} if
 $N_f^{+}(e)=N_f^{-}(e)=\emptyset$. The set of critical cells is denoted
 by $\Cr(f)$. 
\end{definition}

\begin{remark}
 We sometimes regard $f$ as a locally constant function on $X$ under the
 composition $X\rarrow{\pi_{X}} F(X)\rarrow{f} \R$, where $\pi_{X}$ is
 the map which defines the cell decomposition or stratification of $X$,
 i.e.\ $\pi_{X}(x)=e$ if $x\in e$.
\end{remark}

Forman observed that, for a noncritical cell $e$, either
$|N_f^{+}(e)|=1$ or $|N_f^{-}(e)|=1$ happens. This allows us to define a
partial matching on the face poset $F(X)$. 

\begin{definition}
\label{matching}
 A \emph{partial matching} on the face poset $F(X)$ is a bijection
 $\mu:D(\mu)\to U(\mu)$ 
 between disjoint subsets $D(\mu)$, $U(\mu)$ of $F(X)$ such 
 that $d \prec_1 \mu(d)$ for each $d \in D(\mu)$.  
 Elements of
 \[
  \Cr(\mu) = F(X)\setminus D(\mu)\cup U(\mu)
 \]
 are called \emph{critical}.
\end{definition}

\begin{definition}
 Given a discrete Morse function $f$ on $F(X)$, define a partial
 matching $\mu_f$ as follows. The domain and the range of $\mu_f$ are
 given by 
 \begin{eqnarray*}
  D(\mu_{f}) & = & \set{e\in F(X)}{N_f^{+}(e)\neq\emptyset} \\
  U(\mu_{f}) & = & \set{e\in F(X)}{N_f^{-}(e)\neq\emptyset}.
 \end{eqnarray*}
 For $e\in D(\mu_{f})$, $\mu_f(e)$ is the unique cell in $N_{f}^{+}(e)$.
\end{definition}

Note that $\Cr(f)=\Cr(\mu_f)$ for a discrete Morse function $f$. 
We identify two discrete Morse functions when their partial matchings
agree. 

\begin{definition}
 Two discrete Morse functions $f$ and $g$ on a regular CW complex
 $X$ are said to be \emph{equivalent} if, for every pair $e\prec_1 e'$ of
 cells in $X$, we have
 \[
  f(e)<f(e') \Longleftrightarrow g(e)<g(e').
 \]
\end{definition}

\begin{definition}
 \label{Forman_path}
 A \emph{Forman path} $\rho$ with respect to a partial matching
 $\mu$ is a sequence of distinct noncritical cells  
 \[
 \rho : d_1 \prec \mu(d_1) \succ d_2 \prec \mu(d_2) \succ \cdots 
 \succ d_n \prec \mu(d_n),  
 \]
 with $\dim d_{i+1}=\dim \mu(d_i)-1$. Equivalently it is 
 a sequence $\rho=(d_1,\ldots,d_n)$ of distinct cells of the same
 dimension in $D(\mu)$ with $\mu(d_i)\succ d_{i+1}$ for all
 $i=1,\ldots,n-1$.  
 $\rho$ is called a \emph{gradient path} if either $n = 1$ or
 $d_1 \not\prec \mu(d_n)$.  
\end{definition}

%\begin{remark}
% We will see a geometric meaning of gradient paths in
% \S\ref{geometric_flow_on_cell_complex}. 
%\end{remark}

Forman observed that partial matchings associated to discrete Morse
functions have the following significant property.

\begin{definition}
\label{acyclic_matching}
 A partial matching is called \emph{acyclic} if all of its Forman paths
 are gradient. 
\end{definition}

Conversely, it is known that any acyclic partial matching comes from a
discrete Morse function.

\begin{proposition}
 \label{discrete_Morse_function_for_acyclic_partial_matching}
 For any acyclic partial matching $\mu$ on a finite regular CW complex
 $X$, there exists a discrete Morse function $f$ on $X$ with
 $\mu=\mu_{f}$. 
\end{proposition}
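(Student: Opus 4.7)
The plan is to encode the matching $\mu$ as a directed graph $G_\mu$ on $F(X)$, use the acyclicity of $\mu$ to conclude that $G_\mu$ has no directed cycles, and then read off $f$ from a linear extension of the resulting strict partial order.

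Concretely, I would build $G_\mu$ on the vertex set $F(X)$ with two types of arrows: for each covering relation $e \prec_1 e'$ with $(e,e') \notin \mu$, a \emph{face arrow} $e \to e'$; and for each matched pair $(d,\mu(d)) \in \mu$, a \emph{matching arrow} $\mu(d) \to d$. Intuitively each arrow $x \to y$ records a prospective strict inequality $f(x) < f(y)$. The central claim is that acyclicity of $\mu$ in the sense of Definition \ref{acyclic_matching} is equivalent to $G_\mu$ having no directed cycle. The easy direction is immediate: a Forman cycle $d_1 \prec \mu(d_1) \succ d_2 \prec \cdots \succ d_n \prec \mu(d_n) \succ d_1$ lifts literally to a directed cycle in $G_\mu$.

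The hard part is the converse: given a simple directed cycle $C$ in $G_\mu$, I plan to recover a Forman cycle. Take any vertex $e$ of maximum dimension $M$ in $C$. Its outgoing arrow in $C$ must decrease dimension, since no vertex of $C$ has dimension larger than $M$, and so this arrow is a matching arrow, forcing $e \in U(\mu)$ with successor $\mu^{-1}(e) \in D(\mu)$. But $\mu^{-1}(e) \in D(\mu)$ has no outgoing matching arrow in $G_\mu$, so its successor in $C$ is reached by a face arrow and therefore has dimension $M$; by maximality it too is a $U(\mu)$-cell. Iterating around $C$ forces the alternating form $u_1 \to d_1 \to u_2 \to d_2 \to \cdots \to u_k \to d_k \to u_1$ with $\mu(d_j) = u_j$ and $d_j \prec_1 u_{j+1}$ (indices mod $k$), which is precisely a Forman cycle $d_1 \prec \mu(d_1) \succ d_k \prec \mu(d_k) \succ \cdots \succ d_2 \prec \mu(d_2) \succ d_1$ read in reverse.

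Given that $G_\mu$ is acyclic, I would take any linear extension of its transitive closure, giving a bijection $\rho : F(X) \to \{1, \ldots, |F(X)|\}$ with $\rho(x) < \rho(y)$ whenever $x \to y$, and set $f(e) := \rho(e)$. By construction $f(\mu(d)) < f(d)$ for every matched pair, and $f(e) < f(e')$ for every non-matched covering $e \prec_1 e'$. A direct inspection of the defining sets then shows $N_f^+(e) = \{\mu(e)\}$ when $e \in D(\mu)$ and is empty otherwise, and $N_f^-(e) = \{\mu^{-1}(e)\}$ when $e \in U(\mu)$ and is empty otherwise; hence $|N_f^+(e)|, |N_f^-(e)| \le 1$ for every $e$, $f$ is a discrete Morse function, and $\mu_f = \mu$.
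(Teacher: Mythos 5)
Your proof is correct. The paper discharges this proposition with a citation to Forman's Theorem 9.3, so there is nothing to compare against directly; but the technique you use — encode the matching as a directed acyclic graph on $F(X)$, show a directed cycle would force a non-gradient Forman path, and then read $f$ off a linear extension — is exactly the paper's own strategy for Lemma \ref{unrhd} and Proposition \ref{faithful_discrete_Morse_function}, via the relation $\unrhd$. Your directed-cycle-to-Forman-cycle extraction (tracking a top-dimensional vertex around a simple cycle and using that matching arrows are the only dimension-decreasing arrows) is a clean, complete version of the observation the paper leaves to Remark \ref{unrhd_as_flow}. One small difference worth noting: your face arrows use only cover relations $\prec_1$, whereas the paper's $\rhd_1$ uses the full face relation $\succeq$. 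This makes your DAG weaker, so a linear extension need not satisfy $f(e)<f(e')$ for all unmatched faces $e \prec e'$, only for unmatched covers; consequently your $f$ is a discrete Morse function with $\mu_f=\mu$ (which is all the proposition requires), but is not automatically faithful in the sense of the paper's Proposition \ref{faithful_discrete_Morse_function}. If you wanted to recover faithfulness in one stroke, you would simply add a face arrow $e \to e'$ for every pair $e \prec e'$ with $e' \neq \mu(e)$ rather than only covers; your Forman-cycle extraction argument still applies because any such long arrow factors through a chain of covers whose matched segments are still forced into the alternating shape.
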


\begin{proof}
 See Theorem 9.3 in \cite{Forman95}.
\end{proof}

This implies that discrete Morse theory can be developed
entirely in terms of acyclic partial matchings. It is, however, useful 
to have good discrete Morse functions at hand.

Forman already observed that
any discrete Morse function on 
a finite CW complex is equivalent to an injective one\footnote{See the
first paragraph of the proof of Theorem 3.3 in \cite{Forman98-2}}, which
can be perturbed further into a faithful Morse function.

\begin{definition}
 A discrete Morse function $f$ is said to be \emph{faithful} if it
 satisfies the following conditions:
 \begin{enumerate}
  \item $f$ is injective.
  \item If $e\prec e'$ and $e'\neq \mu_{f}(e)$, then
	$f(e)<f(e')$. 
 \end{enumerate}
\end{definition}

\begin{proposition}
 \label{faithful_discrete_Morse_function}
 For any discrete Morse function $f$ on a finite regular CW complex $X$,
 there exists a $\Z$-valued faithful discrete Morse function $\tilde{f}$
 equivalent to $f$.
\end{proposition}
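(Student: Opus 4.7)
The idea is to recast the problem as a linear-extension problem on a directed graph on $F(X)$ built from $\mu = \mu_{f}$. Define a binary relation $\to$ by declaring $a \to b$ precisely when either (i) $b \in D(\mu)$ and $a = \mu(b)$, or (ii) $a \prec b$ in the face poset of $X$ and $b \neq \mu(a)$. Any injective $\tilde{f} : F(X) \to \Z$ that is strictly monotone along $\to$ satisfies $\tilde{f}(\mu(d)) < \tilde{f}(d)$ for every $d \in D(\mu)$ as well as clause (2) in the definition of faithful. A routine cover-by-cover analysis for cells in $D(\mu)$, $U(\mu)$, and $\Cr(\mu)$ then verifies $\mu_{\tilde{f}} = \mu$, whence $\tilde{f}$ is automatically equivalent to $f$.

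The main task is to show $\to$ is acyclic, so that a $\Z$-valued linear extension of its transitive closure exists by finiteness of $F(X)$. Two consecutive (i)-arrows are impossible since $D(\mu) \cap U(\mu) = \emptyset$, and two consecutive (ii)-arrows $a \prec b \prec c$ always combine into a single (ii)-arrow $a \prec c$: either $a \notin D(\mu)$, in which case $\mu(a)$ is undefined and the condition $c \neq \mu(a)$ is vacuous, or $\dim c \geq \dim a + 2 > \dim \mu(a)$, forcing $c \neq \mu(a)$. Hence I may assume any putative directed cycle alternates strictly between the two arrow types, with the same number $k$ of each.

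A dimension count finishes the argument. Each (i)-arrow drops $\dim$ by exactly $1$, each (ii)-arrow raises $\dim$ by at least $1$, and the total change around the cycle is zero; this forces every (ii)-arrow to be a cover relation $\prec_1$. Rewriting, the cycle consists of cells $d_1, \ldots, d_k \in D(\mu)$ with $u_i := \mu(d_i)$ satisfying $d_i \prec_1 u_{i+1}$ for each $i$ modulo $k$, which is precisely a Forman cycle (in the sense of Definition \ref{acyclic_matching}) and contradicts the acyclicity of $\mu_{f}$. The only subtle step is this dimension bookkeeping; the rest reduces to the standard topological sort on a DAG and a mechanical verification that $\tilde{f}$ is Morse.
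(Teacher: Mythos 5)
Your proof is correct and takes essentially the same route as the paper: your relation $\to$ is (up to reversal) the paper's $\rhd_1$, whose transitive closure is shown to be a partial order in Lemma \ref{unrhd}, and the paper's proof likewise takes a $\Z$-valued linear extension and checks it is Morse with the same matching. You re-derive the acyclicity inline via a clean alternation-and-dimension count rather than citing Lemma \ref{unrhd}; the one small omission is that Definition \ref{Forman_path} requires the $d_i$ to be distinct, so you should pass to a minimal cycle before concluding it is a non-gradient Forman path.
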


Our proof of this fact is closely related to the notion of flow paths,
which will be introduced in \S\ref{flow_on_poset}. A proof of this
proposition is given there, after Remark \ref{description_of_flow_path}.

By Proposition \ref{faithful_discrete_Morse_function}, the discrete
Morse function in Proposition
\ref{discrete_Morse_function_for_acyclic_partial_matching} can be chosen
to be faithful. In the rest of this paper, we fix an acyclic
partial matching $\mu$ on a regular CW complex $X$. When $X$ is finite
we also choose a faithful discrete Morse function $f$ with $\mu=\mu_{f}$.
In particular, all critical values of $f$ are distinct.

%As we will see in \S\ref{flow_on_poset}, Forman paths can be extended to
%more general flow paths. We also introduce geometrically defined
%continuous flows in \S\ref{continuous_path}, which is based on  
%the following interpretation of discrete Morse theory as an extension of
%simple homotopy theory.

\subsection{Combinatorial Flows on Face Posets}
\label{flow_on_poset}

As we have recalled in Definition \ref{Forman_path}, gradient paths in
the sense of Forman can only connect pairs of cells of 
dimension difference $1$. Such flows are enough for computations of
homology. We need more general flows, called flow paths, to describe
relations among cells of arbitrary dimension differences. We also need 
to define a partial order on the set of all flow paths to describe ``the
topology of the space of flow paths'', which can be used to recover the
homotopy type of the original CW complex.

\begin{definition}
\label{flow_path}
 A \emph{flow path} with respect to an acyclic partial matching 
 $\mu$ is a sequence $\gamma=(e_1,u_1,\ldots,e_n,u_n;c)$ of cells 
 satisfying the following conditions:
 \begin{enumerate}
  \item $u_i \in U(\mu)$ for $1\le i\le n$;
  \item either $e_i=u_i$ or $e_i=\mu^{-1}(u_i)$ for $1\le i\le n$;
  \item the last cell $c$ is critical; 
  \item \label{face_condition_for_flow_path}
       $u_i\succ e_{i+1}$ for $1\le i\le n$, where $e_{n+1}=c$;
 \end{enumerate}
 The number $n$ is called the \emph{length} of $\gamma$ and denoted by
 $\ell(\gamma)$. We allow $n$ to be $0$, in which case $\gamma=(c)$.

 The critical cell $c$ is called the \emph{target} of $\gamma$ and is 
 denoted by $\tau(\gamma)$. The cell $e_1$ is called the
 \emph{initial cell} of $\gamma$ and is denoted by $\iota(\gamma)$. When 
 $\ell(\gamma)=0$, $\iota(\gamma)$ is defined to be $c$.
 We also use the notation $e_{n+1}=c$ when $n=\ell(\gamma)$.
 The set of flow paths with respect to $\mu$ is denoted by
 $\FP(\mu)$. When $\mu$ comes from a discrete Morse function $f$, it is
 also denoted by $\FP(f)$.
\end{definition}

\begin{remark}
 \label{description_of_flow_path}
 Recall that a Forman path is a sequence of cells of the following form
 \[
 d_1\prec \mu(d_1) \succ d_2 \prec \mu(d_2) \succ \cdots \succ
 d_n\prec \mu(d_n) \succ c
 \]
 with $d_i\in D(\mu)$ or
 \[
  \mu^{-1}(u_1) \prec u_1 \succ \mu^{-1}(u_2) \prec u_2 \succ \cdots
 \succ \mu^{-1}(u_n) \prec u_n \succ c
 \]
 with $u_i\in U(\mu)$. And all the face relations are of codimension
 $1$.
 
 A flow path, on the other hand, is a sequence of the following form
 \[
 e_1\preceq u_1 \succ e_2 \preceq u_{2} \succ \cdots 
 \succ e_{n} \preceq u_{n} \succ c  
 \]
 in which either $e_i=u_i$ or $\mu^{-1}(u_{i})$. And face relations
 $u_{i}\succ e_{i+1}$ are arbitrary.
 
 Thus a flow path can be written as
 \[
 u_1\succ \cdots \succ u_{i_1-1} \succ \mu^{-1}(u_{i_1}) 
 \prec u_{i_1} \succ \cdots \succ u_{i_2-1} \succ \mu^{-1}(u_{i_2})
 \prec u_{i_2} \succ \cdots \succ u_n \succ c
 \]
% \begin{multline*}
%  u_1\succ \cdots \succ u_{i_1-1} \succ \mu^{-1}(u_{i_1}) 
%  \prec u_{i_1} \succ \cdots \\
%  \succ u_{i_2-1} \succ \mu^{-1}(u_{i_2})
% \prec u_{i_2} \succ \cdots \succ u_n \succ c
% \end{multline*}
 or
 \[
  \mu^{-1}(u_1) \prec u_1 \succ \cdots \succ u_{i_1-1}
  \succ \mu^{-1}(u_{i_1}) \prec u_{i_1} \succ 
  \cdots \succ u_{i_2-1}
  \succ \mu^{-1}(u_{i_2}) \prec u_{i_2} \succ
  \cdots \succ u_n \succ c   
 \]
% \begin{multline*}
%  \mu^{-1}(u_1) \prec u_1 \succ \cdots \succ u_{i_1-1}
%  \succ \mu^{-1}(u_{i_1}) \prec u_{i_1} \succ 
%  \cdots  \\
%  \succ u_{i_2-1}
%  \succ \mu^{-1}(u_{i_2}) \prec u_{i_2} \succ
%  \cdots \succ u_n \succ c   
% \end{multline*}
 depending on $e_1=u_1$ or $\mu^{-1}(u_1)$.
 Note that both of these sequences are considered to be of length $n$. 

 If $\mu=\mu_{f}$ for a faithful discrete Morse function $f$, a flow
 path $\gamma=(e_1,u_1,\ldots,e_n,u_n;c)$ 
 gives rise to a decreasing sequence of real numbers
 \[
 f(e_{1}) \ge f(u_{1}) > \cdots > f(e_{n}) \ge
 f(u_{n})>f(\tau(\gamma)).  
 \]
 This is why we regard flow paths as a discrete analogue of gradient
 flows on smooth manifolds. 
\end{remark}

There is another interpretation of flow paths.

\begin{definition}
 Define a relation $\rhd_1$ on $F(X)$ as follows:
 $e\rhd_{1} e'$ if either $e\succeq e'$ with $\mu(e')\neq e$, or
 $e\in D(\mu)$ and $\mu(e)=e'$.
 Write $\unrhd$ for the transitive closure of $\rhd_{1}$.
\end{definition}

\begin{remark}
 \label{unrhd_as_flow}
 If $e\unrhd e'$, there exists a sequence of cells
 \[
  e=e_1\rhd_{1} e_2\rhd_{1} \cdots \rhd_{1} e_{n-1}\rhd_1 e_{n}=e'.
 \]
 Eliminating superfluous equalities, we see that such a sequence is
 either of the following forms 
 \[
 e = e_1\succ \cdots \succ e_{i_1-1} \succ \mu^{-1}(e_{i_1}) 
 \prec e_{i_1} \succ \cdots \succ e_{i_2-1} \succ \mu^{-1}(e_{i_2})
 \prec e_{i_2} \succ \cdots \succ e_n = e'
 \]
% \begin{multline*}
%  e = e_1\succ \cdots \succ e_{i_1-1} \succ \mu^{-1}(e_{i_1}) 
%  \prec e_{i_1} \succ \cdots \\
%  \succ e_{i_2-1} \succ \mu^{-1}(e_{i_2})
% \prec e_{i_2} \succ \cdots \succ e_n = e'
% \end{multline*}
 or
 \[
  e= \mu^{-1}(e_1) \prec e_2 \succ \cdots \succ e_{i_1-1}
  \succ \mu^{-1}(e_{i_1}) \prec e_{i_1} \succ 
  \cdots \succ e_{i_2-1}
  \succ \mu^{-1}(e_{i_2}) \prec e_{i_2} \succ
  \cdots \succ e_n = e',   
 \]
% \begin{multline*}
%  e= \mu^{-1}(e_1) \prec e_1 \succ \cdots \succ e_{i_1-1}
%  \succ \mu^{-1}(e_{i_1}) \prec e_{i_1} \succ 
%  \cdots \\
%  \succ e_{i_2-1}
%  \succ \mu^{-1}(e_{i_2}) \prec e_{i_2} \succ
%  \cdots \succ e_n = e',   
% \end{multline*}
 since there can be no successive sequence of two or more matched pairs
 $\mu^{-1}(e_i)\prec e_i$.
 In particular, $e\unrhd e'$ and $\dim e=\dim e'$ imply that the
 sequence is a strictly alternating sequence of $\prec_1$ and $\succ_1$
 in which $\prec_1$ are matched pairs.
 In particular it is a Forman path in the
 sense of Definition \ref{Forman_path}.

 It should be also noted that flow paths are special kind of such
 sequences, in which we require the 
 last cell is critical and that cells $e_i$ belong to $U(\mu)$.
\end{remark}

\begin{lemma}
 \label{unrhd}
 The relation $\unrhd$ is a partial order on $F(X)$.
\end{lemma}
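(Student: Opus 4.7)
The plan is to verify the three axioms of a partial order in turn. Reflexivity and transitivity are formal: for any $e \in F(X)$ one has $e \succeq e$, and $\mu(e) \neq e$ because either $\mu(e)$ is undefined or else $\mu(e)\succ_1 e$ has strictly larger dimension than $e$, so $e \rhd_1 e$ and hence $e \unrhd e$; transitivity is immediate from the definition of $\unrhd$ as the transitive closure of $\rhd_1$.

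For antisymmetry I would argue by contradiction using the acyclicity of $\mu$. Suppose $e \unrhd e'$ and $e' \unrhd e$ with $e \neq e'$; concatenating the two chains yields a nontrivial cycle $e = f_0 \rhd_1 f_1 \rhd_1 \cdots \rhd_1 f_N = e$. I would first discard the trivial steps $f_i = f_{i+1}$, so that every remaining step is either a strict descent $f_i \succ f_{i+1}$ with $\mu(f_{i+1}) \neq f_i$, or an ascent with $f_i \in D(\mu)$ and $f_{i+1} = \mu(f_i)$.

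The decisive step is a dimension count around this cleaned cycle. Let there be $k$ ascents and $D$ descents; a pure-descent cycle would force dimensions to strictly decrease, so $k \ge 1$. The ascents contribute $+k$ to the total dimension change, hence the descents must contribute $-k$ in total, and since each strict descent drops dimension by at least one we get $D \le k$. Conversely, an ascent lands in $U(\mu)$, which is disjoint from $D(\mu)$, so two ascents cannot be consecutive, and cyclically this forces $D \ge k$. Both bounds are therefore equalities: there are exactly $k$ descents, each of codimension one, strictly alternating with the ascents. The cleaned cycle thus reads
\[
 d_1 \prec_1 \mu(d_1) \succ_1 d_2 \prec_1 \mu(d_2) \succ_1 \cdots \prec_1 \mu(d_k) \succ_1 d_1,
\]
with $d_\ell \in D(\mu)$, and the $\rhd_1$ condition on each descent $\mu(d_\ell) \succ_1 d_{\ell+1}$ forces $d_{\ell+1} \neq d_\ell$ via the injectivity of $\mu$.

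Passing to a minimal such cycle, the $d_\ell$ may further be assumed pairwise distinct, so that $(d_1,\ldots,d_k)$ is a Forman path in the sense of Definition \ref{Forman_path} equipped with the extra relation $d_1 \prec_1 \mu(d_k)$ --- a non-gradient Forman path, contradicting the acyclicity of $\mu$. The main obstacle is precisely this dimension-counting argument: a priori an $\unrhd$-cycle could involve arbitrary codimension $\ge 2$ descents, and it is the global parity and alternation constraint of a closed cycle that pins every descent down to codimension one, converting the $\unrhd$-cycle into a genuine Forman cycle.
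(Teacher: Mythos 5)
Your proof is correct and takes essentially the same route as the paper: concatenate the two chains into a $\unrhd$-cycle, reduce it to a Forman cycle, and invoke acyclicity. The only difference is presentational — the paper delegates the reduction to Remark~\ref{unrhd_as_flow} (which asserts, without spelling it out, that a dimension-balanced $\rhd_1$-chain must strictly alternate codimension-one descents with matched ascents), whereas you carry out the dimension count explicitly, using the cyclic alternation constraint from $U(\mu)\cap D(\mu)=\emptyset$ to pin every descent to codimension one. Your version is a self-contained expansion of the same argument rather than a genuinely different one.
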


\begin{proof}
 Since reflexivity and transitivity of $\unrhd$ follow
 immediately by definition, we establish antisymmetry.
 Assume, for contradiction, that $e\unrhd e'\unrhd e$ holds for distinct 
 cells $e$ and $e'$ of $X$. Then, there must exit two chains 
 $e\rhd_1 e_1\rhd_1 \cdots e_{m}\rhd_1 e'$ and 
 $e'\rhd_1 e_{m+1}\rhd_1\cdots \rhd_1 e_{m+n}\rhd e$.
 By concatenating these flow paths, we obtain a sequence
 \[
  e\rhd_1 e_1\rhd_1 \cdots \rhd_{1} e_{m}\rhd_1 e'\rhd_1
 e_{m+1}\rhd_1\cdots \rhd_1 e_{m+n}\rhd_1 e. 
 \]
 By Remark \ref{unrhd_as_flow}, we obtain a Forman path from $e$ to
 itself, which contradicts the acyclicity of $\mu$. 
\end{proof}

We are now ready to prove Proposition
\ref{faithful_discrete_Morse_function}.

\begin{proof}[Proof of Proposition \ref{faithful_discrete_Morse_function}]
 By Lemma \ref{unrhd}, the acyclic partial matching $\mu_{f}$ associated
 to $f$ defines a partial order $\unrhd$ on $F(X)$. Take a linear
 extension of this partial order and $g: F(X)\to\N$ be an injective
 enumeration of cells so that if $e\unrhd e'$ then $g(e)\ge g(e')$.
 To see that $g$ is a discrete Morse function, suppose $e\prec_1 e'$ and
 $g(e)\ge g(e')$. By remark \ref{unrhd_as_flow}, this happens only when
 $e\in D(\mu_{f})$ and $\mu_{f}(e)=e'$.
 Thus, $g$ is a discrete Morse function with $\mu_{g}=\mu_{f}$. Since both 
 $f$ and $g$ are injective, their equivalence follows immediately from the
 fact that their partial matchings coincide. 
\end{proof}

\begin{remark}
 We may define a category whose objects are cells in $X$ and whose
 morphisms from $e$ to $e'$ are sequences of the form
 $e\rhd_1 e_1\rhd_1 \cdots \rhd_{1} e_{m}\rhd_1 e'$. The above argument
 implies that this is an acyclic category and the partial order $\unrhd$
 is the partial order associated with this acyclic category.
\end{remark}

A flow path does not contain a cycle as in the case of gradient paths,  
but it can go back to the boundary of one of previous cells.
We restrict our attention to the following reduced paths in which such
moves are not allowed. 

\begin{definition}
\label{reduced_flow_path}
 A flow path $\gamma=(e_1,u_1,\ldots,e_n,u_n;c)$
 is called \emph{reduced} if $e_{i+1}\not\prec \mu^{-1}(u_i)$ for all
 $i$. When there is a pair $e_{i+1}\prec \mu^{-1}(u_i)$, we say that
 $\gamma$ is \emph{reducible at $i$}.
 The set of reduced flow paths is denoted by $\overline{\FP}(\mu)$ or
 $\overline{\FP}(f)$ if $\mu=\mu_{f}$. 

 For a flow path $\gamma=(e_1,u_1,\ldots,e_n,u_n;c)$, if $\gamma$ is not 
 reducible at $b-1$ and $e_{b}\prec \mu^{-1}(u_{b-1})$,
 $e_{b}\prec \mu^{-1}(u_{b-2})$,
 $\ldots$, $e_{b}\prec \mu^{-1}(u_{a})$, but
 $e_{b}\not\prec \mu^{-1}(u_{a-1})$, the
 set $\{a,a+1,\ldots,b\}$ is called a \emph{reducible interval} for
 $\gamma$. 
\end{definition}

\begin{lemma}
 \label{reduction_is_flow_path}
 When a flow path $\gamma=(e_1,u_1,\ldots,e_n,u_n;c)$ is reducible at
 $i$, define
 \[
  r_i(\gamma) =
 (e_1,u_1,\ldots,e_{i-1},u_{i-1},e_{i+1},u_{i+1},\ldots,e_n,u_n;c).  
 \]
 Then $r_i(\gamma)$ is a flow path.
\end{lemma}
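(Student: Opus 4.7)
The plan is to verify the four defining conditions of a flow path (Definition \ref{flow_path}) for the new sequence $r_i(\gamma)$. Conditions (1), (2), and (3) are preserved trivially by the reduction, since we merely delete the pair $(e_i,u_i)$ without altering the remaining cells, the target $c$, or their memberships in $U(\mu)$ and the set $\{u_j,\mu^{-1}(u_j)\}$. Thus the entire content of the lemma is condition (4) at the single index where the two previously non-adjacent cells have become neighbors.

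The main step, and the heart of the argument, is to establish $u_{i-1}\succ e_{i+1}$ (when $i\ge 2$), using a three-term chain. First, the reducibility hypothesis at $i$ gives $e_{i+1}\prec \mu^{-1}(u_i)$. Second, since $e_i\in\{u_i,\mu^{-1}(u_i)\}$, in both cases $\mu^{-1}(u_i)\preceq e_i$ (with equality in one case and $\mu^{-1}(u_i)\prec_1 u_i=e_i$ in the other). Third, the original condition (4) at index $i-1$ gives $e_i\prec u_{i-1}$. Chaining these:
\[
e_{i+1}\prec \mu^{-1}(u_i)\preceq e_i\prec u_{i-1},
\]
which yields $u_{i-1}\succ e_{i+1}$.

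For the edge cases: if $i=1$, there is no left neighbor to reconnect, and $r_1(\gamma)$ is simply the flow path obtained by starting from $e_2$ with the remaining tail unchanged, so condition (4) at every surviving index is inherited verbatim. If $i=n$, then the role of $e_{i+1}$ is played by $c=e_{n+1}$, and the identical chain $c\prec \mu^{-1}(u_n)\preceq e_n\prec u_{n-1}$ gives $u_{n-1}\succ c$, which is the required relation at the new terminal junction.

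I do not anticipate any genuine obstacle here: the only subtlety is the two-case split on whether $e_i$ equals $u_i$ or $\mu^{-1}(u_i)$, both of which collapse to the uniform observation $\mu^{-1}(u_i)\preceq e_i$. This is the mechanism by which the reduction step ``bypasses'' the removed pair while preserving all face relations, and it is also the reason one should expect $r_i$ to behave well under iteration when later comparing $\FP(\mu)$ with $\overline{\FP}(\mu)$.
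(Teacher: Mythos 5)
Your proof is correct and follows essentially the same route as the paper's one-line argument: both establish the new face relation $u_{i-1}\succ e_{i+1}$ via the chain $u_{i-1}\succ e_i\succeq\mu^{-1}(u_i)\succ e_{i+1}$, which combines the original condition (4) at $i-1$, the fact that $e_i\in\{u_i,\mu^{-1}(u_i)\}$, and the reducibility hypothesis. Your explicit handling of the boundary cases $i=1$ and $i=n$ and of the two-case split for $e_i$ is a helpful elaboration but not a different method.
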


\begin{proof}
 When $\gamma$ is reducible at $i$, we have
 $u_{i-1} \succ e_{i} \succeq \mu^{-1}(u_{i}) \succ e_{i+1}$.   
 Hence the condition \ref{face_condition_for_flow_path} in the
 definition of flow path is satisfied. 
\end{proof}

\begin{definition}
 For $\gamma\in \FP(\mu)$ and a reducible interval
 $I=\{a,a+1,\ldots,b\}$,
 define  
 \[
  r_{I}(\gamma) = r_{a}(r_{a+1}(\cdots r_{b}(\gamma)\cdots)).
 \]
 Let $I_1,\ldots,I_{\ell}$ be the collection of
 all reducible intervals with $i<j$ for all $i\in I_{p}$ and
 $j\in I_{p+1}$ and all $p=1,\ldots,\ell-1$. Define 
 \[
  r(\gamma) = r_{I_1}(r_{I_2}(\cdots r_{I_{\ell}}(\gamma)\cdots)).
 \]

 By Lemma \ref{reduction_is_flow_path}, $r$ defines a map
 \[
 r : \FP(\mu) \rarrow{} \overline{\FP}(\mu).
 \]
 This is called the \emph{reduction map}.
\end{definition}

We define a relation $\preceq$ on $\FP(\mu)$ % and $\overline{\FP}(f)$
as follows. 

\begin{definition}
\label{subpath}
 For flow paths $\gamma=(e_1,u_1,\ldots,e_m,u_m;c)$ and 
 $\gamma'=(e'_1,u'_1,\ldots,e'_n,u'_n;c')$,  
 define $\gamma \preceq \gamma'$
 if and only if there exists a strictly increasing function
 \[
  \varphi : \{0,1,\ldots,k\} \rarrow{} \{0,1,\ldots,n+1\}
 \]
 for some $1\le k\le m+1$ satisfying the following conditions:
 \begin{enumerate}
  \item \label{subpath_initial} $\varphi(0)=0$,
  \item \label{subpath_constant_cells} $u_{j}=u'_{\varphi(j)}$ for each  
	$1\le j< k$, 
  \item \label{subpath_terminal} $\varphi(k)=n+1$, and
  \item \label{subpath_face_relation}
	for each $1\le j\le k$, $e_{j} \preceq e_{p}'$ for all
	$\varphi(j-1) < p \le \varphi(j)$.
 \end{enumerate} 
 When $\gamma\preceq \gamma'$, $\gamma$ is called a \emph{subpath} of
 $\gamma'$. The function $\varphi$ is called the \emph{embedding
 function for $\gamma\preceq\gamma'$}.
\end{definition}

\begin{remark}
 \label{e_k_is_face_of_target}
 Note that we denote $e'_{n+1}=c'$ for
 $\gamma'=(e'_1,u'_1,\ldots,e'_n,u'_n,c')$.
 Thus the conditions \ref{subpath_terminal} and
 \ref{subpath_face_relation} imply that 
 $e_k\preceq c'=\tau(\gamma')$.
\end{remark}

In particular, the map
$\varphi:\{0,1\}\to\{0,1,\ldots,n+1\}$ defined by $\varphi(1)=n+1$ is
an embedding function for $(\tau(\gamma))\preceq \gamma$ if
$\tau(\gamma)\preceq e_1$, where
$(\tau(\gamma))$ is the flow path consisting of a single critical cell
$\tau(\gamma)$. Another typical example is the
following.

\begin{example}
 \label{upgrading_flow_path}
 For $\gamma=(e_1,u_1,e_2,u_2,\ldots,e_n,u_n;c)$,
 define
 \[
 u(\gamma)=(u_1,u_1,e_2,u_2,\ldots,e_n,u_n;c).
 \]
 The identity map $\{0,\ldots,n+1\}\to \{0,\ldots,n+1\}$ is an embedding
 function and we have $\gamma\preceq u(\gamma)$. 
\end{example}

In order to understand the meaning of the relation $\preceq$ on $\FP(\mu)$,
let us take a look at a more practical example.

\begin{example}
 \label{subpath_example}
 Consider the $2$-simplex in Figure
 \ref{subpath_example_figure1} with the acyclic partial matching
 $\mu$ indicated by the arrows. 
 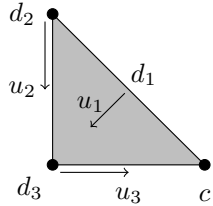
\begin{figure}[ht]
  \begin{center}
   \begin{tikzpicture}
    \draw [fill,lightgray] (2,0) -- (0,2) -- (0,0) -- (2,0);
    \draw (2,0) -- (0,2) -- (0,0) -- (2,0);

    \draw [->] (0.95,0.95) -- (0.5,0.5);
    \draw [->] (-0.1,1.9) -- (-0.1,1);
    \draw [->] (0.1,-0.1) -- (1,-0.1);

    \draw [fill] (2,0) circle (2pt);
    \draw [fill] (0,2) circle (2pt);
    \draw [fill] (0,0) circle (2pt);

    \draw (1.2,1.2) node {$d_1$};
    \draw (0.5,0.8) node {$u_1$};
    \draw (-0.4,2) node {$d_2$};
    \draw (-0.4,1) node {$u_2$};
    \draw (-0.3,-0.3) node {$d_3$};
    \draw (1,-0.4) node {$u_3$};
    \draw (2,-0.4) node {$c$};    
   \end{tikzpicture}  
  \end{center}
  \caption{A partial matching on $2$-simplex}
  \label{subpath_example_figure1}
 \end{figure}
 The only critical cell is $c$ with $D(\mu)=\{d_1,d_2,d_3\}$ and
 $U(\mu)=\{u_1,u_2,u_3\}$. 

 The sequences
 \begin{eqnarray*}
  \delta & = & (d_1,u_1,u_2,u_2,d_3,u_3;c) \\
  \gamma & = & (d_1,u_1,d_3,u_3;c)
 \end{eqnarray*}
 are reduced flow paths terminating at $c$.
 Let us renumber cells in $\gamma$ as
 $\gamma=(d'_1,u'_1,d'_2,u'_2;c)$.
 
 Define $\varphi:\{0,1,2,3\}\to\{0,1,2,3,4\}$ by
 $\varphi(0)=0$, $\varphi(1)=1$, $\varphi(2)=3$, $\varphi(3)=4$. Then
 this is an embedding function for $\gamma\prec \delta$. In fact,
 $u_{\varphi(1)}=u_1=u'_1$, $u_{\varphi(2)}=u_{3}=u'_{2}$ and
 the condition 2 is satisfied. For the condition 4,
 $d'_1=d_1$ is a face of $d_1$, $d'_2=d_3$ is a face of $u_2$
 and $d_3$. Thus $\gamma\prec \delta$.
\end{example}

In order to prove that $\preceq$ is a partial order on $\FP(\mu)$, we
need to prepare a couple of Lemmas.

\begin{lemma}
\label{uniqueness_of_embedding_function}
 For $\gamma \preceq \gamma'$ in $\FP(\mu)$, embedding function 
 $\varphi$ in Definition \ref{subpath} is unique.
\end{lemma}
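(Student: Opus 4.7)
The plan is to prove uniqueness by strong induction on the index $j$, supported by a structural block lemma about repetitions in the $u$-sequence of a flow path.

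First I would establish a block lemma: for any flow path $\gamma' = (e'_1, u'_1, \dots, e'_n, u'_n; c')$, whenever $u'_p = u'_q$ with $p < q$, all intermediate values agree, i.e.\ $u'_p = u'_{p+1} = \cdots = u'_q$, and additionally $e'_r = \mu^{-1}(u'_p)$ for every $p < r \le q$. This follows from Lemma \ref{unrhd}: consecutive flow-path steps give $u'_r \unrhd u'_{r+1}$, so a chain returning to its starting value is forced to be constant by antisymmetry; within such a block, the strict face relation $u'_{r-1} \succ e'_r$ combined with $u'_r = u'_{r-1}$ excludes $e'_r = u'_r$ and leaves $e'_r = \mu^{-1}(u'_r)$.

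Given two embedding functions $\varphi\colon\{0,\dots,k\}\to\{0,\dots,n+1\}$ and $\tilde\varphi\colon\{0,\dots,\tilde k\}\to\{0,\dots,n+1\}$ for $\gamma \preceq \gamma'$, I would prove $\varphi(j) = \tilde\varphi(j)$ by strong induction on $j$, adopting the convention $\varphi(j) = n+1$ when $j = k$ and similarly for $\tilde\varphi$. The base case $j = 0$ is immediate from condition (1). For the inductive step, set $p := \varphi(j-1) = \tilde\varphi(j-1)$ and assume without loss of generality $\varphi(j) < \tilde\varphi(j)$. When both $\varphi(j)$ and $\tilde\varphi(j)$ lie in $\{1,\dots,n\}$ (that is, $j < k$ and $j < \tilde k$), condition (2) gives $u_j = u'_{\varphi(j)} = u'_{\tilde\varphi(j)}$, so by the block lemma $e'_r = \mu^{-1}(u_j)$ for every $\varphi(j) < r \le \tilde\varphi(j)$. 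Evaluating condition (4) for $\tilde\varphi$ at index $j$ on the point $r = \varphi(j)+1$ forces $e_j \preceq \mu^{-1}(u_j)$, and combined with $e_j \in \{u_j,\mu^{-1}(u_j)\}$ this pins down $e_j = \mu^{-1}(u_j)$. Evaluating condition (4) for $\varphi$ at the successor index $j+1$ on the same $r$ then yields $e_{j+1} \preceq \mu^{-1}(u_j)$. Propagating this inclusion through the subsequent transitions $u_{j+1} \succ e_{j+2}$, $u_{j+2} \succ e_{j+3}$, etc., and comparing against condition (4) for $\tilde\varphi$ at indices beyond $j$ (which must cover cells of $\gamma'$ lying outside the $u_j$-block), one derives the desired contradiction. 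The boundary case where $j < k$ and $j = \tilde k$ (so $\tilde\varphi(j) = n+1$) is analogous, invoking the terminal constraint in place of condition (2); the remaining mixed case is ruled out by the WLOG assumption, since it would give $\varphi(j) = n+1 > \tilde\varphi(j)$.

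Once $\varphi(j) = \tilde\varphi(j)$ is established on the common domain, the identity $\varphi(k) = \tilde\varphi(\tilde k) = n+1$ together with strict monotonicity forces $k = \tilde k$, completing the proof. The main obstacle is the propagation step: if the $u$-sequence of $\gamma'$ has no repetitions, condition (2) alone immediately pins down each $\varphi(j)$ and uniqueness is trivial, so all the substantive work lies in tracking how an initial mismatch $\varphi(j) < \tilde\varphi(j)$ inside a common $u_j$-block of $\gamma'$ traps the tail of $\gamma$ inside $\mu^{-1}(u_j)$ and eventually clashes with the structure of $\gamma'$ outside the block.
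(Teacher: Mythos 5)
Your argument breaks down exactly where you concede ``all the substantive work'' lies: the propagation step. You assert that one ``derives the desired contradiction'' without actually deriving it, and in fact no such contradiction exists in the case your block lemma is designed to handle. Consider $\partial[v_0,v_1,v_2]$ with the acyclic matching $\mu(e_{01})=\sigma$, where $e_{01}=[v_0,v_1]$ and $\sigma$ is the $2$-cell. Condition \ref{face_condition_for_flow_path} of Definition \ref{flow_path} only demands $u_i \succ e_{i+1}$, so $\gamma'=(e_{01},\sigma,e_{01},\sigma;v_0)$ is a flow path even though $e'_2 = \mu^{-1}(u'_1)$. The shorter flow path $\gamma=(e_{01},\sigma;v_0)$ then satisfies $\gamma\preceq\gamma'$ via \emph{two} distinct embedding functions: both $\varphi(0)=0,\,\varphi(1)=1,\,\varphi(2)=3$ and $\varphi(0)=0,\,\varphi(1)=2,\,\varphi(2)=3$ meet all four conditions of Definition \ref{subpath} (here $v_0\preceq e_{01}$ handles condition \ref{subpath_face_relation}). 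Your block lemma correctly identifies $\gamma'$ as a single $\sigma$-block with $e'_2=\mu^{-1}(\sigma)$, and you correctly conclude $e_1=\mu^{-1}(\sigma)$ and $e_2\preceq\mu^{-1}(\sigma)$, but the tail $e_2=v_0\preceq e_{01}$ collides with nothing: the embedding genuinely is ambiguous. So your proposed route cannot be completed as written.

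The paper's proof is entirely different and quietly sidesteps the block issue via the faithful Morse function. By Remark \ref{description_of_flow_path}, the values $f(u'_1)>f(u'_2)>\cdots>f(u'_n)$ are strictly decreasing (the strictness tacitly presupposes $e_{i+1}\ne\mu^{-1}(u_i)$, i.e.\ no stuttering on a matched pair---exactly the convention your counterexample above violates), so the $u'_i$ are pairwise distinct. Then condition \ref{subpath_constant_cells} immediately pins down each $\varphi(j)$ for $j<k$, and the only remaining issue is the domain length: the paper shows $k=k'$ by using the terminal constraint $e_k\preceq c'$ to get $f(e_k)\le f(c')$, contradicting $f(e_k)\ge f(u_k)=f(u'_{\varphi'(k)})>f(c')$. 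No block lemma, no induction, no propagation. In other words, under the convention that actually makes the lemma true, your block lemma has empty content and the entire inductive apparatus is unnecessary; the real content is simply the Morse-function argument that rules out $k\ne k'$, which your sketch does not supply.
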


\begin{proof}
 Let $\gamma=(e_1,u_1,\ldots,e_m,u_m;c)$ and 
 $\gamma'=(e'_1,u'_1,\ldots,e'_n,u'_n;c')$ be flow paths. Suppose
 $\gamma\preceq\gamma'$ and that there
 exist two embedding functions
 \begin{eqnarray*}
  \varphi & : & \{0,1,\ldots,k\} \rarrow{} \{0,1,\ldots,n+1\} \\
  \varphi' & : & \{0,1,\ldots,k'\} \rarrow{} \{0,1,\ldots,n+1\}
 \end{eqnarray*}
 for this relation.

 By the condition $u_j=u'_{\varphi'(j)}=u'_{\varphi(j)}$, it suffices to
 show that $k=k'$. Suppose $k<k'$. By Remark
 \ref{e_k_is_face_of_target}, we have $e_k\preceq c'$ by using the
 conditions for $\varphi$.
 Let $f$ be a faithful discrete Morse function with $\mu=\mu_{f}$.
 Since $c'$ is critical, we have
 $f(e_{k})\le f(c')$. On the other hand, we have
 $u_{k}=u'_{\varphi'(k)}$ by the conditions for $\varphi'$.
 By Remark \ref{description_of_flow_path}, we have decreasing
 sequences 
 \begin{eqnarray*}
 f(e_{1}) \ge f(u_{1}) > & \cdots & > f(e_{m}) \ge
 f(u_{m})>f(c) \\  
 f(e'_{1}) \ge f(u'_{1}) > & \cdots & > f(e'_{n}) \ge
 f(u'_{n})>f(c').  
 \end{eqnarray*}
 In particular, we have
 \[
  f(e_{k}) \ge f(u_{k}) = f(u'_{\varphi'(k)}) > f(c'), 
 \]
 which contradicts to $f(e_k)\le f(c')$.
\end{proof}

An analogous argument implies the following. The proof is omitted.

\begin{lemma}
 \label{ft_is_poset_map}
 Let $f$ be a faithful discrete Morse function with $\mu=\mu_{f}$.
 For $\gamma,\gamma'\in \FP(\mu)$,
 $\gamma\preceq\gamma'$ implies $f(\tau(\gamma))\le f(\tau((\gamma'))$. 
\end{lemma}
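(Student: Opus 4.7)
The plan is to imitate the argument in the proof of Lemma \ref{uniqueness_of_embedding_function}, using the decreasing sequences of Morse values along flow paths from Remark \ref{description_of_flow_path} together with the defining properties of a faithful discrete Morse function. Concretely, suppose $\gamma=(e_1,u_1,\ldots,e_m,u_m;c)$ and $\gamma'=(e'_1,u'_1,\ldots,e'_n,u'_n;c')$ and let $\varphi:\{0,1,\ldots,k\}\to\{0,1,\ldots,n+1\}$ be the embedding function for $\gamma\preceq \gamma'$. Taking $j=k$ in Condition \ref{subpath_face_relation} of Definition \ref{subpath} and using $\varphi(k)=n+1$ together with the convention $e'_{n+1}=c'$, I immediately get $e_{k}\preceq c'$, and this single face relation will be the only ingredient linking the two paths.

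I would then split into two cases according to whether $k=m+1$ or $k\le m$. If $k=m+1$, then $e_{k}=c=\tau(\gamma)$, so $c\preceq c'$. Either $c=c'$, in which case $f(c)=f(c')$, or $c\prec c'$ strictly; in the latter case $c$ is critical so $\mu_{f}(c)$ is not defined and hence $c'\neq \mu_{f}(c)$, whence by faithfulness $f(c)<f(c')$.

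If instead $k\le m$, then $e_k$ is either $u_k\in U(\mu_f)$ or $\mu_f^{-1}(u_k)\in D(\mu_f)$; in both cases $e_k$ is non-critical, so $e_k\neq c'$, and $e_k\prec c'$ is a strict face relation. Since $c'$ is critical, $c'\notin U(\mu_f)$ and thus $c'\neq \mu_f(e_k)$; faithfulness then yields $f(e_k)<f(c')$. On the other hand, the decreasing sequence along $\gamma$ from Remark \ref{description_of_flow_path}, namely
\[
f(e_k)\ge f(u_k)>f(e_{k+1})\ge \cdots >f(u_m)>f(c),
\]
gives $f(c)<f(e_k)$. Combining these two inequalities yields $f(\tau(\gamma))=f(c)<f(e_k)<f(c')=f(\tau(\gamma'))$, so in every case we obtain the required inequality $f(\tau(\gamma))\le f(\tau(\gamma'))$.

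There is no real obstacle here beyond a careful bookkeeping of the two cases; the entire argument hinges on the one face relation $e_k\preceq c'$ provided by the embedding function and on the standard consequence of faithfulness that values strictly increase along any strict face relation $e\prec e'$ with $e'\neq \mu_f(e)$, which is automatic when the larger cell is critical.
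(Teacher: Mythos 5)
Your proof is correct and follows the same route the paper intends: take $j=k$ in the embedding function to get $e_k \preceq c'$ (this is exactly Remark \ref{e_k_is_face_of_target}), use faithfulness and the criticality of $c'$ to conclude $f(e_k)\le f(c')$, and use the decreasing sequence of Morse values along $\gamma$ to conclude $f(\tau(\gamma))\le f(e_k)$. Your case split on $k=m+1$ versus $k\le m$ just makes explicit what the omitted ``analogous argument'' to Lemma \ref{uniqueness_of_embedding_function} does in one stroke, so this is essentially the paper's own proof.
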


%\begin{proof}
% Suppose $\gamma=(e_1,u_1,\ldots,e_m,u_m;c)$ and
% $\gamma'=(e'_1,u'_1,\ldots,e'_n,u'_n;c')$. As we have seen in Remark
% \ref{e_k_is_face_of_target}, 
% $\gamma \preceq \gamma'$ implies that there exists $e_{k}$ in $\gamma$
% such that $e_{k} \preceq \tau(\gamma')$.
%
% On the other hand, $\gamma$ is of the following form
% \[
%  e_{1} \preceq u_{1} \succ \cdots \succ e_{n} \preceq
% u_{n} \succ c=\tau(\gamma). 
% \]
% By Remark \ref{description_of_flow_path}, we have a decreasing
% sequence 
% \[
% f(e_{1}) \ge f(u_{1}) > \cdots > f(e_{n}) \ge
% f(u_{n})>f(\tau(\gamma)).  
% \]
% Since $\tau(\gamma')$ is a critical cell, we
% have $f(e) < f(\tau(\gamma'))$ for any face $e \prec \tau(\gamma')$. 
% It follows that
% $f(\tau(\gamma)) \le f(e_{k}) \le f(\tau(\gamma'))$. 
%\end{proof}

Recall that $\ell(\gamma)$ is the length of a flow path $\gamma$.

\begin{lemma}
 \label{ell_is_poset_map}
 If $\gamma\preceq\gamma'$ and $\tau(\gamma)=\tau(\gamma')$,
 then we have $\ell(\gamma)\le \ell(\gamma')$. 
\end{lemma}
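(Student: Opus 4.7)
My plan is to argue by contradiction on the length $k$ of the embedding function, in close parallel with the proof of Lemma \ref{uniqueness_of_embedding_function}. Let $\gamma=(e_1,u_1,\ldots,e_m,u_m;c)$ and $\gamma'=(e'_1,u'_1,\ldots,e'_n,u'_n;c')$ with $c=c'$, and let
\[
\varphi : \{0,1,\ldots,k\} \rarrow{} \{0,1,\ldots,n+1\}
\]
be the embedding function for $\gamma\preceq\gamma'$, where $1\le k\le m+1$. Since $\varphi$ is strictly increasing, we immediately have $k\le n+1$. So the lemma reduces to showing that $k=m+1$: once this is established, $m+1 \le n+1$ gives $\ell(\gamma)=m \le n=\ell(\gamma')$.

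The main step is therefore to rule out $k\le m$. Suppose toward contradiction that $k\le m$; then the cell $u_k$ appears in $\gamma$, and by Remark \ref{description_of_flow_path} applied to the decreasing sequence of values along $\gamma$ we have
\[
f(e_k) \ge f(u_k) > f(c).
\]
On the other hand, by Remark \ref{e_k_is_face_of_target} the embedding conditions \ref{subpath_terminal} and \ref{subpath_face_relation} force $e_k \preceq c' = c$. Since $e_k$ lies in $U(\mu)\cup D(\mu)$ by definition of a flow path, it is non-critical, so $e_k\neq c$ and hence $e_k\prec c$. Because $c$ is critical it is not in the image of $\mu_f$, so in particular $c\neq \mu_f(e_k)$. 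Faithfulness of $f$ then yields $f(e_k)<f(c)$, contradicting $f(e_k)\ge f(u_k)>f(c)$.

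The expected obstacle is really the same as in Lemma \ref{uniqueness_of_embedding_function}: one must extract a sharp inequality $f(e_k)<f(\tau(\gamma))$ from the face relation $e_k\preceq\tau(\gamma')$, which is only available because $\tau(\gamma)=\tau(\gamma')$ is critical \emph{and} $f$ is faithful (the ordinary discrete Morse inequalities would only give $f(e_k)\le f(c)$, which is not strong enough). Once that step is in hand, the rest is just the counting argument $m+1 = k \le n+1$.
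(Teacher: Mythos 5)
Your proof is correct and follows essentially the same route as the paper's: both use the embedding function together with the decreasing sequence of values from Remark \ref{description_of_flow_path} and the face relation $e_k\preceq\tau(\gamma')$ from Remark \ref{e_k_is_face_of_target}, invoking faithfulness of $f$ to squeeze $f(e_k)$ against $f(c)$ and conclude $k=m+1$, then $m+1\le n+1$. The only difference is presentational: you rule out $k\le m$ by contradiction, whereas the paper argues directly that $f(e_k)=f(\tau(\gamma))$ forces $e_k=\tau(\gamma)$ via injectivity of $f$.
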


\begin{proof}
 Suppose  $\gamma=(e_1,u_1,\ldots,e_m,u_m;c)$ and
 $\gamma'=(e'_1,u'_1,\ldots,e'_n,u'_n;c)$.  
 Let
 \[
  \varphi : \{0,\ldots,k\}\longrightarrow \{0,\ldots,n+1\}
 \]
 be the embedding function for $\gamma \preceq \gamma'$. Note that we
 have $e_k\prec \tau(\gamma')=\tau(\gamma)$ by Remark
 \ref{e_k_is_face_of_target}.

 Let $f$ be a faithful discrete Morse function with $\mu=\mu_{f}$.
 By Lemma \ref{ft_is_poset_map}, we have
 $f(\tau(\gamma)) \le f(e_{k}) \le f(\tau(\gamma'))$.
 Since $\tau(\gamma)=\tau(\gamma')$, we obtain
 $f(e_{k})=f(\tau(\gamma))$
 and the injectivity of $f$ implies that $e_{k}=\tau(\gamma)$. Thus
 we have $m=k-1$ by the definition of length of $\gamma$. 
 On the other hand, the injectivity of $\varphi$ implies
 $m+1=k \le n+1$. And we have $\ell(\gamma) \le \ell(\gamma')$.
\end{proof}

\begin{remark}
 \label{k=m+1}
 The above proof implies that, when
 $\tau(\gamma)=\tau(\gamma')$ and $\gamma\preceq\gamma'$, its
 embedding function $\varphi$ is of the form
 \[
 \varphi : \{0,\ldots,\ell(\gamma)+1\} \longrightarrow
 \{0,\ldots,\ell(\gamma')+1\}.   
 \] 
 Thus when $\tau(\gamma)=\tau(\gamma')$ and $\ell(\gamma)=\ell(\gamma')$,
 the embedding for $\gamma\preceq\gamma'$ must be the identity map. 
\end{remark}

\begin{proposition}
 \label{partial_order_on_FP}
 The relation $\preceq$ is a partial order on the set $\FP(\mu)$ of flow
 paths. 
\end{proposition}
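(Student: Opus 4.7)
The plan is to verify the three axioms of a partial order separately. Reflexivity is immediate: for $\gamma=(e_1,u_1,\ldots,e_m,u_m;c)$, the identity function on $\{0,1,\ldots,m+1\}$ satisfies the four conditions of Definition~\ref{subpath}. For antisymmetry, fix a faithful discrete Morse function $f$ with $\mu=\mu_f$ and suppose $\gamma\preceq\gamma'$ and $\gamma'\preceq\gamma$. Applying Lemma~\ref{ft_is_poset_map} in both directions forces $f(\tau(\gamma))=f(\tau(\gamma'))$, and injectivity of $f$ gives $\tau(\gamma)=\tau(\gamma')$; Lemma~\ref{ell_is_poset_map} then yields $\ell(\gamma)=\ell(\gamma')$, and Remark~\ref{k=m+1} forces both embedding functions to be the identity. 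Condition~\ref{subpath_constant_cells} then gives $u_j=u'_j$, while applying condition~\ref{subpath_face_relation} in both directions gives $e_j\preceq e'_j$ and $e'_j\preceq e_j$, so $\gamma=\gamma'$.

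For transitivity, suppose $\gamma\preceq\gamma'$ via $\varphi\colon\{0,\ldots,k\}\to\{0,\ldots,n+1\}$ and $\gamma'\preceq\gamma''$ via $\varphi'\colon\{0,\ldots,k'\}\to\{0,\ldots,p+1\}$. The obvious candidate $\varphi'\circ\varphi$ need not be well-defined, since $\varphi(j)$ may exceed $k'$ when $k'<n+1$. The plan is to truncate: set $j^{*}=\max\{j:\varphi(j)\le k'\}$ and define $\psi(j)=\varphi'(\varphi(j))$ for $0\le j\le j^{*}$, extending by $\psi(j^{*}+1)=p+1$ in the case $\varphi(j^{*})<k'$. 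The initial, terminal, and constant-cell conditions follow routinely from the corresponding conditions on $\varphi$ and $\varphi'$ together with the identity $\varphi'(k')=p+1$. For the face condition at $1\le j\le j^{*}$, each $q\in(\psi(j-1),\psi(j)]$ lies in a unique subinterval $(\varphi'(r-1),\varphi'(r)]$ with $\varphi(j-1)<r\le\varphi(j)$, and chaining $e_j\preceq e'_r\preceq e''_q$ suffices.

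The main obstacle is the boundary piece: verifying the face condition at $j=j^{*}+1$ when $\varphi(j^{*})<k'$ and $j^{*}<k$. Here the interval $(\psi(j^{*}),p+1]$ must be split at $\varphi'(k'-1)$. On the lower portion one reruns the previous argument with $r$ ranging over $\varphi(j^{*})+1,\ldots,k'-1$. On the upper portion one uses $e_{j^{*}+1}\preceq e'_{k'}$, which follows from $\gamma\preceq\gamma'$ because $\varphi(j^{*})<k'\le\varphi(j^{*}+1)$ places $r=k'$ in the range prescribed by condition~\ref{subpath_face_relation}, combined with $e'_{k'}\preceq e''_q$ from the terminal slot of $\gamma'\preceq\gamma''$. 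Keeping this case split straight is the only delicate point; everything else is bookkeeping.
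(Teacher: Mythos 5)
Your proof is correct and follows essentially the same route as the paper: identity for reflexivity, the same truncated composite for transitivity (your $j^{*}$ and the paper's $r$ pick out the same cutoff index, and you supply the case analysis that the paper asserts without verification), and the same lemma chain (Lemma~\ref{ft_is_poset_map}, Lemma~\ref{ell_is_poset_map}, Remark~\ref{k=m+1}) to force the identity embedding function for antisymmetry. The only minor divergence is the final step of antisymmetry, where you apply condition~\ref{subpath_face_relation} in both directions to get $e_j = e'_j$ directly from antisymmetry of the face poset --- a slight streamlining of the paper's argument via coincidence of the index sets $\set{i}{e_i = u_i}$ --- but this is the same approach at heart.
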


\begin{proof}
 For $\gamma\in \FP(\mu)$, the relation $\gamma\preceq \gamma$ is given by
 the identity embedding function.

 Suppose $\gamma\preceq\gamma'$ and $\gamma'\preceq\gamma''$. 
% If
% \begin{eqnarray*}
%  \gamma & = & (e_{1},d_{1},\ldots, e_{\ell},d_{\ell}; c) \\
%  \gamma' & = & (e_{1}',d_{1}',\ldots, e_{m}', d_{m}'; c') \\
%  \gamma'' & = & (e_1'',d_1'',\ldots,e_n'',d_n'';c''),
% \end{eqnarray*}
 Let
 \begin{eqnarray*}
  \varphi & : & \{0,\ldots,k\} \longrightarrow
   \{0,\ldots,\ell(\gamma')+1\} \\
  \varphi' & : & \{0,\ldots,k'\} \longrightarrow
   \{0,\ldots,\ell(\gamma'')+1\} 
 \end{eqnarray*}
 be embedding functions for
 $\gamma\preceq\gamma'$ and $\gamma'\preceq\gamma''$, respectively.

 Since $k'\le \ell(\gamma')+1$, there exists $r$ such that 
 $\varphi(r-1)<k'\le \varphi(r)$.
 Define a function
 $\varphi'':\{0,\ldots,r\}\to\{0,\ldots,\ell(\gamma'')+1\}$ by 
 \[
  \varphi''(i) = 
 \begin{cases}
  (\varphi'\circ\varphi)(i), & i<r \\
  \varphi'(k')=\ell(\gamma'')+1, & i=r.
 \end{cases}
 \]
 This is an embedding function for $\gamma \preceq \gamma''$. 

 Finally suppose that $\gamma\preceq\gamma'$ and
 $\gamma'\preceq\gamma$ for $\gamma=(e_1,u_1,\ldots,e_m,u_m;c)$ and
 $\gamma'=(e'_1,u'_1,\ldots,e'_n,u'_n;c')$. 
 By Lemma \ref{ft_is_poset_map} we have $f(\tau(\gamma))=f(\tau(\gamma'))$.
 Since our Morse function $f$ is assumed to be injective, we have
 $\tau(\gamma)=\tau(\gamma')$. By Lemma \ref{ell_is_poset_map}, we have
 $\ell(\gamma)=\ell(\gamma')$. 

 The relation $\gamma\preceq\gamma'$ implies that, if $e_j=u_j$
 in $\gamma$, 
 then
 \[
  u_j=e_j \preceq e'_{\varphi(j)}\preceq u'_{\varphi(j)} = u_j
 \]
 and we have
 $e'_{\varphi(j)}=u'_{\varphi(j)}$. Thus we have  
 \[
 \set{u_{i}}{e_i=u_{i}} \subset \set{u_{j}'}{e'_j=u_{j}'}. 
 \] 
 The assumption $\gamma'\preceq\gamma$ then implies that
 \[
 \set{u_{i}}{e_i=u_{i}} = \set{u_{j}'}{e'_j=u_{j}'}. 
 \] 
 Furthermore, by Remark \ref{k=m+1}, the embedding function
 $\varphi$ for the relation $\gamma\preceq\gamma'$ is the identity
 map. Thus the relations 
 $\gamma\preceq \gamma'$ and $\gamma'\preceq\gamma$ show that the
 corresponding cells in $\gamma$ and $\gamma'$ are identical and we have
 $\gamma=\gamma'$.  
\end{proof}

We regard the set of reduced flow paths $\overline{\FP}(\mu)$ 
as a subposet of $\FP(\mu)$. On the other hand, we have the reduction map
$r:\FP(f)\to\overline{\FP}(\mu)$.  

\begin{lemma}
 \label{r_is_poset_retraction}
 The reduction $r : \FP(\mu) \to \overline{\FP}(\mu)$ is a poset map.
 It is also a retraction. 
\end{lemma}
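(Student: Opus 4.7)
The retraction property is immediate from the construction: if $\gamma\in\overline{\FP}(\mu)$, then $\gamma$ has no reducible intervals, so each $r_{I}$ in the definition of $r$ acts as the identity and $r(\gamma)=\gamma$.

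For the poset-map property, my plan is to reduce to single-step reductions by proving two auxiliary claims, writing $m=\ell(\gamma)$ and $n=\ell(\gamma')$. \emph{Claim (L1):} if $\gamma\preceq\gamma'$ and $\gamma$ is reducible at an index $i$, then $r_{i}(\gamma)\preceq\gamma'$. \emph{Claim (L2):} if $\gamma$ is reduced, $\gamma\preceq\gamma'$, and $\gamma'$ is reducible at an index $j$, then $\gamma\preceq r_{j}(\gamma')$. Granting these, iterating (L1) from $\gamma\preceq\gamma'$ yields $r(\gamma)\preceq\gamma'$; then iterating (L2), which is available because $r(\gamma)$ is now reduced, yields $r(\gamma)\preceq r(\gamma')$.

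For (L1), let $\varphi:\{0,\ldots,k\}\to\{0,\ldots,n+1\}$ be the embedding function for $\gamma\preceq\gamma'$. When $i<k$ I would define $\hat\varphi:\{0,\ldots,k-1\}\to\{0,\ldots,n+1\}$ by $\hat\varphi(l)=\varphi(l)$ for $l<i$ and $\hat\varphi(l)=\varphi(l+1)$ for $l\geq i$; when $i\geq k$ the reduced pair lies outside the embedded portion and one can simply take $\hat\varphi=\varphi$ (reindexed appropriately when $i=k$). The only substantive check is the face relation at the newly shifted position: one needs $e_{i+1}\preceq e'_{p}$ for every $p$ in the now-enlarged range $\bigl(\varphi(i-1),\varphi(i+1)\bigr]$. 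Splitting this range at $\varphi(i)$, the upper half uses the original face condition for $e_{i+1}$ directly, and the lower half uses the chain $e_{i+1}\prec\mu^{-1}(u_{i})\preceq e_{i}\preceq e'_{p}$ supplied by the reducibility of $\gamma$ at $i$.

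For (L2), the key observation is that the reducedness of $\gamma$ forbids $j$ from being in the image of $\varphi$: an equality $\varphi(l)=j$ with $l<k$ would give $u_{l}=u'_{j}$ and $e_{l+1}\preceq e'_{j+1}\prec\mu^{-1}(u'_{j})=\mu^{-1}(u_{l})$, which would make $\gamma$ reducible at $l$. Hence $j\notin\Ima\varphi$ and one can define $\hat\varphi:\{0,\ldots,k\}\to\{0,\ldots,n\}$ by $\hat\varphi(l)=\varphi(l)$ if $\varphi(l)<j$ and $\hat\varphi(l)=\varphi(l)-1$ if $\varphi(l)>j$. The face conditions then transport routinely by a case analysis according as the new range $\bigl(\hat\varphi(l-1),\hat\varphi(l)\bigr]$ lies entirely below $j$, entirely above $j$, or straddles $j$. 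The main technical obstacle is the careful bookkeeping at the endpoints of the embedding function and the treatment of the terminal critical cells $c=e_{m+1}$ and $c'=e'_{n+1}$ in the face conditions; the borderline cases $i=k$ in (L1) and $j=n$ in (L2) (where $\varphi(k)=n+1$ must shift to $n$) require particular attention but are resolved by the same face-chain reasoning.
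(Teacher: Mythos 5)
Your proposal is correct but takes a genuinely different route from the paper. The paper argues in a single pass: it shows the embedding function $\varphi$ for $\gamma\preceq\delta$ restricts to a map between the complements of the reducible intervals of the two paths, then re-indexes via order-preserving bijections to obtain the embedding function for $r(\gamma)\preceq r(\delta)$. You instead decompose into two single-step reduction lemmas (L1), (L2) and iterate. Your (L2) observation that $j\notin\Ima\varphi$ once $\gamma$ is reduced is exactly the contrapositive of the paper's central step (reducibility of $\delta$ at $\varphi(i)$ forces reducibility of $\gamma$ at $i$), while your (L1), with the face-chain $e_{i+1}\prec\mu^{-1}(u_i)\preceq e_i\preceq e'_p$, makes explicit the merged-position verification of condition (4) in Definition \ref{subpath}, a check the paper asserts ``follows immediately'' without spelling it out. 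Your step-by-step scheme also handles the cascading of reducible indices automatically, whereas the paper's ``in other words'' silently identifies membership in a reducible interval with being reducible at that index, which is a subtler equivalence once reductions cascade. One minor point to make explicit: iterating (L1) and (L2) lands exactly on $r(\gamma)$ and $r(\gamma')$ provided you follow the removal order built into the definition of $r$, so no separate confluence argument is needed.
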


\begin{proof}
 Suppose $\gamma \preceq \delta$ in $\FP(\mu)$ for
 $\gamma=(e_{1},u_{1},\ldots,e_{m},u_{m};c)$
 and
 $\delta=(e_{1}',u_{1}',\ldots, e_{n}',u_{n}';c')$ and let
 $\varphi:\{0,\ldots,k\}\to\{0,\ldots,n+1\}$ be the embedding function
 for $\gamma\preceq\delta$.

 Let $I_1,\ldots, I_{\ell}$ and
 $J_1,\ldots,J_{\ell'}$ be the reducible intervals for $\gamma$ and
 $\delta$, respectively.  
 Let us first show that the embedding function $\varphi$ can be
 restricted to 
 \[
 \varphi : \{0,\ldots,k\}\setminus\bigcup_{i=1}^{\ell}I_i \rarrow{}
 \{0,\ldots,n+1\}\setminus\bigcup_{j=1}^{\ell'}J_j.
 \]
 In other words, we want to show that, if $\delta$ is reducible at
 $\varphi(i)$, then $\gamma$ is reducible at $i$. When $\delta$ is
 reducible at $\varphi(i)$,
 $e'_{\varphi(i)+1}\prec \mu^{-1}(u'_{\varphi(i)})$.
 By the conditions for embedding functions, on the other hand, we have
 $e_{i+1}\prec e'_{\varphi(i)+1}$ and $u'_{\varphi(i)} = u_{i}$. Thus 
 \[
  e_{i+1}\prec e'_{\varphi(i)+1}\prec \mu^{-1}(u'_{\varphi(i)}) =
 \mu^{-1}(u_{i}), 
 \]
 which implies that $\gamma$ is reducible at $i$.

 Let $k'$ be the cardinality of
 $\{1,\ldots,k\}\setminus\bigcup_{i=1}^{\ell}I_i$ and
 $n'=n-|J_1|-\ldots-|J_{\ell'}|$. 
 The order preserving bijections are denoted by
 \begin{eqnarray*}
  \psi & : & \{0,\ldots,k'\}\rarrow{}
   \{0,\ldots,k\}\setminus\bigcup_{i=1}^{\ell}I_i \\
  \theta & : & \{0,\ldots,n'+1\} \rarrow{}
   \{0,\ldots,n+1\}\setminus\bigcup_{j=1}^{\ell'}J_{j}.
 \end{eqnarray*}
 Then we have a strictly increasing function $\varphi'$ by the
 composition 
 \[
 \varphi' : \{0,\ldots,k'\} \rarrow{\psi}
 \{0,\ldots,k\}\setminus\bigcup_{i=1}^{\ell}I_i \rarrow{\varphi}
 \{0,\ldots,n+1\}\setminus\bigcup_{j=1}^{\ell'}J_j \rarrow{\theta^{-1}}
 \{0,\ldots,n'+1\}. 
 \]
 The fact that $\varphi'$ is the embedding function for
 $r(\gamma)\preceq r(\delta)$ follows immediately from the fact that
 $\varphi$ is the embedding function for $\gamma\preceq\delta$.
\end{proof}

Recall that, when we regard posets as small categories, order preserving
maps correspond to functors.
The following is an important property of the reduction map from the
view point of category theory.

\begin{proposition}
 \label{adjoint_poset}
 The composition
 $\FP(\mu)\rarrow{r}\overline{\FP}(\mu)\hookrightarrow\FP(\mu)$ is a
 descending closure operator\footnote{See Definition
 \ref{closure_operator_definition}}.  
% In particular, the reduction $r:\FP(\mu) \to \overline{\FP}(\mu)$ is
% right adjoint to the inclusion
% $i:\overline{\FP}(\mu)\hookrightarrow\FP(\mu)$.
\end{proposition}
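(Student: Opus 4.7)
The plan is to unpack the definition: a descending closure operator on a poset is a monotone endomorphism $c$ that is idempotent and satisfies $c(\gamma)\preceq\gamma$. I will verify these three properties in turn, recycling as much as possible the preceding lemmas. Writing $\iota:\overline{\FP}(\mu)\hookrightarrow\FP(\mu)$ for the inclusion and $c=\iota\circ r$, monotonicity is immediate from Lemma \ref{r_is_poset_retraction} (which says $r$ is a poset map) together with the trivial monotonicity of $\iota$. Idempotency follows from the retraction statement in the same lemma: $r(\gamma)\in\overline{\FP}(\mu)$ is already reduced, hence fixed by $r$.

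The substantive content is the descending property $r(\gamma)\preceq\gamma$. I would avoid wrestling directly with the composite structure of $r_{I_1}\circ\cdots\circ r_{I_\ell}$ and instead prove the elementary claim: whenever $\gamma$ is reducible at $i$, one has $r_i(\gamma)\preceq\gamma$. Chaining this with transitivity (Proposition \ref{partial_order_on_FP}) along the successive elementary reductions comprising $r$ then gives the result.

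To prove the elementary claim, write $\gamma=(e_1,u_1,\ldots,e_n,u_n;c)$ and define
\[
\varphi:\{0,1,\ldots,n\}\longrightarrow\{0,1,\ldots,n+1\},\qquad \varphi(j)=\begin{cases}j,&j<i,\\ j+1,&j\ge i.\end{cases}
\]
This is strictly increasing with $\varphi(0)=0$ and $\varphi(n)=n+1$. Condition (2) of Definition \ref{subpath} holds by construction, since the $j$-th pair of $r_i(\gamma)$ coincides with $(e_{\varphi(j)},u_{\varphi(j)})$. For the face condition (4), every case is trivial except $j=i$, where one needs the surviving cell at position $i$ in $r_i(\gamma)$, namely $e_{i+1}$, to be a face of both $e_i$ and $e_{i+1}$. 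The relation $e_{i+1}\preceq e_{i+1}$ is automatic; for $e_{i+1}\preceq e_i$, reducibility at $i$ gives $e_{i+1}\prec\mu^{-1}(u_i)$, and since $e_i\in\{u_i,\mu^{-1}(u_i)\}$ and $\mu^{-1}(u_i)\prec u_i$, one has $\mu^{-1}(u_i)\preceq e_i$, yielding the required inequality. The boundary case $i=n$ is analogous, using that $e_{n+1}=c$ and $c\prec\mu^{-1}(u_n)\preceq e_n$.

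The main obstacle I anticipate is purely notational: keeping the index bookkeeping straight so that the one nontrivial instance of condition (4) is correctly isolated from all the trivial ones, and making sure the elementary reductions composing $r$ can be chained together (each intermediate flow path being a valid flow path by Lemma \ref{reduction_is_flow_path}, so the next $r_{i_k}$ is defined). Once these are nailed down, the transitivity step is mechanical and the proof is complete.
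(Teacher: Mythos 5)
Your proof is correct, and it differs from the paper's mainly in how the verification of $r(\gamma)\preceq\gamma$ is organized. The paper constructs the embedding function for $r(\gamma)\preceq\gamma$ in a single stroke---it is the order-preserving bijection $\psi$ onto $\{0,\ldots,n+1\}\setminus\bigcup_i I_i$ composed with the inclusion into $\{0,\ldots,n+1\}$---and simply asserts that this satisfies Definition \ref{subpath}, leaving the nontrivial check of condition \ref{subpath_face_relation} (that $e_{\psi(j)}\preceq e_p$ for every index $p$ inside each deleted reducible interval) to the reader. Your factorization through single-step reductions $r_i$ isolates precisely the elementary instance of that check, namely $e_{i+1}\prec\mu^{-1}(u_i)\preceq e_i$, and recovers the global relation by transitivity; the composite of your stepwise embedding functions coincides with the paper's $\psi$. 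The net content is the same, but your route makes the face-condition argument explicit, which is a genuine gain in transparency. One attribution could be sharpened: Lemma \ref{reduction_is_flow_path} only guarantees that each intermediate sequence is a flow path, not that it is reducible at the position you next want to delete. The applicability of each subsequent $r_{i_k}$ in the chain really rests on the structure of reducible intervals built into the definitions of $r_I$ and $r$ (the same well-definedness the paper itself takes for granted when defining $r$), not on that lemma.
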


\begin{proof}
 Let us show that $r(\gamma) \preceq \gamma$ for any
 $\gamma \in \FP(\mu)$. 
 Let $I_1,\ldots,I_{\ell}$ be the reducible intervals in $\gamma$ and 
 \[
  \psi : \{0,1,\ldots,\ell(r(\gamma))+1\} \rarrow{}
 \{0,\ldots,n+1\}\setminus\bigcup_{i=1}^{\ell}I_i 
 \]
 the order preserving bijection. Then the composition
 \[
  r(\varphi) : \{0,1,\ldots,\ell(r(\gamma))+1\} \rarrow{\psi}
 \{0,\ldots,n+1\}\setminus\bigcup_{i=1}^{\ell}I_i \hookrightarrow
 \{0,1,\ldots,n+1\} 
 \]
 is the embedding function for $r(\gamma)\preceq\gamma$.

% The fact that $r(\gamma)=\gamma$ for $\gamma\in\overline{\FP}(f)$
% follows from the definition.

% It remains to show that there is a bijection
% \[
%  \FP(\mu)(i(\gamma),\delta) \cong \overline{\FP}(\mu)(\gamma,r(\delta))
% \]
% for $\gamma\in \overline{\FP}(\mu)$ and $\delta\in\FP(\mu)$.
% This is equivalent to saying that
% \[
%  i(\gamma)\preceq \delta \Longleftrightarrow \gamma\preceq r(\delta).
% \]
%
% Suppose $i(\gamma)\preceq\delta$. Since $r$ is an order preserving
% retraction, we have  
% \[
%  \gamma = r(i(\gamma))\preceq r(\delta).
% \]
% On the other hand, by Lemma \ref{r_is_poset_retraction},
% $\gamma\preceq r(\delta)$ implies that 
% \[
%  i(\gamma)=\gamma \preceq r(\delta)\preceq \delta.
% \]
\end{proof}

\begin{remark}
 The reduction $r(\gamma)$ is the maximal reduced flow path contained
 in $\gamma$ by Proposition \ref{adjoint_poset}. 
\end{remark}

By Corollary \ref{DR_by_closure_operator}, we obtain the following
important fact.

\begin{corollary}
 \label{reduction_is_deformation_retraction}
 $B\overline{\FP}(\mu)$ is
 a strong deformation retract of $B\FP(\mu)$. 
\end{corollary}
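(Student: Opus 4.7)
The plan is to derive this directly from Proposition \ref{adjoint_poset} by invoking the general fact, cited as Corollary \ref{DR_by_closure_operator}, that a descending closure operator on a poset produces a strong deformation retraction onto its image after passing to classifying spaces. Since Proposition \ref{adjoint_poset} has already verified that the composition $r:\FP(\mu)\to\overline{\FP}(\mu)\hookrightarrow\FP(\mu)$ is a descending closure operator, essentially nothing further needs to be checked, and the conclusion follows by citation.

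For the reader who wants to see the mechanism, I would unfold it as follows. Regard $\FP(\mu)$ and $\overline{\FP}(\mu)$ as small categories in the usual way, with a unique morphism $\gamma\to\gamma'$ whenever $\gamma\preceq\gamma'$. The statement $r(\gamma)\preceq\gamma$ for every flow path $\gamma$, established in Proposition \ref{adjoint_poset}, is precisely the data of a natural transformation
\[
\eta : \iota\circ r \Longrightarrow \mathrm{id}_{\FP(\mu)},
\]
where $\iota:\overline{\FP}(\mu)\hookrightarrow\FP(\mu)$ is the inclusion. Applying the classifying space functor $B$ and using the standard fact that $B$ sends natural transformations to homotopies yields a homotopy $B\iota\circ Br\simeq \mathrm{id}_{B\FP(\mu)}$. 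In the other direction, $r\circ\iota=\mathrm{id}_{\overline{\FP}(\mu)}$ on the nose because the reduction map fixes every reduced flow path (there are no reducible intervals to contract), so $Br\circ B\iota=\mathrm{id}_{B\overline{\FP}(\mu)}$.

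To upgrade the retraction to a \emph{strong} deformation retract, one notes that when $\gamma$ is already reduced, the natural transformation $\eta$ evaluates to the identity morphism $\gamma=\gamma$; hence the induced homotopy is constant on $B\overline{\FP}(\mu)\subseteq B\FP(\mu)$, which is exactly the rel-subspace condition. There is no real obstacle here: all of the substantive work has already been done in Lemma \ref{r_is_poset_retraction} (showing $r$ is order-preserving) and Proposition \ref{adjoint_poset} (the inequality $r(\gamma)\preceq\gamma$), and the corollary is simply a packaging of these facts through the natural-transformation-to-homotopy principle codified in Corollary \ref{DR_by_closure_operator}.
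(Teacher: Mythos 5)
Your proof is correct and follows exactly the paper's route: Proposition \ref{adjoint_poset} gives the descending closure operator, and Corollary \ref{DR_by_closure_operator} converts it into a strong deformation retraction of classifying spaces. The unfolding of the natural-transformation mechanism is a faithful elaboration of what Corollary \ref{DR_by_closure_operator} already packages, so there is nothing substantively new or missing.
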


\subsection{From Combinatorial Flows to Stable Subdivision}
\label{continuous_path}

In order to relate the combinatorial definition of flow paths to the
homotopy type of $X$, we construct a continuous flow starting from
each $x\in X$ by using a faithful discrete Morse function.
We also construct a subdivision of $X$ by using those continuous flows. 

The first step is to replace characteristic maps for cells as follows. 

\begin{proposition}
 \label{cell_structure_for_matched_pair}
 Given an acyclic partial matching $\mu$ on a finite regular CW complex
 $X$, we may choose 
 characteristic maps for cells in $X$ in such a way that they satisfy the
 following conditions:
 For each matched pair $d\prec \mu(d)=u$ with $\dim u = n$, the
 characteristic map $\varphi$ for $u$ is a
 homeomorphism of triples    
 \[
 \varphi : \left(D^{n}, S^{n-1}_{+}, S^{n-1}_{-}\right) 
 \rarrow{\cong} \left(\overline{u}, \overline{d}, \partial u\setminus
 d\right),   
 \]
 where $S^{n-1}_{+}$ and $S^{n-1}_{-}$ are the
 northern and southern hemispheres of $\partial D^{n}=S^{n-1}$,
 respectively.  
\end{proposition}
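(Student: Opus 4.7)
The plan is to construct the characteristic maps one matched pair at a time, since modifying the map for a cell $u$ has no effect on the attaching maps of other top cells (those attachments lie in lower-dimensional skeleta and do not touch the interior of $u$). Fix a matched pair $d\prec u=\mu(d)$ with $\dim u = n$ and let $\varphi_{0}:D^{n}\to \overline{u}$ be any pre-existing characteristic map. Since $X$ is regular, $\varphi_{0}$ is automatically a homeomorphism, and in particular $\varphi_{0}|_{S^{n-1}}$ is a homeomorphism $S^{n-1}\rarrow{\cong}\partial u$. The task thus reduces to finding a self-homeomorphism $\tilde h$ of $S^{n-1}$ that carries $A:=\varphi_{0}^{-1}(\overline{d})$ onto the upper hemisphere $S^{n-1}_{+}$; conjugating $\varphi_{0}$ by a suitable extension of $\tilde h$ will then finish the argument.

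To produce $\tilde h$, I would exploit that $\partial u$ inherits the structure of a regular CW sphere in which $\overline{d}$ is a subcomplex homeomorphic to $D^{n-1}$ (via the characteristic map of $d$), and in which $\partial u\setminus d$ is a closed subcomplex that is likewise homeomorphic to $D^{n-1}$. The latter rests on the standard fact that in a regular CW decomposition of $S^{n-1}$, the complement of a single open top-dimensional cell is a topological disk. Thus $\partial u = \overline{d}\cup (\partial u\setminus d)$ expresses $S^{n-1}$ as the union of two closed $(n-1)$-disks glued along the common $(n-2)$-sphere $\partial d$, exactly parallel to the standard decomposition $S^{n-1}=S^{n-1}_{+}\cup S^{n-1}_{-}$ glued along $S^{n-2}$. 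One assembles $\tilde h$ by first choosing any homeomorphism $A\to S^{n-1}_{+}$, restricting to a homeomorphism $\partial A\to S^{n-2}$, and then extending across the complementary disk by the Alexander trick.

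Next, I would promote $\tilde h$ to a self-homeomorphism $\bar h$ of $D^{n}$ by radial coning, $\bar h(tx)=t\,\tilde h(x)$ for $x\in S^{n-1}$ and $t\in[0,1]$, and set $\varphi:=\varphi_{0}\circ \bar h^{-1}$. A direct verification then gives $\varphi(S^{n-1}_{+})=\overline{d}$ and $\varphi(S^{n-1}_{-})=\partial u\setminus d$, so $\varphi$ is the desired characteristic map of triples. Carrying out this replacement for every matched pair in $\mu$, while leaving the characteristic maps of critical cells untouched, produces the full system of characteristic maps required by the statement.

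The main obstacle is the tameness step in the middle paragraph: one must guarantee that $\overline{d}$ and $\partial u\setminus d$ are both disks, and that their common boundary $\partial d$ is bicollared in $\partial u\cong S^{n-1}$, so that a homeomorphism on this $(n-2)$-sphere really does extend across both sides. Regularity of the CW structure is exactly what makes this step routine, since it lets the two hemispheres be handled by cell-by-cell (Alexander-type) extension rather than by invoking nontrivial Schoenflies-type theorems.
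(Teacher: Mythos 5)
Your overall architecture matches the paper's: reparametrize one matched pair at a time, view $\partial u \cong S^{n-1}$ as a union of $\overline{d}$ and $\partial u\setminus d$ glued along $\partial d$, build a homeomorphism of $S^{n-1}$ sending this decomposition to the standard hemisphere decomposition, then cone radially to $D^n$. That part is fine, and the observation that modifying $\varphi_u$ affects nothing else is a legitimate simplification of the paper's inductive bookkeeping.

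The gap is in your final paragraph, where you claim that ``regularity of the CW structure is exactly what makes this step routine, since it lets the two hemispheres be handled by cell-by-cell (Alexander-type) extension rather than by invoking nontrivial Schoenflies-type theorems.'' This is precisely the part of the argument that is \emph{not} routine, and the paper devotes Corollary \ref{generalized_Schoenflies} to it. The ``standard fact'' you cite --- that the closure of the complement of an open top cell in a regular CW sphere is a disk --- is itself a Schoenflies-type statement. Regularity gives you that $\overline{d}$ is an embedded closed $(n-1)$-disk with boundary $\partial d\cong S^{n-2}$, but it does not, on its own, tell you that the closure of the complement is a disk: that is a theorem about embeddings of spheres in spheres, and without a flatness/tameness hypothesis it fails (Alexander horned sphere). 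Nor does a ``cell-by-cell Alexander-type extension'' rescue it: that would amount to a shellability or collapsibility argument for $\partial u\setminus d$, and there exist regular CW (even simplicial) spheres in which the complement of a top cell is not collapsible, so no straightforward induction over cells is available. The paper instead passes to the PL category (barycentric subdivision makes the CW structure PL), notes that a PL codimension-one embedded sphere is locally flat, and then applies Brown's generalized Schoenflies theorem to conclude that both complementary pieces are topological disks. You should replace your dismissal of this step with an explicit appeal to such a theorem; otherwise the claim that $\partial u\setminus d\cong D^{n-1}$ and that $\partial d$ is bicollared is unsupported, and the radial coning cannot get started.
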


We need a version of generalized Sch{\"o}nflies theorem to prove this.
We first need to recall the notion of locally flat embeddings.

\begin{definition}
 Let $M$ and $N$ be topological manifolds without boundaries of
 dimension $m$ and $n$, respectively.
 An embedding of $M$ into $N$ is called \emph{locally flat} at $x\in M$,
 if there exists a neighborhood $U$of $x$ in $N$ such that $(U,U\cap M)$
 is homeomorphic to $(\R^n,\R^m)$.
\end{definition}

The following theorem is due to M.~Brown \cite{M.Brown60}.

\begin{theorem}
 Let $f:S^{n-1}\hookrightarrow S^n$ be a locally flat embedding.
 Then there exists a homeomorphism $\varphi:S^n\to S^n$ with
 \[
  \varphi(f(S^{n-1}))=S^{n-1}=\set{(x_0,\ldots,x_n)\in S^n}{x_n=0}.
 \]
\end{theorem}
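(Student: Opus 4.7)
The plan is to prove Brown's generalized Schönflies theorem in three stages: a bicollaring step that upgrades local flatness to a global product neighborhood, a Schönflies step for each complementary region via a cellularity/shrinking argument, and finally a gluing step to produce the global self-homeomorphism of $S^n$.

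First I would establish bicollaring: a locally flat embedding $f:S^{n-1}\hookrightarrow S^n$ extends to an embedding $F:S^{n-1}\times(-1,1)\hookrightarrow S^n$ with $F|_{S^{n-1}\times\{0\}}=f$. Local flatness provides bicollars over small neighborhoods of each point of $f(S^{n-1})$; compactness of $S^{n-1}$ reduces to a finite cover, and one assembles the pieces into a single global collar by standard Mazur-style patching, controlling overlaps by reparametrizing the interval factor via a partition of unity. This is the essential input that separates this theorem from the wild situation.

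Second, by Jordan--Brouwer separation, $S^n\setminus f(S^{n-1})$ has two components $U_\pm$ with closures $C_\pm$; I would produce homeomorphisms of pairs $h_\pm:(D^n,S^{n-1})\rarrow{\cong}(C_\pm,f(S^{n-1}))$. Focusing on $C_+$, write $C_+=A\cup K_+$ where $A=F(S^{n-1}\times[0,\tfrac{1}{2}])$ is a collar and $K_+=\overline{C_+\setminus A}$ is the ``core.'' The key step is to show that $K_+$ is \emph{cellular} in $S^n$: iterating the bicollaring argument on progressively smaller neighborhoods in $K_+$ produces a decreasing sequence of closed topological $n$-cells whose intersection is exactly $K_+$. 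Brown's shrinking criterion (equivalently, a direct Moore-type upper-semicontinuous decomposition argument) then yields a continuous surjection $q:S^n\to S^n$ that is a homeomorphism off $K_+$ and collapses $K_+$ to a single point. After post-composing with $q$, the region $C_+$ acquires an honest collar on $f(S^{n-1})$ with a single point as its core, which is visibly a cone on $f(S^{n-1})$ and hence homeomorphic to $D^n$.

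Finally I would glue. After adjusting $h_\pm$ by a self-homeomorphism of the boundary sphere, one can arrange that both restrict to the same parametrization of $f(S^{n-1})$; then $h_+^{-1}$ and $h_-^{-1}$ assemble across $f(S^{n-1})$ into a homeomorphism $S^n=D^n_+\cup_{S^{n-1}}D^n_-\rarrow{\cong}S^n$ that carries the standard equator to $f(S^{n-1})$. Inverting gives the required $\varphi$. The main obstacle is the cellularity claim for $K_+$: local flatness is precisely the hypothesis that rules out pathological embeddings such as the Alexander horned sphere, and the real technical work lies in leveraging the bicollar iteratively to produce the nested $n$-cell structure and in invoking the shrinking theorem. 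The gluing step, by contrast, is formal once each side is known to be a disk.
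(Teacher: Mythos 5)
The paper does not actually prove this statement; it simply cites M.~Brown's 1960 paper and uses the result as a black box, so any proof you write is going well beyond what the paper does. Your three-stage outline (bicollar, show each complementary region is a disk, glue) is the correct general contour of Brown's argument, and you correctly identify that local flatness $\Rightarrow$ bicollared is a separate collaring theorem that must be established first. But the crucial middle step has a real gap.

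Your cellularity argument is circular. You claim that ``iterating the bicollaring argument on progressively smaller neighborhoods in $K_+$ produces a decreasing sequence of closed topological $n$-cells whose intersection is exactly $K_+$.'' There is nothing to iterate: $K_+$ is merely a compact set with no manifold structure, so the bicollaring theorem says nothing inside it. The only nested compacta shrinking to $K_+$ that the collar hands you are the sets $e(S^{n-1}\times[t,1])\cup K_+$, and knowing \emph{these} are $n$-cells is precisely equivalent to knowing $C_+$ is an $n$-cell --- the thing you are trying to prove. Brown's actual route is different: he first forms the quotient $\pi\colon S^n\to S^n/\{K_+,K_-\}\cong S^n$ (easy, since the quotient is visibly the suspension of $S^{n-1}$) and then runs a shrinking argument on the monotone map $\pi$ to show that the preimage of a hemisphere is an $n$-cell; cellularity of $K_\pm$ is a consequence of this, not an input obtained from the collar. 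Relatedly, in your last sentence of stage two you slide from ``$q(C_+)$ is a cone, hence a disk'' to ``$C_+$ is a disk,'' but $q|_{C_+}$ collapses $K_+$ and so is not a homeomorphism; showing $C_+\cong C_+/K_+$ is exactly the nontrivial shrinking content and needs its own argument, which your sketch does not supply.
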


 Any piecewise-linear (PL) embedding of a PL $m$-manifold into a PL
 $n$-manifold is known to be locally flat, if $n-m\neq 2$. See Theorem
 1.7.2 in Rushing's book \cite{Rushing73}, for example. 
 By the Jordan-Brouwer separation theorem, for any
 embedding $S^{n-1}\hookrightarrow S^n$, the complement of $S^{n-1}$ has
 two connected components. In particular, we obtain the following fact. 

\begin{corollary}
 \label{generalized_Schoenflies}
 Let $\Sigma^{n-1}$ be a PL $(n-1)$-sphere embedded in a PL $n$-sphere
 $\Sigma^n$. 
 Let us denote the closures of the connected components of
 $\Sigma^n\setminus\Sigma^{n-1}$ by $\Delta^n_{+}$ and $\Delta^n_{-}$. 
 Then both $\Delta^n_{+}$ and $\Delta^n_{-}$ are homeomorphic to an
 $n$-disk $D^n$.
\end{corollary}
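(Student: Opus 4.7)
The plan is to combine the two results cited immediately before the statement. First I would fix, via a PL homeomorphism, an identification $\Sigma^n \cong S^n$, so that $\Sigma^{n-1}$ becomes a PL $(n-1)$-sphere embedded in the standard $n$-sphere. Since the codimension of this embedding is $1$, and in particular is not $2$, Rushing's Theorem 1.7.2 applies to guarantee that the embedding is locally flat at every point.

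Next I would invoke M.\ Brown's theorem (stated in the excerpt just above) to produce a self-homeomorphism $\varphi : S^n \to S^n$ carrying the image of $\Sigma^{n-1}$ onto the standard equator
\[
S^{n-1} = \set{(x_0, \ldots, x_n) \in S^n}{x_n = 0}.
\]
By the Jordan--Brouwer separation theorem, $S^n \setminus \Sigma^{n-1}$ has exactly two connected components; the homeomorphism $\varphi$ must send these homeomorphically onto the two open hemispheres $\set{x \in S^n}{x_n > 0}$ and $\set{x \in S^n}{x_n < 0}$.

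Finally I would observe that the closure of each open hemisphere of $S^n$ is the standard closed hemisphere, and these are each homeomorphic to $D^n$ via the vertical projection $(x_0,\ldots,x_n)\mapsto (x_0,\ldots,x_{n-1})$. Since $\varphi$ is a homeomorphism of $S^n$, it carries the closures $\Delta^n_{+}$ and $\Delta^n_{-}$ onto these two closed hemispheres, yielding the desired homeomorphisms $\Delta^n_{\pm} \cong D^n$.

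The step I expect to be the main obstacle is the verification that the PL hypothesis really does give local flatness in the form needed to apply Brown's theorem; once that is in hand the rest is a direct assembly of the two cited theorems together with the Jordan--Brouwer separation theorem. Since the excerpt explicitly cites Rushing's result for codimension $\neq 2$ and here the codimension is $1$, this step is essentially free, so the corollary reduces to a clean two-step application of the cited theorems.
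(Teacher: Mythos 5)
Your proposal is correct and follows exactly the route the paper intends: the paper presents the corollary as an immediate consequence of Rushing's local flatness result in codimension $\neq 2$, Brown's straightening theorem, and Jordan--Brouwer separation, and your proof simply assembles those three pieces. The only small caveat is that the identification $\Sigma^n \cong S^n$ need not (and should not be claimed to) be a PL homeomorphism — it suffices to use a topological homeomorphism, which always exists since a PL $n$-sphere is in particular a topological $n$-sphere, and local flatness is a topological invariant so Brown's theorem still applies after transporting the embedding; insisting on a PL identification would needlessly invoke the generalized Poincaré conjecture in the PL category.
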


\begin{remark}
 Note that this Corollary is not the PL Sch{\"o}nflies theorem, which
 implies that $\Delta^n_+$ and $\Delta^n_{-}$ are PL disks.
\end{remark}

\begin{proof}[Proof of Proposition \ref{cell_structure_for_matched_pair}]
 We modify characteristic maps by induction on dimensions of
 cells. Suppose we have succeeded in modifying characteristic maps for
 cells of dimension less than $n$.
 Let $u$ be an $n$-cell and $\varphi:D^n\to \overline{u}$ the
 original characteristic map for $u$.
 Since $X$ is assumed to be regular, the restriction
 \[
  \varphi|_{S^{n-1}} : S^{n-1} \rarrow{} \partial u
 \]
 is a homeomorphism.
 The regularity of $X$ also implies that, $\overline{u}$ has a structure
 of a PL $n$-disk containing $\partial u$ as a PL $(n-1)$-sphere. This can be
 done by taking the barycentric subdivision of $X$ twice, if necessary.

 The boundary $\partial u$ contains a PL $(n-2)$-sphere $\partial d$.
 By Corollary \ref{generalized_Schoenflies}, there exist homeomorphisms
 \begin{eqnarray*}
  \varphi_{+} & : & D^{n-1} \rarrow{} \overline{d} \\
  \varphi_{-} & : & D^{n-1} \rarrow{} \partial u\setminus d.
 \end{eqnarray*}
 The composition
 \[
  S^{n-2}=\partial D^{n-1} \rarrow{\varphi_{+}} \partial d
 \rarrow{\varphi_{-}^{-1}} S^{n-2}
 \]
 can be extended radially to a homeomorphism $\psi : D^{n-1}\to D^{n-1}$.
 The composition
 \[
 \tilde{\varphi}_{-} : D^{n-1} \rarrow{\psi} D^{n-1}
 \rarrow{\varphi_{-}} \partial u\setminus d
 \]
 is a homeomorphism which agrees with $\varphi_{+}$ on $S^{n-2}$. Thus
 we obtain a homeomorphism
 $\varphi_{\partial} : S^{n-1} \to \partial u$ by gluing $\varphi_{+}$ and
 $\tilde{\varphi}_{-}$ along $S^{n-2}$. 

 The composition 
 \[
 S^{n-1} \rarrow{\varphi_{\partial}} \partial e \rarrow{\varphi^{-1}}
 S^{n-1} 
 \]
 is a homeomorphism. Extend this to a homeomorphism
 $\tilde{\varphi} : D^n \to D^n$ radially. Then the composition 
 \[
 \varphi' : D^n \rarrow{\tilde{\varphi}} D^n \rarrow{\varphi}
 \overline{u} 
 \]
 is a characteristic map for $u$ which satisfies the required condition.
\end{proof}

\begin{definition}
 \label{retraction}
 Let $d\prec\mu(d)=u$ and $\varphi$ be as above. The complement 
 $\partial u\setminus d$ of $d$ in $\partial u\cong S^{n-1}$ is denoted 
 by $d^{c}$. 
 
 The retraction $r : D^{n} \to S^{n-1}_{-}$
 given by $(\bm{x},x_n) \mapsto \left(\bm{x},-\sqrt{1-\|\bm{x}\|^2}\right)$
 induces a retraction of $\overline{u}$ onto $d^{c}$ 
 through $\varphi$, which is denoted by $R_{u} : \overline{u}\to d^c$.
 The composition
 $\overline{d}\hookrightarrow \overline{u}\rarrow{R_{u}} d^{c}$ is
 denoted by $R_d$.
\end{definition}

\begin{remark}
 \label{retraction_on_disk}
 We have the following commutative diagram
 \[
 \begin{diagram}
  \node{D^{n-1}\times[-1,1]} \arrow{e,t}{\psi} \arrow{s,l}{\pr_1}
  \node{D^n} \arrow{s,r}{r} \arrow{e,t}{\varphi}
  \node{\overline{u}} \arrow{s,r}{R_{u}} \\ 
  \node{D^{n-1}} \arrow{e,b}{\psi_{-}} \node{S^{n-1}_{-}}
  \arrow{e,b}{\varphi|_{S^{n-1}_{-}}} \node{d^{c},} 
 \end{diagram}
 \]
 where $\psi$ and $\psi_{-}$ are maps defined by
 \begin{eqnarray*}
  \psi(\bm{x},t) & = & \left(\bm{x}, t\sqrt{1-\|\bm{x}\|^2}\right) \\
  \psi_{-}(\bm{x}) & = & \left(\bm{x},-\sqrt{1-\|\bm{x}\|^2}\right).
 \end{eqnarray*}
 
 Note that
 $\psi|_{D^{n-1}\times\partial[0,1]\cup Int D^{n-1}\times[0,1]}$
 is a homeomorphism onto $D^{n}\setminus S^{n-2}$.
 It should be also noted that $R_{d}(A)=A$ if and only if
 $R_{d}^{-1}(A)=A$ if and only if $A\subset\partial d$.
 These facts will be used in \S\ref{stable_subdivision} to construct a
 subdivision $\Sd_{\mu}(X)$ of $X$.
\end{remark}

When $d$ is not a face of any other cell, the deformation retraction
$R_d$ can be extended to $X\to X\setminus (d\cup\mu(d))$. Such a
deformation retraction is called an \emph{elementary collapse} in simple
homotopy theory \cite{M.CohenSimpleHomotopy}. Thus matched pairs can be
regarded as generalizations of elementary collapses.

In the rest of this paper, we fix a deformation
retraction $R_{u}: \overline{u}\to d^c=\partial u\setminus d$ for each
matched pair $d\prec_1 u$.
Now we are ready to construct continuous flows on $X$.
Let
\[
 L:[-1,1]\times [0,1] \rarrow{} [-1,1]
\]
be the linear flow on $[-1,1]$ which carries $x\in [-1,1]$ to $-1$,
i.e.\ $L(x,t)=(1-t)x-t$. By extending 
this flow, we obtain a flow on $D^{n-1}\times[-1,1]$ and hence a flow
$L_{d,u}:\overline{u}\times[0,1]\to \overline{u}$ which makes the following
diagram commutative
\[
 \begin{diagram}
  \node{D^{n-1}\times[-1,1]\times[0,1]} \arrow{e,t}{\psi\times 1}
  \arrow{s,l}{1\times L} 
  \node{D^n\times[0,1]} \arrow{s} \arrow{e,t}{\varphi\times 1}
  \node{\overline{u}\times[0,1]} \arrow{s,r}{L_{d,u}} \\ 
  \node{D^{n-1}\times[-1,1]} \arrow{e,b}{\psi} \node{D^{n}}
  \arrow{e,b}{\varphi} \node{\overline{u}.} 
 \end{diagram}
\]

\begin{definition}
 \label{continuous_flow_definition}
 Suppose $X$ is finite.
 For each $x\in X$, we assign a nonnegative number $h_{x}$ and define a 
 continuous path 
 $L_{x}:[0,h_{x}]\to X$ with the following
 properties:
\begin{enumerate}
 \item $L_{x}(0)=x$.
 \item When $u\cap L_{x}([0,h_{x}])\neq\emptyset$ for $u\in U(\mu)$,
       the restriction of $L_{x}$ to $u$ coincides with a
       restriction of $L_{\mu^{-1}(u),u}$ with a certain parameter
       shift. 
\end{enumerate}

 Choose a faithful discrete Morse function $f$ for $\mu$. 
 We proceed by induction on $f(x)$. (Recall that we regard $f$ as an
 integer-valued locally constant function on $X$.)

 When $f(x)$ is the minimum value of $f$, the path
 $L_{x}$ is defined to be the constant path at $x$ with length $0$.

 Suppose we have defined paths $L_{y}$ for all points $y$ with
 $f(y)<f(x)$.
 Let $e$ be the unique cell containing $x$ in its interior.
 If $e\in \Cr(\mu)$, we also define $L_{x}$ to be the constant path
 at $x$ with length $0$. 
 When $e\in U(\mu)$, there exist $y\in \mu^{-1}(e)$ and $s\in [0,1]$
 with $x=L_{\mu^{-1}(e),e}(y,s)$. Note that $y$ and $s$ are uniquely
 determined by $x$.
 Let $e'$ be the unique cell containing
 $x'=L_{\mu^{-1}(e),e}(y,1)$ in its interior. Then $e'\prec e$ but $e'$ is
 not matched with $e$, since $e'\neq \mu^{-1}(e)$. Thus $f(e')<f(e)$,
 since $f$ is faithful.
 By the inductive hypothesis, there exist a number $h_{x'}$ and a
 continuous path 
 $L_{x'}: [0,h_{x'}]\to X$ satisfying the required conditions. Define
 $h_{x}=h_{x'}+1-s$ and 
 \[
 L_{x}(t) =
 \begin{cases}
  L_{x'}(t-1+s), & \text{ if } t\in [1-s,h_{x}] \\
  L_{\mu^{-1}(e),e}(y,t+s), & \text{ if } [0,1-s].
 \end{cases}
 \]
 When $e\in D(\mu)$, let $e'$ be the unique cell containing
 $x'=L_{e,\mu(e)}(x,1)$ in its interior. Then $f(e')<f(e)$ and the
 inductive hypothesis applies to $x'$. Now define $h_{x}=h_{x'}+1$ and 
 \[
 L_{x}(t)=
 \begin{cases}
  L_{x'}(t-1), & \text{ if } t\in [1,h_{x}] \\
  L_{e,\mu(e)}(x,t), & \text{ if } [0,1].
 \end{cases}
 \]
 And we obtain a continuous path
 \[
  L_{x} : [0,h_{x}] \rarrow{} X
 \]
 satisfying the desired conditions.
 The number $h_{x}$ defined by the above procedure is called the
 \emph{height of $x$ with respect to $f$}. And the path $L_{x}$ is
 called the \emph{flow associated to $f$ with initial point $x$}.
\end{definition}

\begin{remark}
 The finiteness of $X$ is used to choose a faithful discrete Morse
 function $f$ for an acyclic partial matching. If we start from a
 faithful discrete Morse function, the finiteness assumption is not
 necessary. 
\end{remark}

%\begin{remark}
% Nicolaescu \cite{math/0702424} developed theory of flows on simplicial
% complexes based on combinatorial Morse theory. 
%\end{remark}

The continuous path constructed above is closely
related to flow paths.

\begin{lemma}
 \label{flow_path_from_x}
 For each $x\in X$, there exists a unique reduced flow path
 $\gamma_{x}$ which contains $x$ in its initial cell $\iota(\gamma_{x})$
 and contains the image of $L_{x}$ in the union of cells in
 $\gamma_{x}$.

 Conversely, for each reduced flow path
 $\gamma=(e_1,u_1,\ldots,e_n,u_n;c)$, there exists $x\in e_1$ such that
 $\gamma_{x}=\gamma$. 
\end{lemma}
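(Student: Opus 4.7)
The plan is to prove both parts by induction on $f(e)$, where $e$ is the unique open cell containing $x$; since $f$ is faithful (Proposition~\ref{faithful_discrete_Morse_function}), this is well-defined. The argument shadows the recursive construction of $L_x$ in Definition~\ref{continuous_flow_definition}.

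For existence of $\gamma_x$, at the base case $e\in \Cr(\mu)$ the flow is constant at $x$ and $\gamma_x=(e)$ is trivially reduced. In the inductive step I set $e_1 = e$ and take $u_1=e$ if $e\in U(\mu)$ or $u_1=\mu(e)$ if $e\in D(\mu)$, follow the flow across $\overline{u_1}$ to its exit point $x'=L_{d_1,u_1}(y,1)$ (with $d_1=\mu^{-1}(u_1)$) in some cell $e'$ of strictly smaller $f$-value, invoke the inductive hypothesis to obtain the reduced flow path $\gamma_{x'}$, and prepend the pair $(e_1,u_1)$. The key ingredient for reducedness at the new junction is the explicit parameterization in Remark~\ref{retraction_on_disk}: since $\psi$ sends $\Int(D^{n-1})\times\{-1\}$ homeomorphically into $S^{n-1}_{-}\setminus S^{n-2}$, whenever $x$ lies in the open cell $u_1$ or in $d_1$, the exit point $x'$ lies in $d_1^c\setminus \partial d_1$, which forces $e'\not\preceq d_1$, and in particular $e'\not\prec \mu^{-1}(u_1)$, exactly what is needed to extend reducedness.

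For uniqueness, given any candidate reduced flow path $\gamma^*=(e_1^*,u_1^*,\ldots;c^*)$ satisfying the two conditions, $e_1^*$ is forced to equal $e$ because it must contain $x$; hence $u_1^*=u_1$. The exit cell $e'\ni x'$ is then among the cells of $\gamma^*$ past this initial pair, and I would use the strictly decreasing sequence of $f$-values along $\gamma^*$ from Remark~\ref{description_of_flow_path} together with the reducedness hypothesis to show $e'=e_2^*$. The inductive hypothesis applied at $x'$ then identifies the tail of $\gamma^*$ with $\gamma_{x'}$, yielding $\gamma^*=\gamma_x$. For the converse, I would induct on $\ell(\gamma)$: for $\ell(\gamma)\ge1$ apply the induction hypothesis to the tail $(e_2,u_2,\ldots;c)$ (itself a reduced flow path) to obtain $x'\in e_2$ realizing it, then use reducedness $e_2\not\prec d_1$ together with the surjectivity of $R_{d_1}$ onto $d_1^c\setminus \partial d_1$ from $d_1$ itself (again Remark~\ref{retraction_on_disk}) to pull $x'$ back uniquely to a point of $d_1$; if $e_1=u_1$, lift further to $x=L_{d_1,u_1}(y,s)\in u_1$ for any $s\in(0,1)$. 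Checking the recursive definition of $L_x$ then yields $\gamma_x=\gamma$.

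The main obstacle will be the uniqueness assertion. The delicate point is that the containment condition ``image of $L_x$ lies in the union of cells of $\gamma_x$'' is one-directional, so \emph{a priori} one cannot immediately exclude extra ``padding'' pairs in $\gamma^*$ whose cells happen not to be touched by $L_x$. Ruling these out requires a careful comparison of $f$-values across $\gamma^*$ using the strictly decreasing sequence of Remark~\ref{description_of_flow_path} combined with the reducedness hypothesis, showing that any attempted padding forces a cell whose $f$-value is inconsistent with the position it is inserted at, and thereby pinning down each entry of $\gamma^*$ to agree with the corresponding entry of $\gamma_x$.
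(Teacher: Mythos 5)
Your existence argument and the converse direction are essentially the paper's own: build $\gamma_x$ by tracking the recursive construction of $L_x$, with the reducedness of the new prefix coming from the fact that the exit point lands in $d_1^c \setminus \partial d_1$; and for the converse, pull back through $R_{u_1}$ starting from a point realizing the tail. Both of these are fine.

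The uniqueness argument, however, has a genuine gap, and the difficulty you flag is not merely delicate but fatal to your strategy. You read the second condition as a one-directional containment, $\Ima(L_x) \subset \bigcup \{\text{cells of } \gamma_x\}$, and you propose to rule out ``padding'' pairs in a candidate $\gamma^*$ by comparing $f$-values along $\gamma^*$. But under this reading uniqueness is simply false. In the $2$-simplex of Example~\ref{subpath_example}, take $\gamma = (d_1,u_1,d_3,u_3;c)$ and $\delta = (d_1,u_1,u_2,u_2,d_3,u_3;c)$: both are reduced flow paths, the cells of $\gamma$ form a proper subset of the cells of $\delta$, and there is a point $x \in d_1$ whose flow $L_x$ exits $u_1$ directly at the vertex $d_3$, so that $L_x$ traverses exactly the cells $d_1, u_1, d_3, u_3, c$. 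For that $x$ both $\gamma$ and $\delta$ satisfy ``$x \in \iota(\gamma^*)$'' and ``$\Ima(L_x) \subset \bigcup \{\text{cells of } \gamma^*\}$''. So no comparison of $f$-values along $\gamma^*$ can rescue the claim $e' = e_2^*$ you propose to establish; with $\gamma^* = \delta$ we have $e' = d_3 = e_3^{\delta}$, not $e_2^{\delta} = u_2$.

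What the paper actually means (and uses, see the proof of Lemma~\ref{e_gamma_by_L}: ``$\gamma$ consists of cells which intersect with the image of $L_x$'') is that the cells of $\gamma_x$ are \emph{exactly} those that $L_x$ passes through, not merely a superset. With that reading, uniqueness is immediate, because the recursive construction of $L_x$ in Definition~\ref{continuous_flow_definition} produces a uniquely determined sequence of cells, and the flow path is read off from that sequence; no $f$-value argument is needed. You should replace the attempted padding-exclusion argument with this observation, and note that the converse direction you wrote then also shows the surjectivity of $x \mapsto \gamma_x$ onto $\overline{\FP}(\mu)$.
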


\begin{proof}
 Let $e_1,\ldots,e_n$ be the sequence of cells appeared in the
 construction of $L_{x}$. Depending on $e_i\in D(\mu)$ or
 $e_i\in U(\mu)$, define $u_i=\mu(e_i)$ or $u_i=e_i$.
 By the construction, $L_x$ lands in a critical cell, say $c$. 
 Then the sequence  $\gamma_{x}=(e_1,u_1,\ldots,e_n,u_n;c)$ is a reduced
 flow path. The uniqueness follows from the uniqueness of the choices of
 cells $e_1,\ldots,e_n$.

 Conversely, let $\gamma=(e_1,u_1,\ldots,e_n,u_n;c)$ be a reduced flow
 path. Choose a point $z\in c$. Starting with $z$, let us reverse the
 construction of flows in Definition \ref{continuous_flow_definition} to
 obtain $x\in X$ with $\gamma_{x}=\gamma$ and $L_{x}(h_x)=z$.
 We proceed by induction on the length $n$ of $\gamma$.
 When $n=0$, $\gamma=(c)$ and there is nothing to prove.

 Suppose we have found such points for reduced flow paths of lengths
 $\le n-1$. Let $\gamma'=(e_2,u_2,\ldots,e_n,u_n;c)$ and apply the
 inductive hypothesis. Then there exists $x'\in e_2$ such that the image 
 of $L_{x'}$ is contained in the union of cells in $\gamma'$.
 Since $\gamma$ is reduced, 
 $e_{2}\subset \partial u_1\setminus \overline{\mu^{-1}(u_1)}$. When
 $e_1=\mu^{-1}(u_1)$, there exists $x\in e_1$ such that
 $L_{e_1,u_1}(x,1)=x'$. When $e_1=u_1$, there exist $x\in e_1$
 and $s\in (0,1)$ such that $x'=L_{\mu^{-1}(u_1),u_1}(x,s)$.
 By construction, cells containing the image of $L_{x}$ form $\gamma$ and
 thus $\gamma=\gamma_{x}$.
\end{proof}

This lemma allows us to define a surjective map
\[
 \pi_{\mu} : X \rarrow{} \overline{\FP}(\mu)
\]
by $\pi_{\mu}(x)=\gamma_{x}$. Hence we have a stratification on $X$
\begin{equation}
 X= \bigcup_{\gamma \in \overline{\FP}(\mu)} e_{\gamma},
  \label{decomposition_by_flows}
\end{equation}
where $e_{\gamma}=\pi_{\mu}^{-1}(\gamma)$, for a reduced flow path
$\gamma$. 

Note that we have a commutative diagram
\begin{equation}
 \begin{diagram}
  \node{} \node{X} \arrow{sw,t}{\pi_{\mu}} \arrow{se,t}{\pi_{X}} \node{} \\
  \node{\overline{\FP}(\mu)} \arrow[2]{e,b}{\iota} \node{} \node{F(X),}
 \end{diagram}
 \label{pi_f_is_subdivision}
\end{equation}
where $\pi_{X}$ is the defining map for the cell decomposition of $X$,
i.e.\ $x\in X$ is mapped to the unique cell containing $x$.
Thus 
\[
 e_{\lambda} =
 \bigcup_{\iota(\gamma)=e_{\lambda}} e_{\gamma} 
\]
for $e_{\lambda}\in F(X)$.
% and (\ref{decomposition_by_flows}) is a
% subdivision of the cell decomposition of $X$.

The following description of $e_{\gamma}$ is useful.

\begin{lemma}
 \label{e_gamma_by_L}
 For a reduced flow path $\gamma=(e_1,u_1,\ldots,e_n,u_n;c)$, we have
 \[
  e_{\gamma} = \set{x\in e_1}{L_{x}(h_x)\in c}.
 \]
 \end{lemma}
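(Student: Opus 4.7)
My plan is to verify the set equality by establishing the two inclusions separately, relying on the definition $e_\gamma = \pi_\mu^{-1}(\gamma)$, the characterization of $\gamma_x$ from Lemma \ref{flow_path_from_x}, and the inductive construction of $L_x$ in Definition \ref{continuous_flow_definition}.

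For the inclusion $(\subseteq)$, suppose $x \in e_\gamma$, so $\gamma_x = \gamma$. Lemma \ref{flow_path_from_x} places $x$ in $\iota(\gamma_x) = e_1$. Moreover, the inductive construction of $L_x$ guarantees that the flow terminates inside some critical cell, and this cell must be the target $\tau(\gamma_x) = c$, so $L_x(h_x) \in c$.

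For the reverse inclusion $(\supseteq)$, fix $x \in e_1$ with $L_x(h_x) \in c$. I would proceed by induction on $n = \ell(\gamma)$. The base case $n = 0$ is immediate: $\gamma = (c)$, so $e_1 = c$ is critical, $L_x$ is constant at $x$, and $\gamma_x = (c) = \gamma$. For the inductive step, the first phase of $L_x$ pushes $x$ through $u_1$ (where $u_1 = e_1$ if $e_1 \in U(\mu)$, or $u_1 = \mu(e_1)$ if $e_1 \in D(\mu)$) and lands at a point $x' \in \partial u_1 \setminus \mu^{-1}(u_1)$ via the retraction $R_{u_1}$. The remainder of $L_x$ then coincides with $L_{x'}$, and one would argue that $x'$ lies in $e_2$ by combining reducedness of $\gamma$, which forces $e_2 \not\prec \mu^{-1}(u_1)$, with the endpoint constraint $L_{x'}(h_{x'}) \in c$. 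The inductive hypothesis applied to the truncated reduced flow path $\gamma' = (e_2, u_2, \ldots, e_n, u_n; c)$ and to the point $x'$ then yields $\gamma_{x'} = \gamma'$, and prepending the initial step gives $\gamma_x = \gamma$, so $x \in e_\gamma$.

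The main obstacle I expect is the identification of the landing cell of the first flow phase as exactly $e_2$. Because different starting points in $e_1$ can in principle produce different landing cells on $\partial u_1 \setminus \mu^{-1}(u_1)$, this identification cannot be made from $e_1$ and $u_1$ alone; it must exploit the endpoint condition $L_x(h_x) \in c$ together with the reducedness of $\gamma$. The explicit formula for the retraction $R_{u_1}$ supplied by Proposition \ref{cell_structure_for_matched_pair} and Remark \ref{retraction_on_disk}, combined with the strict decrease of the faithful Morse function $f$ along $L_x$, should be the tools that force $x'$ into $e_2$ and allow the induction to close.
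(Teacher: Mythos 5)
Your two-inclusion decomposition is sound, and the forward inclusion is fine, but the obstacle you flag in the reverse inclusion is not a technical gap awaiting a cleverer argument --- it is a genuine hole, and in fact the claimed set equality fails. The hypotheses $x\in e_1$ and $L_x(h_x)\in c$ pin down only the initial cell and the target of $\gamma_x$, not the intermediate cells, and several distinct reduced flow paths can share the same $\iota$ and $\tau$. Already in the $2$-simplex of Example~\ref{subpath_example}, $R_{u_1}$ maps the open $2$-cell $u_1$ onto the arc $u_2\cup\{d_3\}\cup u_3$, so the three reduced paths $(u_1,u_1,u_2,u_2,d_3,u_3;c)$, $(u_1,u_1,d_3,u_3;c)$ and $(u_1,u_1,u_3,u_3;c)$ all have initial cell $u_1$ and target $c$, yet they index three disjoint nonempty strata inside $u_1$, whereas $\set{x\in u_1}{L_x(h_x)\in c}$ is all of $u_1$ (there being no other critical cell). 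So no completion of your induction can close the gap.

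For comparison, the paper's one-line proof commits exactly the jump you hesitated over: after reducing the claim to ``the cells of $\gamma$ are precisely those meeting the image of $L_x$'', it asserts without justification that this is equivalent to $L_x(h_x)\in c$. Your instinct not to sign off on that step was correct. The statement your induction can legitimately establish, and the one the surrounding arguments actually rely on (see the proof of Corollary~\ref{inductive_e_gamma} and Lemma~\ref{more_inductive_e_gamma}), is the stepwise identity $e_\gamma=\set{x\in e_1}{L_{e_1,u_1}(x,1)\in e_{\gamma'}}$ with $\gamma'=(e_2,u_2,\ldots,e_n,u_n;c)$; the terminal condition $L_x(h_x)\in c$ should be replaced by a condition on the very next landing cell, from which your induction on length goes through without incident.
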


\begin{proof}
 By definition
 \[
  e_{\gamma} = \set{x\in e_1}{\gamma_{x}=\gamma}.
 \]
 The condition $\gamma_{x}=\gamma$ is equivalent to saying that $\gamma$
 consists of cells which intersect with the image of $L_{x}$. By the
 construction of $L_{x}$, this is equivalent to $L_{x}(h_x)\in c$.
\end{proof}

\begin{corollary}
 \label{inductive_e_gamma}
 For a reduced flow path $\gamma=(e_1,u_1,\ldots,e_n,u_n;c)$, let
 $\gamma'=(e_2,u_2,\ldots,e_n,u_n;c)$. Then
 \[
  e_{\gamma} = R_{u_1}^{-1}(e_{\gamma'})\cap e_1.
 \]
 When $e_1=\mu^{-1}(u_1)$, we also have
 \[
  e_{\gamma} = R_{e_1}^{-1}(e_{\gamma'})\cap e_1.
 \]
\end{corollary}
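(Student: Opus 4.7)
The plan is to combine the description $e_{\gamma} = \set{x \in e_1}{L_x(h_x) \in c}$ from Lemma \ref{e_gamma_by_L}, applied to both $\gamma$ and $\gamma'$, with the key identification that the \emph{first leg} of the flow $L_x$ (the piece traversing the cell $u_1$) is computed by the retraction $R_{u_1}$. Concretely, it suffices to prove: for every $x \in e_1$, the point $x'$ appearing in the inductive step of Definition \ref{continuous_flow_definition} equals $R_{u_1}(x)$, because then $L_x$ decomposes as the initial segment across $u_1$ followed by $L_{x'}$, so that $L_x(h_x) = L_{x'}(h_{x'})$.

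For this identification I would first unwind the defining commutative squares of $L_{d,u}$ and $R_u$ given after Proposition \ref{cell_structure_for_matched_pair} and in Remark \ref{retraction_on_disk}. Since the linear flow $L(x_0, t) = (1-t)x_0 - t$ satisfies $L(x_0, 1) = -1$ for every $x_0 \in [-1,1]$, one reads off directly that
\[
L_{\mu^{-1}(u_1), u_1}\!\bigl(\varphi\psi(\bm y, t),\, 1\bigr) \;=\; \varphi\psi(\bm y, -1) \;=\; R_{u_1}\!\bigl(\varphi\psi(\bm y, t)\bigr),
\]
i.e.\ the time-$1$ map of $L_{\mu^{-1}(u_1), u_1}$ agrees with $R_{u_1}$. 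Now consider the two cases allowed by Definition \ref{flow_path}. If $e_1 = \mu^{-1}(u_1)$, the construction of $L_x$ sets $x' = L_{e_1, u_1}(x, 1) = R_{u_1}(x)$ outright. If $e_1 = u_1$, then $x = L_{\mu^{-1}(u_1), u_1}(y, s) = \varphi\psi(\bm y, 1-2s)$ for a unique pair $(y, s) \in \mu^{-1}(u_1) \times (0,1)$, and the construction defines $x' = L_{\mu^{-1}(u_1), u_1}(y, 1) = \varphi\psi(\bm y, -1)$, which by the display above is again $R_{u_1}(x)$. This second case is the step I expect to require the most care, because $L$ does not satisfy a semigroup law and one cannot simply write $x' = L_{\mu^{-1}(u_1), u_1}(x, 1-s)$; going through the $\varphi\psi$ coordinates is what cleans things up.

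With $x' = R_{u_1}(x)$ established, the chain of equivalences
\[
L_x(h_x) \in c \;\Longleftrightarrow\; L_{x'}(h_{x'}) \in c \;\Longleftrightarrow\; R_{u_1}(x) \in e_{\gamma'}
\]
(using Lemma \ref{e_gamma_by_L} for $\gamma'$, and the fact that $e_{\gamma'} \subset e_2$ so that membership in $e_{\gamma'}$ already forces $R_{u_1}(x) \in e_2$) yields the first equality $e_\gamma = R_{u_1}^{-1}(e_{\gamma'}) \cap e_1$. For the second equality in the case $e_1 = \mu^{-1}(u_1)$, Definition \ref{retraction} defines $R_{e_1}$ as the composition $\overline{e_1} \hookrightarrow \overline{u_1} \rarrow{R_{u_1}} e_1^{c}$, so $R_{e_1}$ and $R_{u_1}$ agree on $\overline{e_1}$; intersecting their preimages of $e_{\gamma'}$ with $e_1$ therefore gives the same set.
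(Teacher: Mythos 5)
Your proof is correct and takes essentially the same approach as the paper: identify the inductive step point $x'$ in the construction of $L_x$ with $R_{u_1}(x)$, so that $L_x$ decomposes into the retraction across $u_1$ followed by $L_{x'}$. The paper's own proof is terser — it treats only the $e_1 = \mu^{-1}(u_1)$ case directly, asserting $e_\gamma = \set{x \in e_1}{L_{e_1,u_1}(x,1) \in e_{\gamma'}}$ "by the construction of $L_x$" and noting that the time-$1$ map equals $R_{u_1}$, then declares the case $e_1 = u_1$ "analogous and omitted"; you actually supply that omitted case, correctly observing that the absence of a semigroup law for $L$ makes it necessary to go through the $\varphi\psi$ coordinates rather than writing $x' = L_{\mu^{-1}(u_1),u_1}(x,1-s)$. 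One small imprecision: the rightward implication in your final chain, $L_{x'}(h_{x'}) \in c \Rightarrow x' \in e_{\gamma'}$, invokes Lemma \ref{e_gamma_by_L} for $\gamma'$, which requires knowing $x' \in e_2$ first — this is not what your parenthetical about $e_{\gamma'} \subset e_2$ addresses (that handles the opposite direction). It is quickly repaired: from $x \in e_1$ and $L_x(h_x) \in c$, Lemma \ref{e_gamma_by_L} applied to $\gamma$ gives $\gamma_x = \gamma$, which pins down the cell after $u_1$ as $e_2$, hence $x' \in e_2$.
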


\begin{proof}
 Suppose $e_1=\mu^{-1}(u_1)$. By the construction of $L_{x}$, we have
 \[
  e_{\gamma} = \set{x\in e_1}{L_{e_1,u_1}(x,1)\in e_{\gamma'}}
 \]
 and the map $L_{e_1,u_1}(-,1) : \overline{u_1}\to e_{1}^{c}$ coincides
 with $R_{u_1}$. The case $e_1=u_1$ is analogous and is omitted.
\end{proof}

The rest of this section is devoted to the proof of the following fact. 

\begin{proposition}
 \label{stable_subdivision_is_cellular}
 The subdivision (\ref{decomposition_by_flows}) of $X$
 is a regular cell decomposition.
\end{proposition}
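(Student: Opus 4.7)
The plan is to proceed by induction on the length $\ell(\gamma)$ of reduced flow paths, showing at each step that $e_\gamma$ is a regular open cell whose closure is a union of subdivision cells. The base case $\ell(\gamma)=0$ is immediate: $\gamma=(c)$ forces $e_\gamma=c$, already a regular open cell of $X$.

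For the inductive step, write $\gamma=(e_1,u_1,\ldots,e_n,u_n;c)$ and set $\gamma'=(e_2,u_2,\ldots,e_n,u_n;c)$. By Corollary \ref{inductive_e_gamma}, $e_\gamma=R_{u_1}^{-1}(e_{\gamma'})\cap e_1$, and by induction there is a regular characteristic homeomorphism $\Phi_{\gamma'}\colon D^k\to\overline{e_{\gamma'}}$. Let $d:=\mu^{-1}(u_1)$ and $n_1:=\dim u_1$. Because $\gamma$ is reduced, $e_2\not\prec d$; together with $e_2\prec u_1$ and the fact that $d$ is top-dimensional (hence open) in $\partial u_1$, this places $e_{\gamma'}\subset d^c\setminus \partial d=\psi_-(\Int D^{n_1-1})$, where $\psi,\psi_-$ are the coordinates from Remark \ref{retraction_on_disk}. (One also checks $e_2\neq d$: by Lemma \ref{flow_path_from_x} the reduced flow path must be realized by some $x\in X$, ruling out the degenerate $e_1=e_2=d$ configuration.)

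There are two subcases. When $e_1=d$, the map $R_d$ sends $(\mathbf{x},\sqrt{1-\|\mathbf{x}\|^2})\in\overline{d}$ to $(\mathbf{x},-\sqrt{1-\|\mathbf{x}\|^2})\in d^c$, so $R_d\colon\overline{d}\to d^c$ is a continuous bijection between a compact and a Hausdorff space and therefore a homeomorphism. A standard closure argument using this homeomorphism yields $\overline{e_\gamma}=R_d^{-1}(\overline{e_{\gamma'}})$, so $R_d^{-1}\circ\Phi_{\gamma'}\colon D^k\to\overline{e_\gamma}$ is the desired regular characteristic map, and $e_\gamma$ has the same dimension $k$. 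When $e_1=u_1$, I set $\tilde{A}:=\psi_-^{-1}(\overline{e_{\gamma'}})\subset D^{n_1-1}$, so that $\overline{e_\gamma}=\psi(\tilde{A}\times[-1,1])$; the candidate characteristic map $D^k\times[-1,1]\to\overline{e_\gamma}$ is the composite of $(\psi_-^{-1}\Phi_{\gamma'})\times\mathrm{id}$ with $\psi$, producing an open cell of dimension $k+1$.

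The main obstacle is the $e_1=u_1$ case: $\psi$ collapses $\{\mathbf{x}\}\times[-1,1]$ to the point $(\mathbf{x},0)$ for every $\mathbf{x}\in S^{n_1-2}$, so the composite above is only a quotient map whenever $\tilde{A}$ meets $S^{n_1-2}$, and this intersection is generically nonempty (think of the $2$-simplex example, where $\overline{e_{\gamma'}}$ is a closed edge of $u_1$ sharing an endpoint with $d$). I would first show that $\tilde{A}\cap S^{n_1-2}$ is a closed sub-disk of $\partial \tilde{A}$, using the regularity of $\overline{e_{\gamma'}}$ together with the cellular description of $\partial u_1=\overline{d}\cup d^c$ from Proposition \ref{cell_structure_for_matched_pair}; then I would invoke the standard topological fact that collapsing $S\times[-1,1]$ along the $[-1,1]$-factor in $D^k\times[-1,1]\cong D^{k+1}$, for a collared sub-disk $S\subset\partial D^k$, again yields a closed $(k+1)$-disk. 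This simultaneously secures the characteristic homeomorphism and confirms the dimension of $e_\gamma$. A final combinatorial check---tracing which reduced flow paths appear as the subdivision cells in $\overline{e_\gamma}$ via truncation toward $c$ or by replacement of $e_i=u_i$ with $\mu^{-1}(u_i)$---confirms the closure is a union of subdivision cells, and finiteness of $X$ disposes of the weak-topology condition, completing the regular CW structure.
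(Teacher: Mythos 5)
Your overall strategy is the same as the paper's: induct on the flow path (you use the length $\ell(\gamma)$, the paper uses $f(\iota(\gamma))$, which is equivalent), apply Corollary \ref{inductive_e_gamma} to write $e_\gamma$ as $R_{u_1}^{-1}(e_{\gamma'})\cap e_1$, and split into the two cases $e_1 = d$ and $e_1 = u_1$. Your treatment of the $e_1 = d$ case is clean and correct, and is in fact tidier than the paper's chase through $\alpha$, $i_0$, $\psi$, $\varphi$; the identification of $R_d\colon\overline{d}\to d^c$ as a homeomorphism and the closure identity $\overline{e_\gamma}=R_d^{-1}(\overline{e_{\gamma'}})$ both hold.

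The gap is in the $e_1 = u_1$ case, and it is precisely where the paper invests its real technical effort (Lemma \ref{inverse_image_of_collapse}). You propose first to show that $\tilde{A}\cap S^{n_1-2}$, equivalently $\overline{e_{\gamma'}}\cap\partial d$, is a closed sub-disk of $\partial\tilde{A}$, and then to cite a "standard fact'' about collapsing $S\times[-1,1]$ along the interval when $S\subset\partial D^k$ is a collared sub-disk. Neither step is justified, and the first is not obviously true: you know $e_{\gamma'}\subset d^c\setminus\partial d$, but that gives no control over how the \emph{closure} $\overline{e_{\gamma'}}$ meets $\partial d$. In a regular complex it is entirely possible for a closed cell in $d^c$ to touch $\partial d$ in a disconnected set (picture a strip-shaped $2$-cell in the southern hemisphere whose closure meets the equator in two disjoint arcs); whether the subset $\overline{e_{\gamma'}}$ of such a cell inherits a nicer intersection with $\partial d$ is exactly the kind of thing one would have to prove, and you do not. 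The paper sidesteps the question entirely: Lemma \ref{inverse_image_of_collapse} proves, by a double induction on dimension and number of top simplices using collapsibility of PL disks, that $p^{-1}(|L|)$ is a disk for \emph{any} subcomplex $L$ with $|L|$ a disk, with no hypothesis whatsoever on $|L|\cap\partial P$. Your "standard fact'' is also not something one can simply cite; the paper has to prove the appropriate statement, and the proof (Lemma \ref{inverse_image_of_collapse}) is nontrivial. To repair your argument you should drop the intermediate disk claim, triangulate $d^c$ compatibly with $\overline{e_{\gamma'}}$, and prove (or cite, via a genuine reference) a statement at the generality of Lemma \ref{inverse_image_of_collapse}. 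As a smaller point, the paper also routes the global assembly through Lemma \ref{subdivision_of_regular_cell_complex}, which you gesture at but do not invoke; and your parenthetical ruling out "$e_1=e_2=d$'' is aimed at the wrong configuration — what you actually need, and what follows from the strict descent of a faithful $f$ along a flow path, is $u_1\neq u_2$, hence $e_2\neq d$.
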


In order to prove this, let us consider the following more general
situation. Suppose we have a regular cell complex $X$. Suppose further 
that each cell $e_{\lambda}$ in $X$ is equipped with a decomposition
 \[
 e_{\lambda} = \bigcup_{\alpha\in A_{\lambda}}
 e_{\lambda,\alpha}. 
 \]
We would like to know when the decomposition of $X$ 
\begin{equation}
  X = \bigcup_{\alpha\in A_{\lambda}}\bigcup_{\lambda\in\Lambda}
   e_{\lambda,\alpha} 
   \label{subdivision_by_subdivision_of_cells}
\end{equation}
is a regular cell decomposition.

It is easy to see that the closure of $e_{\lambda}$ has the following
decomposition 
\begin{equation}
 \overline{e_{\lambda}} =
  \bigcup_{\mu\in\Lambda} \bigcup_{\beta\in A_{\mu}}
  \bigcup_{e_{\mu,\beta}\cap\overline{e_{\lambda}}\neq\emptyset}
  e_{\mu,\beta}. 
\label{closure_of_subdivision}
\end{equation}

\begin{lemma}
 \label{subdivision_of_regular_cell_complex}
 If (\ref{closure_of_subdivision}) is a regular cell decomposition of
 $\overline{e_{\lambda}}$ for all $\lambda$, then
 (\ref{subdivision_by_subdivision_of_cells}) 
 is a regular cell decomposition of $X$, which is a subdivision of the
 original cell decomposition on $X$.
\end{lemma}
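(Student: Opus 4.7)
The plan is to verify the axioms of a regular CW complex for the decomposition (\ref{subdivision_by_subdivision_of_cells}) directly, drawing all characteristic maps and face data from the regular CW structures on each $\overline{e_\lambda}$ provided by (\ref{closure_of_subdivision}). A key preliminary observation is that for each new piece $e_{\lambda,\alpha}$, its closure in $X$ equals its closure inside $\overline{e_\lambda}$, since $\overline{e_\lambda}$ is closed in $X$. Consequently, the characteristic map $D^{d}\to\overline{e_{\lambda,\alpha}}$ produced by the regularity of (\ref{closure_of_subdivision}) is automatically a characteristic map in $X$, and $\overline{e_{\lambda,\alpha}}\setminus e_{\lambda,\alpha}$ is a union of strictly lower-dimensional pieces of the new decomposition.

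First I would check that (\ref{subdivision_by_subdivision_of_cells}) is a disjoint decomposition: the original cells $e_\lambda$ partition $X$, and each is partitioned by its $e_{\lambda,\alpha}$. Next, closure-finiteness holds because $\overline{e_{\lambda,\alpha}}\subseteq \overline{e_\lambda}$, and the latter carries the regular CW structure (\ref{closure_of_subdivision}), so $\overline{e_{\lambda,\alpha}}$ meets only those $e_{\mu,\beta}$ appearing in $\overline{e_\lambda}$. For the weak topology, a subset $A\subseteq X$ is closed iff $A\cap\overline{e_\lambda}$ is closed for every original cell $e_\lambda$, which, using (\ref{closure_of_subdivision}), is equivalent to $A\cap\overline{e_{\mu,\beta}}$ being closed for every $e_{\mu,\beta}$ appearing in some $\overline{e_\lambda}$, i.e., for every cell of (\ref{subdivision_by_subdivision_of_cells}). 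Regularity of each new cell is inherited directly from (\ref{closure_of_subdivision}), and the fact that (\ref{subdivision_by_subdivision_of_cells}) refines the original decomposition is clear from the construction, since $e_\lambda=\bigcup_{\alpha\in A_\lambda}e_{\lambda,\alpha}$ and $\overline{e_{\lambda,\alpha}}\subseteq\overline{e_\lambda}$.

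The main obstacle will be verifying the coherence of the local structures: if the same cell $e_{\mu,\beta}$ appears in two different closed cells $\overline{e_{\lambda_1}}$ and $\overline{e_{\lambda_2}}$, one must know that its dimension, characteristic map, and boundary decomposition supplied by the two hypotheses agree. This coherence is forced by the fact that $e_{\mu,\beta}$ is, by definition, the same subset of $e_\mu$ in both cases, and its closure in $X$ is intrinsically defined; uniqueness of the regular closed cell associated with a given open cell then compels the two local structures to agree on the subcomplex containing $e_{\mu,\beta}$. Once this coherence is settled, the remaining verifications are bookkeeping.
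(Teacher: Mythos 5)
Your approach is essentially the same as the paper's: both hinge on the observation that $\overline{e_{\lambda}}$ is closed in $X$, so the closure of $e_{\lambda,\alpha}$ in $\overline{e_{\lambda}}$ equals its closure in $X$, and thus the characteristic map supplied by the regular structure on $\overline{e_{\lambda}}$ (composed with the inclusion into $X$) serves as a characteristic map in $X$, with the boundary condition inherited from the hypothesis. The only slight overreach is your ``coherence'' worry: it is moot, since each new cell $e_{\lambda,\alpha}$ has a canonical home $\overline{e_{\lambda}}$ (the index $\lambda$ is part of its data), and one simply takes the characteristic map from that one regular structure; there is no need for the structures on different $\overline{e_{\lambda}}$'s to agree on overlaps.
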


\begin{proof}
 By assumption, each $e_{\lambda,\alpha}$ is equipped with a
 characteristic map
 \[
  \varphi_{\lambda,\alpha} : D^{\dim e_{\lambda,\alpha}} \rarrow{}
 \overline{e_{\lambda}}. 
 \]
 Composed with the inclusion $\overline{e_{\lambda}}\hookrightarrow X$,
 we obtain a characteristic map for $e_{\alpha,\lambda}$ in $X$, since
 the closure of $e_{\lambda,\mu}$ in 
 $\overline{e_{\lambda}}$ coincides with the closure in $X$.
 
 The condition that
 $\partial e_{\lambda,\alpha}=\overline{e_{\lambda,\alpha}}\setminus e_{\lambda,\alpha}$
 is covered with cells of dimension $< \dim e_{\lambda,\alpha}$ 
 follows from the assumption.
\end{proof}

The following observation is essential in the proof of Proposition
\ref{stable_subdivision_is_cellular}.

\begin{lemma}
 \label{inverse_image_of_collapse}
 Let $K$ be a triangulation of a convex polytope $P$ of dimension $n$. 
 Define a relation $\sim$ on $P\times[0,1]=|K|\times[0,1]$ by
 $(\bm{x},s) \sim (\bm{x},t)$ 
 for $\bm{x}\in\partial P$ and $s,t\in [0,1]$. The equivalence relation
 generated by $\sim$ is also denoted by $\sim$.
 Denote the quotient space $P\times[0,1]/_{\sim}$ by $E$ and the
 canonical projection onto $P$ by
 \[
  p : E \rarrow{} P.
 \]
 Then for any simplex $\sigma\in K$, $p^{-1}(\sigma)$ is homeomorphic to
 $D^{\dim \sigma+1}$.

 More generally for any subcomplex $L\subset K$ whose geometric
 realization is homeomorphic to a disk
 of dimension $n$, $p^{-1}(|L|)$ is homeomorphic to a disk of dimension
 $n+1$. 
\end{lemma}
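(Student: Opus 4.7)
The plan is to prove the general statement about disk subcomplexes $L$ and to deduce the first statement as a special case. For a simplex $\sigma$ whose interior lies in $\mathrm{int}(P)$, I would observe that $p^{-1}(\sigma) = \sigma \times [0,1]$ directly, which is homeomorphic to $D^{\dim \sigma + 1}$; for a top-dimensional $\sigma$ meeting $\partial P$, the conclusion follows from the general statement applied to $L = \bar\sigma$.

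For the general claim, set $F = |L| \cap \partial P$. The equivalence relation $\sim$ restricts to $|L| \times [0,1]$ as the collapse of each fiber $\{\bm{x}\} \times [0,1]$ with $\bm{x} \in F$ to a single point, with all other fibers untouched, so $p^{-1}(|L|)$ is exactly the corresponding quotient of $|L| \times [0,1]$. Since $|L|$ is a PL $n$-disk, I would fix a PL homeomorphism $|L| \cong D^n$ carrying $F$ to a PL subcomplex $F' \subseteq S^{n-1} = \partial D^n$, reducing the claim to showing that $(D^n \times [0,1])/{\sim'}$ is homeomorphic to $D^{n+1}$, where $\sim'$ collapses each fiber over $F'$ to a point.

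Next I would realize this quotient concretely in $\R^{n+1}$. Choose a PL function $h : D^n \to [0,1]$ that vanishes precisely on $F'$ and is positive elsewhere; such an $h$ is built by taking a triangulation of $D^n$ whose restriction to $\partial D^n$ contains $F'$ as a subcomplex, setting $h=0$ on vertices of $F'$ and $h=1$ on all other vertices, and extending affinely on each simplex. The map $\Psi : D^n \times [0,1] \to \R^{n+1}$ defined by $\Psi(\bm{x},t) = (\bm{x}, (2t-1)h(\bm{x}))$ has image the compact region $R = \{(\bm{x},y) \in D^n \times \R : |y| \le h(\bm{x})\}$, and its fibers are exactly the equivalence classes of $\sim'$; hence $p^{-1}(|L|) \cong R$. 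The proof then concludes by verifying $R \cong D^{n+1}$: the boundary $\partial R$ decomposes as the union of the graphs of $\pm h$ (which meet along $F'$) together with the lateral region $\{(\bm{x},y) : \bm{x} \in \partial D^n,\, |y| \le h(\bm{x})\}$, and I would show this boundary is a PL $n$-sphere, after which Corollary \ref{generalized_Schoenflies}, applied inside the one-point compactification of $\R^{n+1}$, yields $R \cong D^{n+1}$.

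The principal obstacle is verifying that $\partial R$ is a PL $n$-sphere for an arbitrary PL subcomplex $F' \subseteq S^{n-1}$. The extremes $F' = \emptyset$ (where $R \cong D^n \times [-1,1]$ at once) and $F' = S^{n-1}$ (handled by the map $\psi$ of Remark \ref{retraction_on_disk}) are straightforward, but the intermediate cases demand a careful induction on the simplices of $F'$. Adding one top-dimensional simplex of $F'$ at a time, I would use PL collar neighborhoods inside $\partial D^n$ together with repeated applications of Corollary \ref{generalized_Schoenflies} to check that the boundary remains a PL $n$-sphere after each elementary modification, which in turn propagates through $\Psi$ to keep $R$ a PL $(n+1)$-disk.
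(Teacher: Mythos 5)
Your approach is genuinely different from the paper's. Where the paper constructs, for each simplex $\sigma$, an explicit convex polytope $Q\subset\sigma\times[0,1]$ homeomorphic to $p^{-1}(\sigma)$, and then handles a subcomplex $L$ by an induction on simplices of $L$ driven by collapsibility (writing $p^{-1}(|L|)$ as a union of two disks glued along a disk at each step), you collapse everything into a single picture: a region $R\subset\R^{n+1}$ bounded by the graphs of $\pm h$ for a PL function $h$ vanishing exactly on $F'$, to which Corollary \ref{generalized_Schoenflies} is then applied. That would be a clean and attractive argument if it closed.

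It does not close. The claim that $\partial R$ is a PL $n$-sphere for an arbitrary PL subcomplex $F'\subseteq S^{n-1}$ is exactly where the difficulty of the lemma lives, and your proposal leaves it as an acknowledged ``principal obstacle'' accompanied only by an unworked sketch (add top simplices of $F'$ one at a time, use collar neighborhoods, apply Corollary \ref{generalized_Schoenflies} repeatedly). No induction is actually set up, no inductive step is verified, and the elementary modifications you allude to --- what happens to $\partial R$ when one adjoins a top simplex of $F'$, how the pinching of $R$ along lower-dimensional faces of $F'$ is controlled, why the resulting boundary remains locally flat --- are not addressed. Carrying this out would be at least as involved as the paper's own collapsibility induction, which has the structural advantage of never needing to recognize a sphere: it manufactures disks by gluing two disks along a disk. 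As written, the crux of the proof is asserted, not proved.

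There are two smaller but real issues as well. First, your dichotomy for the simplex part of the lemma is incomplete: a simplex of dimension $<n$ that meets $\partial P$ (say an edge with one vertex on $\partial P$ and one in the interior) is covered by neither ``$\sigma$ with interior in $\mathrm{int}(P)$, so $p^{-1}(\sigma)=\sigma\times[0,1]$'' (false in this case, since the closed simplex meets $\partial P$ and a boundary fiber is collapsed) nor ``top-dimensional $\sigma$ meeting $\partial P$, reduce to $L=\bar\sigma$.'' You would need the disk argument for $D^m$ with $m<n$, which your write-up does not state. Second, for the affine extension of $h$ (value $0$ on vertices of $F'$, $1$ on the rest) to vanish \emph{exactly} on $F'$ you need $F'$ to be a full subcomplex of the triangulation of $D^n$ you choose; otherwise $h$ can vanish on additional faces. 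This is fixable by a barycentric subdivision, but it should be said.
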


\begin{proof}
 When $\sigma\cap \partial P=\emptyset$, $p^{-1}(\sigma)$
 can be identified with $\sigma\times[0,1]$ and is homeomorphic
 to $D^{\dim \sigma+1}$. Suppose 
 $\sigma\cap \partial P\neq\emptyset$. Then
 $\tau=\sigma\cap \partial P$ is a face of $\sigma$.
 We have
 \[
  p^{-1}(\sigma) = \sigma\times [0,1]/_{\sim_{\sigma}},
 \]
 where the equivalence relation $\sim_{\sigma}$ is the restriction of
 $\sim$ and thus  
 $(\bm{x},s)\sim_{\sigma} (\bm{x},t)$
 for $\bm{x}\in \sigma\cap \partial P$ and $s,t\in [0,1]$.

 Let $\{v_0,v_1,\ldots,v_k\}$ be the set of vertices of $\sigma$. We may
 assume that the vertices belonging to $\tau$ are the first $\ell+1$
 vertices $v_0,\ldots, v_{\ell}$. Define a convex polytope $Q$
 contained in $\sigma\times[0,1]$ by
 \[
  Q= \Conv((v_0,0), \ldots, (v_{\ell},0),(v_{\ell+1},0), \ldots,
 (v_{k},0), (v_{\ell+1},1), \ldots, (v_{k},1)).
 \]
 We claim that $p^{-1}(\sigma)$ is homeomorphic to $Q$. A homeomorphism
 $\pi: p^{-1}(\sigma) \to Q$ is, for example, defined by
 \[
  \pi\left(\left[\sum_{i=0}^k t_iv_i,s\right]\right) = \sum_{i=0}^{\ell}
 t_i(v_i,0) + 
 \sum_{i=\ell+1}^k (1-s)t_i(v_i,0) + \sum_{i=\ell+1}^{k} st_i(v_i,1).
 \]
 Since elements in the face $\tau$ correspond to
 $\sum_{i=0}^k t_iv_i$ with $t_{\ell+1}=\cdots=t_k=0$, this map is
 well defined and continuous. And this is easily seen to be bijective.

 Let $L$ be a subcomplex of $K$ with $|L|\cong D^n$. We prove that
 $p^{-1}(|L|)$ is homeomorphic to an $(n+1)$-disk by induction on the
 number of $n$-simplices and $n$.
 Suppose we have proved the statement for $n\le m-1$. Furthermore
 suppose that we have proved the statement for subcomplexes of dimension
 $m$ and the number of $m$-simplices less than $k$.

 Suppose $L$ has $k$ $m$-simplices. Choose an $m$-simplex
 $\sigma$ in $L$. Let $L'$ be the subcomplex of $L$ obtained by removing
 $\sigma$ and its faces in $\partial |L|$. Since $|L|$ is homeomorphic
 to a disk, it is collapsible to a point. Thus we may choose $\sigma$
 with which $|L'|$ is homeomorphic to an $m$-disk. Then
 $\sigma\cap |L'|$ is homeomorphic to an $(m-1)$-disk. By inductive
 assumption on dimension, 
 $p^{-1}(\sigma\cap |L'|)$ is homeomorphic to an $m$-disk. By
 inductive assumption on the number of $m$-cells, $p^{-1}(|L'|)$ is
 homeomorphic to an $(m+1)$-disk. Thus
 $p^{-1}(|L|)=p^{-1}(|L'|)\cup p^{-1}(\sigma)$ is homeomorphic to an
 $(m+1)$-disk as a union of two $(m+1)$-disks along an $m$-disk.
\end{proof}

\begin{proof}[Proof of Proposition \ref{stable_subdivision_is_cellular}]
 Let us verify the condition in Lemma
 \ref{subdivision_of_regular_cell_complex}. 
 By the diagram (\ref{pi_f_is_subdivision}), we have
 \[
  e_{\lambda} = \bigcup_{\iota(\gamma)=e_{\lambda}} e_{\gamma}
 \]
 for a cell $e_{\lambda}$ in $F(X)$, which leads to a decomposition of
 $X$
 \[
  X = \bigcup_{\lambda\in F(X)} \bigcup_{\iota(\gamma)=e_{\lambda}}
 e_{\gamma}. 
 \]
 We need to construct a characteristic map for each $e_{\gamma}$
 and prove that 
 \[
 \overline{e_{\lambda}} =
 \bigcup_{\gamma}
 \bigcup_{e_{\gamma}\cap\overline{e_{\lambda}}\neq\emptyset}
 e_{\gamma} 
 \]
 is a regular cell decomposition of $\overline{e_{\lambda}}$.

 Let us first construct a characteristic map $\varphi_{\gamma}$ for
 $e_{\gamma}$. Choose a faithful discrete Morse function $f$ with
 $\mu=\mu_{f}$. 
 We construct $\varphi_{\gamma}$ by induction
 on $f(\iota(\gamma))$.
 % (Recall from Definition \ref{flow_path} that $\iota(\gamma)$ is the
 % initial cell of $\gamma$.) 

 When $f(\iota(\gamma))$ is the minimum value of $f$, $\iota(\gamma)$ is
 critical and $\gamma$ is the constant flow path $(\iota(\gamma))$. 
 The characteristic map $\varphi_{\gamma}$ for
 $e_{\gamma}=\iota(\gamma)$ is defined
 to be the characteristic map for $\iota(\gamma)$. 

 Suppose we have constructed a characteristic map of $e_{\gamma}$
 for $\gamma$ with $f(\iota(\gamma))\le k$. Let
 $\gamma=(e_1,u_1,\ldots,e_n,u_n;c)$ be a 
 reduced flow path with $f(e_1)=k+1$. (Recall that our $f$ is
 $\Z$-valued.) Since $\gamma$ is nontrivial, $e_1$ is not critical.
 Let $\gamma'=(e_2,u_2,\ldots,e_n,u_n;c)$. 
 Let $\varphi : D^{n} \to \overline{u_1}$ be the characteristic
 map for $u_1$. By Proposition \ref{cell_structure_for_matched_pair}, we
 may assume that $\varphi$ is a homeomorphism of triples
 \[
 \varphi : (D^{n},S^{n-1}_{+},S^{n-1}_{-}) \rarrow{}
 \left(\overline{u_1},\overline{d_1},d_1^{c}\right), 
 \]
 where $d_1=\mu^{-1}(u_1)$.
 By Remark \ref{retraction_on_disk}, we have 
 a commutative diagram 
 \[
 \begin{diagram}
  \node{D^{n-1}\times[-1,1]} \arrow{e,t}{\psi} \arrow{s}
  \node{D^n} \arrow{s,r}{r} \arrow{e,t}{\varphi}
  \node{\overline{u_1}} \arrow{s,r}{R_{d_1}} \\ 
  \node{D^{n-1}} \arrow{e,b}{\psi_{-}} \node{S^{n-1}_{-}}
  \arrow{e,b}{\varphi|_{S^{n-1}_{-}}} \node{d_1^{c}.} 
 \end{diagram}
 \]
 (See Definition \ref{retraction} and Remark \ref{retraction_on_disk}
 for definitions of $r$, $\psi$, and $\psi_{-}$.)
	     
 By Remark
 \ref{description_of_flow_path}, 
 we have
 \[
 k+1=f(e_1) \ge f(u_1) > f(e_2) = f(\iota(\gamma'))
 \]
 and the inductive hypothesis applies to $\gamma'$.
 Let
 \[
 \varphi_{\gamma'} : D^{\dim e_{\gamma'}} \rarrow{}
 \overline{e_{\gamma'}} 
 \]
 be the characteristic map for $e_{\gamma'}$.
 Note that
 $e_{\gamma'}\subset\partial u_1\setminus d_1$ by construction. 
 Since $\varphi|_{S^{n-1}_{-}}$ is a homeomorphism, there
 exists an embedding
 $\alpha:D^{\dim e_{\gamma'}}\hookrightarrow D^{n-1}$ 
 making the following diagram commutative
 \[
 \begin{diagram}
  \node{D^{n-1}} \arrow{e,t}{\psi_{-}} \node{S^{n-1}_{-}}
  \arrow{e,t}{\varphi|_{S^{n-1}_{-}}} \node{d_1^{c}} \\
  \node{D^{\dim e_{\gamma'}}} \arrow{n,l}{\alpha}
  \arrow[2]{e,b}{\varphi_{\gamma'}} 
  \node{} \node{\overline{e_{\gamma'}}.} \arrow{n,J}
 \end{diagram}
 \]
 
 There are two cases; $e_1=d_1$ or $e_1=u_1$.

 \begin{description}
  \item[The case $e_1=d_1$:] 
	     We have
	     $e_{\gamma}=R_{d_1}^{-1}(e_{\gamma'})\cap d_1$
	     by Corollary \ref{inductive_e_gamma}. 
	     Let $i_0:D^{n-1}\hookrightarrow D^{n-1}\times[-1,1]$ be the
	     inclusion into $D^{n-1}\times\{1\}$. Then the composition
	     \[
	      D^{\dim e_{\gamma'}} \rarrow{\alpha} D^{n-1}
	     \rarrow{i_0} D^{n-1}\times[-1,1] \rarrow{\psi} D^n
	     \rarrow{\varphi} \overline{e^n} \hookrightarrow X
	     \]
	     is a characteristic map for $e_{\gamma}$.

  \item[The case $e_1=u_1$:] 
	     In this case
	     $e_{\gamma}=R_{u_1}^{-1}(e_{\gamma'})\cap u_1$.
	     By pulling back the regular cell decomposition of
	      $d_1^{c}$ via $\varphi$, $S^{n-1}_{-}$ has a structure of
	     regular cell complex. Let $K$ be a simplicial
	     subdivision of this regular cell decomposition. Then  
	     the map $\psi$ can be regarded as the quotient map
	     \[
	      |K|\times[0,1] \rarrow{} |K|\times[0,1]/_{\sim}
	     \]
	     in Lemma \ref{inverse_image_of_collapse}. Thus, by Lemma
	     \ref{inverse_image_of_collapse},
	     $\varphi^{-1}(R_{u_1}^{-1}(e_{\gamma'}))$ is
	     homeomorphic to a disk of dimension
	     $\dim e_{\gamma'}+1$ and we obtain a characteristic
	     map for $e_{\gamma}$. 

%	     When 
%	     $\overline{\sigma_{\gamma'}}\cap\partial d_1=\emptyset$, 
%	     choose a homeomorphism
%	     \[
%	      \beta:D^{\dim\sigma_{\gamma'}+1} \rarrow{\cong}
%	     D^{\dim\sigma_{\gamma'}}\times[-1,1] 
%	     \]
%	     and define a map
%	     $\varphi_{\gamma}:D^{\dim\sigma_{\gamma'}+1}\to\overline{e}$
%	     by the following composition
%	     \[
%	     \varphi_{\gamma} : D^{\dim\sigma_{\gamma'}+1}\rarrow{\beta}
%	     D^{\dim\sigma_{\gamma'}}\times[-1,1] 
%	     \rarrow{\alpha\times 1} D^{n-1}\times[-1,1] \rarrow{\psi} D^{n}
%	     \rarrow{\varphi} \overline{e}.
%	     \]
%	     This is a homeomorphism onto
%	     $\overline{\sigma_{\gamma}}$, since $\psi$ is a
%	     homeomorphism when restricted to $\Int(D^{n-1})\times[-1,1]$.
%
%	     When 
%	     $\overline{\sigma_{\gamma'}}\cap\partial d_1\neq\emptyset$, 
%	     we have
%	     \begin{eqnarray*}
%	      \varphi^{-1}(\overline{\sigma_{\gamma}}) & = &
%	       \varphi^{-1}(R_{d_1}^{-1}(\overline{\sigma_{\gamma'}})) \\
%	      & = & r^{-1}(\varphi^{-1}(\overline{\sigma_{\gamma'}})) \\
%	      & = & \psi(\varphi^{-1}(\overline{\sigma_{\gamma'}}\times
%	       [-1,1])). 
%	     \end{eqnarray*}
%	     By the inductive assumption,
%	     $\overline{\sigma_{\gamma'}}\cap \partial d_1$
%	     and defines a cell structure on
%	     $\sigma_{\gamma}$. 
 \end{description}

 The fact that
 $\overline{e_{\gamma}}\setminus e_{\gamma}$ is covered with
 cells of dimension $<\dim e_{\gamma}$ follows from the construction of
 the characteristic map for $e_{\gamma}$, or the construction of a
 homeomorphism in Lemma \ref{inverse_image_of_collapse}. 
\end{proof}

 \begin{definition}
  \label{stable_subdivision_definition}
 The decomposition (\ref{decomposition_by_flows}) is called the
 \emph{stable subdivision} of $X$ associated to $\mu$ 
 and is denoted by $\Sd_{\mu}(X)$ or $\Sd_{f}(X)$ when $\mu=\mu_{f}$ for
 a faithful discrete Morse function $f$.

 For a critical cell $c$, the subspace
 \[
  W^{s}(c)= \bigcup_{\tau(\gamma)=c} e_{\gamma}
 \]
 is called the \emph{stable subspace} of $c$.
 \end{definition}

\begin{remark}
 Recall that a smooth Morse function on a manifold $M$ induces a
 decomposition of $M$ by stable manifolds. The stable manifold $W^{s}(c)$
 at a critical point $c$ is a submanifold of $M$ consisting of points on
 gradient flows going down to $c$. 
 The stable subdivision defined above is a discrete analogue of this
 decomposition.
\end{remark}

\begin{example}
 Consider the cubical cell decomposition of a torus $T^2$ given by
 the left picture in Figure \ref{torus}.
 Define an acyclic partial matching on this cell complex by the right
 picture in Figure \ref{torus}, in which critical cells are drawn by
 black. 

 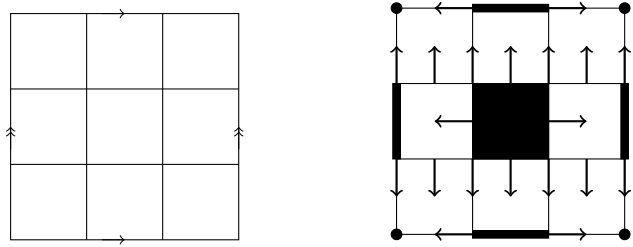
\begin{figure}[ht]
  \begin{center}
   \begin{tikzpicture}
    \draw (0,0) rectangle (3,3);

    \draw (1,0) -- (1,3);
    \draw (2,0) -- (2,3);
    \draw (0,1) -- (3,1);
    \draw (0,2) -- (3,2);
    \draw [->] (1.2,3) -- (1.5,3);
    \draw [->] (1.2,0) -- (1.5,0);
    \draw [->>] (0,1.2) -- (0,1.5);
    \draw [->>] (3,1.2) -- (3,1.5);
   \end{tikzpicture}
   \hspace{50pt}
   \begin{tikzpicture}
    \draw (0,0) rectangle (3,3);
    \draw (1,0) -- (1,3);
    \draw (2,0) -- (2,3);
    \draw (0,1) -- (3,1);
    \draw (0,2) -- (3,2);
    \draw [thick,->] (1,3) -- (0.5,3);
    \draw [thick,->] (2,3) -- (2.5,3);
    \draw [thick,->] (0,2) -- (0,2.5);
    \draw [thick,->] (0.5,2) -- (0.5,2.5);
    \draw [thick,->] (1,2) -- (1,2.5);
    \draw [thick,->] (1.5,2) -- (1.5,2.5);
    \draw [thick,->] (2,2) -- (2,2.5);
    \draw [thick,->] (2.5,2) -- (2.5,2.5);
    \draw [thick,->] (3,2) -- (3,2.5);
    \draw [thick,->] (1,1.5) -- (0.5,1.5);
    \draw [thick,->] (2,1.5) -- (2.5,1.5);
    \draw [thick,->] (0,1) -- (0,0.5);
    \draw [thick,->] (0.5,1) -- (0.5,0.5);
    \draw [thick,->] (1,1) -- (1,0.5);
    \draw [thick,->] (1.5,1) -- (1.5,0.5);
    \draw [thick,->] (2,1) -- (2,0.5);
    \draw [thick,->] (2.5,1) -- (2.5,0.5);
    \draw [thick,->] (3,1) -- (3,0.5);
    \draw [thick,->] (1,0) -- (0.5,0);
    \draw [thick,->] (2,0) -- (2.5,0);

    \draw [fill] (1,1) rectangle (2,2);

    \draw [fill] (-0.05,1) rectangle (0.05,2);
    \draw [fill] (2.95,1) rectangle (3.05,2);
    \draw [fill] (1,-0.05) rectangle (2,0.05);
    \draw [fill] (1,2.95) rectangle (2,3.05);
    
    \draw [fill] (0,0) circle (2pt);
    \draw [fill] (3,0) circle (2pt);
    \draw [fill] (0,3) circle (2pt);
    \draw [fill] (3,3) circle (2pt);
   \end{tikzpicture}
  \end{center}
  \caption{An acyclic partial matching on $T^2$}
  \label{torus}
 \end{figure}

The decomposition by stable subspaces and 
the stable subdivision are drawn in Figure
 \ref{stable_subdivision_of_torus}. 

 \begin{figure}[ht]
  \begin{center}
   \begin{tikzpicture}
    \draw [fill,lightgray] (0,2) rectangle (1,3);
    \draw [fill,lightgray] (2,2) rectangle (3,3);
    \draw [fill,lightgray] (0,0) rectangle (1,1);
    \draw [fill,lightgray] (2,0) rectangle (3,1);

    \draw [fill,lightgray] (1.3,2) -- (1,3) -- (1,2) -- (1.3,2);
    \draw [fill,lightgray] (1.7,2) -- (2,3) -- (2,2) -- (1.7,2);
    \draw [fill,lightgray] (2,1.3) -- (3,1) -- (2,1) -- (2,1.3);
    \draw [fill,lightgray] (2,1.7) -- (3,2) -- (2,2) -- (2,1.7);
    \draw [fill,lightgray] (1.3,1) -- (1,0) -- (1,1) -- (1.3,1);
    \draw [fill,lightgray] (1.7,1) -- (2,0) -- (2,1) -- (1.7,1);
    \draw [fill,lightgray] (1,1.3) -- (0,1) -- (1,1) -- (1,1.3);
    \draw [fill,lightgray] (1,1.7) -- (0,2) -- (1,2) -- (1,1.7);

    \draw (0,0) rectangle (3,3);

    \draw (1,0) -- (1,3);
    \draw (2,0) -- (2,3);
    \draw (0,1) -- (3,1);
    \draw (0,2) -- (3,2);
%    \draw [->] (1.2,3) -- (1.5,3);
%    \draw [->] (1.2,0) -- (1.5,0);
%    \draw [->>] (0,1.2) -- (0,1.5);
%    \draw [->>] (3,1.2) -- (3,1.5);

   \end{tikzpicture}
   \hspace{50pt}
   \begin{tikzpicture}
    \draw (0,0) rectangle (3,3);

    \draw (1,0) -- (1,3);
    \draw (2,0) -- (2,3);
    \draw (0,1) -- (3,1);
    \draw (0,2) -- (3,2);

    \draw (1.3,2) -- (1,3);
    \draw (1.7,2) -- (2,3);
    \draw (2,1.3) -- (3,1);
    \draw (2,1.7) -- (3,2);
    \draw (1.3,1) -- (1,0);
    \draw (1.7,1) -- (2,0);
    \draw (1,1.3) -- (0,1);
    \draw (1,1.7) -- (0,2);

    \draw (0.3,2) -- (0,3);
    \draw (0.7,2) -- (1,3);
    \draw (0.3,2) -- (1,1.8);
    \draw (0.7,2) -- (1,1.9);
    \draw (2.3,2) -- (2,3);
    \draw (2.7,2) -- (3,3);
    \draw (2.7,2) -- (2,1.8);
    \draw (2.3,2) -- (2,1.9);

    \draw (0.3,1) -- (0,0);
    \draw (0.7,1) -- (1,0);
    \draw (0.3,1) -- (1,1.2);
    \draw (0.7,1) -- (1,1.1);
    \draw (2.3,1) -- (2,0);
    \draw (2.7,1) -- (3,0);
    \draw (2.7,1) -- (2,1.2);
    \draw (2.3,1) -- (2,1.1);

%    \draw [->] (1.2,3) -- (1.5,3);
%    \draw [->] (1.2,0) -- (1.5,0);
%    \draw [->>] (0,1.2) -- (0,1.5);
%    \draw [->>] (3,1.2) -- (3,1.5);
   \end{tikzpicture}
  \end{center}
  \caption{Stable subdivision on $T^2$}
  \label{stable_subdivision_of_torus}
\end{figure}
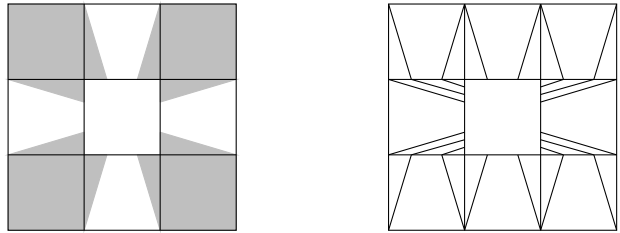

\end{example}

\section{The Flow Category}
\label{flow_category}

%Through out the rest of this paper, we fix a discrete Morse function $f$
%on a regular CW complex $X$, whose face poset is denoted by $F(X)$.

The aim of this section is to construct a
poset-enriched category $C(\mu)$, called the flow category, having
critical cells of $\mu$ as objects and show that the classifying space of
$C(\mu)$ is homotopy equivalent to $X$.

\subsection{Small Categories}
\label{small_category}

Let us first fix notation and terminology for categories and
functors. We regard a small category as a monoid in the category of
quivers.

\begin{definition} \hspace{\fill}
 \begin{enumerate}
  \item  A \emph{quiver} $Q$ consists of a set $Q_0$ of vertices and a
	 set $Q_1$ of arrows together with maps
	 \[
	 s,t : Q_1 \longrightarrow Q_0.
	 \]
	 For $x,y\in Q_0$ and $u\in Q_1$, when $s(u)=x$ and $t(u)=y$, we
	 write $u : x\to y$. By definition, the set of arrows from $x$
	 to $y$ equals $s^{-1}(x)\cap t^{-1}(y)$ and is denoted by
	 $Q(x,y)$. 

  \item For a quiver $Q$, define
	\[
	N_2(Q) = \set{(u,v)\in Q_1^2}{s(u)=t(v)}.
	\]

  \item A (small) \emph{category} is a quiver $C$ equipped with maps
	\begin{eqnarray*}
	 \circ & : & N_2(C) \longrightarrow C_1 \\
	 \iota & : & C_0 \longrightarrow C_1
	\end{eqnarray*}
	satisfying the associativity and the unit conditions, i.e.
	\begin{eqnarray*}
	 (u\circ v)\circ w & = & u\circ(v\circ w) \\
	 u & = & \iota(t(u))\circ u = u\circ \iota(s(u))
	\end{eqnarray*}
	for all $u,v,w\in C_1$ with $s(u)=t(v)$ and $s(v)=t(w)$.
	We also require that $s(u\circ v)=s(v)$, $t(u\circ v)=t(u)$, and 
	$s(\iota(x))=t(\iota(x))=x$. 
       
	For $x\in C_0$, $\iota(x)$ is denoted by $1_x$ and is called the
	\emph{identity morphism} at $x$.

% \item A \emph{functor} $f$ from a category $C$ to $D$ is a pair of maps
%       \begin{eqnarray*}
%	f_0 & : & C_0 \longrightarrow D_0 \\
%	f_1 & : & C_1 \longrightarrow D_1
%       \end{eqnarray*}
%       that are compatible with $s$, $t$, $\circ$, and $\iota$.	

% \item For functors $f,g : C\to D$, a \emph{natural transformation}
%       $\theta$ from $f$ to $g$ is a map
%       \[
%	\theta : C_0 \longrightarrow D_1
%       \]
%       satisfying the following conditions:
%       \begin{enumerate}
%	\item $\theta(x) \in D(f(x),g(x))$ for all $x\in C_0$.
%	\item The following diagram is commutative in $D$ for any
%	      $u\in C(x,y)$ 
%	      \[
%	      \begin{diagram}
%	       \node{f(x)} \arrow{s,l}{\theta(x)} \arrow{e,t}{f(u)}
%	       \node{f(y)} \arrow{s,r}{\theta(y)} \\ 
%	       \node{g(x)} \arrow{e,t}{g(u)} \node{g(y).}
%	      \end{diagram}
%	      \]
%       \end{enumerate}
%       In this case, we write
%       \[
%	\theta : f \Longrightarrow g.
%       \]
  \item A category $C$ is said to be \emph{acyclic} if either $C(x,y)$ or
       $C(y,x)$ is empty for any $x, y\in C_0$ with $x\neq y$ and
       $C(x,x)=\{1_x\}$ for any 
       $x\in C_0$.
 \end{enumerate}
 Definitions of functors and natural transformations in this formulation
 should be obvious and are omitted.
\end{definition}

\begin{remark}
 A poset is a typical example of an acyclic category. Many of
 properties and constructions for posets can be extended to acyclic
 categories. 
\end{remark}

For a small category $C$, we have a coproduct decomposition
\[
 C_1 = \coprod_{x,y\in C_0} C(x,y).
\]
Thus a small category $C$ can be defined as a collection of a set $C_0$,
a family of sets $\{C(x,y)\}_{x,y\in C_0}$, maps 
\[
 \circ : C(y,z)\times C(x,y) \longrightarrow C(x,z),
\]
and elements $1_x\in C(x,x)$ satisfying the associativity and the unit
conditions. 

In order to study higher structures, we need categories whose sets of
morphisms $C(x,y)$ are topological spaces, posets, small categories, and
so on. More generally, there is a notion of 
categories enriched over a monoidal category. Recall that, roughly
speaking, a monoidal category is a category equipped with a ``tensor
product operation'' for pairs of objects.

\begin{definition}
 \label{enriched}
 Let $(\bm{V},\otimes, 1)$ be a strict monoidal category\footnote{It is
 well known that any monoidal category can be replaced by a strict one.
 See Mac\,Lane's book \cite{MacLaneCategory} or Kassel's book
 \cite{KasselBook}, for example.}. 
 \begin{enumerate}
  \item A \emph{$\bm{V}$-quiver} $Q$ consists of a set $Q_0$ of vertices 
	and a family $\{Q(x,y)\}_{x,y\in Q_0}$ of objects in $\bm{V}$.
  \item A \emph{category enriched over $\bm{V}$} or simply a
	\emph{$\bm{V}$-category} consists of a $\bm{V}$-quiver $C$
	together with
	\begin{itemize}
	 \item a morphism
	       \[
	       \circ : C(y,z)\otimes C(x,y) \longrightarrow C(x,z)
	       \]
	       in $\bm{V}$ for each triple $x,y,z\in C_o$ and
	 \item a morphism $1_x:1\to C(x,x)$ in $\bm{V}$ for each $x\in C_0$
	\end{itemize}
	that are subject to the standard requirements of being a small
	category. 
 \end{enumerate}
\end{definition}

%The reader is refered to Kelly's book
%\cite{KellyEnrichedCategory_reprint} for details. 

Examples of monoidal categories include the category $\Posets$ of posets,
the category $\Spaces$ of topological spaces, and the category $\Cats$ of
small categories. Monoidal structures are given by products in all of
these categories. We use the following terminologies. 

\begin{definition}
 \label{popular_enriched_categories}
 Categories enriched over $\Posets$, $\Spaces$ and $\Cats$ are called
 \emph{poset-categories}, \emph{topological categories}, and 
 \emph{(strict) $2$-categories} respectively.
\end{definition}

\begin{remark}
 Note that poset-categories
 can be regarded as $2$-categories under the inclusion
 $\Posets\subset\Cats$. 
\end{remark}

Our first task in this section is to construct a poset-category $C(\mu)$
from an acyclic partial matching $\mu$. Since poset-categories are
$2$-categories, we need to recall basic definitions and properties of
$2$-categories.

\begin{definition}
 For a $2$-category $C$ and objects $x,y\in C_0$, $C(x,y)$ is a small
 category. Thus it consists of the set of objects $C(x,y)_0$
 and morphisms $C(x,y)_1$. Elements of $C(x,y)_0$ and $C(x,y)_1$ are
 called \emph{$1$-morphisms} and \emph{$2$-morphisms} in $C$,
 respectively. When $\theta\in C(x,y)(u,v)$, we 
 write $\theta : u\Rightarrow v$.

 The classes of all $1$-morphisms and
 $2$-morphisms in $C$ are denoted by $C_1$ and $C_2$, respectively. When 
 $C_2$ is a set, $C$ is called a \emph{small} $2$-category.

 There are two kinds of compositions in a $2$-category
 $C$. First of all, given a triple $x,y,z\in C_0$ of objects, we have
 the \emph{horizontal composition} functor
 \[
 \circ : C(y,z)\times C(x,y) \longrightarrow C(x,z).
 \]
 On the other hand, for a triple $u,v,w\in C(x,y)_0$, we have the
 \emph{vertical composition} denoted by
 \[
 \ast : C(x,y)(v,w)\times C(x,y)(u,v) \longrightarrow C(x,y)(u,w).
 \]
\end{definition}

There are weaker versions of $2$-categories such as bicategories and
$(\infty,2)$-categories. Although we only use strict
$2$-categories, we are often forced to use weaker notions of functors
between $2$-categories. 

\begin{definition}
 \label{colax_functor_definition}
 Let $C$ and $D$ be small $2$-categories.
 A \emph{colax functor} $f$ from $C$ to $D$
 consists of the following data:
 \begin{itemize}
  \item a map
	\[
	 f : C_0 \longrightarrow D_0,
	\]
  \item a family of functors
	\[
	 f=f_{y,x} : C(x,y) \longrightarrow D(f(x),f(y)),
	\]
  \item a family of natural transformations
	\[
	\xymatrix{%
	f(x) \ar[r]^{f(w)}
	\rruppertwocell<15>^{f(w'\circ w)}{\hspace*{30pt}f_{x'',x',x}}
	& f(x') \ar[r]^{f(w')} & f(x'')
	\lltwocell<\omit>{\omit}
	}
	\]
  \item a family of natural transformations 
	\[
	\xymatrix{%
	f(x) \rtwocell^{f(1_x)}_{1_{f(x)}}{\hspace*{5pt}f_x} & f(x).
	}
	\]
 \end{itemize}
 These maps are subject to the following additional conditions: 
 \begin{enumerate}
  \item The following diagram is commutative in $D(f(x),f(t))$
	\begin{equation}
	\xymatrixcolsep{45pt}
	 \xymatrix{
	f((w\circ v)\circ u) \ar@{=}[d]
	\ar@{=>}[r]^{f_{q,y,x}} & f(w\circ v)\circ f(u)
	\ar@{=>}[r]^{f_{q,z,y}\circ 1_{f(u)}} &
	(f(w)\circ f(v))\circ f(u) \ar@{=}[d] \\ 
	f(w\circ(v\circ u)) 
	\ar@{=>}[r]_{f_{q,z,x}} &
	f(w)\circ f(v\circ u)\ar@{=>}[r]_{1_{f(w)}\circ f_{z,y,x}} &
	f(w)\circ (f(v)\circ f(u)) 
	}
	\label{composer}
	\end{equation}
	for any composable $1$-morphisms
	$x \rarrow{u} y \rarrow{v} z \rarrow{w} q$ in $C$.

  \item The following diagrams are commutative in $D(f(x),f(y))$
	\begin{equation}
	\xymatrixcolsep{30pt}
	\xymatrix{ 
	f(u\circ 1_x) \ar@{=}[d] \ar@{=>}[r]^{f_{y,x,x}} &
	f(u)\circ f(1_x) \ar@{=>}[r]^(0.54){1_{f(u)}\circ f_x} &
	f(u)\circ 1_{f(x)} 
	\ar@{=}[d] \\ 
	f(u) \ar@{=}[rr] & & f(u)
	}
	\label{uniter1}
	\end{equation}
	\begin{equation}
	\xymatrixcolsep{30pt}
	\xymatrix{
	f(1_y\circ u) \ar@{=}[d] \ar@{=>}[r]^{f_{y,y,x}} & 
	f(1_y)\circ f_1(u) \ar@{=>}[r]^(0.54){f_y\circ 1_{f(u)}} &
	1_{f(y)}\circ f(u) 
	\ar@{=}[d] \\ 
	f(u) \ar@{=}[rr] & & f(u)
	}
	\label{uniter2}
	\end{equation}
	for any $1$-morphism $u : x\to y$ in $C$.
 \end{enumerate}

 $f$ is called a \emph{normal colax functor} when $f_x$ is the identity 
 for all objects $x \in C_0$.  

 For simplicity, we suppress the indices in $f_{z,y,x}$ and $f_x$ and
 denote them by $f$, when there is no danger of confusion.
\end{definition}

\begin{remark}
 Note that colax functors are often called \emph{oplax functors} in the
 literature. The choice of the term ``colax'' is based on the general
 principle that, when $2$-morphisms are reversed, we put a suffix
 ``co''. 
\end{remark}

\begin{remark}
 \label{lax_functor_definition}
 We may reverse the directions of natural transformations $f_{x,y,z}$
 and $f_x$ to obtain the notion of \emph{lax functors}.

 When these natural transformations are isomorphisms, $f$ is
 called a \emph{pseudofunctor}. In particular, a pseudofunctor can be
 regarded as a lax functor and a colax functor.
\end{remark}

\subsection{Categories of Combinatorial Flows}
\label{category_of_flows}

Here we construct a poset category $C(\mu)$ out of
the poset of flow paths $\FP(\mu)$. A reduced version
$\overline{C}(\mu)$ is also introduced.
 
 We first define poset-enriched quivers\footnote{Definition
 \ref{enriched}.} 
 $Q(\mu)$ and $\overline{Q}(\mu)$.

\begin{definition}
 Define $Q(\mu)_0=\Cr(\mu)$ and 
 \[
 Q(\mu)_1 = \set{(c,\gamma)\in \Cr(\mu)\times\FP(\mu)}{c\succ \iota(\gamma)}.
 \]
 Extend the target map $\tau:\FP(\mu)\to\Cr(\mu)$ to a structure of quiver
 \[
 \sigma,\tau : Q(\mu)_1 \longrightarrow Q(\mu)_0
 \]
 by $\sigma(c,\gamma)=c$. Similarly we define
 $\overline{Q}(\mu)_0=Q(\mu)_0=\Cr(\mu)$ and 
 \[
 \overline{Q}(\mu)_1 = \set{(c,\gamma)\in
 \Cr(\mu)\times\overline{\FP}(\mu)}{c\succ \iota(\gamma)}. 
 \]
 The source and the target maps
 $\sigma,\tau : \overline{Q}(\mu)_1\to \overline{Q}(\mu)_0$ are 
 defined analogously.

 For each pair $(c,c')$ of critical cells, the forgetful map
 \[
  Q(\mu)(c,c') \longrightarrow \FP(\mu)
 \]
 is injective. 
 Define a partial order on $Q(\mu)(c,c')$ as a full subposet 
 of $\FP(\mu)$ under this injection. The poset structure on
 $\overline{Q}(\mu)(c,c')$ is induced analogously from that of
 $\overline{\FP}(\mu)$. 

 The resulting poset-quivers $Q(\mu)$ and $\overline{Q}(\mu)$ are
 called the \emph{flow quiver} and the \emph{reduced flow quiver} of
 $\mu$, respectively.  When $\mu=\mu_{f}$ for a discrete Morse function
 $f$, they are also denoted by $Q(f)$ and $\overline{Q}(f)$,
 respectively. 
\end{definition}

\begin{remark}
 When $\gamma=(e_1,u_1,\ldots,e_n,u_n;c')$, we denote 
 $(c,\gamma)\in Q(\mu)(c,c')$ by $(c;e_1,u_1,\ldots,e_n,u_n;c')$ or simply
 by $\gamma$ when there is no danger of confusion.
\end{remark}

In order to make these quivers into small categories, we need to specify
how to compose morphisms. For the flow quiver $Q(\mu)$, it is simply
given by concatenations. 

\begin{definition}
 For $\gamma\in Q(\mu)(c_1,c_2)$ and
 $\gamma'\in Q(\mu)(c_2,c_3)$,
 define $\gamma * \gamma'$ by 
 \[
 \gamma * \gamma' = (c_1;e_1,u_1,\ldots,e_m,u_m,
 e'_1,u'_1,\ldots,e'_n,u'_n;c_3) 
 \]
 when $\gamma=(c_1;e_1,u_1,\ldots,e_m,u_m;c_2)$ and
 $\gamma'=(c_2;e'_1,u_1',\ldots,e_n',u_n';c_3)$. 
\end{definition}

\begin{remark}
 The fact that $\gamma\ast \gamma'$ belongs to
 $Q(\mu)(c_1,c_3)$ can be verified as follows. Suppose
 $\gamma=(c_1;e_1,u_1,\ldots,e_m,u_m;c_2)$ and 
 $\gamma'=(c_2;e_1',u_1',\ldots,e_n',u_n';c_3)$. By definition, we have
 $u_m\succ c_2$ and $c_2\succ e_1'$. Thus
 $u_m\succ e_1'$ and we obtain a flow path $\gamma *\gamma'$.
\end{remark}

\begin{lemma}
 \label{concatenation_is_poset_map}
 The concatenation of flow paths defines a poset map
 \[
 * : Q(\mu)(c_{1},c_{2}) \times Q(\mu)(c_{2},c_{3}) \longrightarrow
 Q(\mu)(c_{1},c_{3}),
 \]
 where $Q(\mu)(c_1,c_2)\times Q(\mu)(c_2,c_3)$ is equipped with the product
 poset structure.
\end{lemma}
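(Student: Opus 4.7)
The plan is to prove monotonicity directly by concatenating embedding functions. Suppose $\gamma_i \preceq \gamma_i'$ in $Q(\mu)(c_i,c_{i+1})$ for $i=1,2$. Write
\[
\gamma_i=(e_1^{(i)},u_1^{(i)},\ldots,e_{m_i}^{(i)},u_{m_i}^{(i)};c_{i+1}),\qquad
\gamma_i'=(e_1'^{(i)},u_1'^{(i)},\ldots,e_{n_i}'^{(i)},u_{n_i}'^{(i)};c_{i+1}),
\]
and let $\varphi_i$ be the embedding function witnessing $\gamma_i \preceq \gamma_i'$. Since $\tau(\gamma_i)=\tau(\gamma_i')$, Remark \ref{k=m+1} forces $\varphi_i:\{0,\ldots,m_i+1\}\to\{0,\ldots,n_i+1\}$ with $\varphi_i(0)=0$ and $\varphi_i(m_i+1)=n_i+1$.

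Next I would define $\varphi:\{0,\ldots,m_1+m_2+1\}\to\{0,\ldots,n_1+n_2+1\}$ by splicing:
\[
\varphi(j)=\begin{cases}\varphi_1(j),& 0\le j\le m_1,\\ n_1+\varphi_2(j-m_1),& m_1+1\le j\le m_1+m_2+1.\end{cases}
\]
Strict monotonicity within each regime is inherited from $\varphi_i$; at the junction, $\varphi(m_1)=\varphi_1(m_1)\le n_1<n_1+\varphi_2(1)=\varphi(m_1+1)$. Conditions \ref{subpath_initial} and \ref{subpath_terminal} of Definition \ref{subpath} are immediate ($\varphi(0)=0$ and $\varphi(m_1+m_2+1)=n_1+(n_2+1)$). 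Condition \ref{subpath_constant_cells} follows piecewise from the corresponding conditions on $\varphi_1$ and $\varphi_2$, once one unpacks the labeling of $u$-cells in $\gamma_1*\gamma_2$ and $\gamma_1'*\gamma_2'$.

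The main step, and essentially the only non-routine one, is verifying the face condition \ref{subpath_face_relation} at the index $j=m_1+1$, where the cells of $\gamma_2$ must be compared with cells of $\gamma_1'$. Explicitly, for $\varphi(m_1)<p\le\varphi(m_1+1)$, one has $e_{m_1+1}=e_1^{(2)}=\iota(\gamma_2)$, and $e'_p$ is either $e'^{(1)}_p$ (when $p\le n_1$) or $e'^{(2)}_{p-n_1}$ (when $p\ge n_1+1$). The second range is handled by condition \ref{subpath_face_relation} for $\varphi_2$ at $j=1$, since $\varphi_2(0)=0<p-n_1\le\varphi_2(1)$. For the first range one invokes condition \ref{subpath_face_relation} for $\varphi_1$ at its final index $j=m_1+1$, which gives $c_2=e^{(1)}_{m_1+1}\preceq e'^{(1)}_p$ for all $\varphi_1(m_1)<p\le n_1$; combining with $e_1^{(2)}=\iota(\gamma_2)\prec c_2$, which holds by the very definition of $(c_2,\gamma_2)\in Q(\mu)_1$, transitivity yields $e_1^{(2)}\preceq e'^{(1)}_p$ as required.

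All remaining indices $j$ lie entirely in the $\gamma_1$-block or entirely in the $\gamma_2$-block, so condition \ref{subpath_face_relation} follows immediately from the corresponding property of $\varphi_1$ or $\varphi_2$. This shows $\gamma_1*\gamma_2\preceq\gamma_1'*\gamma_2'$, proving that $*$ is a poset map on the product. I expect the boundary-index argument above to be the only real content; the rest is bookkeeping of indices, which I would relegate to a short case analysis rather than grind through in detail.
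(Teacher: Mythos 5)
Your proof is correct and follows essentially the same route as the paper's: splice the two embedding functions $\varphi_1,\varphi_2$ (using Remark \ref{k=m+1} to pin down their domains and endpoints) into a single map, then check Definition \ref{subpath} block by block. In fact your treatment of the junction index $j=m_1+1$ is more explicit than the paper's: the paper lists the two inherited face conditions (from $\varphi_1$ at its final index and from $\varphi_2$ at $j=1$) and asserts the conclusion, whereas the needed inequality $e_1^{(2)}\preceq e'^{(1)}_p$ for $\varphi_1(m_1)<p\le n_1$ really does require, as you note, combining $c_2\preceq e'^{(1)}_p$ with $\iota(\gamma_2)\prec c_2$ by transitivity. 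You have supplied the one genuinely non-routine observation that the paper leaves implicit.
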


\begin{proof}
 Suppose that $(\gamma, \gamma')$ and $(\delta, \delta')$ in
 $Q(\mu)(c_{1},c_{2}) \times Q(\mu)(c_{2},c_{3})$ satisfy  
 the order relation
 $(\gamma,\gamma') \preceq (\delta,\delta')$. In other
 words, $\gamma \preceq \delta$ and
 $\gamma' \preceq \delta'$ hold in $Q(\mu)(c_{1},c_{2})$ and
 $Q(\mu)(c_{2},c_{3})$, respectively.  

 Suppose
 \begin{eqnarray*}
  \gamma & = & (c_1;e_1,u_1,\ldots,e_{m},u_{m};c_2) \\
  \gamma' & = & (c_1;e'_{m+1},u'_{m+1},\ldots,e'_{m+m'},u'_{m+m'};c_3) \\
  \delta & = & (c_2;e''_1,u''_1,\ldots,e''_{n},u''_{n};c_2) \\
  \delta' & = & (c_2;e'''_{n+1},u'''_{n+1},\ldots,e'''_{n+n'},u'''_{n+n'};c_3).
 \end{eqnarray*}
 By Remark \ref{k=m+1}, the embedding functions for the relations
 $\gamma \preceq \delta$ and $\gamma' \preceq \delta'$ are of the
 following form
 \begin{eqnarray*}
  \varphi & : & \{0,\ldots,m+1\}\to\{0,\ldots,n+1\} \\
  \varphi' & : & \{0,\ldots,m'+1\}\to\{0,\ldots,n'+1\}
 \end{eqnarray*}
 According to our numbering scheme, the conditions for
 $\gamma\preceq\delta$ are 
 \begin{enumerate}
  \item $\varphi(0)=0$,
  \item $u_j=u''_{\varphi(j)}$ for $1\le j<m+1$,
  \item $\varphi(m+1)=n+1$,
  \item for each $1\le j\le m+1$, $e_j\preceq e''_{p}$ for all
	$\varphi(j-1)<p\le \varphi(j)$
 \end{enumerate}
 and the conditions for $\gamma'\preceq\delta'$ are
\begin{enumerate}
 \item $\varphi'(0)=0$,
 \item $u'_{j}=u'''_{\varphi'(j-m)+n}$ for
	$m+1\le j< m+m'+1$,
 \item $\varphi'(m'+1)=n'+1$,
 \item for each $m+1\le j\le m+m'+1$, $e_j\preceq e''_{p}$ for all
	$\varphi'(j-1-m)+n<p\le \varphi'(j-m)+n$.
\end{enumerate}

 Now define a map
 \[
 \varphi*\varphi' : \{0,\ldots,m+m'+1\}\to\{0,\ldots,n+n'+1\}
 \]
 by
 \[
  (\varphi*\varphi')(i) = 
 \begin{cases}
  \varphi(i), & 0\le i\le m \\
  \varphi'(i-m)+n, & m+1\le i\le m+m'+1.
 \end{cases}
 \]
 Let us verify that this map gives rise to the relation
 $\gamma * \gamma' \preceq \delta * \delta$. The conditions
 $(\varphi*\varphi')(0)=0$ and $(\varphi*\varphi')(m+m'+1)=n+n'+1$ are
 immediate from the definition.
 The remaining conditions in Definition \ref{subpath} can be split into
 the following: 
 \begin{enumerate}
  \item $u_j=u''_{\varphi*\varphi'(j)}$ for $1\le j< m+1$,
  \item $u'_{j}=u'''_{\varphi*\varphi(j)}$ for
	$m+1\le j< m+m'+1$,
  \item for each $1\le j\le m+1$, $e_j\preceq e''_{p}$ for all
	$(\varphi*\varphi')(j-1)<p\le (\varphi*\varphi')(j)$, and
  \item for each $m+1\le j\le m+m'+1$, $e'_j\preceq e'''_{p}$ for all
	$(\varphi*\varphi')(j-1)<p\le (\varphi*\varphi')(j)$.
 \end{enumerate}
 By the definition of $\varphi*\varphi'$, these conditions are 
 \begin{enumerate}
  \item $u_j=u''_{\varphi(j)}$ for $1\le j<m+1$,
  \item $u'_{j}=u'''_{\varphi'(j-m)+n}$ for
	$m+1\le j< m+m'+1$,
  \item for each $1\le j\le m+1$, $e_j\preceq e''_{p}$ for all
	$\varphi(j-1)<p\le \varphi(j)$, and
  \item for each $m+1\le j\le m+m'+1$, $e'_j\preceq e'''_{p}$ for all
	$\varphi'(j-1-m)+n<p\le \varphi'(j-m)+n$.
 \end{enumerate}
 And we obtain
 $\gamma * \gamma' \preceq \delta * \delta'$.
\end{proof}

\begin{lemma}
 \label{concatenation_is_associative}
 The concatenation operation is associative, i.e.\ 
 \[
 (\gamma_{1} * \gamma_{2}) * \gamma_{3} = \gamma_{1} * (\gamma_{2} *
 \gamma_{3}) 
 \] 
 for $\gamma_1\in Q(\mu)(c_1,c_2)$, $\gamma_2\in Q(\mu)(c_2,c_3)$, and
 $\gamma_3\in Q(\mu)(c_3,c_4)$.
\end{lemma}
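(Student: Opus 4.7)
The plan is to verify the equality by simply unfolding the definition of $*$, which is juxtaposition of finite sequences of cells. Once the sequences are written out explicitly, both parenthesizations yield the same element of $Q(\mu)(c_1,c_4)$, so the proof reduces to an inspection.

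Concretely, I would write
\[
\gamma_i = (c_i;\, e^{(i)}_1, u^{(i)}_1, \ldots, e^{(i)}_{n_i}, u^{(i)}_{n_i};\, c_{i+1})
\]
for $i=1,2,3$. A first application of the definition gives
\[
\gamma_1 * \gamma_2 = (c_1;\, e^{(1)}_1, u^{(1)}_1, \ldots, u^{(1)}_{n_1}, e^{(2)}_1, u^{(2)}_1, \ldots, u^{(2)}_{n_2};\, c_3),
\]
and a second application yields
\[
(\gamma_1 * \gamma_2) * \gamma_3 = (c_1;\, e^{(1)}_1, u^{(1)}_1, \ldots, u^{(1)}_{n_1}, e^{(2)}_1, \ldots, u^{(2)}_{n_2}, e^{(3)}_1, u^{(3)}_1, \ldots, u^{(3)}_{n_3};\, c_4).
\]
An identical computation starting from $\gamma_2 * \gamma_3$ and prepending $\gamma_1$ produces the very same sequence, so the two sides coincide on the nose.

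I would also briefly note that each intermediate concatenation genuinely lies in the appropriate $Q(\mu)(c,c')$: this is exactly the face-relation check made in the remark following the definition of $*$, namely that the terminal $u$-cell of one flow path dominates the critical cell joining it to the next, which in turn dominates the initial cell of the following flow path. This check is preserved under iteration and is independent of how one parenthesises the product.

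There is no real obstacle in this lemma; it is pure bookkeeping, reflecting the fact that concatenation of finite sequences is associative and that the intermediate critical cells $c_2$ and $c_3$ appear only as boundary labels rather than as interior entries of the underlying sequence.
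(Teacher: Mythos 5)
Your proposal is correct and takes the same approach as the paper, which simply says ``By definition'': concatenation of finite sequences is associative, and you have just written out what that means. Nothing further is needed.
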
 

\begin{proof}
 By definition.
\end{proof}

The following relations are useful.

\begin{lemma}
 \label{order_and_composition}
 For a pair of composable flow paths
 $\gamma=(e_1,u_1,\ldots,e_m,u_m;\tau(\gamma))$ 
 and $\delta=(e_1',u_1',\ldots,e_n',u_n';\tau(\delta))$, we have  
 \begin{enumerate}
  \item $\delta \preceq \gamma\ast\delta$, 
	when either
	\begin{enumerate}
	 \item $\ell(\delta)\ge 1$, or
	 \item $\ell(\delta)=0$ and $\tau(\delta)\preceq e_p$ for all
	       $1\le p\le m$, and 
	\end{enumerate} 
  \item $\gamma\ast\delta \preceq\gamma$, when either
	\begin{enumerate}
	 \item $\ell(\gamma)\ge 1$, or
	 \item $\ell(\gamma)=0$ and $e'_1\preceq\tau(\gamma)$.
	\end{enumerate}
 \end{enumerate}
\end{lemma}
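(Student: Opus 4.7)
Both parts will be established by constructing explicit embedding functions satisfying the four conditions of Definition \ref{subpath}.

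For part (2), I take $\varphi$ to be the identity map on $\{0,1,\ldots,m+1\}$. Conditions (1)--(3) and condition (4) for $1 \le j \le m$ are immediate since $\gamma$ is the initial segment of $\gamma \ast \delta$, so the $u$-cells and $e$-cells with index at most $m$ in the concatenation agree with those of $\gamma$. The only substantive case is condition (4) at $j = m+1$, demanding the face relation $\tilde{e}^{\gamma\ast\delta}_{m+1} \preceq \tau(\gamma)$, where $\tilde{e}$ denotes the $e$-cells of $\gamma \ast \delta$. When $\ell(\delta) \ge 1$ this cell is $e'_1 = \iota(\delta)$, and the composability of $\gamma$ with $\delta$ (the requirement $\tau(\gamma) \succ \iota(\delta)$ built into $Q(\mu)$) supplies the relation; when $\ell(\delta) = 0$ it coincides with $\tau(\gamma)$. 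In case (b) the embedding collapses to $\{0,1\} \to \{0,1\}$ and the stated hypothesis $e'_1 \preceq \tau(\gamma)$ furnishes the single face relation that must be checked.

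For part (1), the equal targets $\tau(\delta) = \tau(\gamma \ast \delta)$ together with Remark \ref{k=m+1} force the embedding to have domain $\{0,1,\ldots,n+1\}$. I take the shift embedding $\varphi(0) = 0$, $\varphi(j) = m + j$ for $1 \le j \le n+1$. Conditions (1), (3) are automatic; condition (2) follows because $\tilde{u}^{\gamma\ast\delta}_{m+j} = u'_j$; and condition (4) is the tautology $e'_j \preceq e'_j$ for $2 \le j \le n+1$. The entire argument therefore reduces to condition (4) at $j = 1$, namely $e'_1 \preceq \tilde{e}^{\gamma\ast\delta}_p$ for $1 \le p \le m+1$. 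The case $p = m+1$ is trivial since $\tilde{e}^{\gamma\ast\delta}_{m+1} = e'_1$; for $1 \le p \le m$ one needs the face relation $e'_1 \preceq e_p$. In case (b) the hypothesis $\tau(\delta) \preceq e_p$ supplies this directly, since $\ell(\delta) = 0$ forces $e'_1 = \tau(\delta)$.

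The hard part is to justify the face relation $e'_1 \preceq e_p$ for all $1 \le p \le m$ in case (1a), where the sole additional assumption is $\ell(\delta) \ge 1$. Composability and the flow path axiom for $\gamma$ yield only the chain $e'_1 \prec \tau(\gamma) \prec u_m$, placing $e'_1$ among the faces of $u_m$ but not immediately relating it to the earlier cells $e_p$ with $p < m$. I expect the resolution to exploit the criticality of $\tau(\gamma)$, the matched-pair structure of each $(e_p, u_p)$, and the acyclicity of $\mu$, most likely through a descending induction along $\gamma$ that propagates the face relation backwards through each step $e_p \preceq u_p \succ e_{p+1}$, using Lemma \ref{unrhd} to rule out any inconsistency. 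This is the only step of the verification where the embedding cannot be read off directly from the concatenation.
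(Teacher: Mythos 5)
Your constructions and the verification of part (2) — including the observation that composability automatically supplies $e'_1 \preceq \tau(\gamma)$ — agree exactly with the paper's proof, as does case (1b) of part (1). You have also located the one substantive step in case (1a): with the shift embedding $\varphi(0)=0$, $\varphi(j)=m+j$, condition (4) of Definition~\ref{subpath} at $j=1$ requires $e'_1 \preceq e_p$ for \emph{all} $1 \le p \le m$, while composability only yields $e'_1 \prec \tau(\gamma) \prec u_m$. You should be aware, though, that the paper's own proof carries exactly this hidden gap: the sentence ``For each $1\le j\le n+1$, $\varphi(j-1)<p\le\varphi(j)$ implies that $p=m+j$'' is false at $j=1$, where $p$ in fact ranges over $\{1,\dots,m+1\}$, so the relations $e'_1\preceq e_p$ for $p\le m$ are never established.

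The inductive fix you hope for cannot work, because case (1a) is false as stated. Consider the cone over a pentagon with apex $a$, base vertices $v_1,\dots,v_5$, base edges $e_i=[v_i,v_{i+1}]$, cone edges $f_i=[a,v_i]$, and triangles $\sigma_i=[a,v_i,v_{i+1}]$ (indices mod $5$), equipped with the acyclic partial matching $\mu(v_1)=e_5$, $\mu(v_2)=e_1$, $\mu(v_3)=e_2$, $\mu(f_2)=\sigma_1$, $\mu(f_3)=\sigma_2$. Then $f_1$, $v_5$, and $\sigma_3$ are all critical, $\gamma=(f_3,\sigma_2,f_2,\sigma_1;f_1)\in Q(\mu)(\sigma_3,f_1)$ and $\delta=(v_1,e_5;v_5)\in Q(\mu)(f_1,v_5)$ are composable with $\ell(\delta)=1$, yet $e'_1=v_1$ is not a face of $e_1=f_3=[a,v_3]$, so $\delta\not\preceq\gamma\ast\delta$. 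Case (1a) therefore needs the same additional face hypothesis as (1b). This does not propagate in the paper, which only invokes part (1) through case (1b) (Example~\ref{triangle_example2}) and uses part (2) in Proposition~\ref{DR_to_genuine_fiber}; but the step you could not supply cannot be supplied, and the correct conclusion is that the lemma's statement, not your argument, is what needs repairing.
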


\begin{proof}
 Define $\varphi:\{0,1,\ldots,n+1\}\to \{0,1,\ldots,n+m+1\}$ by
 $\varphi(0)=0$ and $\varphi(i) = i+m$ for $i=1,\ldots,n+1$. Then the
 first three conditions for $\delta \preceq \gamma\ast\delta$ is obvious. 

 Suppose $n>0$. For each $1\le j\le n+1$, $\varphi(j-1)<p\le\varphi(j)$
 implies that $p=m+j$. Thus the fourth condition is equivalent to
 $e'_j\preceq e''_{m+j}=e'_j$ for $j=1,\ldots,n$ where
 \[
  \gamma\ast\delta =
 (e''_1,u_1'',\ldots,e''_{m+n},u''_{m+n};\tau(\delta)). 
 \]
 When $n=0$, the condition is $e'_1\preceq e''_{p}$ for
 $\varphi(0)<p\le\varphi(1)=m+1$. Or $\tau(\delta)\preceq e_{p}$ for
 $0<p\le m+1$. Since $\tau(\delta)=e_{m+1}=\tau(\gamma)$ in this case,
 the essential condition is  $\tau(\delta)\preceq e_{p}$ for
 $0<p\le m$.

 Let us show that the identity
 map $\{0,1,\ldots,m+1\}\to\{0,1,\ldots,m+1\}$ is an embedding function
 for $\gamma\ast\delta\preceq\gamma$ if $\ell(\gamma)\ge 1$. Again the
 first three conditions obviously hold. The fourth condition is
 equivalent to $e_j\preceq e_j$ 
 for $1\le j\le m$ and $e_1'\preceq \tau(\gamma)$, when
 $\ell(\gamma)\ge 1$. Since
 $\gamma\ast\delta$ is defined, $e_1'\preceq\tau(\gamma)$ holds. 
 When $\ell(\gamma)=0$, the fourth condition is equivalent to 
 $e'_1\preceq\tau(\gamma)$.
\end{proof}

\begin{example}
 \label{triangle_example2}
 Consider the function $f$ on
 the boundary $\partial [v_0,v_1,v_2]$ of a $2$-simplex defined by
 \begin{eqnarray*}
  f([v_0]) & = & 0 \\
  f([v_0,v_1]) & = & 1 \\
  f([v_0,v_2]) & = & 2 \\
  f([v_1]) & = & 4 \\
  f([v_2]) & = & 5 \\
  f([v_1,v_2]) & = & 6.
 \end{eqnarray*}
 Then this is a discrete Morse function whose partial matching is given
 by 
 \begin{eqnarray*}
  \mu_f([v_1]) & = & [v_0,v_1] \\
  \mu_f([v_2]) & = & [v_0,v_2].
 \end{eqnarray*}
 It has two critical simplices $[v_0]$ and $[v_1,v_2]$.
 Here is a list of Forman paths:
 \begin{eqnarray*}
  \gamma_1 & : & [v_1] \prec [v_0,v_1] \\
  \gamma_2 & : & [v_2] \prec [v_0,v_2].
 \end{eqnarray*}
 We regard each Forman path as a flow path by adding $[v_0]$ at the end: 
 \begin{eqnarray*}
  \gamma_{1} & : & [v_1] \prec [v_0,v_1] \succ [v_0] \\
  \gamma_{2} & : & [v_2] \prec [v_0,v_2] \succ [v_0].
 \end{eqnarray*}
 We have non-Forman flow paths as follows:
 \begin{eqnarray*}
  \gamma_{0} & : & [v_0] \\
  \gamma_{01} & : & [v_0,v_1] \preceq [v_0,v_1] \succ [v_0] \\
  \gamma_{02} & : & [v_0,v_2] \preceq [v_0,v_2] \succ [v_0] \\
  \gamma_{12} & : & [v_1,v_2]
 \end{eqnarray*}
 so that
 \[
  \FP(\mu) = \overline{\FP}(\mu) = 
 \{\gamma_0, \gamma_1, \gamma_2, \gamma_{01}, \gamma_{02}, \gamma_{12}\}.
 \]
 Note that
 \begin{eqnarray*}
  \gamma_{01} & = & \gamma_{01}\ast \gamma_0  \\
  \gamma_{02} & = & \gamma_{02}\ast \gamma_0, 
 \end{eqnarray*}
 which imply, by Lemma \ref{order_and_composition},
 \begin{eqnarray*}
  \gamma_{0} & \prec & \gamma_{01} \\
  \gamma_{0} & \prec & \gamma_{02}.
 \end{eqnarray*}
 Note also that $\gamma_{0}\prec\gamma_{1}$ and $\gamma_0\prec\gamma_2$
 do not hold even though 
 $\gamma_1=\gamma_1\ast\gamma_0$ and $\gamma_2=\gamma_2\ast\gamma_0$.

 Furthermore we also have 
 \begin{eqnarray*}
  \gamma_{1} & \prec & \gamma_{12} \\
  \gamma_{2} & \prec & \gamma_{12} \\
  \gamma_{1} & \prec & \gamma_{01} \\
  \gamma_{2} & \prec & \gamma_{02}.
 \end{eqnarray*}
 For example, an embedding function for $\gamma_{1}\prec\gamma_{01}$ is
 given by the identity map $\{0,1,2\}\to\{0,1,2\}$ and an embedding
 function for $\gamma_{1}\prec\gamma_{12}$ is given by the identity map
 $\{0,1\}\to\{0,1\}$.

 Thus the Hasse diagram is given by
 \begin{center}
  \begin{tikzpicture}
  \draw [fill] (0,0) circle (2pt);
  \draw (0,-0.5) node {$\gamma_0$};

  \draw [fill] (1,0) circle (2pt);
  \draw (1,-0.5) node {$\gamma_1$};

  \draw [fill] (2,0) circle (2pt);
  \draw (2,-0.5) node {$\gamma_2$};

  \draw [fill] (0,1) circle (2pt);
  \draw (0,1.5) node {$\gamma_{01}$};

  \draw [fill] (1,1) circle (2pt);
  \draw (1,1.5) node {$\gamma_{12}$};

  \draw [fill] (2,1) circle (2pt);
  \draw (2,1.5) node {$\gamma_{02}$};

  \draw (0,0) -- (0,1);
  \draw (1,0) -- (1,1);
  \draw (2,0) -- (2,1);
  \draw (1,0) -- (0,1);
  \draw (2,0) -- (1,1);
  \draw (0,0) -- (2,1);
  \end{tikzpicture}
 \end{center}
 which implies that $B\FP(\mu)$ is the boundary of a hexagon and can be
 identified with the barycentric subdivision of $X=\partial[v_0,v_1,v_2]$.
\end{example}

Here is a definition of the flow category $C(\mu)$.

\begin{definition}[Flow category]
 \label{flow_category_definition}
 Define a category $C(\mu)$ as follows.
 The set of objects is given by
 \[
  C(\mu)_0 = \Cr(\mu).
 \]
 The set of morphisms $C(\mu)(c,c')$ from a critical cell $c$ to another
 $c'$ is given
 \[
  C(\mu)(c,c') = \begin{cases}
		Q(\mu)(c',c), & c\neq c' \\
		\{1_{c}\}, & c=c.
	       \end{cases}
 \]
 The composition $\circ$ is defined by the concatenation $*$
 \[
 \circ : C(\mu)(c_2,c_3)\times C(\mu)(c_1,c_2) = Q(\mu)(c_3,c_2)\times
 Q(\mu)(c_2,c_1) \rarrow{\ast} Q(\mu)(c_3,c_1) = C(\mu)(c_1,c_3).
 \]
 This category $C(\mu)$ is called the \emph{flow category} of
 $\mu$. When $\mu=\mu_{f}$ for a discrete Morse function, we also use
 the notation $C(f)$.
\end{definition}

As a corollary to Lemma \ref{concatenation_is_poset_map}, we see that
$C(\mu)$ is a poset category. In particular it is a
$2$-category and we may apply any of the classifying space functors
reviewed in \S\ref{nerve}, i.e.\ $B^2$, $B^{cl}$, and $B^{ncl}$. By
Theorem \ref{classifying_space_of_2-category}, these constructions are
weakly homotopy equivalent to each other.
% Let us choose $B^{2}C(f)$ as our model.

\begin{example}
 \label{triangle_example3}
 Consider the flow paths in Example \ref{triangle_example2}. We have
 \[
  C(\mu)([v_0],[v_1,v_2])=Q(\mu)([v_1,v_2],[v_0]) = \{\gamma_1,\gamma_2\}
 \]
 with trivial order relation. Thus $C(\mu)$ is a $1$-category and we
 have a homeomorphism
 \[
  BC(\mu) \cong \partial[v_0,v_1,v_2].
 \]
\end{example}

We would like to show that this example generalizes, i.e.\ the
classifying space of the flow category $C(\mu)$ is always homotopy 
equivalent to $X$. It is not easy, however, to 
relate these spaces directly. We need intermediate categories and a
zigzag of functors
\begin{equation}
 F(\Sd_f(X)) \larrow{} \overline{\FP}(\mu) \rarrow{\tau} \overline{C}(\mu)
 \larrow{r} C(\mu).
 \label{zigzag_of_functors}
\end{equation}
$\Sd_f(X)$ is a subdivision of $X$ which will be defined in
\S\ref{stable_subdivision}. $\overline{\FP}(\mu)$ is the poset of reduced
flow paths defined in Definition \ref{reduced_flow_path} and
\ref{subpath}. 
The rest of this section is devoted to the 
construction of the \emph{reduced flow category} $\overline{C}(\mu)$ and
a functor $r: C(\mu)\to \overline{C}(\mu)$ which induces a homotopy
equivalence of classifying spaces.

\begin{definition}
 Define a poset quiver $\overline{C}(\mu)$ by
 \begin{eqnarray*}
  \overline{C}(\mu)_0 & = & \Cr(\mu) \\
  \overline{C}(\mu)(c,c') & = & \overline{Q}(\mu)(c',c).
 \end{eqnarray*}
\end{definition}

Namely $\overline{C}(\mu)$ is a subquiver of $C(\mu)$ consisting of reduced
flow paths. Unfortunately $\overline{C}(\mu)$ is not closed under the
composition in $C(\mu)$. We need to take a reduction after the
concatenation. 

\begin{definition}
 \label{reduced_composition}
 For $\gamma \in \overline{C}(\mu)(c_{1},c_{2})$ and 
 $\delta \in \overline{C}(\mu)(c_{2},c_{3})$, 
 define $\delta\circ\gamma \in \overline{C}(\mu)(c_{1},c_{3})$ 
 by
 \[
  \delta\circ\gamma = r(\gamma*\delta).
 \]
\end{definition}

Note that 
\[
 \circ : \overline{C}(\mu)(c_{2},c_{3}) \times
 \overline{C}(\mu)(c_{1},c_{2}) \longrightarrow \overline{C}(\mu)(c_{1},c_{3}) 
\]
is a poset map as the composition of poset maps $r$ and $*$.

\begin{proposition}
\label{r_is_functor}
 The following diagram is commutative
 \[
 \begin{diagram}
  \node{C(\mu)(c_{2},c_{3})\times C(\mu)(c_{1},c_{2})} \arrow{e,t}{\circ}
  \arrow{s,l}{r\times r}
  \node{C(\mu)(c_{1},c_{3})} \arrow{s,r}{r} \\
  \node{\overline{C}(\mu)(c_{2},c_{3})\times \overline{C}(\mu)(c_{1},c_{2})}
  \arrow{e,b}{\circ} 
  \node{\overline{C}(\mu)(c_{1},c_{3}).}
 \end{diagram}
\]
\end{proposition}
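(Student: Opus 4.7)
Unwinding the definitions, the diagram commutes exactly when
\[
r(\delta \ast \gamma) = r\bigl(r(\delta) \ast r(\gamma)\bigr)
\]
for every composable pair $\delta \in C(\mu)(c_2,c_3)$, $\gamma \in C(\mu)(c_1,c_2)$. My plan is to establish the two $\preceq$-inequalities in $\overline{\FP}(\mu)$ separately, exploiting that Proposition \ref{adjoint_poset} identifies $r(\zeta)$ with the maximal reduced subpath of $\zeta$ and that both $\ast$ and $r$ are monotone (Lemmas \ref{concatenation_is_poset_map} and \ref{r_is_poset_retraction}).

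One direction is immediate. Since $r(\delta) \preceq \delta$ and $r(\gamma) \preceq \gamma$ by descending closure, monotonicity of $\ast$ gives $r(\delta) \ast r(\gamma) \preceq \delta \ast \gamma$, and then the poset map $r$ produces $r(r(\delta) \ast r(\gamma)) \preceq r(\delta \ast \gamma)$.

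For the reverse inequality I will prove the stronger statement $r(\delta \ast \gamma) \preceq r(\delta) \ast r(\gamma)$; applying $r$ once more and invoking idempotence then yields $r(\delta \ast \gamma) \preceq r(r(\delta) \ast r(\gamma))$. Writing $\delta = (e_1,u_1,\ldots,e_m,u_m;c_2)$ and $\gamma = (e_1',u_1',\ldots,e_n',u_n';c_1)$, the central observation is that every reducibility present in $\delta$ or $\gamma$ is inherited by $\delta \ast \gamma$. Reducibilities of $\delta$ at interior indices $i<m$ and of $\gamma$ at every index transfer verbatim, since the cells at the relevant positions are unchanged. Reducibility of $\delta$ at its terminal index $m$, namely the condition $c_2 \prec \mu^{-1}(u_m)$, upgrades to $e_1' \prec \mu^{-1}(u_m)$ in $\delta \ast \gamma$ via the composability relation $e_1' \prec c_2$ and transitivity of the face order. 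These transfers imply that every pair $(e_i,u_i)$ discarded in forming $r(\delta)$ or $r(\gamma)$ is also discarded in forming $r(\delta \ast \gamma)$, so the surviving pairs in $r(\delta \ast \gamma)$ form a subsequence of those in $r(\delta) \ast r(\gamma)$; matching surviving pairs by position furnishes the embedding function $\varphi$ of Definition \ref{subpath}.

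The main obstacle will be verifying face condition (4) of Definition \ref{subpath} for this $\varphi$, because reducing $\delta \ast \gamma$ may eliminate pairs through reducible intervals that straddle the junction $u_m$--$e_1'$, and such intervals have no counterpart in the separate reductions of $\delta$ or $\gamma$. For an index $p$ with $\varphi(j-1) < p < \varphi(j)$, corresponding to a pair $(e_p^\ast, u_p^\ast)$ of $r(\delta) \ast r(\gamma)$ that is further eliminated in $r(\delta \ast \gamma)$, the required relation $e_j^{r(\eta)} \preceq e_p^\ast$ will be derived from the chain $e_j^{r(\eta)} \prec \mu^{-1}(u_p^\ast) \preceq e_p^\ast$, where the first inequality records the reducibility of $\eta$ responsible for discarding $(e_p^\ast,u_p^\ast)$ and the second is the defining property $e_p^\ast \in \{u_p^\ast, \mu^{-1}(u_p^\ast)\}$ of flow paths. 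The bookkeeping for straddling intervals is the only part that is not purely formal; everything else follows mechanically from monotonicity, idempotence, and descending closure of $r$.
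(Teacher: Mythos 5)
Your proof takes essentially the same approach as the paper's. Both establish one direction immediately from monotonicity of $r$ and $\ast$ together with the descending-closure property $r(\zeta)\preceq\zeta$, and both reduce the reverse direction to showing $r(\delta\ast\gamma)\preceq r(\delta)\ast r(\gamma)$ (then apply $r$ and idempotence). The paper justifies this key inequality informally by observing that $r(\delta\ast\gamma)$ may be computed by first removing the reducible intervals of $\delta$ and of $\gamma$ separately and then continuing; you spell out the same idea more explicitly — tracking how interior reducibilities of $\delta$ and all reducibilities of $\gamma$ transfer verbatim, how the terminal reducibility of $\delta$ at index $m$ upgrades from $c_2\prec\mu^{-1}(u_m)$ to $e_1'\prec\mu^{-1}(u_m)$ via $e_1'\prec c_2$, and how the face condition of Definition~\ref{subpath} is verified for indices absorbed by straddling intervals using $e_j\prec\mu^{-1}(u_p^\ast)\preceq e_p^\ast$. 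The logical skeleton and the crucial lemmas (Lemma~\ref{concatenation_is_poset_map}, Lemma~\ref{r_is_poset_retraction}, Proposition~\ref{adjoint_poset}) are the same; you are simply filling in bookkeeping the paper leaves implicit.
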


\begin{proof}
 By Lemma \ref{r_is_poset_retraction}, $r(\delta)\preceq\delta$ and
 $r(\gamma)\preceq\gamma$. 
 Since $r$ and $*$ are poset maps by Lemma
 \ref{concatenation_is_poset_map} and Lemma \ref{r_is_poset_retraction},
 we have 
 \[
 r(\gamma) \circ r(\delta) = r(r(\delta) * r(\gamma)) \preceq r(\delta *
 \gamma) 
\]
 for $(\gamma,\delta) \in C(\mu)(c_{2},c_{3}) \times C(\mu)(c_{1},c_{2})$.
 On the other hand, $r(\delta * \gamma)$ is obtained from
 $\delta*\gamma$ by removing cells having indices in reducible
 intervals of $\delta*\gamma$. Note, however, it can be also obtained by
 removing cells indexed by reducible intervals in $\delta$ and $\gamma$,
 and then removing reducible intervals of the resulting path.
 In particular, $r(\delta*\gamma)$ is a subpath of
 $r(\delta) * r(\gamma)$.
 Since $r$ is a poset map and a retraction, we have 
 \[
 r(\delta * \gamma) = r(r(\delta*\gamma)) \preceq r(r(\delta)*r(\gamma))
 = r(\gamma) \circ r(\delta). 
 \]
\end{proof}

\begin{corollary}
 The following diagram is commutative
 \[
 \begin{diagram}
  \node{\overline{C}(\mu)(c_{3},c_{4})
 \times \overline{C}(\mu)(c_{2},c_{3}) \times
 \overline{C}(\mu)(c_{1},c_{2})} \arrow{e,t}{1\times\circ}
  \arrow{s,l}{\circ\times 1}
  \node{\overline{C}(\mu)(c_{3},c_{4}) \times
  \overline{C}(\mu)(c_{1},c_{3})} \arrow{s,r}{\circ} \\ 
  \node{\overline{C}(\mu)(c_{2},c_{4}) \times
  \overline{C}(\mu)(c_{1},c_{2})} \arrow{e,b}{\circ}
  \node{\overline{C}(\mu)(c_{1},c_{4})} 
 \end{diagram}
 \]
 for any critical cells $c_1,c_2,c_3$ and $c_4$.
\end{corollary}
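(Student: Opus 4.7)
The plan is to derive the associativity of $\circ$ in $\overline{C}(\mu)$ by a direct unfolding, using three already-established ingredients: (i) associativity of the concatenation $\ast$, from Lemma \ref{concatenation_is_associative}; (ii) the identity
\[
r(\alpha \ast \beta) = r\bigl(r(\alpha) \ast r(\beta)\bigr)
\]
for composable $\alpha, \beta \in \FP(\mu)$, which is precisely the content of Proposition \ref{r_is_functor} translated from the diagram to equational form; and (iii) the idempotence $r \circ r = r$, which follows from Proposition \ref{adjoint_poset} since $r$ is a closure operator.

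Concretely, let $\gamma \in \overline{C}(\mu)(c_1, c_2)$, $\delta \in \overline{C}(\mu)(c_2, c_3)$, and $\epsilon \in \overline{C}(\mu)(c_3, c_4)$; each is reduced, so $r(\gamma) = \gamma$, $r(\delta) = \delta$, $r(\epsilon) = \epsilon$. Applying Definition \ref{reduced_composition} twice, I would rewrite the two composites as
\[
(\epsilon \circ \delta) \circ \gamma = r\bigl(\gamma \ast r(\delta \ast \epsilon)\bigr), \qquad \epsilon \circ (\delta \circ \gamma) = r\bigl(r(\gamma \ast \delta) \ast \epsilon\bigr).
\]
Then I would apply (ii) to the left-hand composite with $\alpha = \gamma$ and $\beta = \delta \ast \epsilon$; using $r(\gamma) = \gamma$ together with $r(r(\delta \ast \epsilon)) = r(\delta \ast \epsilon)$ from (iii), this gives
\[
r\bigl(\gamma \ast r(\delta \ast \epsilon)\bigr) = r\bigl(r(\gamma) \ast r(r(\delta \ast \epsilon))\bigr) = r\bigl(\gamma \ast (\delta \ast \epsilon)\bigr).
\]
Symmetrically, applying (ii) to the right-hand composite with $\alpha = \gamma \ast \delta$ and $\beta = \epsilon$, and using $r(\epsilon) = \epsilon$ and idempotence, yields $r(r(\gamma \ast \delta) \ast \epsilon) = r((\gamma \ast \delta) \ast \epsilon)$. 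Finally, Lemma \ref{concatenation_is_associative} identifies $\gamma \ast (\delta \ast \epsilon)$ with $(\gamma \ast \delta) \ast \epsilon$, so the two composites coincide.

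I do not expect a genuine obstacle, since every ingredient is already in place. The only care required is bookkeeping: one must verify that the pairs $(\gamma, \delta \ast \epsilon)$ and $(\gamma \ast \delta, \epsilon)$ sit in the source/target configuration for which identity (ii) is valid, i.e.\ that their targets and sources match as elements of $Q(\mu)$ under the convention $\overline{C}(\mu)(c, c') = \overline{Q}(\mu)(c', c)$. This chase is purely mechanical and does not introduce any new content beyond what Proposition \ref{r_is_functor} already provides.
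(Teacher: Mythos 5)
Your proof is correct and follows essentially the same route as the paper: both reduce associativity of $\circ$ in $\overline{C}(\mu)$ to associativity of $\ast$ in $C(\mu)$ via Proposition \ref{r_is_functor}, the only cosmetic difference being that the paper invokes surjectivity of $r$ to transport associativity along the functor, while you unfold the same reduction explicitly by using the retraction identity $r|_{\overline{\FP}(\mu)}=\mathrm{id}$ and idempotence of $r$.
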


\begin{proof}
 Since $r$ is surjective by Lemma \ref{r_is_poset_retraction} and is
 compatible with $\circ$ by Proposition \ref{r_is_functor}, the result
 follows from Lemma \ref{concatenation_is_associative}. 
\end{proof}

\begin{definition}[Reduced flow category]
 \label{reduced_flow_category_definition}
 The poset category obtained from the quiver $\overline{Q}(\mu)$ by
 using the composition in Definition \ref{reduced_composition} is called
 the \emph{reduced flow category} $\overline{C}(\mu)$.
\end{definition}

Proposition \ref{r_is_functor} implies that the reduction $r$ is
a functor of poset categories from $C(\mu)$ to $\overline{C}(\mu)$. 
The following is an immediate but important consequence of Proposition
\ref{adjoint_poset}. 

\begin{theorem}
 \label{reduction_is_homotopy_equivalence}
 The reduction $r : C(\mu) \to \overline{C}(\mu)$ induces a homotopy
 equivalence  
 \[
 B^{2}r : B^{2}C(\mu) \longrightarrow B^{2}\overline{C}(\mu).
 \]
\end{theorem}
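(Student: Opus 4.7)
The plan is to reduce the theorem to a hom-by-hom closure operator argument and then promote the result to $B^2$ by a standard simplicial realization argument. Recall that $B^2$ of a poset-enriched category is the geometric realization of the simplicial space
\[
 [k] \longmapsto \coprod_{c_0,\ldots,c_k} BC(\mu)(c_0,c_1) \times \cdots \times BC(\mu)(c_{k-1},c_k),
\]
and by Proposition \ref{r_is_functor} the reduction $r$ is a $2$-functor, so $B^2 r$ is assembled level-wise from the hom-maps $Br : BC(\mu)(c,c') \to B\overline{C}(\mu)(c,c')$. It therefore suffices to show that each such hom-map is a homotopy equivalence and then appeal to the level-wise realization principle.

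First I would check that $r$ restricts correctly to each hom-poset. A flow path $\gamma \in C(\mu)(c,c') = Q(\mu)(c',c)$ satisfies $\tau(\gamma) = c$ and $c' \succ \iota(\gamma)$. The target is preserved by $r$ because the terminal critical cell is never contained in a reducible interval. For the initial condition, the embedding function $\varphi$ witnessing $r(\gamma) \preceq \gamma$ (Proposition \ref{adjoint_poset}) satisfies $\varphi(0) = 0$, and condition \ref{subpath_face_relation} at $j = 1$ forces $\iota(r(\gamma)) \preceq \iota(\gamma)$; composing with $\iota(\gamma) \prec c'$ gives $c' \succ \iota(r(\gamma))$. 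Hence $r$ restricts to $r : C(\mu)(c,c') \to \overline{C}(\mu)(c,c')$, and since $\overline{C}(\mu)(c,c')$ sits as a full subposet of $C(\mu)(c,c')$, the composition $C(\mu)(c,c') \to \overline{C}(\mu)(c,c') \hookrightarrow C(\mu)(c,c')$ is a descending closure operator, exactly as in Proposition \ref{adjoint_poset}. By the same closure-operator deformation-retraction principle that establishes Corollary \ref{reduction_is_deformation_retraction}, $B\overline{C}(\mu)(c,c')$ is a strong deformation retract of $BC(\mu)(c,c')$, so $Br$ is a homotopy equivalence on every hom-space.

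It remains to promote these hom-level equivalences to a homotopy equivalence on $B^2$. At each simplicial level, $B^2 r$ is a coproduct of finite products of homotopy equivalences (indeed strong deformation retractions), hence is itself a homotopy equivalence. Because classifying spaces of posets are simplicial complexes and the degeneracy maps correspond to insertions of identity morphisms, the simplicial spaces involved are good in the sense of Segal, and the realization of a level-wise homotopy equivalence of good simplicial spaces is a homotopy equivalence. The main technical point I expect to confront is confirming that the descending closure operator structure really does restrict cleanly to every hom-poset without altering the source or the target of the flow paths; once this is verified, the passage from hom-wise to $B^2$-wise equivalence is purely formal and is presumably the one-line argument the paper alludes to.
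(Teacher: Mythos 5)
Your proposal is correct and takes essentially the same route as the paper: restrict the reduction to each hom-poset, observe that Proposition \ref{adjoint_poset} makes $r$ a descending closure operator so that $Br(c,c')$ is a deformation retraction by Corollary \ref{DR_by_closure_operator}, and then promote this to $B^2$ by a level-wise realization argument for cofibrant (``good'') simplicial spaces — the paper packages that last step as a citation of Proposition \ref{homotopy_invariance_of_classifying_space}.
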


\begin{proof}
 By Proposition \ref{adjoint_poset}, the reduction $r$ is a descending
 closure operator, which implies that it induces a
 deformation retraction on each classifying space of morphisms
 \[
  Br(c,c') : BC(\mu)(c,c') \rarrow{\simeq} B\overline{C}(\mu)(c,c')
 \]
 by Corollary \ref{DR_by_closure_operator}. 
 By applying Proposition \ref{homotopy_invariance_of_classifying_space},
 we see that $B^{2}r : B^{2}C(\mu) \to B^{2}\overline{C}(\mu)$ is a
 homotopy equivalence. 
\end{proof}

\subsection{The Collapsing Functor}
\label{collapsing_functor}

Theorem \ref{reduction_is_homotopy_equivalence} says that the right most
functor in (\ref{zigzag_of_functors}) induces a homotopy equivalence
between classifying spaces. The next step is to define a functor
$\tau:\overline{\FP}(\mu) \to \overline{C}(\mu)$ and show that it induces a
homotopy equivalence between classifying spaces.

Notice that $\overline{C}(\mu)$ is a poset-category, hence a
$2$-category. On the other hand, $\overline{\FP}(\mu)$ is a poset regarded
as a small category. We should regard $\overline{\FP}(\mu)$ as a
$2$-category whose $2$-morphisms are identities and try to construct a
$2$-functor $\tau : \overline{\FP}(\mu)\to\overline{C}(\mu)$.
On objects it is given by the target map
 \[
  \tau : \FP(\mu) \rarrow{} \Cr(\mu) = C(\mu)_0.
 \]
The restriction to $\overline{\FP}(\mu)$ is denoted by $\overline{\tau}$.

We would like to extend these maps to $2$-functors by regarding
the posets $\FP(\mu)$ and $\overline{\FP}(\mu)$ as small categories.
Unfortunately this is too much to expect.
The best we can do is the following.

\begin{proposition}
 \label{collapsing_functor_for_morphisms}
 The map $\tau$ can be extended to a normal colax
 functor\footnote{See Definition \ref{colax_functor_definition}.} 
 $\tau : \FP(\mu)\to C(\mu)$.
 Furthermore it induces a normal colax functor   
 $\overline{\tau} : \overline{\FP}(\mu)\to\overline{C}(\mu)$. 
\end{proposition}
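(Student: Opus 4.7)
The plan is to send each order relation $\gamma \preceq \gamma'$ to a flow path obtained by truncating $\gamma$ at the position where its embedding into $\gamma'$ first exits. On objects I keep the target map $\tau$. For a $1$-morphism $\gamma=(e_1,u_1,\ldots,e_m,u_m;c)\preceq \gamma'$, let $\varphi:\{0,\ldots,k\}\to\{0,\ldots,\ell(\gamma')+1\}$ be the unique embedding from Lemma \ref{uniqueness_of_embedding_function}. If $\tau(\gamma)=\tau(\gamma')$, then $C(\mu)(\tau(\gamma),\tau(\gamma'))=\{1_{\tau(\gamma)}\}$ forces $\tau(\gamma\preceq\gamma'):=1_{\tau(\gamma)}$. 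Otherwise I define $\tau(\gamma\preceq\gamma'):=(\tau(\gamma');e_k,u_k,\ldots,e_m,u_m;c)$, viewed as an element of $Q(\mu)(\tau(\gamma'),\tau(\gamma))=C(\mu)(\tau(\gamma),\tau(\gamma'))$. Membership in $Q(\mu)$ requires the strict face relation $\tau(\gamma')\succ e_k$, which follows from $e_k\preceq\tau(\gamma')$ (Remark \ref{e_k_is_face_of_target}) together with the non-criticality of $e_k$ for $k\le m$, and with $e_k=\tau(\gamma)\neq\tau(\gamma')$ for $k=m+1$ (the case $k=m+1$ with $e_k=\tau(\gamma')$ being precisely the first clause).

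Normality holds because $\gamma=\gamma'$ yields the identity embedding, hence $k=m+1$, landing in the first clause. The substantive step is the colax composer: for $\gamma\preceq\gamma'\preceq\gamma''$ I need the inequality $\tau(\gamma\preceq\gamma'')\preceq\tau(\gamma'\preceq\gamma'')\circ\tau(\gamma\preceq\gamma')$ in $C(\mu)(\tau(\gamma),\tau(\gamma''))$. The composition of embedding functions from the proof of Proposition \ref{partial_order_on_FP} produces the embedding for $\gamma\preceq\gamma''$ with start index $r\le k$, so the truncation $\tau(\gamma\preceq\gamma'')=(e_r,u_r,\ldots,e_m,u_m;c)$ contains the truncation $(e_k,u_k,\ldots,e_m,u_m;c)=\tau(\gamma\preceq\gamma')$ as a suffix, while the prefix $(e_r,u_r,\ldots,e_{k-1},u_{k-1})$ embeds through $\varphi$ restricted to $[r,k)$ into the truncation of $\gamma'$ constituting $\tau(\gamma'\preceq\gamma'')$. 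Splicing these two embeddings at the concatenation boundary of $\tau(\gamma'\preceq\gamma'')\ast\tau(\gamma\preceq\gamma')$ supplies the required subpath witness. The pentagon coherence and unit diagrams commute automatically because $C(\mu)$ is poset-enriched, so there is at most one $2$-morphism between any prescribed pair of $1$-morphisms; this also handles all edge cases where some of the $\tau(\gamma),\tau(\gamma'),\tau(\gamma'')$ coincide and the composer collapses into a unit law.

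For the restriction to reduced flow paths, the reducibility condition $e_{j+1}\not\prec\mu^{-1}(u_j)$ is local to consecutive indices, so any truncation of a reduced flow path is reduced; thus $\tau$ sends $\overline{\FP}(\mu)$ into $\overline{Q}(\mu)$ on both objects and $1$-morphisms, yielding $\overline{\tau}:\overline{\FP}(\mu)\to\overline{C}(\mu)$. The colax composer transfers by applying the poset retraction $r$ (Lemma \ref{r_is_poset_retraction}) to the inequality for $\tau$: since the reduced composition in $\overline{C}(\mu)$ is $r$ applied to the concatenation (Definition \ref{reduced_composition}) and $r$ fixes already-reduced paths, one obtains $\overline{\tau}(\gamma\preceq\gamma'')=r(\overline{\tau}(\gamma\preceq\gamma''))\preceq r(\overline{\tau}(\gamma'\preceq\gamma'')\ast\overline{\tau}(\gamma\preceq\gamma'))=\overline{\tau}(\gamma'\preceq\gamma'')\circ\overline{\tau}(\gamma\preceq\gamma')$.

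The hard part is the explicit construction of the embedding function witnessing the composer inequality; the indices split at the concatenation boundary, and the four conditions of Definition \ref{subpath} must be verified separately on the prefix (coming from the $\gamma'$-truncation) and the suffix (coming from the $\gamma$-truncation), ensuring that the distinguished $u$-entries and the face relations $e_j\preceq e'_p$ align correctly across the boundary.
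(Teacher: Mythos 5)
Your proof is correct and takes essentially the same approach as the paper: truncate $\gamma$ at the top embedding index $k$, verify the colax composer by splicing the suffix of $\gamma$ with the truncation of $\gamma'$ at the concatenation boundary, and transport the structure to the reduced setting via the retraction $r$. Your explicit case split for $\tau(\gamma)=\tau(\gamma')$ is a small tidying of the paper's treatment of the strict face relation required for $Q(\mu)$-membership, and the index bookkeeping you leave to the reader is precisely what the paper carries out in its explicit construction of the embedding function $\zeta$.
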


\begin{proof}
 For a morphism $\gamma \preceq \gamma'$ in $\FP(\mu)$, 
 let $\varphi:\{0,\ldots,k\}\to\{0,\ldots,n+1\}$ be the embedding
 function. 
 Define a flow path $\tau(\gamma \preceq \gamma')$ to be the subsequence
 of $\gamma=(e_1,u_1,\ldots,e_n,u_n;\tau(\gamma))$ starting at $e_{k}$.  
 Note that we have $e_{k}\preceq \tau(\gamma')$ by Remark
 \ref{e_k_is_face_of_target} and we have 
 \[
 \tau(\gamma \preceq \gamma') = (e_{k},u_{k},\ldots, e_{n}, u_{n};\tau(\gamma))
 \in Q(\mu)(\tau(\gamma'),\tau(\gamma))=C(\mu)(\tau(\gamma),
 \tau(\gamma')).  
 \]
 Then $\tau$ sends the identity
 $\gamma \preceq \gamma$ to  $1_{\tau(\gamma)}$ by definition.

 For a sequence $\gamma_{1} \preceq \gamma_{2} \preceq \gamma_{3}$ in
 $\FP(\mu)$, let us show that
 \[
 \tau(\gamma_1\preceq\gamma_3) \preceq
 \tau(\gamma_2\preceq\gamma_3)\circ\tau(\gamma_1\preceq\gamma_2)
 \]
 in $C(\mu)(\tau(\gamma_1),\tau(\gamma_3))$.
 Let
 \begin{eqnarray*}
  \varphi_1 & : & \{0,\ldots,k_1\} \rarrow{} \{0,\ldots,n_2+1\} \\
  \varphi_2 & : & \{0,\ldots,k_2\} \rarrow{} \{0,\ldots,n_3+1\}
 \end{eqnarray*}
 be embedding functions for $\gamma_{1} \preceq \gamma_{2}$ and
 $\gamma_{2} \preceq \gamma_{3}$, respectively.
 By definition, the flow
 path $\tau(\gamma_{1} \preceq \gamma_{2})$ is the subsequence  
 of $\gamma_{1}=(e_1,u_1,\ldots,e_{n_1},u_{n_1};\tau(\gamma_1))$
 starting at the cell $e_{k_1}$ and $\tau(\gamma_2\preceq\gamma_{3})$
 is the subsequence of
 $\gamma_2=(e'_1,u'_1,\ldots,e'_{n_2},u'_{n_2};\tau(\gamma_2))$
 starting at the cell $e'_{k_2}$. 
 According to the proof of Proposition \ref{partial_order_on_FP}, on the
 other hand, the embedding function
 $\psi:\{0,\ldots,k_3\}\to\{0,\ldots,n_3+1\}$ for
 $\gamma_1\preceq\gamma_3$ is given by 
 \[
  \psi(i) = 
 \begin{cases}
  (\varphi_2\circ\varphi_1)(i), & i<k_3 \\
  \varphi_2(k_2)=n_3+1, & i=k_3
 \end{cases}
 \]
 where $k_3$ is the number with $\varphi_1(k_3-1)<k_2\le\varphi_1(k_3)$.
 Thus we have
 \begin{eqnarray*}
  \tau(\gamma_2\preceq\gamma_3)\ast  \tau(\gamma_1\preceq\gamma_2)& = &
   (e'_{k_2},u'_{k_2},\ldots,
   e'_{n_2},u'_{n_2},e_{k_1},u_{k_1}, \ldots, e_{n_1},u_{n_1},;
   \tau(\gamma_1)) \\  
  \tau(\gamma_1\preceq\gamma_3) & = & (e_{k_3},u_{k_3}, \ldots,
   e_{n_1},u_{n_1};\tau(\gamma_1)). 
 \end{eqnarray*}
 Let us rename cells in these flow paths as
 \begin{eqnarray*}
  \tilde{e}_j & = & e_{j+k_3-1} \\
  \tilde{u}_j & = & u_{j+k_3-1}
 \end{eqnarray*}
 in $\tau(\gamma_1\preceq\gamma_3)$
 and 
 \begin{eqnarray*}
  \tilde{e}'_j & = & 
   \begin{cases}
    e'_{j+k_2-1}, & 1\le j \le n_2-k_2+1 \\
    e_{j+k_1+k_2-n_2-2}, & n_2-k_2+2\le j\le n_1+n_2-k_1-k_2+2
   \end{cases} \\
  \tilde{u}'_j & = & 
   \begin{cases}
    u'_{j+k_2-1}, & 1\le j \le n_2-k_2+1 \\
    u_{j+k_1+k_2-n_2-2}, & n_2-k_2+2\le j\le n_1+n_2-k_1-k_2+2
   \end{cases}
 \end{eqnarray*}
 in $\tau(\gamma_2\preceq\gamma_3)\ast \tau(\gamma_1\preceq\gamma_2)$
 so that we have
 \begin{eqnarray*}
  \tau(\gamma_1\preceq\gamma_3) & = & (\tilde{e}_{1},\tilde{u}_{1},
   \ldots, \tilde{e}_{n_1-k_3+1},\tilde{u}_{n_1-k_3+1};\tau(\gamma_1)) \\ 
  \tau(\gamma_2\preceq\gamma_3)\ast \tau(\gamma_1\preceq\gamma_2) & = &
   (\tilde{e}'_{1},\tilde{u}'_{1}, \ldots,
   \tilde{e}'_{n_1+n_2-k_1-k_2+2},\tilde{u}'_{n_1+n_2-k_1-k_2+2};
   \tau(\gamma_1)).
 \end{eqnarray*}
 With this notation, what we need is a map
 \[
 \zeta : \{0,1,\ldots,n_1-k_3+2\} \rarrow{}
 \{0,1,\ldots,n_1+n_2-k_1-k_2+3\} 
 \]
 satisfying the following conditions:
 \begin{enumerate}
  \item $\zeta(0)=0$,
  \item $\tilde{u}_{j}=\tilde{u}'_{\zeta(j)}$ for $1\le j< n_1-k_3+2$,
  \item $\zeta(n_1-k_3+2)=n_1+n_2-k_1-k_2+3$,
  \item $\tilde{e}_j\preceq \tilde{e}'_p$ for $\zeta(j-1)<p\le\zeta(j)$ and
	$1\le j\le n_1-k_3+2$.
 \end{enumerate}
 Define $\zeta$ by
 \[
  \zeta(j) =
 \begin{cases}
  0, & j=0 \\
  \varphi_1(j+k_3-1)-k_2+1, & 1\le j\le k_1-k_3 \\
  j+n_2-k_1-k_2+k_3+1, & k_1-k_3+1\le j\le n_1-k_3+2.
 \end{cases}
 \]
 Let us verify that $\zeta$ is an embedding function for 
 \begin{equation}
 \tau(\gamma_{1} \preceq \gamma_{3})  \preceq \tau(\gamma_{2} \preceq
 \gamma_{3}) * \tau(\gamma_{1} \preceq \gamma_{2}). 
 \label{colax_condition}
 \end{equation}
 The first and the third conditions are obvious. 

 For the second condition, suppose $1\le j\le k_1-k_3$. Then
 \begin{eqnarray*}
  \tilde{u}_j & = & u_{j+k_3-1} \\
  \tilde{u}'_{\zeta(j)} & = & u''_{\zeta(j)+k_2-1} \\
  & = & u'_{\varphi_1(j+k_3-1)}.
 \end{eqnarray*}
 By the second condition for the embedding function $\varphi_1$, these
 cells coincide. 
 When $k_1-k_3+1\le j\le n_1-k_3+2$, we have
 \[
  n_2-k_2+2\le \zeta(j)\le n_1+n_2-k_1-k_2+3.
 \]
 and thus
 \begin{eqnarray*}
  \tilde{u}'_{\zeta(j)} & = & u_{\zeta(j)+k_1+k_2-n_2-2} \\
  & = & u_{j+n_2-k_1-k_2+k_3+1+k_1+k_2-n_2-2} \\
  & = & u_{j+k_3-1}.
 \end{eqnarray*}
 On the other hand, $\tilde{u}_j=u_{j+k_3-1}$ by definition and we have
 $\tilde{u}'_{\zeta(j)}=\tilde{u}_j$. 

 For the fourth condition, suppose $1\le j\le k_1-k_3$ and
 $\zeta(j-1)<p\le\zeta(j)$. In this case
 $\zeta(j-1)=\varphi_1(j+k_3-2)-k_2+1$ and
 $\zeta(j)=\varphi_1(j+k_3-1)-k_2+1$, which implies that
 \[
  \varphi_1(j+k_3-2) < p+k_2-1\le \varphi_1(j+k_3-1).
 \]
 The fourth condition for $\varphi_1$ implies that
 \[
  \tilde{e}_j = e_{j+k_3-1} \preceq e'_{p+k_2-1} = \tilde{e}'_{p}.
 \]
 When $k_1-k_3+1\le j\le n_1-k_3+2$, the condition
 $\zeta(j-1)<p\le\zeta(j)$ is equivalent to 
 $p=j+n_2-k_1-k_2+k_3+1$, in which case
 \begin{eqnarray*}
  \tilde{e}'_p & = & e_{p+k_1+k_2-n_2-2}   \\
  & = & e_{j+k_3-1}
 \end{eqnarray*}
 which coincides with $\tilde{e}_j=e_{j+k_3-1}$.
 And we obtain an embedding function $\zeta$ for (\ref{colax_condition}). 

 Suppose $\gamma\preceq\delta$ in $\overline{\FP}(\mu)$. Since
 $\tau(\gamma\preceq\delta)$ is defined as a subsequence of $\gamma$,
 $\tau(\gamma\preceq\delta)$ belongs to
 $\overline{C}(\mu)(\tau(\gamma),\tau(\delta))$. 
 For a sequence $\gamma_{1} \preceq \gamma_{2} \preceq \gamma_{3}$ in
 $\overline{\FP}(\mu)$, we have (\ref{colax_condition}) in
 $Q(\mu)(\tau(\gamma_3),\tau(\gamma_1))$. 
 Since the reduction $r$ is a poset map and
 $\tau(\gamma_1\preceq\gamma_3)$ is reduced, we obtain
 \[
 \tau(\gamma_1\preceq\gamma_3) = r(\tau(\gamma_{1} \preceq \gamma_{3}))
 \preceq r(\tau(\gamma_{2}\preceq 
 \gamma_{3}) \circ \tau(\gamma_{1} \preceq \gamma_{2})) =   
 \tau(\gamma_{2} \preceq \gamma_{3}) \circ \tau(\gamma_{1} \preceq
 \gamma_{2})).
 \]
 in $\overline{C}(\mu)(\tau(\gamma_{1}),\tau(\gamma_{3}))$. And we obtain
 a normal colax functor
 \[
  \overline{\tau}: \overline{\FP}(\mu) \rarrow{} \overline{C}(\mu).
 \]
\end{proof}

\begin{example}
 \label{triangle_example4}
 Consider the discrete Morse function in Example
 \ref{triangle_example2}. Its flow category is described in Example
 \ref{triangle_example3}. 
 
 The collapsing functor $\tau$ is given, on objects, by
 \begin{eqnarray*}
  \tau(\gamma_0) & = & [v_0] \\
  \tau(\gamma_1) & = & [v_0] \\
  \tau(\gamma_2) & = & [v_0] \\
  \tau(\gamma_{01}) & = & [v_0] \\
  \tau(\gamma_{02}) & = & [v_0] \\
  \tau(\gamma_{12}) & = & [v_1,v_2].
 \end{eqnarray*}
 We have six nontrivial order relations in $\FP(\mu)$, for which the
 collapsing functor is defined by
 \begin{eqnarray*}
  \tau(\gamma_0\prec\gamma_{01}) & = & ([v_0]) = \gamma_0 = 1_{[v_0]} \\
  \tau(\gamma_0\prec\gamma_{02}) & = & ([v_0]) = \gamma_0 = 1_{[v_0]} \\
  \tau(\gamma_1\prec\gamma_{01}) & = & ([v_0]) = \gamma_0 = 1_{[v_0]} \\
  \tau(\gamma_2\prec\gamma_{02}) & = & ([v_0]) = \gamma_0 = 1_{[v_0]} \\
  \tau(\gamma_1\prec\gamma_{12}) & = & \gamma_1 \\
  \tau(\gamma_2\prec\gamma_{12}) & = & \gamma_2.
 \end{eqnarray*}
\end{example}

By Lemma \ref{lax_classifying_spaces}, $\tau$ and $\overline{\tau}$
induce maps between normal colax 
classifying spaces
\begin{eqnarray*}
 B^{ncl}\tau & : & B^{ncl}\FP(\mu)=B\FP(\mu) \rarrow{}
 B^{ncl}C(\mu) \\ 
 B^{ncl}\overline{\tau} & : &
  B^{ncl}\overline{\FP}(\mu)=B\overline{\FP}(\mu) \rarrow{} 
 B^{ncl}\overline{C}(\mu). 
\end{eqnarray*}
Let us show that both $B^{ncl}\tau$ and $B^{ncl}\overline{\tau}$ are
homotopy equivalences. One of the most convenient tools for showing a
functor to 
be a homotopy equivalence is Quillen's Theorem A, which says that a
functor between small categories induces a homotopy equivalence between
classifying spaces if ``(homotopy) fibers'' are contractible.
There are variations, depending on the three
choices for (homotopy) fibers; $\tau\downarrow c$, $c\downarrow\tau$, 
and $\tau^{-1}(c)$. 
For functors between $2$-categories, analogous theorems have been proved.
Here we use Corollary \ref{prefibered_2QuillenA} by 
showing that $\tau$ is prefibered and each fiber $\tau^{-1}(c)$ is
contractible.

Let us first find explicit descriptions of homotopy fibers in order to
show that $\tau$ is prefibered.
Note that both left and right homotopy fibers are posets by
Corollary \ref{homotopy_fiber_poset}.

\begin{lemma}
 \label{homotopy_fiber_of_collapsing_functor}
 For a critical cell $c$, the posets $\tau\downarrow c$ and
 $c\downarrow\tau$ are given by 
 \begin{align*}
 \tau\downarrow c & =
   \set{(\gamma,\delta)\in \FP(\mu)^2}{\sigma(\delta)=c,
   \tau(\delta)=\tau(\gamma)} \\ 
  c\downarrow\tau & = \set{(\delta,\gamma)\in \FP(\mu)^2}{\tau(\gamma)=
  \sigma(\delta), 
   \tau(\delta)=c},  
 \end{align*}
 respectively.
 The partial orders $\preceq_{c}$ and $\preceq^{c}$ on
 $\tau\downarrow c$ and  $c\downarrow\tau$ are, respectively, given by
 \begin{align*}
 (\gamma,\delta) \preceq_{c} (\gamma',\delta') & \Longleftrightarrow
 \gamma\preceq\gamma' \text{ and } \delta'\ast\tau(\gamma\preceq\gamma')
 \preceq\delta. \\
 (\delta,\gamma) \preceq^{c} (\delta',\gamma') & \Longleftrightarrow
 \gamma\preceq\gamma' \text{ and } \tau(\gamma\preceq\gamma')\ast\delta
 \preceq \delta'. 
 \end{align*}
\end{lemma}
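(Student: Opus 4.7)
The proof amounts to unwinding the definition of the $2$-categorical comma categories for the normal colax functor $\tau$ reviewed in the appendix. I break it into three steps.

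First, I identify the objects. By definition, an object of $\tau\downarrow c$ is a pair $(\gamma,\delta)$ with $\gamma$ an object of $\FP(\mu)$ and $\delta$ a $1$-morphism $\tau(\gamma)\to c$ in $C(\mu)$. Unpacking Definition \ref{flow_category_definition}, such a $\delta$ is exactly an element of $Q(\mu)(c,\tau(\gamma))$, i.e.\ a flow path with $\sigma(\delta)=c$ and $\tau(\delta)=\tau(\gamma)$. The same reasoning, with roles of source and target swapped, identifies $c\downarrow\tau$ as the set of pairs $(\delta,\gamma)$ with $\delta\in Q(\mu)(\tau(\gamma),c)$.

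Second, I identify the morphisms. A morphism $(\gamma,\delta)\to(\gamma',\delta')$ in $\tau\downarrow c$ consists of a $1$-morphism $\alpha\colon\gamma\to\gamma'$ in $\FP(\mu)$ together with a colax $2$-cell $\theta\colon\delta'\circ\tau(\alpha)\Rightarrow\delta$ in $C(\mu)(\tau(\gamma),c)$. Since $\FP(\mu)$ is a poset (Proposition \ref{partial_order_on_FP}) viewed as a $2$-category with trivial $2$-cells, giving $\alpha$ is the same as asserting $\gamma\preceq\gamma'$. Since $C(\mu)$ is poset-enriched, giving the $2$-cell $\theta$ is the same as asserting the inequality $\delta'\circ\tau(\alpha)\preceq\delta$ inside the poset $C(\mu)(\tau(\gamma),c)=Q(\mu)(c,\tau(\gamma))$. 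Translating $\circ$ into $\ast$ via Definition \ref{flow_category_definition} rewrites this as $\delta'\ast\tau(\gamma\preceq\gamma')\preceq\delta$, which is the proposed defining condition for $\preceq_c$. The analogous unwinding for $c\downarrow\tau$, where the colax $2$-cell now has the form $\tau(\alpha)\circ\delta\Rightarrow\delta'$, yields $\tau(\gamma\preceq\gamma')\ast\delta\preceq\delta'$.

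Third, I confirm that $\preceq_c$ and $\preceq^c$ are genuine partial orders rather than mere preorders. Reflexivity follows from $\gamma\preceq\gamma$ together with the normality of $\tau$, which forces $\tau(\gamma\preceq\gamma)=1_{\tau(\gamma)}$. Transitivity uses the colax coherence $2$-cell of $\tau$ constructed in Proposition \ref{collapsing_functor_for_morphisms}, combined with the monotonicity and associativity of $\ast$ (Lemmas \ref{concatenation_is_poset_map} and \ref{concatenation_is_associative}). For antisymmetry, $\gamma\preceq\gamma'$ and $\gamma'\preceq\gamma$ force $\gamma=\gamma'$ by Proposition \ref{partial_order_on_FP}, which collapses both inequality conditions on the second coordinate to $\delta\preceq\delta'$ and $\delta'\preceq\delta$, whence $\delta=\delta'$. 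The main point requiring care is to keep the direction of the colax comparison $2$-cell consistent with the convention adopted in the appendix for each of the two comma constructions; once that is fixed, each stated formula falls out by direct substitution.
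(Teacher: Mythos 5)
Your proposal is correct and follows essentially the same route as the paper: the paper's proof is a direct unwinding of the comma-category definitions, and it obtains the poset structure by simply citing Corollary \ref{homotopy_fiber_poset}, which you re-derive by hand in your second and third steps. The only difference is one of economy, not substance; your explicit verification of reflexivity, transitivity (via the colax coherence cell), and antisymmetry is exactly the content of that corollary specialized to $\tau$.
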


\begin{proof}
 Let us consider the case of the left homotopy fiber. We have
 \begin{eqnarray*}
  (\tau\downarrow c)_0 & = & \set{(\gamma,\delta)\in \FP(\mu)\times
   C(\mu)_1}{\delta\in C(\mu)(\tau(\gamma),c)} \\
  & = & \set{(\gamma,\delta)\in\FP(\mu)^2}{\delta\in Q(\mu)(c,\tau(\gamma))}
   \\ 
  & = &
   \set{(\gamma,\delta)\in \FP(\mu)^2}{\sigma(\delta)=c,
   \tau(\delta)=\tau(\gamma)}.  
 \end{eqnarray*}
 By Corollary \ref{homotopy_fiber_poset}, the partial order
 $\preceq_{c}$ on $\tau\downarrow c$ is given by
 \[
 (\gamma,\delta)\preceq_{c} (\gamma',\delta') \Longleftrightarrow
 \gamma\preceq\gamma' \text{ and } \delta'\ast\tau(\gamma\preceq\gamma')
 \preceq\delta. 
 \]

 The right homotopy fiber is given by
  \begin{eqnarray*}
  (c\downarrow\tau)_0 & = & \set{(\delta,\gamma)\in
   C(\mu)_1\times\FP(\mu)}{\delta\in C(\mu)(c,\tau(\gamma))} \nonumber
   \\ 
  & = & \set{(\delta,\gamma)\in
   \FP(\mu)^2}{\delta\in Q(\mu)(\tau(\gamma),c)} \nonumber \\
  & = &
   \set{(\delta,\gamma)\in \FP(\mu)^2}{\tau(\gamma)= \sigma(\delta),
   \tau(\delta)=c} \nonumber
 \end{eqnarray*}
 The partial order on this poset set is given by
 \[
 (\delta,\gamma)\preceq^{c} (\delta',\gamma') \Longleftrightarrow
 \gamma\preceq \gamma' \text{ and }
 \tau(\gamma\preceq\gamma')\ast \delta \preceq \delta'.
 \]
\end{proof}

\begin{proposition}
 \label{DR_to_genuine_fiber}
 For each $c\in \Cr(\mu)$, define maps
 $i_{c}:\tau^{-1}(c)\to c\downarrow\tau$ and
 $s_{c}: c\downarrow\tau\to\tau^{-1}(c)$
 by $i_c(\gamma)=(1_{c},\gamma)$ and
 $s_{c}(\delta,\gamma)=\gamma\ast\delta$, 
 respectively. Then the composition 
 \[
 \rho_{c} : c\downarrow\tau \rarrow{s_{c}} \tau^{-1}(c) \rarrow{i_{c}}
 c\downarrow\tau
 \]
 is a descending closure operator.
\end{proposition}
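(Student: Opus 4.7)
The plan is to verify directly the three properties defining a descending closure operator: that $\rho_c$ is a poset map, is idempotent, and satisfies $\rho_c(\delta,\gamma) \preceq^c (\delta,\gamma)$ for every object. Unwinding the composition gives $\rho_c(\delta,\gamma) = (1_c, \gamma \ast \delta)$, which lies in $c \downarrow \tau$ since $\tau(\gamma \ast \delta) = \tau(\delta) = c$ places $1_c$ in $C(\mu)(c,c)$. Idempotence is immediate: applying $\rho_c$ again produces $(1_c, (\gamma \ast \delta) \ast 1_c) = (1_c, \gamma \ast \delta)$, because $1_c = (c)$ is the length-zero flow path and concatenating with it appends no cells.

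For the descending property, Lemma \ref{homotopy_fiber_of_collapsing_functor} requires both $\gamma \ast \delta \preceq \gamma$ and $\tau(\gamma \ast \delta \preceq \gamma) \ast 1_c \preceq \delta$. The first follows from Lemma \ref{order_and_composition}(2): membership in $c \downarrow \tau$ forces $\iota(\delta) \prec \sigma(\delta) = \tau(\gamma)$, so the hypothesis $e'_1 \preceq \tau(\gamma)$ is automatically met in the case $\ell(\gamma) = 0$, while the case $\ell(\gamma) \geq 1$ needs nothing extra. The embedding function supplied by that lemma is the identity on $\{0,\ldots,\ell(\gamma)+1\}$, so in the notation of Proposition \ref{collapsing_functor_for_morphisms} the parameter $k$ equals $\ell(\gamma)+1$ and the tail subsequence of $\gamma \ast \delta$ starting at position $\ell(\gamma)+1$ is exactly $\delta$. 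Hence $\tau(\gamma \ast \delta \preceq \gamma) = \delta$, and the second condition reduces to $\delta \preceq \delta$.

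The substantive step is showing that $\rho_c$ is a poset map. Given $(\delta,\gamma) \preceq^c (\delta',\gamma')$, unpacking yields embedding functions $\varphi : \{0,\ldots,k\} \to \{0,\ldots,\ell(\gamma')+1\}$ for $\gamma \preceq \gamma'$ and $\psi : \{0,\ldots,\ell\} \to \{0,\ldots,\ell(\delta')+1\}$ for $\tau(\gamma \preceq \gamma') \ast \delta \preceq \delta'$. In the target relation $(1_c, \gamma \ast \delta) \preceq^c (1_c, \gamma' \ast \delta')$ the second-component condition $\tau(\gamma \ast \delta \preceq \gamma' \ast \delta') \ast 1_c \preceq 1_c$ is automatic because $C(\mu)(c,c) = \{1_c\}$ forces $\tau(\gamma \ast \delta \preceq \gamma' \ast \delta') = 1_c$. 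So it suffices to build an embedding function $\zeta : \{0,\ldots,k-1+\ell\} \to \{0,\ldots,\ell(\gamma')+\ell(\delta')+1\}$ witnessing $\gamma \ast \delta \preceq \gamma' \ast \delta'$, by splicing: set $\zeta(j) = \varphi(j)$ for $0 \leq j \leq k-1$ and $\zeta(j) = \psi(j-k+1) + \ell(\gamma')$ for $k \leq j \leq k-1+\ell$. Strict monotonicity across the seam holds because $\varphi(k-1) < \varphi(k) = \ell(\gamma')+1 \leq \ell(\gamma')+\psi(1)$; the endpoint value is $\psi(\ell)+\ell(\gamma') = \ell(\gamma')+\ell(\delta')+1$; and the $u$-cell and face-relation conditions of Definition \ref{subpath} split into the regime $j < k$, handled by $\varphi$ with target cells lying in the $\gamma'$-portion of $\gamma' \ast \delta'$, and $j \geq k$, handled by $\psi$ after the shift by $\ell(\gamma')$ with target cells in the $\delta'$-portion.

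I expect the main obstacle to be the index bookkeeping at the seam $j = k$, where the face-relation condition must be verified for every $p$ with $\varphi(k-1) < p \leq \psi(1) + \ell(\gamma')$, a range that straddles the boundary between the $\gamma'$- and $\delta'$-portions of $\gamma' \ast \delta'$. Resolving this requires combining the boundary value $\varphi(k) = \ell(\gamma')+1$ for $\varphi$ with the $\psi$-condition at $j = 1$ to cover both sides of the seam.
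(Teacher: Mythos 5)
Your proof is correct, and the two substantive steps it shares with the paper are argued the same way: idempotence via $(\gamma\ast\delta)\ast 1_c = \gamma\ast\delta$, and the descending property via Lemma \ref{order_and_composition}(2) together with the observation that $\tau(\gamma\ast\delta\preceq\gamma)=\delta$ (the identity embedding on $\{0,\ldots,\ell(\gamma)+1\}$ makes the tail exactly $\delta$). Where you go further is in verifying that $\rho_c$ is actually a poset map — a condition that Definition \ref{closure_operator_definition} requires but that the paper's two-line proof passes over in silence. Your splicing of the embedding functions $\varphi$ (for $\gamma\preceq\gamma'$) and $\psi$ (for $\tau(\gamma\preceq\gamma')\ast\delta\preceq\delta'$) into $\zeta$ is the right mechanism, and the seam case is handled correctly: the two regimes $\varphi(k-1)<p\le\ell(\gamma')+1$ and $\ell(\gamma')+1\le p\le\psi(1)+\ell(\gamma')$ overlap at $p=\ell(\gamma')+1$, with the $\varphi$ side giving $\hat e_k\preceq e_k\preceq e'_p$ (using $\iota(\delta)\prec\tau(\gamma)$ when $k=\ell(\gamma)+1$) and the $\psi$ side at $j'=1$ giving $\hat e_k=\check e_1\preceq e^{\delta'}_q$. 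So your proof is not merely a restatement of the paper's — it fills in a genuine omission (one that is also needed for the subsequent claim that $i\dashv s$ is an adjunction in Corollary \ref{tau_is_prefibered}).
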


\begin{proof}
 Obviously $\rho_{c}\circ\rho_{c}=\rho_{c}$. Since
 $\gamma\ast\delta\preceq \gamma$ by Lemma
 \ref{order_and_composition} and 
 $\tau(\gamma\ast\delta\preceq\gamma)=\delta$, we have
 \[
  \rho_{c}(\delta,\gamma) =
 (1_{\tau(\gamma\ast\delta)},\gamma\ast\delta)\preceq^{c} 
 (\delta,\gamma)
 \]
 in $c\downarrow\tau$. 
\end{proof}

This implies that $\rho$ is the counit for the adjunction $i\dashv s$
and we obtain the following corollary.

\begin{corollary}
\label{tau_is_prefibered}
 The collapsing functor $\tau$ is prefibered.
\end{corollary}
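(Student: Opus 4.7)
The plan is to deduce the corollary directly from Proposition \ref{DR_to_genuine_fiber} by translating the descending closure operator $\rho_c = i_c \circ s_c$ into an adjunction. Recall that for posets, whenever a subposet $Q\subseteq P$ with inclusion $i$ admits a retraction $s:P\to Q$ such that $is$ is a descending closure operator on $P$, the pair $(i,s)$ forms an adjunction with $i$ left adjoint to $s$: the counit $is \Rightarrow \mathrm{id}_P$ is the relation $\rho_c(x)\preceq x$, and the unit $\mathrm{id}_Q \Rightarrow si$ is the identity since $s$ is a retraction. Instantiating this in our situation, Proposition \ref{DR_to_genuine_fiber} immediately gives adjunctions
\[
i_c : \tau^{-1}(c) \rightleftarrows c\downarrow\tau : s_c
\]
for every critical cell $c\in\Cr(\mu)$, with $i_c\dashv s_c$.

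Next I would unwind the definition of \emph{prefibered} used in Corollary \ref{prefibered_2QuillenA}: a normal colax functor between $2$-categories is prefibered when, for each object $c$ in the target, the inclusion $i_c:\tau^{-1}(c)\hookrightarrow c\downarrow\tau$ of the strict fiber into the lax/comma fiber admits a right adjoint (so that the strict fiber is a reflective sub-$2$-category of the lax one). Since the homotopy fibers $c\downarrow\tau$ are in fact posets by Corollary \ref{homotopy_fiber_poset}, this $2$-categorical adjointness condition collapses to the ordinary poset adjunction established above. Thus the pair $(i_c,s_c)$ already witnesses the prefibered property verbatim.

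The only step requiring genuine care is confirming that the adjunction $i_c\dashv s_c$ derived at the poset level really matches the $2$-categorical formulation of prefibered used in the appendix; once that is recognized as automatic for poset-valued fibers, there is nothing more to do. In short, after citing Proposition \ref{DR_to_genuine_fiber} and invoking the standard fact that a descending closure operator is the counit of a coreflection, the corollary follows in one line, so the main (and only) obstacle is the bookkeeping of matching conventions between descending closure operators, adjunctions, and the definition of prefibered functor in the $2$-categorical Quillen Theorem A.
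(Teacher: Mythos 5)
Your proof is correct and follows essentially the same route as the paper: Proposition \ref{DR_to_genuine_fiber} exhibits the descending closure operator $\rho_c = i_c\circ s_c$, which together with $s_c\circ i_c = \mathrm{id}$ yields the adjunction $i_c\dashv s_c$, and this is exactly the prefibered condition. The only nitpick is a terminological slip in the parenthetical remark: the strict fiber $\tau^{-1}(c)$ is a \emph{coreflective} (not reflective) subcategory of $c\downarrow\tau$, consistent with your later reference to the ``counit of a coreflection.''
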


%\begin{proof}
% Let us verify that $s_{c}$ is right adjoint to $i_{c}$, i.e.
% \[
%  i_{c}(\gamma)\preceq (\gamma',\delta') \Longleftrightarrow
% \gamma\preceq s_{c}(\gamma',\delta') 
% \]
% or
% \[
% (\gamma,1_{\tau(\gamma)})\preceq (\gamma',\delta') \Longleftrightarrow
% \gamma\preceq \gamma'\ast\delta'. 
% \]
%
% Suppose $(\gamma,1_{\tau(\gamma)})\preceq (\gamma',\delta')$. Apply
% $s_c$ and we have
% \[
% \gamma= s_c(\gamma,1_{\tau(\gamma)}) \preceq s_c(\gamma',\delta').
% \]
% Suppose $\gamma\preceq \gamma'\ast\delta'$. Apply $i_c$ and we have
% \[
% i_c(\gamma)\preceq i_c(\gamma'\ast\delta')
% =(\gamma'\ast\delta',1_{\tau(\gamma'\ast\delta')}) \preceq
% (\gamma',\delta'). 
% \]
%\end{proof}

Now we are ready to prove the following theorem.

\begin{theorem}
\label{homotopy_eq_collapsing}
 For an acyclic partial matching $\mu$ on a finite regular CW
 complex, the collapsing functor $\tau$ induces a homotopy 
 equivalence  
 \[
 B^{ncl}\tau : B\FP(\mu) \longrightarrow B^{ncl}C(\mu)
 \]
 between classifying spaces.
\end{theorem}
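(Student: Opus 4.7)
The plan is to apply the $2$-categorical Quillen's Theorem~A of Corollary~\ref{prefibered_2QuillenA}, which asserts that a prefibered normal colax functor between small $2$-categories whose genuine fibers have contractible classifying spaces induces a homotopy equivalence on normal colax classifying spaces. Prefiberedness of $\tau$ is Corollary~\ref{tau_is_prefibered}, so everything reduces to verifying that $B\tau^{-1}(c)$ is contractible for every $c \in \Cr(\mu)$. I would first simplify the fiber: since the reduction $r : \FP(\mu) \to \overline{\FP}(\mu)$ from Proposition~\ref{adjoint_poset} preserves the target of a flow path, it restricts to each fiber as a descending closure operator on $\tau^{-1}(c)$ with image $\overline{\tau}^{-1}(c)$. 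Corollary~\ref{DR_by_closure_operator} then gives $B\tau^{-1}(c) \simeq B\overline{\tau}^{-1}(c)$, so it is enough to show contractibility of $B\overline{\tau}^{-1}(c)$.

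My approach to this contractibility is to build an explicit order-theoretic zigzag in $\overline{\tau}^{-1}(c)$ connecting any $\gamma=(e_1,u_1,\ldots,e_n,u_n;c)$ of positive length to a strictly shorter element. The two basic operations are the upgrade $u(\gamma) = (u_1,u_1,e_2,u_2,\ldots,e_n,u_n;c)$ from Example~\ref{upgrading_flow_path} and the head truncation $h(\gamma) = (e_2,u_2,\ldots,e_n,u_n;c)$. A direct check with the embedding function of Definition~\ref{subpath} gives $\gamma \preceq u(\gamma)$ via the identity embedding (using that either $e_1=u_1$ or $e_1 = \mu^{-1}(u_1) \prec u_1$) and $h(\gamma) \preceq u(\gamma)$ via the shift $i \mapsto i+1$ (using that $u_1 \succ e_2$). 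Reducedness is preserved by both operations, so the zigzag $\gamma \preceq u(\gamma) \succeq h(\gamma)$ lives inside $\overline{\tau}^{-1}(c)$, and iterating terminates after finitely many steps at $(c)$.

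The main obstacle is to promote this pointwise zigzag into a genuine contraction of $B\overline{\tau}^{-1}(c)$. The naive attempt to turn $u$ and $h$ into monotone endofunctors fails in general, because an embedding function witnessing $\gamma \preceq \gamma'$ can shift cells in a way that is incompatible with head truncation. My plan is therefore to filter $\overline{\tau}^{-1}(c)$ by length and argue inductively that each inclusion $\overline{\tau}^{-1}(c)_{\le k-1} \hookrightarrow \overline{\tau}^{-1}(c)_{\le k}$ is a homotopy equivalence: the truncation $h$ lands in $\overline{\tau}^{-1}(c)_{\le k-1}$, and combined with a further application of Quillen's Theorem~A to the length map and careful bookkeeping of the elementary zigzag on each stratum, contractibility reduces to the trivial base case $\overline{\tau}^{-1}(c)_{\le 0} = \{(c)\}$. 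Once $B^{ncl}\overline{\tau} : B\overline{\FP}(\mu) \to B^{ncl}\overline{C}(\mu)$ is established as a homotopy equivalence, the theorem for the unreduced $\tau$ follows by two-out-of-three applied to the naturality square of the reduction, using Corollary~\ref{reduction_is_deformation_retraction}, Theorem~\ref{reduction_is_homotopy_equivalence}, and the equivalence $B^{ncl} \simeq B^2$.
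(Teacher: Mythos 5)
Your opening moves agree with the paper: you invoke Corollary~\ref{tau_is_prefibered} together with Corollary~\ref{prefibered_2QuillenA}, so that everything reduces to showing $B\tau^{-1}(c)$ is contractible for each critical $c$. The pointwise zigzag $\gamma \preceq u(\gamma) \succeq h(\gamma)$ is also a correct and genuine observation, and it is in spirit exactly the motion that the paper's argument realizes. However, the way you propose to globalize it does not work, and you have in fact put your finger on the obstruction without resolving it. The filtration by $\ell(\gamma)$ is too coarse: on a single length stratum the truncation $h$ is monotone (by Remark~\ref{k=m+1} the embedding is the identity there), but an order relation $\gamma \preceq \gamma'$ with $\ell(\gamma) < \ell(\gamma') = k$ can have an embedding function $\varphi$ with $\varphi(1) > 1$, and then nothing forces $\gamma \preceq h(\gamma')$. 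Likewise $u$ is not monotone in general: from $\gamma \preceq \gamma'$ the embedding gives $u_1 = u'_{\varphi(1)}$ and $e_1 \preceq e'_p$ for $p \leq \varphi(1)$, but there is no face relation between $u_1$ and $u'_1$ unless $\varphi(1)=1$. Your appeal to ``a further application of Quillen's Theorem~A to the length map and careful bookkeeping'' does not identify a mechanism that overcomes this; it is precisely where a new idea is required.

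The paper supplies that idea by replacing the length filtration with the much finer filtration of $\tau^{-1}(c)$ by the value $f(\iota(\gamma))$ of a faithful $\mathbb{Z}$-valued discrete Morse function on the initial cell. Because $f$ is injective, the $\ell$-th stage $F_\ell\tau^{-1}(c)$ differs from $F_{\ell-1}\tau^{-1}(c)$ only by flow paths whose initial cell is the unique cell with $f$-value $\ell$. If that cell is $d \in D(\mu_f)$, the upgrade $m_\ell(\gamma) = u(\gamma)$ (applied only when $\iota(\gamma) = d$) is an ascending closure operator on $F_\ell\tau^{-1}(c)$ with image $F_{\ell-1}\tau^{-1}(c)$; if it is $\mu_f(d)$, the head truncation $b_\ell(\gamma) = h(\gamma)$ (applied only when $\iota(\gamma) = \mu_f(d)$) is a descending closure operator. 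Monotonicity of these operators holds \emph{because} the embedding function is pinned down by the comparison of $f$-values of initial cells (forcing $\varphi(1)=1$ in the relevant cases), which is exactly the control that the length filtration lacks. Corollary~\ref{DR_by_closure_operator} then gives deformation retractions $BF_\ell\tau^{-1}(c) \simeq BF_{\ell-1}\tau^{-1}(c)$, and induction terminates at the base point. Your detour through $\overline{\tau}^{-1}(c)$ and the closing two-out-of-three argument are then unnecessary (and the asserted strict commutation $\overline{\tau}\circ r = r\circ\tau$ on morphisms would itself need verification); once $B\tau^{-1}(c)$ is contractible, Corollary~\ref{prefibered_2QuillenA} concludes the theorem directly.
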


\begin{proof}
 By Corollary \ref{tau_is_prefibered} and Corollary
 \ref{prefibered_2QuillenA}, it suffices to show that 
 $B\tau^{-1}(c)$ is contractible for each $c\in \Cr(\mu)$.

 Recall from Proposition \ref{faithful_discrete_Morse_function} that we
 may choose a faithful and $\Z$-valued discrete Morse function $f$ whose
 associated partial matching is $\mu$. In particular, it is injective. 
 For each nonnegative integer $\ell$, define
 \[
  \tau^{-1}(c)_{\ell} = (f\circ\iota)^{-1}(\ell)\cap \tau^{-1}(c).
 \]
 Define a filtration on $\tau^{-1}(c)$ by
 \[
  F_{\ell}\tau^{-1}(c) = \bigcup_{i=0}^{\ell} \tau^{-1}(c)_{i}.
 \]
 We are going to show that $BF_{\ell}\tau^{-1}(c)$ deformation-retracts
 onto $BF_{\ell-1}\tau^{-1}(c)$ for all $\ell\ge 1$.

 By the injectivity of $f$, $f^{-1}(\ell)$ contains at most one
 element. There are three cases;
 \begin{enumerate}
  \item $f^{-1}(\ell)=\emptyset$
  \item $f^{-1}(\ell)=\{d\}$ with $d\in D(\mu_f)$ 
  \item $f^{-1}(\ell)=\{\mu_f(d)\}$ with $d\in D(\mu_f)$. 
 \end{enumerate}

 In the first case, $F_{\ell}\tau^{-1}(c)=F_{\ell-1}\tau^{-1}(c)$ and
 there is nothing to prove.  

 Suppose $\tau^{-1}(c)_{\ell}=\{d\}$ with $d\in D(\mu_f)$.
 Define a map
 \[
  m_{\ell} : F_{\ell}\tau^{-1}(c) \to F_{\ell}\tau^{-1}(c)
 \]
 by 
 \[
 m_{\ell}(\gamma) =
 \begin{cases}
  (\mu_{f}(d),\mu_{f}(d),e_2,u_2,\ldots, e_{n},u_{n};c), & \text{ if }
  \gamma=(d,\mu_{f}(d), e_2,u_2,\ldots, e_{n},u_{n};c), \\ 
  \gamma, & \text{ otherwise.}
 \end{cases} 
 \]
 Since $f(\mu_{f}(d))\le f(d)$, 
 $m_{\ell}(\gamma)$ belongs to $F_{\ell}\tau^{-1}(c)$. 
 Let us verify that this is an ascending closure operator on
 $F_{\ell}\tau^{-1}(c)$. 
 The embedding function for $\gamma\preceq m_{\ell}(\gamma)$ is given by
 the identity map. We have $m_{\ell}\circ m_{\ell}=m_{\ell}$ by
 definition. 
 It remains to show that this is a poset map. 

 Suppose $\gamma\preceq\gamma'$ with embedding function $\varphi$.
 In particular, we have $\iota(\gamma)\preceq\iota(\gamma')$. Since $f$
 is faithful, we have $f(\iota(\gamma))\le f(\iota(\gamma'))$. 

 When $\iota(\gamma)\neq d$, we have
 \[
  m_{\ell}(\gamma)=\gamma\preceq \gamma' \preceq m_{\ell}(\gamma').
 \]
 When $\iota(\gamma)=d$ the injectivity of $f$ implies that
 $\iota(\gamma'))=\iota(\gamma)=d$, for 
 $\ell=f(\iota(\gamma))\le f(\iota(\gamma'))\le \ell$. 
 Thus the same embedding function $\varphi$
 serves as an embedding function for
 $m_{\ell}(\gamma)\preceq m_{\ell}(\gamma')$. 
 And we have an ascending closure operator $m_{\ell}$ whose image is 
 $F_{\ell-1}\tau^{-1}(c)$.
 By Corollary \ref{DR_by_closure_operator}, $BF_{\ell-1}\tau^{-1}(c)$ is
 a deformation retract of $BF_{\ell}\tau^{-1}(c)$. 
 
 Let us consider the third case $\tau^{-1}(c)_{\ell}=\{\mu_{f}(d)\}$.
 Define
 \[
  b_{\ell} : F_{\ell}\tau^{-1}(c) \rarrow{} F_{\ell}\tau^{-1}(c)
 \]
 by 
 \[
  b_{\ell}(\gamma) = 
 \begin{cases}
  (e_2,u_2,\ldots, e_{n},u_{n};c), & \text{ if }
  \iota(\gamma)=\mu_{f}(d) \\
  \gamma, & \text{ otherwise}
 \end{cases}
 \]
 for $\gamma=(e_1,u_1,e_2,u_2,\ldots,e_n,u_n;c)$.
 As we have seen in Remark \ref{description_of_flow_path}, the
 faithfulness of $f$ implies that 
 \[
 \ell=f(\mu_{f}(d)) \ge f(\mu_{f}(d)) > f(e_2)\ge f(u_2) >
 \cdots > f(e_{n}) \ge f(u_{n})>f(\tau(\gamma))
 \]
 and thus $b_{\ell}(\gamma)\in F_{\ell}\tau^{-1}(c)$.

 Let us show that this is a descending closure operator.
 For a flow path $\gamma$ with $\iota(\gamma)=\mu_{f}(d)$, consider the
 map  $d^1:\{0,\ldots,\ell\}\to\{0,\ldots,\ell+1\}$ given by
 \[
  d^1(i) = 
 \begin{cases}
  0, & i=0 \\
  i+1, & i\ge 2.
 \end{cases}
 \]
 This serves as an embedding function for
 $b_{\ell}(\gamma)\preceq\gamma$, since $e_2\preceq\mu(d_1)$.
 The map $b_{\ell}$ is idempotent by definition. It remains to show that
 $b_{\ell}$ is a poset map. Suppose $\gamma\preceq\gamma'$ with
 embedding function
 $\varphi:\{0,\ldots,\ell(\gamma)+1\}\to\{0,\ldots,\ell(\gamma')+1\}$. 
 As is the case of $\tau^{-1}(c)_{\ell}=\{d\}$, $\gamma\preceq\gamma'$
 implies that $f(\iota(\gamma))\le f(\iota(\gamma'))$ and we have the
 following three cases. 
 \begin{enumerate}
  \item $f(\iota(\gamma))\le f(\iota(\gamma'))<\ell$
  \item $f(\iota(\gamma))<f(\iota(\gamma'))=\ell$
  \item $f(\iota(\gamma))=f(\iota(\gamma'))=\ell$
 \end{enumerate}

 When $f(\iota(\gamma'))<\ell$, we have
 $b_{\ell}(\gamma)=\gamma\preceq\gamma'=b_{\ell}(\gamma)$. 
 When $f(\iota(\gamma))=f(\iota(\gamma'))=\ell$, the restriction of the
 $\varphi$ to 
 $\{0,2,\ldots,\ell(\gamma)+1\}\to\{0,2,\ldots,\ell(\gamma')+1\}$
 is an embedding function for $b_{\ell}(\gamma)\preceq b_{\ell}(\gamma')$. 
 Suppose $f(\iota(\gamma))< f(\iota(\gamma'))=\ell$. We need to show
 that $\gamma\preceq b_{\ell}(\gamma')$. Let
 $\gamma=(e_1,u_1,e_2,u_2,\ldots,e_n,u_n;c)$. Since
 $f(\iota(\gamma))<\ell$, the injectivity of $f$ implies that
 $\iota(\gamma)=e_1\neq \mu(d)$. Since $e_1$ is either $\mu^{-1}(u_1)$
 or $u_1$, $\mu^{-1}(u_1)\neq d$. This implies that the embedding function
 $\varphi$ for $\gamma\preceq\gamma'$ satisfies $\varphi(1)>1$. Thus the
 same function $\varphi$ regarded as a map
 $\{0,1,\ldots,\ell(\gamma)+1\} \to \{0,2,3,\ldots,\ell(\gamma')+1\}$ is
 an embedding function for $b_{\ell}(\gamma)=\gamma\prec b_{\ell}(\gamma')$.
 And we obtain a descending closure operator
 $b_{\ell}: F_{\ell}\tau^{-1}(c)\to F_{\ell}\tau^{-1}(c)$ with image
 $F_{\ell-1}\tau^{-1}(c)$. 

 Again, by Corollary \ref{DR_by_closure_operator},
 $BF_{\ell-1}\tau^{-1}(c)$ is a deformation retract of
 $BF_{\ell}\tau^{-1}(c)$. 
\end{proof}

\begin{example}
 \label{triangle_example5}
 We continue with Example \ref{triangle_example4}.
 Let us compute $[v_0]\downarrow\tau$ and $[v_1,v_2]\downarrow\tau$. 
 As sets, we have
 \begin{eqnarray*}
  ([v_0]\downarrow\tau)_{0} & = & 
   \{(\gamma_{0},\gamma_{0}),
   (\gamma_{0},\gamma_{1}), (\gamma_{0},\gamma_{2}), 
   (\gamma_{0},\gamma_{01}), (\gamma_{0},\gamma_{02}),
   (\gamma_{1},\gamma_{12}), (\gamma_{2},\gamma_{12})\} \\
   ([v_1,v_2]\downarrow\tau)_{0} & = & \{(\gamma_{12},\gamma_{12})\}. 
 \end{eqnarray*}
 By
 $\gamma_{0}\prec \gamma_{01},\gamma_{02}$,
 $\gamma_{1}\prec \gamma_{01}$ and $\gamma_{2}\prec\gamma_{02}$,
 the partial order on $[v_0]\downarrow\tau$ is given by 
 \begin{eqnarray*}
  (\gamma_{0},\gamma_{0}) & \prec^{[v_0]} & (\gamma_{0},\gamma_{01}) \\
  (\gamma_{0},\gamma_{0}) & \prec^{[v_0]} & (\gamma_{0},\gamma_{02}) \\
  (\gamma_{0},\gamma_{1}) & \prec^{[v_0]} & (\gamma_{0},\gamma_{01}) \\
  (\gamma_{0},\gamma_{2}) & \prec^{[v_0]} & (\gamma_{0},\gamma_{02}).
 \end{eqnarray*}
 By $\gamma_{1}\prec\gamma_{12}$ and $\gamma_{2}\prec\gamma_{12}$, we
 have
 \begin{eqnarray*}
  (\gamma_{0},\gamma_{1}) & \prec^{[v_0]} & (\gamma_{1},\gamma_{12}) \\
  (\gamma_{0},\gamma_{2}) & \prec^{[v_0]} & (\gamma_{2},\gamma_{12}).  
 \end{eqnarray*}
 The Hasse diagram of $[v_0]\downarrow\tau$ is
 given by 
 \begin{center}
  \begin{tikzpicture}
   \draw [fill] (-2,0) circle (2pt);
   \draw (-2,-0.5) node {$(\gamma_{0},\gamma_{1})$};

   \draw [fill] (0,0) circle (2pt);
   \draw (0,-0.5) node {$(\gamma_{0},\gamma_{0})$};

   \draw [fill] (2,0) circle (2pt);
   \draw (2,-0.5) node {$(\gamma_{0},\gamma_{2})$};

   \draw [fill] (-3,1) circle (2pt);
   \draw (-3,1.5) node {$(\gamma_{1},\gamma_{12})$};

   \draw [fill] (-1,1) circle (2pt);
   \draw (-1,1.5) node {$(\gamma_{0},\gamma_{01})$};

   \draw [fill] (1,1) circle (2pt);
   \draw (1,1.5) node {$(\gamma_{0},\gamma_{02})$};

   \draw [fill] (3,1) circle (2pt);
   \draw (3,1.5) node {$(\gamma_{2},\gamma_{12})$};

   \draw (0,0) -- (-1,1);
   \draw (0,0) -- (1,1);
   \draw (-2,0) -- (-3,1);
   \draw (-2,0) -- (-1,1);
   \draw (2,0) -- (3,1);
   \draw (2,0) -- (1,1);
  \end{tikzpicture}
%  \begin{tikzpicture}
%   \draw [fill,blue] (-2,0) circle (2pt);
%   \draw (-2,-0.5) node {$(\gamma_1,\gamma_0)$};
%
%   \draw [fill,blue] (0,0) circle (2pt);
%   \draw (0,-0.5) node {$(\gamma_0,\gamma_0)$};
%
%   \draw [fill,blue] (2,0) circle (2pt);
%   \draw (2,-0.5) node {$(\gamma_2,\gamma_0)$};
%
%   \draw [fill] (-3,1) circle (2pt);
%   \draw (-3,1.5) node {$(\gamma_{12},\gamma_{1})$};
%
%   \draw [fill,blue] (-1,1) circle (2pt);
%   \draw (-1,1.5) node {$(\gamma_{01},\gamma_0)$};
%
%   \draw [fill,blue] (1,1) circle (2pt);
%   \draw (1,1.5) node {$(\gamma_{02},\gamma_0)$};
%
%   \draw [fill] (3,1) circle (2pt);
%   \draw (3,1.5) node {$(\gamma_{12},\gamma_2)$};
%
%   \draw [blue] (0,0) -- (-1,1);
%   \draw [blue] (0,0) -- (1,1);
%   \draw (-2,0) -- (-3,1);
%   \draw [blue] (-2,0) -- (-1,1);
%   \draw (2,0) -- (3,1);
%   \draw [blue] (2,0) -- (1,1);
%  \end{tikzpicture}
 \end{center}
 Thus both $B([v_0]\downarrow\tau)$ and $B([v_0,v_1]\downarrow \tau)$
 are contractible.
 Note that $\tau^{-1}([v_0])$ is the subposet given by the zigzag
 between $(\gamma_{0},\gamma_{1})$ and $(\gamma_{0},\gamma_{2})$ and
 embedded in $[v_0]\downarrow\tau$ as a deformation retract.
\end{example}

\subsection{The Face Poset of Stable Subdivision}
\label{stable_subdivision}

%This is the final part of this paper.

We have constructed a subdivision $\Sd_{\mu}(X)$ of $X$ by using flow
paths in \S\ref{continuous_path}. It turns out that the face poset of
the stable subdivision $\Sd_{\mu}(X)$ is isomorphic to the poset of
reduced flow paths $\overline{\FP}(\mu)$. 

The aim of this section is to complete the
proof of Theorem \ref{main1} by proving this fact.
To this end, we need to understand relations between partial order on
$\overline{\FP}(\mu)$ and the deformation retraction
$R_{u}:\overline{u}\to d^{c}$ defined for each matched pair
$d\prec_{1}u=\mu(d)$. 

\begin{definition}
 \label{stable_subspace_definition}
 For a reduced flow path $\gamma=(e_1,u_1,\ldots,e_n,u_n;c)$, define a
 sequence of subpaths $\gamma^{(1)},\ldots,\gamma^{(n+1)}$ of $\gamma$ by
 \[
  \gamma^{(i)} = (e_i,u_i,\ldots,e_n,u_n;c)
 \]
 for $i\le n$ and $\gamma^{(n+1)}=(c)$.

 Let $i_1,\ldots,i_k$ be the indices of $e_i$'s with $e_i\in D(\mu)$ and
 define
 \[
  W^{s}_{\gamma} = \bigcup_{i=1}^{n+1} e_{\gamma^{(i)}} \cup
 \bigcup_{\ell=1}^{k} e_{u(\gamma^{(i_{\ell})})},
 \]
 for $\ell=1,\ldots,k$, where $u(\gamma^{(i_{\ell})})$ is the operation
 on flow paths defined in Example \ref{upgrading_flow_path}.
 This is called the \emph{stable subspace along $\gamma$}.
\end{definition}

\begin{example}
 Consider the partial matching on a $2$-simplex in Example
 \ref{subpath_example}, in which
 we have shown $\gamma\prec\delta$ 
 for the flow paths
 $\gamma = (e_1,e_2,e_5,e_6;e_7)=(d_1,u_1,d_2,u_2;c)$ and
 $\delta=(e_1,e_2,e_4,e_4,e_5,e_6;e_7)=(d'_1,u'_1,u'_2,u'_2,d'_3,u'_3;c)$. 

 \begin{figure}[ht]
  \begin{center}
   \begin{tikzpicture}
    \draw (2,0) -- (0,2) -- (0,0) -- (2,0);

    \draw [->] (1,1) -- (0.5,0.5);
    \draw [->] (-0.1,2) -- (-0.1,1);
    \draw [->] (0,-0.1) -- (1,-0.1);

    \draw [fill] (2,0) circle (2pt);
    \draw [fill] (0,2) circle (2pt);
    \draw [fill] (0,0) circle (2pt);

    \draw (1.2,1.2) node {$e_1$};
    \draw (0.5,0.8) node {$e_2$};
    \draw (-0.4,2) node {$e_3$};
    \draw (-0.4,1) node {$e_4$};
    \draw (-0.3,-0.3) node {$e_5$};
    \draw (1,-0.4) node {$e_6$};
    \draw (2,-0.4) node {$e_7$};    
   \end{tikzpicture}  
%   \begin{tikzpicture}
%    \draw (2,0) -- (0,2) -- (0,0) -- (2,0);
%
%    \draw [blue,->] (1,1) -- (0.5,0.5);
%    \draw [blue,->] (0,2) -- (0,1);
%    \draw [blue,->] (0,0) -- (1,0);
%
%    \draw [fill] (2,0) circle (2pt);
%    \draw [fill] (0,2) circle (2pt);
%    \draw [fill] (0,0) circle (2pt);
%
%    \draw (1.2,1.2) node {$e_1$};
%    \draw (0.5,0.8) node {$e_2$};
%    \draw (-0.4,2) node {$e_3$};
%    \draw (-0.4,1) node {$e_4$};
%    \draw (-0.3,-0.3) node {$e_5$};
%    \draw (1,-0.4) node {$e_6$};
%    \draw (2,-0.4) node {$e_7$};    
%   \end{tikzpicture}  
  \end{center}
  \caption{A partial matching on $2$-simplex}
  \label{stable_subspace_example_figure1}
 \end{figure}
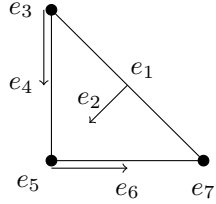
 The subpaths in Definition \ref{stable_subspace_definition} for
 $\gamma$ are
 \begin{align*}
  \gamma^{(1)} & = (e_1,e_2,e_5,e_6;c) \\
  \gamma^{(2)} & = (e_5,e_6;c) \\
  \gamma^{(3)} & = (c).
 \end{align*}
 Thus
 \[
 W^{s}_{\gamma} = e_{\gamma^{(1)}}\cup e_{\gamma^{(2)}} \cup
 e_{\gamma^{(3)}} \cup e_{u(\gamma^{(1)})}\cup e_{u(\gamma^{(2)})},
 \]
 which is the one dimensional complex drawn by dotted lines in Figure
 \ref{stable_subspace_example_figure2}.
% which is the one dimensional complex drawn by blue in Figure
% \ref{stable_subspace_example_figure2}.

  \begin{figure}[ht]
  \begin{center}
   \begin{tikzpicture}
    \draw (4,0) -- (0,4) -- (0,0);

    \draw [dotted] (2,2) -- (0,0);
    \draw [dotted] (0,0) -- (4,0);
    
    \draw [fill] (4,0) circle (2pt);
    \draw [fill] (0,4) circle (2pt);
    \draw [fill] (0,0) circle (2pt);
    \draw [fill] (2,2) circle (2pt);
    
    \draw (2.8,2.4) node {$e_{\gamma^{(1)}}=e_{\gamma}$};
    \draw (0.7,1.65) node {$e_{u(\gamma^{(2)})}$};
    \draw (-0.4,-0.4) node {$e_{\gamma^{(2)}}$};
    \draw (2,-0.5) node {$e_{u(\gamma^{(2)})}$};
    \draw (4,-0.5) node {$e_{\gamma^{(3)}}$};    
   \end{tikzpicture}  
%   \begin{tikzpicture}
%    \draw (2,0) -- (0,2) -- (0,0);
%
%%    \draw [blue] (0,2) -- (1,1);
%    \draw [blue] (1,1) -- (0,0);
%%    \draw [fill,red] (0,2) -- (0,0) -- (1,1) -- (0,2);
%    \draw [blue] (0,0) -- (2,0);
%    
%    \draw [fill,blue] (2,0) circle (2pt);
%    \draw [fill] (0,2) circle (2pt);
%    \draw [fill,blue] (0,0) circle (2pt);
%    \draw [fill,blue] (1,1) circle (2pt);
%    
%    \draw (1.5,1.3) node {$e_{\gamma^{(1)}}=e_{\gamma}$};
%%    \draw (0.44,1) node {$e_{u(\gamma^{(1)})}$};
%%    \draw (-0.4,2) node {$e_3$};
%    \draw (0.2,0.9) node {$e_{u(\gamma^{(2)})}$};
%    \draw (-0.3,-0.3) node {$e_{\gamma^{(2)}}$};
%    \draw (1,-0.4) node {$e_{u(\gamma^{(2)})}$};
%    \draw (2,-0.4) node {$e_{\gamma^{(3)}}$};    
%   \end{tikzpicture}  
  \end{center}
  \caption{Stable subspace along $\gamma$}
  \label{stable_subspace_example_figure2}
 \end{figure}
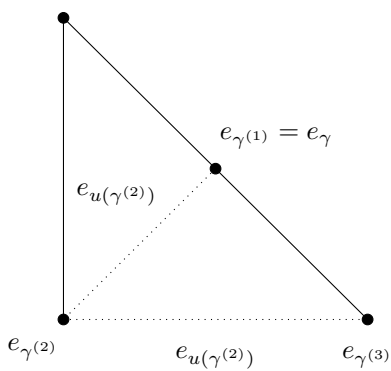
 Note that we need to add $e_{u(\gamma^{(1)})}$ and
 $e_{u(\gamma^{(2)})}$ to obtain a connected region.

 Similarly 
  \begin{align*}
  \delta^{(1)} = \delta & = (e_1,e_2,e_4,e_4,e_5,e_6;c) \\
  \delta^{(2)} & = (e_4,e_4,e_5,e_6;c) \\
  \delta^{(3)} & = (e_5,e_6;c) \\
  \delta^{(4)} & = (c).
 \end{align*}
 Thus
 \[
 W^{s}_{\delta} = e_{\delta^{(1)}}\cup e_{\delta^{(2)}} \cup
 e_{\delta^{(3)}}\cup 
 e_{\delta^{(4)}} \cup e_{u(\delta^{(1)})} \cup e_{u(\delta^{(3)})},
 \]
 which is the union of the shaded area and dotted lines in Figure
 \ref{stable_subspace_example_figure3}.
% which is the area drawn by red in Figure
% \ref{stable_subspace_example_figure3}.

  \begin{figure}[ht]
  \begin{center}
   \begin{tikzpicture}
    \draw (4,0) -- (0,4) -- (0,0);

    \draw [fill,lightgray] (0,4) -- (0,0) -- (2,2) -- (0,4);
    \draw [dotted] (0,4) -- (2,2);
    \draw [dotted] (0,4) -- (0,0);
    \draw [dotted] (0,0) -- (4,0);
    
    \draw (2,2) -- (0,0);

    \draw [fill] (4,0) circle (2pt);
    \draw [fill] (0,4) circle (2pt);
    \draw [fill] (0,0) circle (2pt);

    \draw (2.2,3.2) node {$e_{\delta^{(1)}}=e_{\delta}$};
    \draw (0.88,2) node {$e_{u(\delta^{(1)})}$};
%    \draw (-0.4,2) node {$e_3$};
    \draw (-0.6,2) node {$e_{\delta^{(2)}}$};
    \draw (-0.4,-0.4) node {$e_{\delta^{(3)}}$};
    \draw (2,-0.5) node {$e_{u(\delta^{(3)})}$};
    \draw (4,-0.5) node {$e_{\delta^{(4)}}$};    
   \end{tikzpicture}  
%   \begin{tikzpicture}
%    \draw (2,0) -- (0,2) -- (0,0);
%
%    \draw [red] (0,2) -- (1,1);
%    \draw [red] (0,2) -- (0,0);
%    \draw [fill,red] (0,2) -- (0,0) -- (1,1) -- (0,2);
%    \draw [red] (0,0) -- (2,0);
%    
%    \draw (1,1) -- (0,0);
%
%    \draw [fill,red] (2,0) circle (2pt);
%    \draw [fill] (0,2) circle (2pt);
%    \draw [fill,red] (0,0) circle (2pt);
%
%    \draw (1.2,1.7) node {$e_{\delta^{(1)}}=e_{\delta}$};
%    \draw (0.44,1) node {$e_{u(\delta^{(1)})}$};
%%    \draw (-0.4,2) node {$e_3$};
%    \draw (-0.4,1) node {$e_{\delta^{(2)}}$};
%    \draw (-0.3,-0.3) node {$e_{\delta^{(3)}}$};
%    \draw (1,-0.4) node {$e_{u(\delta^{(3)})}$};
%    \draw (2,-0.4) node {$e_{\delta^{(4)}}$};    
%   \end{tikzpicture}  
  \end{center}
  \caption{Stable subspace along $\delta$}
  \label{stable_subspace_example_figure3}
  \end{figure}
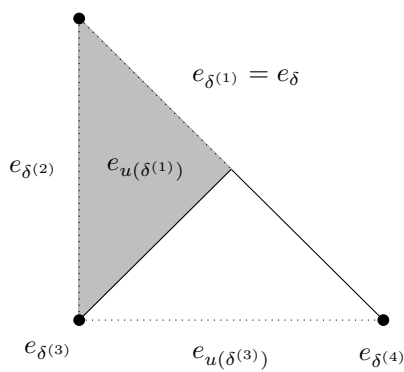

 From these figures, we see that
 $W_{\gamma}^s\subset \overline{W_{\delta}^s}$. 
\end{example}

\begin{lemma}
 For $x\in e_{\gamma}$, the continuous flow $L_{x}$ stays in
 $W_{\gamma}^{s}$. Furthermore the flows $\{L_{x}\}_{x\in\iota(\gamma)}$
 can be glued together to give rise to a continuous flow on
 $W_{\gamma}^{s}$.
 \[
  L_{\gamma}: W_{\gamma}^{s}\times [0,h]\rarrow{} W_{\gamma}^{s}.
 \]
\end{lemma}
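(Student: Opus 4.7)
My plan is to proceed in two parts: first establishing containment of $L_x$ in $W_\gamma^s$ for $x \in e_\gamma$ by induction on $n = \ell(\gamma)$, then assembling the individual flows into one continuous flow with a common time parameter.

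For Part 1, the base case $n = 0$ is immediate, since $\gamma = (c)$, $e_\gamma = W_\gamma^s = c$, and $L_x$ is constant at $x$. For the inductive step, set $\gamma' = \gamma^{(2)} = (e_2, u_2, \ldots, e_n, u_n; c)$ and split on whether $e_1 = d_1 = \mu^{-1}(u_1)$ or $e_1 = u_1$. In the first case, Corollary \ref{inductive_e_gamma} gives $e_\gamma = R_{d_1}^{-1}(e_{\gamma'}) \cap d_1$, so $x' := L_{e_1,u_1}(x,1)$ lies in $e_{\gamma'}$, and Definition \ref{continuous_flow_definition} expresses $L_x$ as $L_{e_1,u_1}(x,\cdot)$ on $[0,1]$ followed by a shift of $L_{x'}$ on $[1,h_x]$. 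Points $L_x(t)$ with $t \in (0,1)$ lie in the interior of $u_1$, and their own flow follows the sequence $u_1, e_2, u_2, \ldots, c$, i.e., the flow path $u(\gamma)$, so they lie in $e_{u(\gamma^{(1)})}$. The remainder $L_{x'}$ lies in $W_{\gamma'}^s$ by the inductive hypothesis. The case $e_1 = u_1$ is analogous, with the portion of $L_x$ inside $u_1$ lying in $e_{\gamma^{(1)}} = e_\gamma$ itself. Since $e_\gamma \cup e_{u(\gamma^{(1)})} \cup W_{\gamma'}^s \subseteq W_\gamma^s$ by construction, this yields $L_x \subseteq W_\gamma^s$.

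For Part 2, observe that every $y \in W_\gamma^s$ lies in exactly one of the cells $e_{\gamma^{(i)}}$ or $e_{u(\gamma^{(i_\ell)})}$, and applying Part 1 to the appropriate subpath of $\gamma$ shows that the entire trajectory $L_y$ is contained in $W_\gamma^s$. To obtain a single map $L_\gamma : W_\gamma^s \times [0,h] \to W_\gamma^s$ with a common terminal time, I rescale each $L_y$ so that each full passage through a cell takes unit time, and pad with the constant flow at $c$ (which is critical, hence fixed) to extend shorter trajectories up to the common time $h = n$. Continuity away from boundaries between the strata of $W_\gamma^s$ follows from the continuity of each $L_{d_i,u_i}$.

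The main obstacle will be continuity of the glued flow $L_\gamma$ across boundaries between the constituent cells of $W_\gamma^s$. For a sequence $y_k \to y$ whose members lie in strata of different combinatorial type (e.g.\ $y_k \in e_{\gamma^{(i)}}$ with $e_i = d_i$ but $y \in e_{u(\gamma^{(i)})}$, or conversely a sequence in the interior of $u_i$ converging to the boundary cell $e_{i+1}$), the reparametrized flows have different ``cell counts,'' and one must verify that the rescaling matches up in the limit. This is where the explicit formula for $L_{d,u}$ in terms of the linear flow $L$ on $[-1,1]$ and the compatibility with the retraction $R_u$ recorded in Remark \ref{retraction_on_disk} become crucial: the uniform parametrization $t \mapsto L_{d,u}(\cdot, t)$ extends continuously to all of $\overline{u} \times [0,1]$ and carries the stratification of $\overline{u}$ consistently, which is exactly what is needed to make $L_\gamma$ continuous.
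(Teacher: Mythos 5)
Your handling of the first statement by induction is sound, and is in fact more explicit than the paper, which dismisses it as obvious from the construction of $L_x$ and $W^s_\gamma$. The gap is in the gluing step. Rescaling each individual trajectory $L_y$ so that every cell passage takes unit time, and then padding at the \emph{end} with the constant flow at $c$, produces a map that is discontinuous across strata. Take $e_1 = \mu^{-1}(u_1)$ and $e_2 = \mu^{-1}(u_2)$, and let $y_k \in e_{u(\gamma^{(1)})}\subset u_1$ converge to a boundary point $y \in e_{\gamma^{(2)}}\subset e_2$. Each $y_k$ spends positive time in $u_1$, which your rescaling stretches to unit time, so the rescaled $L_{y_k}(1)=R_{u_1}(y_k)\to R_{u_1}(y)=y$. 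But $y$ has no $u_1$-passage at all: in its rescaled trajectory the interval $[0,1]$ already traverses the pair $(e_2,u_2)$, so $L_y(1)=R_{u_2}(y)\ne y$. Padding with the constant flow at $c$ cannot repair this mismatch, because what the limit trajectory is missing is a constant segment at the \emph{beginning}, not the end.

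The paper's construction avoids this by never rescaling individual trajectories. It concatenates the homotopies $L_{d_i,u_i}$ applied uniformly to the entire stratum $W^s_{\gamma^{(i)}}$: each $L_{d_i,u_i}$ is already a continuous map on $\overline{u_i}\times[0,1]$, and it restricts to the identity on $d_i^{\,c}\times[0,1]$. Thus a point such as $y$ above, which lies in $d_1^{\,c}$, is held fixed during the first time block and only starts moving once the concatenation reaches $L_{d_2,u_2}$ --- this supplies exactly the padding-at-the-start that your scheme lacks, and continuity is automatic because each piece of the concatenation is a single globally defined homotopy rather than a family of per-point reparametrizations. Your closing observation that the parametrization $t\mapsto L_{d,u}(\cdot,t)$ is the key is correct in spirit, but it is incompatible with rescaling trajectories to unit time per cell; the fix is to abandon the rescaling and work directly with the retraction homotopies on strata, as the paper does.
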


\begin{proof}
 The first statement is obvious from the definition.
 For a reduced flow path $\gamma=(e_1,u_1,\ldots,e_n,u_n;c)$, let
 $h=h_x$ for $x\in e_{u(\gamma)}$. This is independent of $x$, since the
 initial cell of $u(\gamma)$ belongs to $D(\mu)$.

 Similarly define $h_2=h_{y}$ for $y\in e_{u(\gamma_2)}$ and define a
 map 
 \[
  L_1 : W_{\gamma}^{s}\times[h_2,h] \rarrow{} W_{\gamma}^{s}
 \]
 by
 \[
  L_1(x,t) = L_{d_1,u_1}(x,t-h_2),
 \]
 where $d_1=\mu^{-1}(u_1)$. This is a deformation retraction onto
 $W_{\gamma_2}^{s}$. By iterating this process, we obtain a sequence of
 homotopies
 $\{L_i:W_{\gamma_i}^{s}\times[h_{i+1},h_i]\to W_{\gamma_i}^{s}\}$. By
 concatenating these homotopies we obtain a continuous map
 \[
  L : W_{\gamma}^{s}\times[0,h] \rarrow{} W_{\gamma}^{s}
 \]
 whose restriction to $\{x\}\times[0,h_{x}]$ coincides with $L_{x}$.
\end{proof}

By using $L_{\gamma}$, we have the following extension of the
description of Lemma \ref{e_gamma_by_L}.

\begin{lemma}
 \label{more_inductive_e_gamma}
 For a reduced flow path $\gamma=(e_1,u_1,\ldots,e_n,u_n;c)$, we have
 \[
  e_{\gamma^{(k)}} = \set{x\in e_{k}}{\exists t \text{ s.t. }
 L_{\gamma}(x,t)\in e_{\gamma^{(k+1)}}}
 \]
 for $1\le k\le n$.
\end{lemma}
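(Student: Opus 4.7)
The plan is to deduce the claim from Corollary \ref{inductive_e_gamma} by unpacking the construction of the continuous flow $L_{\gamma}$ (equivalently, $L_{x}$) step by step. Since $\gamma$ is reduced, every subpath $\gamma^{(k)}=(e_k,u_k,\ldots,e_n,u_n;c)$ is reduced as well, and applying Corollary \ref{inductive_e_gamma} to $\gamma^{(k)}$ (with $\gamma^{(k+1)}$ playing the role of $\gamma'$) yields
\[
 e_{\gamma^{(k)}} = R_{u_k}^{-1}\!\left(e_{\gamma^{(k+1)}}\right)\cap e_k.
\]
Thus, for a fixed $x\in e_k$, it suffices to show that there exists some $t$ with $L_{\gamma}(x,t)\in e_{\gamma^{(k+1)}}$ if and only if $R_{u_k}(x)\in e_{\gamma^{(k+1)}}$.

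First I would exhibit a distinguished time $t_k$ at which $L_{\gamma}(x,t_k)=R_{u_k}(x)$. This follows directly from the inductive construction in Definition \ref{continuous_flow_definition}. If $e_k=d_k=\mu^{-1}(u_k)$, take $t_k=1$, so that $L_{\gamma}(x,1)=L_{d_k,u_k}(x,1)=R_{u_k}(x)$ by Definition \ref{retraction}. If $e_k=u_k$, write $x=L_{d_k,u_k}(y,s)$ with $y\in d_k$ and $s\in[0,1)$, take $t_k=1-s$, and use the fact that the linear flow $L$ on $[-1,1]$ drives every point to $-1$ at parameter $1$ to conclude $L_{\gamma}(x,1-s)=L_{d_k,u_k}(y,1)=R_{u_k}(y)$; since $R_{u_k}$ is a retraction along $L_{d_k,u_k}$, this equals $R_{u_k}(x)$. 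In either case $L_{\gamma}(x,t_k)=R_{u_k}(x)$, which immediately gives the ``if'' direction.

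For the converse I would analyze how $L_{\gamma}(x,\cdot)$ behaves after time $t_k$. Setting $x'=R_{u_k}(x)\in e_{k+1}$, the concatenation formula in Definition \ref{continuous_flow_definition} identifies $L_{\gamma}(x,t_k+\tau)$ with $L_{x'}(\tau)$ up to the evident parameter shift. The set of times at which $L_{\gamma}(x,\cdot)$ lies in the cell $e_{k+1}$ forms a closed subinterval of $[0,h_x]$ containing $t_k$, and for every such $t$ the trajectory starting at $L_{\gamma}(x,t)$ coincides (up to reparameterization) with the trajectory starting at $x'$; hence $\gamma_{L_{\gamma}(x,t)}=\gamma_{x'}$ by Lemma \ref{flow_path_from_x}. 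Because $e_{\gamma^{(k+1)}}\subseteq e_{k+1}$, any witness $t$ with $L_{\gamma}(x,t)\in e_{\gamma^{(k+1)}}$ must lie in this subinterval, so $x'\in e_{\gamma^{(k+1)}}$, and then the identification from Corollary \ref{inductive_e_gamma} gives $x\in e_{\gamma^{(k)}}$.

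The main obstacle is formalizing the claim that inside $e_{k+1}$ the trajectory of $L_{\gamma}(x,\cdot)$ is entirely determined by $L_{x'}$, so that the associated flow path is constant along the relevant subinterval. This requires splitting into the two cases $e_{k+1}\in D(\mu)$, in which the subinterval collapses to the single instant $t_k$, and $e_{k+1}\in U(\mu)$, in which the subinterval is nontrivial and one has to verify that the parameter-shifted flow inside $u_{k+1}$ agrees with $L_{x'}$. Both verifications are essentially bookkeeping on top of Definition \ref{continuous_flow_definition}, but the combinatorics of parameter shifts in the case $e_k=u_k$, $e_{k+1}=u_{k+1}$ deserves careful attention.
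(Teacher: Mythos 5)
Your plan to deduce the lemma from Corollary~\ref{inductive_e_gamma} runs into a genuine difficulty in the converse direction. The entire argument there pivots on the assertion introduced by the phrase ``Setting $x'=R_{u_k}(x)\in e_{k+1}$'': that $t_k$ lies in the interval of times during which the trajectory is in $e_{k+1}$, that $\gamma_{L_{\gamma}(x,t)}$ is constant on that interval, and that its constant value equals $\gamma_{x'}$ all depend on $x'$ actually lying in $e_{k+1}$. You do not justify this, and indeed $R_{u_k}(x)\in e_{k+1}$ is precisely the condition that characterizes $e_{\gamma^{(k)}}$ inside $e_k$; it fails for a general $x\in e_k$. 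Under your implicit reading of $L_{\gamma}(x,t)$ as the continuous flow $L_x(t)$ starting at an arbitrary $x\in e_k$, the claimed equality is simply false. For instance, take the solid $3$-simplex with the acyclic matching $\mu([v_i])=[v_0,v_i]$, $\mu([v_i,v_j])=[v_0,v_i,v_j]$, $\mu([v_1,v_2,v_3])=[v_0,v_1,v_2,v_3]$, and $[v_0]$ the unique critical cell. For $\gamma=([v_1,v_2,v_3],[v_0,v_1,v_2,v_3];[v_0])$ one has $e_{\gamma^{(2)}}=\{v_0\}$ and $e_{\gamma^{(1)}}=R_{u_1}^{-1}(v_0)\cap[v_1,v_2,v_3]$, a single point. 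Yet any $x\in[v_1,v_2,v_3]$ for which $R_{u_1}(x)$ lands in the open edge $(v_0,v_1)$ still has $L_x(t)=v_0$ for some later $t$, because the flow continues along the matched edge $[v_0,v_1]$ down to $v_0$. So $\{x\in e_1:\exists t,\ L_x(t)\in e_{\gamma^{(2)}}\}$ strictly contains $e_{\gamma^{(1)}}$.

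The paper's one-word proof (``By definition'') is not a compressed version of your argument but a different reading of the statement: $L_{\gamma}$ is defined only on $W^s_{\gamma}$, and a direct check shows $W^s_{\gamma}\cap e_k=e_{\gamma^{(k)}}$, so the right-hand side of the lemma is automatically contained in $e_{\gamma^{(k)}}$; the reverse inclusion is just the observation that for $x\in e_{\gamma^{(k)}}$ the trajectory reaches $R_{u_k}(x)\in e_{\gamma^{(k+1)}}$ at the time you call $t_k$. The reduction to Corollary~\ref{inductive_e_gamma} is not only unnecessary but, if carried out for arbitrary $x\in e_k$, changes the content of the lemma to something that does not hold.
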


\begin{proof}
 By definition.
\end{proof}

The stable subspace $W^{s}(c)$ introduced in Definition
\ref{stable_subdivision_definition} for a critical cell $c$ decomposes
into a union
\[
 W^{s}(c) = \bigcup_{\tau(\gamma)=c} W^{s}_{\gamma}.
\]
The flows $L_{\gamma}$ for $\gamma$ with $\tau(\gamma)=c$ can be glued
together to give rise to a flow on $W^{s}(c)$
\[
 L_{c} : W^{s}(c)\times[0,h] \rarrow{} W^{s}(c). 
\]
%The boundary of a cell $e_{\gamma}$ in the stable subdivision
%$\Sd_{\mu}(X)$ can be understood in terms of the behavior of
%$L_{c}$.

The main result of this section is the following theorem,  
which establishes that the map $\overline{\FP}(\mu) \to F(\Sd_{\mu}(X))$
 given by $\gamma \mapsto e_{\gamma}$ is an isomorphism of posets. 

\begin{proposition}
 \label{face_poset_of_stable_subdivision}
 For $\gamma,\delta\in\overline{\FP}(\mu)$, 
we have $\gamma\preceq \delta$ if and only if
 $e_{\gamma}\subset \overline{e_{\delta}}$.
\end{proposition}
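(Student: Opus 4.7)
The plan is to prove both implications by induction on $n = \ell(\delta)$, using the recursive description $e_\delta = R_{u_1'}^{-1}(e_{\delta^{(2)}}) \cap e_1'$ of Corollary \ref{inductive_e_gamma} together with the explicit geometry of matched pairs provided by Proposition \ref{cell_structure_for_matched_pair} and Remark \ref{retraction_on_disk}. Throughout, write $\gamma = (e_1, u_1, \ldots, e_m, u_m; c)$ and $\delta = (e_1', u_1', \ldots, e_n', u_n'; c')$.

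For the forward direction, fix an embedding function $\varphi \colon \{0, \ldots, k\} \to \{0, \ldots, n+1\}$ witnessing $\gamma \preceq \delta$ and split into two cases. When $\varphi(1) = 1$, one has $u_1 = u_1'$ and $e_1 \preceq e_1'$; the shifted function $j \mapsto \varphi(j+1) - 1$ witnesses $\gamma^{(2)} \preceq \delta^{(2)}$, so by induction $e_{\gamma^{(2)}} \subset \overline{e_{\delta^{(2)}}}$. Given $x \in e_\gamma$, the relation $R_{u_1}(x) \in e_{\gamma^{(2)}}$ is approximated by a sequence $y_i \to R_{u_1}(x)$ with $y_i \in e_{\delta^{(2)}}$, and this sequence is lifted to $x_i \to x$ in $e_\delta$ via the explicit coordinate section $\psi$ from Remark \ref{retraction_on_disk}, exploiting that $R_{u_1'}$ has a continuous section on $\overline{e_1'}$. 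When $\varphi(1) > 1$, the cell $e_1$ sits as a face of the chain $e_1', \ldots, e_{\varphi(1)}'$ with $u_1$ appearing only at position $\varphi(1) \ge 2$ of $\delta$; the function $j \mapsto \varphi(j) - 1$ (for $j \ge 1$, with $0 \mapsto 0$) witnesses $\gamma \preceq \delta^{(2)}$, and induction gives $e_\gamma \subset \overline{e_{\delta^{(2)}}}$. To upgrade this to $e_\gamma \subset \overline{e_\delta}$ one shows directly, for each $x \in e_\gamma \subset e_1$, that small perturbations in $e_1'$ transverse to $e_1$ (afforded by $e_1 \preceq e_1'$) combined with adjustments in the fiber direction of $R_{u_1'}$ produce approximating points in $e_\delta$.

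For the backward direction, fix $x \in e_\gamma$ and $x_i \in e_\delta$ with $x_i \to x$. Every $x_i$ has flow path $\gamma_{x_i} = \delta$, so the flow $L_{x_i}$ visits $e_1', u_1', \ldots, e_n', u_n', c'$ in order, while the limit flow $L_x$ visits the cells of $\gamma$. By continuity of the local flows $L_{d, u}$ on matched pairs, together with the sequence description in Remark \ref{description_of_flow_path} and the injectivity of $\mu$, each $u_j$ in $\gamma$ must equal some $u_p'$ in $\delta$; setting $\varphi(j)$ to be the smallest such index $p$ exceeding $\varphi(j-1)$ produces a strictly increasing function, and continuity of the flow forces each intermediate $e_{p}'$ for $\varphi(j-1) < p \le \varphi(j)$ to contain $e_j$ as a face. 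This assignment is precisely the embedding function required by Definition \ref{subpath}, its uniqueness being guaranteed by Lemma \ref{uniqueness_of_embedding_function}.

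The principal obstacle is Case 2 of the forward direction, where $e_1$ may be a proper face of $e_1'$ of arbitrary codimension, and the desired approximating points in $e_\delta$ must simultaneously satisfy the reachability condition $R_{u_1'}(\cdot) \in e_{\delta^{(2)}}$ while lying close to $e_1$. Controlling this requires combining the coordinate description of $R_{u_1'}$ with the combinatorial face inequalities $e_1 \preceq e_p'$ for $0 < p \le \varphi(1)$ built into the embedding function, and leveraging the reducedness of both $\gamma$ and $\delta$ to ensure that the approximating points genuinely land in the open stratum $e_\delta$ rather than in an adjacent lower-dimensional stratum; the backward direction is comparatively direct once the flow-degeneration picture is made precise.
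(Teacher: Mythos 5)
Your overall structure (both inclusions, recursion via Corollary \ref{inductive_e_gamma}, use of the coordinate model of $R_u$) is compatible with the paper's argument, but there are two genuine gaps, one in each direction, and in both cases the missing ingredient is exactly the reducedness mechanism that the paper isolates as a separate lemma.

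In the forward direction, you admit that Case 2 ($\varphi(1)>1$) is the hard step but leave it at ``small perturbations transverse to $e_1$ combined with adjustments in the fiber direction.'' What actually closes this case is the pointwise fixed-point fact: because $\delta$ is reduced, $e'_{\ell+1}$ is a face of $u'_\ell$ different from $d'_\ell$, hence lies in $(d'_\ell)^c$; since $\varphi$ is an embedding function we have $e_1\preceq e'_{\ell+1}$ for $\varphi(0)<\ell+1\le\varphi(1)$, so $e_1$ is fixed pointwise by each $R_{u'_\ell}$ with $1\le\ell<\varphi(1)$. Combined with the product-structure description of $R_{u'_\ell}^{-1}$ from Remark \ref{retraction_on_disk}, this is what lets one move from $e_\gamma\subset\overline{e_{\delta^{(\ell+1)}}}$ to $e_\gamma\subset\overline{e_{\delta^{(\ell)}}}$ one step at a time. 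You gesture at ``the combinatorial face inequalities $e_1\preceq e'_p$'' and ``reducedness'' but never assemble them into this concrete statement; without it the induction does not close. (The paper runs a reverse induction on the index $j$ within $\gamma$ rather than on $\ell(\delta)$; your induction variable is a legitimate alternative, but the required lemma is the same.)

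In the backward direction, your flow-degeneration argument has two unproved assertions that do not follow from ``continuity of the local flows.'' First, that each $u_j$ of $\gamma$ equals some $u'_p$ of $\delta$: continuity only gives that the limit points of $L_{x_i}$ lie in closures of cells of $\delta$, so a priori one only gets $u_j\preceq e'_p$ for some $p$, not equality. Second, that the face conditions $e_j\preceq e'_p$ hold for all intermediate $p$. The paper settles both by defining $\varphi(j)$ as the \emph{maximum} $p>\varphi(j-1)$ with $e_{\gamma^{(j)}}\subset\overline{e_{\delta^{(p)}}}$; maximality is then used to show $e_{\gamma^{(j)}}\not\subset(d'_{\varphi(j)})^c$ (otherwise $R_{u'_{\varphi(j)}}$ would fix it and push it into $\overline{e_{\delta^{(\varphi(j)+1)}}}$, contradicting maximality), hence $e_{\gamma^{(j)}}\subset d'_{\varphi(j)}\cup u'_{\varphi(j)}$, which forces $u_j=u'_{\varphi(j)}$. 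Your choice of ``smallest such $p$'' cannot run this argument, and you give no replacement for it.
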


Assume the existence of a pair of reduced flow paths 
 \begin{eqnarray*}
 \gamma & = & (e_{1},u_{1},e_{2}, u_{2},\ldots, e_{n}, u_{n}; c), \text{ and} \\
 \delta & = & (e_{1}',u_{1}', e_{2}', u_{2}', \ldots, e_{m}', u_{m}'; c')  
 \end{eqnarray*}
 so that $\gamma \preceq \delta$ holds with the embedding function
 $\varphi:\{0,1,\ldots,k\}\to\{0,1,\ldots,m+1\}$. We also denote
 $d_i=\mu^{-1}(u_i)$ and $d'_j=\mu^{-1}(u'_j)$ for each $i$ and $j$, and recall
for the reader's convenience the fact that a deformation retraction 
$R_u: \overline{u} \to \partial u \setminus d$ has been chosen for each
matched cell pair $u = \mu(d)$ and used to define a continuous flow
in the sense of Definition \ref{continuous_flow_definition}.

\begin{lemma}
\label{path_range_retraction_invariance}
The cell $e_j$ is fixed pointwise by the deformation retraction 
$R_{u'_\ell}:\overline{u'}_\ell \to \partial u'_\ell \setminus d'_\ell$
for each $1 \leq j \leq k$ and $\phi(j-1) \leq \ell < \phi(j)$.
\end{lemma}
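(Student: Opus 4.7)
The plan is to leverage the explicit description of the retraction recorded in Definition \ref{retraction} and Remark \ref{retraction_on_disk}: the map $R_{u'_\ell}$ fixes a subset of $\overline{u'_\ell}$ pointwise precisely when that subset is contained in $\partial u'_\ell \setminus d'_\ell$. Since $e_j$ is an open cell, it therefore suffices to establish two facts: (i) $e_j \prec u'_\ell$ as a strict face, placing $e_j$ in $\partial u'_\ell$, and (ii) $e_j \neq d'_\ell$, so that $e_j$ is disjoint from $d'_\ell$.

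For (i) I would split according to the position of $\ell$ in the embedding. When $\ell = \varphi(j-1)$ (which forces $j \geq 2$, since $\varphi(0)=0$ and $u'_0$ does not exist), the second condition of Definition \ref{subpath} yields $u'_\ell = u_{j-1}$, whence the flow path axiom $u_{j-1} \succ e_j$ for $\gamma$ supplies the strict face relation. When $\varphi(j-1) < \ell < \varphi(j)$, the fourth condition of Definition \ref{subpath} applied at $p = \ell + 1$ yields $e_j \preceq e'_{\ell+1}$, and the flow path axiom $u'_\ell \succ e'_{\ell+1}$ for $\delta$ then gives $e_j \prec u'_\ell$.

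For (ii) I would argue by contradiction, assuming $e_j = d'_\ell$. If $j = k = n+1$, then $e_j = c$ is critical while $d'_\ell \in D(\mu)$, ruling equality out at once. Otherwise $e_j$ is a genuine cell of $\gamma$; since $d'_\ell \notin U(\mu)$, it must equal $\mu^{-1}(u_j)$, forcing $u_j = u'_\ell$. In the subcase $\ell = \varphi(j-1)$ this gives $u_j = u_{j-1}$, which the strict decreasing chain $f(u_{j-1}) > f(e_j) \geq f(u_j)$ from Remark \ref{description_of_flow_path}, combined with injectivity of the faithful Morse function $f$, contradicts. In the subcase $\varphi(j-1) < \ell < \varphi(j)$, the fourth condition supplies $d'_\ell \preceq e'_{\ell+1}$; combining this with the cover $d'_\ell \prec_1 u'_\ell$ and the strict face $u'_\ell \succ e'_{\ell+1}$ yields the dimension pinch
\[
 \dim d'_\ell \leq \dim e'_{\ell+1} \leq \dim u'_\ell - 1 = \dim d'_\ell,
\]
so that regularity forces $d'_\ell = e'_{\ell+1}$. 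Writing $e'_{\ell+1}$ as either $u'_{\ell+1}$ or $d'_{\ell+1}$ then produces either an element of the empty set $D(\mu)\cap U(\mu)$ or the equality $u'_\ell = u'_{\ell+1}$, again contradicting injectivity of $f$.

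The main obstacle is this second subcase of (ii): the inequality $e_j \neq d'_\ell$ cannot be read off from the order relations alone, and one must pair the tight dimension interplay between the cover $d'_\ell \prec_1 u'_\ell$ and the strict face $e'_{\ell+1} \prec u'_\ell$ with the Morse-theoretic injectivity afforded by the faithful $f$. The remaining cases are comparatively routine consequences of the axioms for the embedding function and the flow paths.
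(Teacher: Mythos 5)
Your proof is correct, but it takes a genuinely different route from the paper's. The paper's argument (modulo what appear to be index typos---it should read $u'_{p-1}$, $d'_{p-1}$, $R_{u'_{p-1}}$, and $\ell = p-1$) works entirely through the intermediate cell $e'_p$ of $\delta$: it shows that $e'_p$ itself lies in the closed set $\partial u'_{p-1}\setminus d'_{p-1}$, and then simply transfers the conclusion to $e_j$ via the embedding condition $e_j \preceq e'_p$ and the fact that $\partial u'_{p-1}\setminus d'_{p-1}$ is closed. This bypasses any direct analysis of $e_j$. You instead verify the two needed facts (strict face, inequality with $d'_\ell$) directly for $e_j$, which forces you into a two-way case split on $\ell$ and the dimension-pinch argument in part (ii). Your approach is more laborious but still sound; in particular the pinch $\dim d'_\ell \leq \dim e'_{\ell+1} \leq \dim u'_\ell - 1$ combined with regularity of $X$ is a valid way to derive the contradiction, and you use faithfulness of $f$ exactly where it is needed.

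Two remarks worth making. First, you could have compressed step (ii) considerably by reusing your own observation from step (i): once you know $e_j \preceq e'_{\ell+1}$, it suffices to show $e'_{\ell+1} \neq d'_\ell$ (which is the single-cell version of your faithfulness argument); then $e'_{\ell+1} \subset \partial u'_\ell \setminus d'_\ell$, this set is closed, and $e_j$ sits in its closure---exactly the paper's route. Second, you never invoke the hypothesis that $\delta$ is reduced, whereas the paper cites reducedness explicitly. A close reading shows the paper's argument actually relies on faithfulness (to exclude $e'_p = d'_{p-1}$) rather than on reducedness (which only rules out $e'_p$ being a \emph{proper} face of $d'_{p-1}$, a case that would still land $e'_p$ inside $\partial u'_{p-1}\setminus d'_{p-1}$). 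So your omission of the reducedness hypothesis is not an error; it exposes a redundancy in the paper's stated reasoning.
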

\begin{proof} 
Since $\phi$ is an embedding function, we have $e_j \preceq e'_p$
for $\phi(j-1) < p \leq \phi(j)$. But since $\delta$ is a reduced flow 
path, any such $e'_p$ is a face of the subsequent $u'_{p+1}$ different 
from $d'_{p+1}$, and all of its points must therefore be fixed by 
$R_{u'_{p+1}}$. For these values of $p$, note that $\ell = p+1$ 
satisfies $\phi(j-1) \leq \ell < \phi(j)$.
\end{proof}

\begin{proposition}
\label{reverse_induction_path}
 For every $1 \leq j \leq k$ and $\phi(j-1) \leq \ell < \phi(j)$, the 
 following statements hold.
 \begin{enumerate}
  \item If $e_{\gamma^{(j)}} \subset \overline{e_{\delta^{(\ell+1)}}}$, 
	then  
	$e_{\gamma^{({j)}}} \subset \overline{e_{\delta^{(\ell)}}}$ 
	for every $\phi(j-1) \leq \ell < \phi(j)$.
  \item If $e_{\gamma^{(j)}} \subset \overline{e_{\delta^{(\phi(j-1))}}}$, 
	then $e_{\gamma^{({j-1)}}} \subset
	\overline{e_{\delta^{(\phi(j-1)-1)}}}$. 
 \end{enumerate}
\end{proposition}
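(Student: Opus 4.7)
The plan is to prove both statements by reverse induction on $(j,\ell)$, relying on the recursive formula of Corollary \ref{inductive_e_gamma}, namely $e_{\delta^{(\ell)}} = R_{u'_\ell}^{-1}(e_{\delta^{(\ell+1)}}) \cap e'_\ell$, together with the fixed-point information from Lemma \ref{path_range_retraction_invariance} and the explicit local disk coordinates supplied by Proposition \ref{cell_structure_for_matched_pair} and Remark \ref{retraction_on_disk}. The common engine is a single topological lifting lemma: given a point $x$ lying on the lower hemisphere $\partial u \setminus d$ of a matched pair $d \prec_1 u$ and in the closure of some subset $A \subset \partial u \setminus d$, one can lift an approximating sequence from $A$ a short distance along the vertical fibers of $R_u$ into the relevant cell $e'$, producing points of $R_u^{-1}(A) \cap e'$ converging to $x$.

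For Statement 1, I would fix $x \in e_{\gamma^{(j)}} \subset e_j$. By Lemma \ref{path_range_retraction_invariance}, $R_{u'_\ell}(x) = x$, so $x \in \partial u'_\ell \setminus d'_\ell$. The inductive hypothesis provides a sequence $y_n \in e_{\delta^{(\ell+1)}}$ converging to $x$; since $\delta$ is reduced, $e_{\delta^{(\ell+1)}} \subset e'_{\ell+1} \subset \partial u'_\ell \setminus d'_\ell$. Applying the lifting lemma with $A = e_{\delta^{(\ell+1)}}$ produces $z_n \in R_{u'_\ell}^{-1}(e_{\delta^{(\ell+1)}}) \cap e'_\ell = e_{\delta^{(\ell)}}$ with $z_n \to x$, so $x \in \overline{e_{\delta^{(\ell)}}}$. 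Two subcases arise: when $e'_\ell = u'_\ell$ the lift enters the open top cell directly, while when $e'_\ell = d'_\ell$ it enters $d'_\ell$ via the antipodal reflection in $R_{u'_\ell}$. In the antipodal subcase, condition (4) of the embedding function combined with $\gamma$ being reduced places $e_j$ in $\partial d'_\ell$, which is exactly what makes approximation from the opposite hemisphere possible.

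For Statement 2, apply Corollary \ref{inductive_e_gamma} to $\gamma$ at the step $j-1 \mapsto j$: any $x \in e_{\gamma^{(j-1)}} \subset e_{j-1}$ satisfies $R_{u_{j-1}}(x) \in e_{\gamma^{(j)}} \subset \overline{e_{\delta^{(\varphi(j-1))}}}$ by hypothesis. The identity $u_{j-1} = u'_{\varphi(j-1)}$ from condition (2) of $\varphi$, combined with the face relations furnished by condition (4) and the flow-path relation $u'_{\varphi(j-1)-1} \succ e'_{\varphi(j-1)}$, places $e_{j-1}$ inside $\overline{u'_{\varphi(j-1)-1}}$. The same lifting argument as in Statement 1, now applied to the retraction $R_{u'_{\varphi(j-1)-1}}$ with $A = e_{\delta^{(\varphi(j-1))}}$, yields approximating points $z_n \in R_{u'_{\varphi(j-1)-1}}^{-1}(e_{\delta^{(\varphi(j-1))}}) \cap e'_{\varphi(j-1)-1} = e_{\delta^{(\varphi(j-1)-1)}}$ converging to $x$.

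The main obstacle I expect is the topological lifting lemma in its antipodal incarnation: when the target cell $e'_\ell$ (or $e'_{\varphi(j-1)-1}$) equals the matched downward cell, the approximating sequence must live in the upper hemisphere of the matched disk while the limit point $x$ lies on the lower hemisphere, so only equator points can in fact be such limits. Verifying that the relevant cells $e_j$ and $e_{j-1}$ genuinely land on the appropriate equatorial positions is the technical heart of the argument, and requires a careful case analysis combining conditions (2) and (4) of the embedding function with the reduction hypotheses on both $\gamma$ and $\delta$; the local product model of Remark \ref{retraction_on_disk} then makes the equatorial approximation explicit.
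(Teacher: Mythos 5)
Your treatment of Statement (1) is sound, and in fact more careful than the paper's own proof, which simply invokes the fixed-point conclusion of Lemma~\ref{path_range_retraction_invariance} together with the recursive formula $e_{\delta^{(\ell)}} = R_{u'_\ell}^{-1}(e_{\delta^{(\ell+1)}}) \cap e'_\ell$ to assert the containment $e_{\gamma^{(j)}} \subset e_j \subset \overline{e_{\delta^{(\ell)}}}$, without dwelling on the antipodal subcase $e'_\ell = d'_\ell$. You correctly identify that in that subcase one must land on the equator $\partial d'_\ell$; since condition (4) gives $e_j \preceq e'_\ell$ while $e_j$ is $R_{u'_\ell}$-fixed (hence $e_j \subset d'^c_\ell$), the intersection $\overline{d'_\ell}\cap d'^c_\ell = \partial d'_\ell$ indeed catches $e_j$.

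Your Statement (2), however, has a genuine gap. You establish $R_{u_{j-1}}(x) \in e_{\gamma^{(j)}} \subset \overline{e_{\delta^{(\varphi(j-1))}}}$, and then claim that the lifting lemma, applied with $A = e_{\delta^{(\varphi(j-1))}}$ and the retraction $R_{u'_{\varphi(j-1)-1}}$, produces a sequence $z_n \to x$. But your lifting lemma requires the limit point to lie in $\overline{A}$ and to be fixed by the retraction used. Here only the image $R_{u_{j-1}}(x)$ is known to lie in $\overline{e_{\delta^{(\varphi(j-1))}}}$; $x$ itself lives in $e_{j-1} \in \{d_{j-1}, u_{j-1}\}$, which is not fixed by $R_{u_{j-1}}$, so you have no reason to believe $x\in\overline{e_{\delta^{(\varphi(j-1))}}}$. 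Consequently, the sequence your construction yields converges to $R_{u_{j-1}}(x)$, not to $x$, and the conclusion $x \in \overline{e_{\delta^{(\varphi(j-1)-1)}}}$ does not follow. Moreover, the retraction $R_{u'_{\varphi(j-1)-1}}$ you feed into the lifting lemma is generically unrelated to $R_{u_{j-1}}$: note that $u_{j-1} = u'_{\varphi(j-1)}$, not $u'_{\varphi(j-1)-1}$.

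The missing ingredient---and the one the paper's proof of Statement (2) actually exploits---is precisely the identity $R_{u_{j-1}} = R_{u'_{\varphi(j-1)}}$ that you record but never use. This single retraction simultaneously produces $e_{\gamma^{(j-1)}}$ from $e_{\gamma^{(j)}}$ and $e_{\delta^{(\varphi(j-1))}}$ from $e_{\delta^{(\varphi(j-1)+1)}}$ via Corollary~\ref{inductive_e_gamma}, so the cylinder/preimage structures on the two sides are literally the same. One should push the hypothesis forward through this common retraction (using the fact that $e_j$ is $R_{u'_{\varphi(j-1)}}$-fixed) to obtain $e_{\gamma^{(j)}} \subset \overline{e_{\delta^{(\varphi(j-1)+1)}}}$, then pull back along the same retraction to land $e_{\gamma^{(j-1)}}$ inside $\overline{e_{\delta^{(\varphi(j-1))}}}$, and finally run one more step of the Statement (1) mechanism. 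Your lifting machinery would handle each of these steps; the error is solely in which retraction you hand it.
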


\begin{proof}
 To see that (1) holds, recall from Lemma \ref{more_inductive_e_gamma} 
 that $e_{\delta^{(\ell)}}$ consists of precisely those points of
 $e'_\ell$ which are mapped to $e_{\delta^{(\ell+1)}}$ by the
 deformation retraction $R_{u'_{\ell}}$. By assumption,
 $e_j \subset \overline{e_{\delta^{(\ell+1)}}}$ 
 and by Lemma \ref{path_range_retraction_invariance}, the cell $e_j$ is
 fixed by these retractions for all $\ell$ in the stated range. Thus, we
 have
 \[
 e_{\gamma^{(j)}} \subset e_j \subset \overline{e_{\delta^{(\ell)}}}.
 \] 
 Turning to (2), note that $u_{j-1} = u'_{\phi(j-1)}$ since $\phi$ is
 an embedding function. Therefore, the retraction $R_{u_{j-1}}$ which
 defines $e_{\gamma^{(j-1)}}$ from $e_{\gamma^{(j)}}$ coincides exactly
 with the retraction $R_{u'_{\phi(j-1)}}$ which similarly defines
 $e_{\delta^{(\phi(j-1)-1)}}$. 
 The desired inclusion
 $e_{\gamma^{({j-1)}}} \subset \overline{e_{\delta^{(\phi(j-1)-1)}}}$ 
 now follows from
 $e_{\gamma^{({j)}}} \subset \overline{e_{\delta^{(\phi(j-1))}}}$.
\end{proof}

\begin{remark}
 It is worth noting that the argument in the above proof can be used to
 prove that $L_{\gamma}$ is a part of $L_{\delta}$ when
 $\gamma\preceq\delta$. Namely, if $x\in e_{\gamma}$,
 $L_{\delta}(x,t)=L_{\gamma}(x,t')$ for some $t'$.
\end{remark}

 Recall that $e_k \prec c'$ because $\phi$ is an embedding
 function. Proposition  
 \ref{reverse_induction_path}, along with the initial condition 
 \[
 e_{\gamma^{(k)}} \subset e_k \subset \overline{c'} =
 \overline{e_{\delta^{(\phi(k))}}} 
 \]
 proves one half of Proposition \ref{face_poset_of_stable_subdivision}
 by reverse-induction on $j \in \{1, \ldots, k\}$
 (recall that $\gamma = \gamma^{(0)}$ and similarly for $\delta$). 
 Conversely, suppose that $e_{\gamma}\subset\overline{e_{\delta}}$. In
 order to prove  
 $\gamma\preceq\delta$, we need to find an embedding function $\varphi$.

 Define a sequence of nonnegative
 integers $i_0=0,i_1,i_2,\ldots$ inductively by
 \[
  i_{j} =
  \begin{cases}
  \max \set{p>i_{j-1}}{e_{\gamma^{(j)}}\subset
   \overline{e_{\delta^{(p)}}}}, & \text{ if}
   \set{p>i_{j-1}}{e_{\gamma^{(j)}}\subset
   \overline{e_{\delta^{(p)}}}} \neq\emptyset, \\
   m+1, & \text{ otherwise}
  \end{cases}
 \]
 and set $k = \min \set{j}{i_{j}=m+1}$. Then we obtain a strictly
 increasing function
 \[
  \varphi : \{0,\ldots, k\} \rarrow{} \{0,\ldots,m+1\}
 \]
 by $\varphi(j)=i_j$.

 Let us verify that this is an embedding function for
 $\gamma\preceq\delta$. By definition, it is 
 a strictly increasing function with $\varphi(0)=0$ and $\varphi(k)=m+1$.
 It remains to prove that
 \begin{enumerate}
  \item $u_j=u'_{\varphi(j)}$ for each $1\le j<k$, and
  \item for each $1\le j\le k$, $e_j\preceq e'_{p}$ for all
	$\varphi(j-1)<p\le\varphi(j)$. 
 \end{enumerate}

 The second part is immediate. By definition,
 $e_{\gamma^{(j)}}\subset \overline{e_{\delta^{(p)}}}$ for
 $\varphi(j-1)<p\le\varphi(j)$. But $e_{\gamma^{(j)}}\subset e_j$ and
 $e_{\delta^{(p)}}\subset\overline{e'_{p}}$,
 which imply that $e_j\cap \overline{e'_{p}}\neq \emptyset$. Thus we have
 $e_{j}\preceq e'_{p}$.

 For the first part, let us first prove that, for each $j$,
 $e_{\gamma^{(j)}}\not\subset (d'_{\varphi(j)})^{c}$. Suppose
 $e_{\gamma^{(j)}}\subset (d'_{\varphi(j)})^{c}$. Then
 $R_{u'_{\varphi(j)}}(e_{\gamma^{(j)}})=e_{\gamma^{(j)}}$ by the
 definition of the retraction
 $R_{u'_{\varphi(j)}}: \overline{u'_{\varphi(j)}}\to (d'_{\varphi(j)})^{c}$.
 Since $e_{\gamma^{(j)}}\subset\overline{e_{\delta^{(\varphi(j))}}}$, we
 have
 \[
 e_{\gamma^{(j)}} = R_{u'_{\varphi(j)}}(e_{\gamma^{(j)}}) \subset
 R_{u'_{\varphi(j)}}(\overline{e_{\delta^{(\varphi(j))}}}) =
 \overline{e_{\delta^{(\varphi(j)+1)}}}, 
 \]
 which contracts to our choice of $\varphi(j)$. Hence
 $e_{\gamma^{(j)}}\not\subset (d'_{\varphi(j)})^{c}$ or
 \[
 e_{\gamma^{(j)}} \subset
 \overline{u'_{\varphi(j)}} \setminus (d'_{\varphi(j)})^{c}
 = d'_{\varphi(j)}\cup u'_{\varphi(j)}.
 \]
 If $e_{\gamma^{(j)}}\subset u'_{\varphi(j)}$,
 $u'_{\varphi(j)}\cap e_{j}\neq\emptyset$ and we obtain
 $e_{j}=u_{j}=u'_{\varphi(j)}$. 
 When $e_{\gamma^{(j)}}\subset d'_{\varphi(j)}$,
 $d'_{\varphi(j)}\cap e_{j}\neq\emptyset$ and we have
 $e_{j}=d_{j}=d'_{\varphi(j)}$. Apply the matching $\mu$, and we obtain
 $u_{j}=u'_{\varphi(j)}$. 

This concludes the proof of Proposition
\ref{face_poset_of_stable_subdivision}, 
and we have the following desired consequence.

\begin{corollary}
 \label{reduced_flow_path_and_stable_subdivision}
 The map $\overline{\FP}(\mu) \to F(\Sd_{\mu}(X))$
 % and $\overline{\FP}(\mu)\to \St(\mu)$
 given by $\gamma \mapsto e_{\gamma}$
 % and $\gamma\mapsto W_{\gamma}^{s}$ are 
 is an isomorphism of posets.
\end{corollary}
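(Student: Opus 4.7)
The plan is to derive the corollary essentially directly from Proposition \ref{face_poset_of_stable_subdivision}, with the only additional ingredients being the decomposition (\ref{decomposition_by_flows}) and the antisymmetry of $\preceq$ on $\overline{\FP}(\mu)$ established in Proposition \ref{partial_order_on_FP}. First I would observe that the assignment $\gamma\mapsto e_\gamma$ is well defined as a map $\overline{\FP}(\mu)\to F(\Sd_\mu(X))$ since the stable subdivision is, by construction, the regular cell decomposition indexed by the $e_\gamma$.

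Next I would handle bijectivity. Surjectivity is immediate from (\ref{decomposition_by_flows}): every cell of $\Sd_\mu(X)$ is by definition $e_\gamma$ for some $\gamma\in\overline{\FP}(\mu)$. For injectivity, suppose $e_\gamma = e_\delta$. Then trivially $e_\gamma\subset\overline{e_\delta}$ and $e_\delta\subset\overline{e_\gamma}$, so Proposition \ref{face_poset_of_stable_subdivision} produces both $\gamma\preceq\delta$ and $\delta\preceq\gamma$, and antisymmetry of $\preceq$ (Proposition \ref{partial_order_on_FP}) forces $\gamma=\delta$.

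Finally, for the poset structure, recall that the order on $F(\Sd_\mu(X))$ is defined by $e\preceq e'\Longleftrightarrow e\subset\overline{e'}$. The two implications of Proposition \ref{face_poset_of_stable_subdivision} then read, after transporting along the bijection, exactly as order-preservation and order-reflection of $\gamma\mapsto e_\gamma$. Hence the map is an isomorphism of posets. There is no real obstacle here, as the genuine content has been absorbed into Proposition \ref{face_poset_of_stable_subdivision}; the corollary is a packaging step.
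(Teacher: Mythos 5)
Your proof is correct and takes the same route as the paper, which presents the corollary as a direct consequence of Proposition \ref{face_poset_of_stable_subdivision} without spelling out the packaging. The only slight overkill is your injectivity argument: since $e_{\gamma}=\pi_{\mu}^{-1}(\gamma)$ are fibers of the surjection $\pi_{\mu}$, the cells $e_{\gamma}$ are automatically nonempty and pairwise disjoint, so injectivity is immediate without invoking antisymmetry of $\preceq$ — though your route via Proposition \ref{face_poset_of_stable_subdivision} and Proposition \ref{partial_order_on_FP} is equally valid.
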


%\begin{corollary}
% The classifying space $B^{ncl}C(f)\simeq B^2C(f)$ 
% of the flow category is homotopy equivalent to $X$.
%\end{corollary}

\begin{proof}[Proof of Theorem \ref{main1}]
% By Proposition \ref{faithful_discrete_Morse_function}, we may assume
% $f$ is faithful and $\Z$-valued.
 The isomorphism in Corollary
 \ref{reduced_flow_path_and_stable_subdivision} induces a
 homeomorphism between classifying spaces
 $B\overline{\FP}(\mu_{f}) \cong  BF(\Sd_{\mu_{f}}(X))$.
 It is well known that the composition $BF$ is nothing but the
 barycentric subdivision\footnote{See, for example, section 12.4
 of Bj{\"o}rner's article \cite{Bjorner95} or section 10.3.5 of Kozlov's
 book \cite{KozlovCombinatorialAlgebraicTopology}.} $\Sd$ and we
 obtain a homeomorphism  
 \[
 B\overline{\FP}(\mu_{f}) \cong \Sd(\Sd_{\mu_{f}}(X)) \cong X.
 \]
 By combining the homotopy equivalences in Theorem
 \ref{homotopy_eq_collapsing}, Theorem
 \ref{classifying_space_of_2-category}, and  Theorem
 \ref{reduction_is_homotopy_equivalence} with this homeomorphism, we
 obtain a homotopy equivalence
 \[
  X \cong B\overline{\FP}(\mu_{f}) \simeq B^{ncl}\overline{C}(\mu_{f})
 \simeq B^2\overline{C}(\mu_{f}) \simeq B^2C(\mu_{f}).
 \]
\end{proof}

\appendix

\section{Homotopy Theory of Small Categories}
\label{category_theory}

Our main technical tool in this paper is homotopy theory of small
categories, including $2$-categories.
We collect definitions and important properties in
homotopy theory of small categories used in this paper for the
convenience of the reader. 

Our main references include \S11
of May's book \cite{May72}, Dugger's exposition
\cite{DuggerHomotopyColimit} on homotopy colimits, and
Segal's papers \cite{SegalBG,Segal73-1,Segal74-1}.

\subsection{Simplicial Sets and Simplicial Spaces}
\label{simplicial_space}

In homotopy theory of small categories, each small category $C$ is made
into a topological space by the classifying space construction, which is
defined as the geometric realization of a simplicial set $NC$, called
the nerve of $C$. When $C$ is a topological category, the nerve $NC$ is
a simplicial space.

In this section, we recall homotopy theoretic properties of simplicial
spaces, including simplicial sets.

\begin{definition}
 A \emph{simplicial object} in a category $\bm{C}$ consists of a
 sequence of objects $\{X_n\}_{n=0,1,\ldots}$ in $\bm{C}$ and morphisms
 \begin{eqnarray*}
  d_i & : & X_n \rarrow{} X_{n-1} \\
  s_i & : & X_n \rarrow{} X_{n+1}
 \end{eqnarray*}
 for $0\le i\le n$ satisfying the following relations:
 \begin{enumerate}
  \item $d^j\circ d^i = d^i\circ d^{j-1}$ for $i<j$
  \item $s^j\circ d^i = d^i\circ s^{j-1}$ for $i<j$
  \item $s^j\circ d^j = 1 = s^j\circ d^{j+1}$
  \item $s^j\circ d^i = d^{i-1}\circ s^j$ for $i>j+1$
  \item $s^j\circ s^i = s^i\circ s^{j+1}$ for $i\le j$.
 \end{enumerate}

 Simplicial objects in the categories of sets and
 topological spaces are called \emph{simplicial sets}
 and \emph{simplicial spaces}, respectively.
\end{definition}

It is convenient to introduce the following small category.

\begin{definition}
 The category of isomorphism classes of finite totally ordered sets and
 order-preserving maps is denoted by $\Delta$. The object of cardinality
 $n+1$ is denoted by $[n]$ and is identified with the subposet
 $0<1<\cdots<n$ of $\Z$.

 The subcategory of injective morphisms is denoted by $\Delta_{\inj}$.
\end{definition}

\begin{lemma}
 \label{simplicial_set_as_functor}
 The category of simplicial sets is isomorphic to the category
 $\Cats(\Delta^{\op},\Sets)$ of contravariant functors from $\Delta$ to
 $\Sets$.  Similarly the category of simplicial spaces is isomorphic to
 the category $\Cats(\Delta^{\op},\Spaces)$ of contravariant
 functors from $\Delta$ to $\Spaces$. 
\end{lemma}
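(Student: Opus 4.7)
The plan is to set up the equivalence by identifying how morphisms in $\Delta$ are generated and presented, and then use this to translate functoriality of $X:\Delta^{\op}\to\Sets$ (or $\Spaces$) into the simplicial identities. Write $[n]=\{0<1<\cdots<n\}$ and introduce the coface maps $d^i:[n-1]\to[n]$ (the unique injective order-preserving map whose image omits $i$) and codegeneracy maps $s^i:[n+1]\to[n]$ (the unique surjective order-preserving map which repeats $i$). A direct check shows these satisfy the \emph{cosimplicial identities}, namely the opposites of the five identities listed in the definition of a simplicial object.

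The central combinatorial fact I would invoke (or prove by a short pigeonhole-style argument) is that every morphism $\alpha:[m]\to[n]$ in $\Delta$ admits a canonical epi-mono factorization $\alpha=\iota\circ\pi$ with $\iota$ injective and $\pi$ surjective; each of $\iota,\pi$ is then a uniquely determined composite of $d^i$'s and $s^j$'s respectively, subject only to the cosimplicial relations. In other words, $\Delta$ is presented by the generators $\{d^i, s^j\}$ modulo the cosimplicial identities. This is the only nontrivial step, and it is the one I would spell out carefully: the hard part is ensuring uniqueness up to these relations, so that any assignment of data to the generators which satisfies them extends consistently.

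Given this presentation, construct the two directions of the isomorphism of categories. Given a contravariant functor $X:\Delta^{\op}\to\Sets$, set $X_n=X([n])$, $d_i=X(d^i)$, $s_i=X(s^i)$; functoriality applied to the cosimplicial identities yields the simplicial identities. Conversely, given a simplicial set $(X_n,d_i,s_i)$, define $X$ on objects by $X([n])=X_n$, and on a morphism $\alpha:[m]\to[n]$ by factoring $\alpha$ as a composite of cofaces and codegeneracies and applying the corresponding $d_i$'s and $s_j$'s; the simplicial identities guarantee that different factorizations yield the same map, so $X$ is a well-defined functor. These two constructions are mutually inverse on objects and extend in the evident way to morphisms (simplicial maps versus natural transformations), giving the claimed isomorphism of categories.

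The identical argument works verbatim with $\Sets$ replaced by $\Spaces$, since neither the generators-and-relations description of $\Delta$ nor the translation between face/degeneracy operators and functoriality uses anything specific to sets beyond the existence of composition in the target category. Hence $\category{Cat}(\Delta^{\op},\Spaces)$ is isomorphic to the category of simplicial spaces by exactly the same construction.
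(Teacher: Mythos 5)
Your proof is correct and follows the standard argument for this classical fact, which the paper states without proof. The key step you correctly identify is that $\Delta$ is generated by the coface and codegeneracy maps modulo the cosimplicial identities; once this presentation is in place, the dictionary between simplicial objects and contravariant functors is immediate, and as you note nothing in the argument depends on the target category being $\Sets$ rather than $\Spaces$ (or indeed any category). One small cautionary remark: the five identities listed in the paper's Definition of simplicial object are written with superscripts and are literally the cosimplicial identities for the $d^i, s^i$ in $\Delta$; when translated to the face and degeneracy operators $d_i, s_i$ of a simplicial object the relations appear in their dualized (order-reversed) form, so you should be careful, as you implicitly are, that "functoriality applied to the cosimplicial identities yields the simplicial identities" means contravariant functoriality reverses the order of composition.
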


\begin{definition}
 We denote the categories of simplicial sets and simplicial spaces
 by $\Sets^{\Delta^{\op}}$ and $\Spaces^{\Delta^{\op}}$,
 respectively. We regard $\Sets^{\Delta^{\op}}$ as a full subcategory of
 $\Spaces^{\Delta^{\op}}$ consisting of simplicial spaces with discrete
 topology. 

 For a simplicial space $X$ and a morphism $\varphi:[m]\to [n]$ in
 $\Delta$, the induced map is denoted by $\varphi^*: X_n\to X_m$.
\end{definition}

The following variation is useful when we study acyclic categories.

\begin{definition}
 A functor $X:\Delta^{\op}_{\inj}\to \category{Top}$ is called a
 \emph{$\Delta$-set}. 
\end{definition}

\begin{remark}
 A $\Delta$-set $X$ can be regarded as a ``simplicial set without
 degeneracies'', i.e.\ it consists of a sequence of sets $\{X_n\}$
 together with maps $d_i: X_n\to X_{n-1}$ satisfying the same
 relations as in the definition of simplicial sets.
 See \cite{Rourke-Sanderson1971-1} by Rourke and
 Sanderson, for more details on $\Delta$-sets.
\end{remark}

There are two popular ways to form a topological space from a simplicial
space.

\begin{definition}
 For a simplicial space $X$, define
 \begin{eqnarray*}
  |X| & = & \quotient{\coprod_{n=0}^{\infty} X_n\times\Delta^n}{\sim} \\
  \|X\| & = &
   \quotient{\coprod_{n=0}^{\infty} X_n\times\Delta^n}{\sim_{\inj}},
 \end{eqnarray*}
 where $\Delta^n$ is the standard $n$-simplex whose vertices are
 identified with elements in $[n]$ and $\sim$ is the
 equivalence relation generated by 
 $(\varphi^*(x),s) \sim (x,\varphi_*(s))$ for $x\in X_{n}$,
 $s\in\Delta^{m}$, and $\varphi\in\Delta([m],[n])$. 
 The map $\varphi_* : \Delta^m\to \Delta^n$ is the affine map induced by
 the map between vertices $\varphi:[m]\to [n]$.
 The equivalence relation $\sim_{\inj}$ is generated by 
 $(\varphi^*(x),s) \sim (x,\varphi_*(s))$ for
 $\varphi\in\Delta_{\inj}([m],[n])$. 

 $|X|$ and $\|X\|$ are called the \emph{(geometric) realization} and the
 \emph{fat realization} of $X$, respectively.
 The canonical quotient map from the fat realization to the 
 realization is denoted by 
 \[
  q_{X} : \|X\| \rarrow{} |X|.
 \]
\end{definition}

One of the advantages of the fat realization $\|X\|$ is the following
homotopy invariance.

\begin{proposition}
 For a morphism $f: X\to Y$ of simplicial spaces whose $n$-th stage
 $f_n : X_n\to Y_n$ is a homotopy equivalence for all $n$, the induced
 map on fat realizations
 \[
  \|f\| : \|X\| \rarrow{} \|Y\|
 \]
 is a homotopy equivalence.
\end{proposition}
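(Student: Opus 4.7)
The plan is to proceed by induction along the skeletal filtration of the fat realization. For a simplicial space $X$, define the $n$-skeleton $\|X\|^{(n)}$ as the image of $\coprod_{k\le n}X_k\times\Delta^k$ inside $\|X\|$, so that $\|X\|^{(0)}=X_0$ and $\|X\|$ is the colimit of the sequence $\|X\|^{(0)}\subset \|X\|^{(1)}\subset\cdots$. The crucial structural observation, which makes the fat realization better-behaved than the ordinary one, is that because $\|X\|$ is built using only the injective maps in $\Delta^{\op}_{\inj}$, for each $n\ge 1$ there is a genuine pushout square of topological spaces
\[
\begin{diagram}
\node{X_n\times \partial\Delta^n} \arrow{e} \arrow{s} \node{\|X\|^{(n-1)}} \arrow{s} \\
\node{X_n\times \Delta^n} \arrow{e} \node{\|X\|^{(n)},}
\end{diagram}
\]
with no further identifications needed on the left-hand column (no collapsing along degeneracies, as in the case of $|X|$).

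The first step would be to verify this pushout description carefully: the top horizontal map is induced by the face maps $d_i:X_n\to X_{n-1}$ via the inclusions $\delta^i:\Delta^{n-1}\hookrightarrow \Delta^n$, and the equivalence relation $\sim_{\inj}$ on the fat realization is precisely the one generated by these identifications. Once that is in place, since $\partial\Delta^n\hookrightarrow\Delta^n$ is a cofibration of CW pairs, so is $X_n\times\partial\Delta^n\hookrightarrow X_n\times\Delta^n$, and so the bottom arrow $\|X\|^{(n-1)}\hookrightarrow\|X\|^{(n)}$ is a cofibration.

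The second step is the inductive argument. Given a morphism $f:X\to Y$ with each $f_n$ a homotopy equivalence, I would show by induction on $n$ that the restriction $\|f\|^{(n)}:\|X\|^{(n)}\to\|Y\|^{(n)}$ is a homotopy equivalence. The base $n=0$ is just $f_0$. For the inductive step, the square above for $X$ maps to the corresponding square for $Y$; three of the relevant maps, namely $\|f\|^{(n-1)}$, $f_n\times 1_{\partial\Delta^n}$, and $f_n\times 1_{\Delta^n}$, are homotopy equivalences by the inductive hypothesis and the assumption on $f$. Applying the gluing lemma for pushouts along cofibrations (a standard consequence of the cofibration axiom) then gives that $\|f\|^{(n)}$ is a homotopy equivalence.

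Finally, $\|X\|=\colim_n \|X\|^{(n)}$ along cofibrations, and likewise for $Y$, so passing to the sequential colimit preserves the homotopy equivalence via the telescope argument. The main technical hurdle I anticipate is verifying the pushout description of $\|X\|^{(n)}$ cleanly, because this requires a careful bookkeeping of the relation $\sim_{\inj}$ and of the adjunction between $\Delta_{\inj}$-functors and their restrictions; everything else reduces to standard cofibration-homotopy formalism. No appeal to Reedy cofibrancy or to any ``goodness'' hypothesis on $X$ is needed, which is exactly the advantage of $\|\cdot\|$ over $|\cdot|$.
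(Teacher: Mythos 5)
Your argument is correct and is essentially the same skeletal-filtration-plus-gluing-lemma argument given in Segal's Appendix A, which is what the paper cites in lieu of a proof. The one small imprecision is the phrase ``cofibration of CW pairs'' for $X_n\times\partial\Delta^n\hookrightarrow X_n\times\Delta^n$: since $X_n$ is an arbitrary space this need not be a CW pair, but it is still a Hurewicz (closed) cofibration because the product of a fixed space with a cofibration is a cofibration, and that is all the gluing lemma and the telescope argument require.
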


\begin{proof}
 See Appendix A of Segal's paper \cite{Segal74-1}, for example.
\end{proof}

Thus the realization is homotopy invariant for simplicial spaces $X$
whose realization $|X|$ is homotopy equivalent to the fat realization
$\|X\|$. The following is a sufficient condition for the quotient
$q_X:\|X\|\to|X|$ to be a homotopy equivalence. 

\begin{definition}
 \label{cofibrant_simplicial_space}
 For a simplicial space $X$, define the \emph{$n$-th latching space}
 $L_nX$ by
 \[
  L_nX = \bigcup_{i=0}^{n-1} s_i(X_{n-1}).
 \]
 A simplicial space $X$ is said to be \emph{cofibrant} if
 the inclusion $i_n:L_nX\hookrightarrow X_n$, the \emph{$n$-th
 latching map}, is a closed cofibration for all $n$. 
\end{definition}

\begin{remark}
 For a pair $(X,A)$ of topological spaces, the inclusion
 $A\hookrightarrow X$ is a closed cofibration if and only if $(X,A)$ is
 an \emph{NDR pair}, i.e.\ there exist a continuous function
 $u : A \to [0,1]$ and a homotopy $h : X\times[0,1]\to X$ such that
 $A=u^{-1}(0)$, $h(x,t)=(x,t)$ for
 $(x,t)\in X\times\{0\}\cup A\times[0,1]$,  and $h(x,1)\in A$ for
 $x\in h^{-1}([0,1))$.
\end{remark}

Recall that one of the most important examples of closed cofibrations is
an inclusion of a subcomplex in a CW complex.

\begin{example}
 \label{CW_simplicial_space}
 If $X$ is a simplicial space consisting of CW complexes in which
 the latching space $L_nX$ is a subcomplex of $X_n$ for all $n$, then
 $X$ is cofibrant.

 In particular, for a $2$-category $C$ the nerve $NBC$ of the associated 
 topological category $BC$ is a cofibrant simplicial space.
\end{example}

\begin{proposition}
 When $X$ is a cofibrant simplicial space, the quotient map
 $q_{X}:\|X\|\to |X|$ is a homotopy equivalence.
\end{proposition}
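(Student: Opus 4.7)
The plan is to induct on the skeletal filtrations of the two realizations, following the standard approach (cf.\ \S11 of May's book or Appendix A of Segal's paper \cite{Segal74-1}). Define
\[
\|X\|^{(n)} = \text{image of } \bigsqcup_{k\le n} X_k \times \Delta^k \text{ in } \|X\|,
\]
and analogously $|X|^{(n)}$ for the thin realization. Both realizations are colimits of these skeletal filtrations. Under the cofibrancy hypothesis on $X$, each inclusion $\|X\|^{(n-1)} \hookrightarrow \|X\|^{(n)}$ and $|X|^{(n-1)} \hookrightarrow |X|^{(n)}$ is a closed cofibration (using that $\partial\Delta^n \hookrightarrow \Delta^n$ is one in the fat case, and combining this with the latching-map cofibrancy in the thin case). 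Since $q_X$ respects these filtrations, it suffices to show inductively that each $q_X^{(n)} : \|X\|^{(n)} \to |X|^{(n)}$ is a homotopy equivalence, then assemble the pieces via the telescope argument for colimits of homotopy equivalences along closed cofibrations.

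The base case $n=0$ is immediate since $q_X^{(0)}$ is the identity on $X_0$. For the inductive step, I would present both skeleta as pushouts:
\[
\|X\|^{(n)} \cong \|X\|^{(n-1)} \cup_{X_n \times \partial\Delta^n} (X_n \times \Delta^n),
\]
\[
|X|^{(n)} \cong |X|^{(n-1)} \cup_{P_n} (X_n \times \Delta^n),
\]
where $P_n = (X_n \times \partial\Delta^n)\cup(L_n X \times \Delta^n) \subseteq X_n \times \Delta^n$ and the attaching maps use face operators (fat case) or face and degeneracy operators (thin case). The inclusion $P_n \hookrightarrow X_n \times \Delta^n$ is a closed cofibration precisely because $L_n X \hookrightarrow X_n$ and $\partial\Delta^n \hookrightarrow \Delta^n$ are.

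To compare the two pushouts, I would construct a cube whose back face is the pushout square defining $\|X\|^{(n)}$, whose front face is the square defining $|X|^{(n)}$, and whose vertical maps consist of the inductive equivalence $q_X^{(n-1)}$ at one corner, the identity on $X_n \times \Delta^n$, and the natural inclusion $X_n \times \partial \Delta^n \hookrightarrow P_n$ composed with the canonical map into $|X|^{(n-1)}$. The main obstacle will be verifying, via the gluing lemma for closed cofibrations, that this cube exhibits the front pushout as homotopy equivalent to the back. This reduces to showing that once one collapses $L_n X \times \Delta^n$ along its degeneracies into $\|X\|^{(n-1)}$, the resulting space is homotopy equivalent to its image in $|X|^{(n-1)}$. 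Because $L_n X$ is assembled entirely from strictly lower-dimensional simplices, this comparison is controlled by the inductive hypothesis on $q_X^{(n-1)}$, together with one more application of the gluing lemma made possible by the cofibrancy of $L_n X \hookrightarrow X_n$. Passing to the colimit of these skeletal equivalences then yields the desired homotopy equivalence $q_X : \|X\| \to |X|$.
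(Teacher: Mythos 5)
Your framework (skeletal filtrations, pushout presentations, gluing lemma) is the natural one and matches the ingredients in the reference the paper cites, but the inductive claim on which the whole argument rests is false. Take $X$ to be the constant simplicial space on a point; this is cofibrant since each latching map $L_n X \hookrightarrow X_n$ is the identity (for $n\ge 1$). Then $|X|^{(n)} = *$ for all $n$, while $\|X\|^{(0)}=*$ and $\|X\|^{(1)}=\Delta^1/\partial\Delta^1 = S^1$, so already $q_X^{(1)}\colon S^1\to *$ fails to be a homotopy equivalence. The fat and thin skeleta grow at very different rates: every $X_n\times\Delta^n$ contributes nontrivially to $\|X\|^{(n)}$, whereas it contributes nothing new to $|X|^{(n)}$ when all of $X_n$ is degenerate (i.e.\ $L_nX=X_n$). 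Thus one cannot induct on the statement that $q_X^{(n)}$ is an equivalence; only the map on the colimits is an equivalence.

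This is reflected concretely in your cube: the third vertical map would have to be the pushout-product inclusion $X_n\times\partial\Delta^n\hookrightarrow P_n$, which is a cofibration but not a weak equivalence, so the gluing lemma does not apply to that cube; and the map you actually describe (the inclusion composed with the map to $|X|^{(n-1)}$) does not land in the right corner of the front square, so the cube does not even commute as written. The final paragraph tries to repair this, but ``collapsing $L_nX\times\Delta^n$ along its degeneracies into $\|X\|^{(n-1)}$'' cannot recover the inductive hypothesis, since (as the example shows) no homotopy equivalence between $\|X\|^{(n)}$ and $|X|^{(n)}$ exists in general. The actual proof (see Segal \cite{Segal74-1}, Appendix A) is structured differently: instead of comparing the two filtrations level-by-level, one shows directly that collapsing the degenerate part of $\|X\|$ is a homotopy equivalence, for instance by constructing an explicit (simplicial) deformation onto the non-degenerate part using the NDR structure provided by the cofibrations $L_nX\hookrightarrow X_n$, or, in more modern language, by a homotopy cofinality argument for $\Delta_{\inj}^{\op}\hookrightarrow\Delta^{\op}$ applied to Reedy-cofibrant diagrams. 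Either way, the comparison is global rather than skeleton-by-skeleton.
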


\begin{proof}
 See also Appendix A of Segal's paper \cite{Segal74-1}.
\end{proof}

\begin{corollary}
 \label{homotopy_invariance_of_realization}
 Let $f:X\to Y$ be a map of simplicial spaces. Suppose $X$ and $Y$ are
 cofibrant and that $f_n : X_n\to Y_n$ is a homotopy equivalence for all
 $n$. Then the induced map $|f|: |X|\to |Y|$ is a homotopy equivalence. 
\end{corollary}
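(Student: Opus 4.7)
The plan is a two-out-of-three argument applied to the natural commutative square formed by the fat realizations $\|X\|, \|Y\|$ along the top, the ordinary realizations $|X|, |Y|$ along the bottom, the induced maps $\|f\|$ and $|f|$ as horizontal arrows, and the canonical quotient maps $q_X, q_Y$ as vertical arrows. Commutativity of this square is immediate from the definitions: on a representative $(x,s) \in X_n \times \Delta^n$ both composites land on the class of $(f_n(x), s)$, and the equivalence relations defining $\|\cdot\|$ and $|\cdot|$ are preserved levelwise by $f$.

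The strategy is to identify three of the four maps in this square as homotopy equivalences, whence the fourth will follow. The top arrow $\|f\|$ is a homotopy equivalence by the preceding proposition on homotopy invariance of the fat realization, whose sole hypothesis is that every $f_n$ be a homotopy equivalence---precisely one of our assumptions. The two vertical arrows $q_X$ and $q_Y$ are homotopy equivalences by the other preceding proposition, whose cofibrancy hypothesis is likewise part of our assumption on $X$ and $Y$.

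Replacing $q_Y$ by a homotopy inverse and composing along the square (or equivalently applying two-out-of-three in the homotopy category) then yields that $|f|$ is a homotopy equivalence. There is no genuine obstacle here; the substantive content of the corollary has been absorbed into the two preceding propositions, with cofibrancy playing exactly the role that permits one to transport homotopy invariance from the fat realization to the ordinary realization.
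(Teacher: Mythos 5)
Your proof is correct and is precisely the argument the paper intends: the corollary is stated immediately after the two propositions on fat realizations, and the paper gives no explicit proof because the commutative square with $\|f\|$, $|f|$, $q_X$, $q_Y$ and two-out-of-three is the obvious deduction. Nothing to add.
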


\subsection{Nerves and Classifying Spaces}
\label{nerve}

The nerve construction transforms small categories to simplicial sets,
and then to topological spaces by taking the geometric realization
functor. The nerve construction can be extended to $2$-categories and
topological categories by using simplicial spaces.
%See Appendix \ref{simplicial_space} for definitions and basic properties
%of simplicial sets and spaces.

\begin{definition}
 \label{nerve_definition}
 For a small category $C$, define
 \[
  N_n(C) = \set{(u_n,\ldots,u_1)\in C_1^n}{s(u_n)=t(u_{n-1}),\ldots,
 s(u_2)=t(u_1)}. 
 \]
 Elements of $N_n(C)$ are called \emph{$n$-chains}.
\end{definition}

\begin{lemma}
 For a small category $C$, these sets $\{N_n(C)\}_{n\ge 0}$ together
 with maps
 \begin{eqnarray*}
  d_i & : & N_n(C) \longrightarrow N_{n-1}(C) \\
  s_i & : & N_n(C) \longrightarrow N_{n+1}(C)
 \end{eqnarray*}
 defined by 
 \begin{eqnarray*}
  d_i(u_n,\cdots, u_1) & = &
   \begin{cases}
    (u_n,\ldots, u_2) & i=0 \\
    (u_n,\ldots, u_{i+2},u_{i+1}\circ
   u_i,\ldots, u_1), & 1\le i\le n-1 \\
    (u_{n-1},\ldots, u_1), & i=n,
   \end{cases}
   \\
  s_i(u_n,\cdots, u_1) & = & \begin{cases}
			      (u_n,\ldots,u_1,1_{s(u_1)}), & i=0 \\
			      (u_n,\ldots, u_{i+1}, 1_{t(u_i)},
			      u_i,\ldots, u_1) & 1\le i\le n
			     \end{cases}
 \end{eqnarray*}
 form a simplicial set.
\end{lemma}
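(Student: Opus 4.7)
The plan is to verify the five simplicial identities directly from the formulas defining $d_i$ and $s_i$, using only the associativity and unit axioms of the small category $C$. Since the statement is a routine but nontrivial bookkeeping exercise, I would either grind through the five identities by cases or, preferably, give a cleaner conceptual proof first and then remark that it suffices.

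The conceptual route I would take is to identify the $n$-chain set $N_n(C)$ with the set $\Cats([n], C)$ of functors from the poset $[n] = (0 < 1 < \cdots < n)$, regarded as a small category, to $C$. An $n$-chain $(u_n, \ldots, u_1)$ corresponds to the functor sending the edge $(i-1) \to i$ to $u_i$, and conversely the associativity/unit axioms guarantee that a functor out of $[n]$ is determined by its values on edges. Under this identification, an order-preserving map $\varphi : [m] \to [n]$ induces a pullback $\varphi^\ast : N_n(C) \to N_m(C)$ by precomposition. The faces $d_i$ and degeneracies $s_i$ in the statement are then precisely the pullbacks along the standard coface and codegeneracy maps of $\Delta$, whence the simplicial identities follow for free from the fact that precomposition is functorial. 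This packages $N(C)$ as a functor $\Delta^{\op} \to \Sets$, which by Lemma \ref{simplicial_set_as_functor} is exactly a simplicial set.

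If one prefers the direct route, the verification splits into the cases $1 \le i < j \le n-1$, where composition is involved on both sides and the equality reduces to associativity $u_{j+1} \circ (u_j \circ u_i) = (u_{j+1} \circ u_j) \circ u_i$; the boundary cases $i = 0$ or $j = n$ where one of the compositions degenerates to a projection; and the degeneracy relations, where $s_j \circ d_j = 1$ and $s_j \circ d_{j+1} = 1$ reduce to the unit axioms $1_{t(u_j)} \circ u_j = u_j$ and $u_{j+1} \circ 1_{s(u_{j+1})} = u_{j+1}$, and $s_j \circ s_i = s_i \circ s_{j+1}$ is a purely combinatorial reshuffling of inserted identity arrows. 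None of these verifications poses a genuine obstacle; the only real pitfall is keeping track of the indexing convention used in the lemma, which differs from some standard sources by reading chains right-to-left.

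I expect the main obstacle, such as it is, to be purely notational: making sure the boundary cases $i = 0$ and $i = n$ in the definition of $d_i$ (which are projections rather than compositions) are correctly matched under the identities, and similarly for $s_0$ and $s_n$. Once the chain-as-functor perspective is adopted, all of this is absorbed into the standard cosimplicial identities on $\Delta$, and the proof reduces to a single sentence.
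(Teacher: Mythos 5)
Your proposal is correct. The paper gives no explicit proof of this lemma (it is treated as a standard fact), but it immediately follows with Lemma \ref{nerve_as_functor}, which records precisely the identification $N_n(C)\cong\Cats([n],C)$ that underlies your conceptual argument; your reduction of the simplicial identities to functoriality of precomposition along the cosimplicial structure of $\Delta$ is thus exactly in the spirit of the paper, and your fallback direct verification of the five identities via associativity and the unit axioms is also sound.
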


\begin{definition}
 The simplicial set $N(C)$ is called the \emph{nerve} of $C$. 
 The geometric realization of the nerve $N(C)$ is denoted by
 $BC$ and is called the \emph{classifying space} of $C$.
\end{definition}

\begin{remark}
 When $C$ is a poset, $BC$ agrees with the geometric realization of the
 order complex of $C$.
\end{remark}

The following fact is fundamental.

\begin{lemma}
 \label{nerve_as_functor}
 Regard the totally ordered set $[n]=\{0<1<\cdots<n\}$ as a small
 category. Then, for a small category $C$, we have a natural isomorphism
 of sets 
 \[
  N_n(C) \cong \Cats([n],C).
 \]
 Furthermore, under the identification between simplicial sets and
 functors $\Delta^{\op}\to\Sets$, the nerve construction
 defines a functor
\[
 N : \Cats \longrightarrow \Sets^{\Delta^{\op}},
\]
 where $\Sets^{\Delta^{\op}}$ is the category of simplicial sets.
\end{lemma}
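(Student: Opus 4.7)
The plan is to produce the bijection $\Phi_n : \Cats([n],C) \rarrow{} N_n(C)$ explicitly and then check that it is natural in $[n]$. Viewing $[n]$ as the category with objects $0,1,\ldots,n$ and a unique morphism $\alpha_{ij} : i \to j$ for each $i \le j$, a functor $F : [n] \to C$ is determined by its values on the ``atomic'' morphisms $\alpha_{i,i+1}$, since every $\alpha_{ij}$ factors uniquely as $\alpha_{j-1,j}\circ\cdots\circ\alpha_{i,i+1}$ and functoriality forces $F(\alpha_{ij})$ to be the corresponding composite in $C$. Moreover, the string $(F(\alpha_{n-1,n}),\ldots,F(\alpha_{0,1}))$ consists of composable arrows because $F$ preserves sources and targets. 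This suggests setting
\[
 \Phi_n(F) = (F(\alpha_{n-1,n}),\ldots,F(\alpha_{0,1})).
\]

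Next I would define the inverse $\Psi_n : N_n(C) \rarrow{} \Cats([n],C)$ by sending $(u_n,\ldots,u_1)$ to the functor $F$ with $F(0)=s(u_1)$, $F(i)=t(u_i)=s(u_{i+1})$ for $1\le i<n$, $F(n)=t(u_n)$, and $F(\alpha_{ij})=u_j\circ u_{j-1}\circ\cdots\circ u_{i+1}$ (with $F(\alpha_{ii})=1_{F(i)}$). Associativity and the unit axioms in $C$ make $F$ into a bona fide functor, and it is immediate from the definitions that $\Phi_n\circ\Psi_n = \mathrm{id}$ and $\Psi_n\circ\Phi_n = \mathrm{id}$.

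To upgrade this pointwise bijection to the claimed functor $N : \Cats \to \Sets^{\Delta^{\op}}$, I would check that for any order-preserving $\varphi : [m] \to [n]$ the square
\[
\begin{diagram}
\node{\Cats([n],C)} \arrow{e,t}{\Phi_n} \arrow{s,l}{\varphi^*}
\node{N_n(C)} \arrow{s,r}{\varphi^*} \\
\node{\Cats([m],C)} \arrow{e,b}{\Phi_m} \node{N_m(C)}
\end{diagram}
\]
commutes, where the left vertical arrow is precomposition with $\varphi$ and the right vertical arrow is the simplicial structure map. By the usual generators-and-relations presentation of $\Delta$, it is enough to verify this for the coface maps $d^i : [n-1]\to[n]$ and the codegeneracy maps $s^i : [n+1]\to [n]$; precomposing a functor $F : [n]\to C$ with $d^i$ drops the $i$-th object and replaces the two adjacent atomic morphisms by their composite (yielding exactly the face operator $d_i$ in Definition \ref{nerve_definition}), while precomposing with $s^i$ repeats an object and inserts an identity morphism (the degeneracy $s_i$). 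Naturality in $C$ is automatic since $\Phi_n$ is defined entirely in terms of values of functors on the $\alpha_{i,i+1}$, and a functor $G : C\to D$ of small categories acts on each side by postcomposition.

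None of these steps presents a real obstacle; the only thing that requires care is bookkeeping for the reverse-ordering convention $(u_n,\ldots,u_1)$ used in Definition \ref{nerve_definition}, which shifts indices in the verification that $d^i$ corresponds to $d_i$ and $s^i$ to $s_i$. Once this is sorted, the second assertion of the lemma follows from the first together with Lemma \ref{simplicial_set_as_functor}.
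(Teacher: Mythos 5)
Your argument is correct and complete: the bijection $\Phi_n$ sending a functor $F : [n] \to C$ to the string of its values on the atomic morphisms $\alpha_{i,i+1}$, together with its explicit inverse, is the standard proof of this fact, and your reduction of naturality in $[n]$ to the coface and codegeneracy generators of $\Delta$ is exactly the right bookkeeping. The paper states this lemma without proof (it is a well-known fact recorded in the appendix for the reader's convenience), so there is nothing to compare against; your write-up would serve as a perfectly acceptable proof if one were to be supplied.
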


%\begin{remark}
% See Appendix \ref{simplicial_space} for the definition of the category
% $\Delta$ and the identification of simplicial sets and 
% functors $\Delta^{\op}\to\Sets$
%\end{remark}

Thus the classifying space construction is also a functor
\[
 B : \Cats \rarrow{N} \Sets^{\Delta^{\op}} \rarrow{|-|} \Spaces.
\]
where $|-|$ denotes the geometric realization functor. In particular,
any functor $f : C\to D$ induces a continuous map of topological 
spaces $Bf : BC \to BD$.

A natural transformation between functors induces a homotopy. 

\begin{lemma}
 \label{natural_transformation_is_homotopy}
 A natural transformation $\theta : f\Rightarrow g$ between
 functors $f,g : C\to D$ induces a homotopy
 \[
 B\theta : BC\times [0,1] \longrightarrow BD
 \]
 between $Bf$ and $Bg$.  
\end{lemma}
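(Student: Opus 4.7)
The plan is to encode the natural transformation $\theta$ as a single functor out of a product category $C \times [1]$, where $[1]$ denotes the poset $\{0 < 1\}$ regarded as a small category, and then apply the classifying space functor. The classifying space $B[1]$ is naturally homeomorphic to the interval $[0,1]$, so a homotopy between $Bf$ and $Bg$ amounts to a continuous map $BC \times B[1] \to BD$ whose restrictions to $BC \times \{0\}$ and $BC \times \{1\}$ are $Bf$ and $Bg$ respectively.

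First I would define a functor $H : C \times [1] \to D$ by the rules $H(c,0) = f(c)$, $H(c,1) = g(c)$, and, on a morphism $(u,\varepsilon):(c,i) \to (c',j)$, by
\[
 H(u,\varepsilon) =
 \begin{cases}
  f(u), & (i,j) = (0,0), \\
  g(u), & (i,j) = (1,1), \\
  g(u) \circ \theta_{c} = \theta_{c'} \circ f(u), & (i,j) = (0,1).
 \end{cases}
\]
The two expressions in the last line agree by naturality of $\theta$, and functoriality of $H$ is a direct check on composites, using functoriality of $f$ and $g$ together with the naturality squares for $\theta$. By construction, the restrictions of $H$ to the subcategories $C \times \{0\}$ and $C \times \{1\}$ agree with $f$ and $g$.

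Next I would invoke the fact that the nerve functor preserves finite products, so there is a canonical isomorphism of simplicial sets $N(C \times [1]) \cong N(C) \times N[1]$. Since $N[1]$ is the simplicial $1$-simplex, whose geometric realization is the interval $[0,1]$, and since the geometric realization of a product of simplicial sets is homeomorphic to the product of their realizations (both $NC$ and $N[1]$ are cofibrant simplicial sets, and $N[1]$ has finitely many non-degenerate simplices, so the usual Milnor-type issue does not arise), we obtain a natural homeomorphism $B(C \times [1]) \cong BC \times [0,1]$. Define $B\theta$ to be the composite
\[
 BC \times [0,1] \cong B(C \times [1]) \xrightarrow{BH} BD.
\]
Restricting to $BC \times \{0\}$ and $BC \times \{1\}$ recovers $Bf$ and $Bg$ because $H$ restricts to $f$ and $g$ on the respective endpoints, completing the construction.

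The only genuine subtlety is the identification $B(C \times [1]) \cong BC \times [0,1]$; for an arbitrary pair of simplicial sets the product of realizations and the realization of the product can differ in the category of topological spaces without compactness assumptions, but here $N[1]$ is locally compact (even finite in each degree) and its realization $[0,1]$ is compact, so the comparison map is a homeomorphism. With that identification in hand, the rest of the argument is purely formal.
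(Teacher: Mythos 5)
The paper states this lemma without proof, treating it as a well-known fact from homotopy theory of small categories, so there is no argument in the paper to compare against. Your proof is the standard one and is correct: the functor $H : C \times [1] \to D$ packaging $\theta$ is well defined (the two descriptions of $H(u,0 \le 1)$ agree precisely by the naturality square, and functoriality of $H$ reduces to the naturality squares plus functoriality of $f$ and $g$), the nerve preserves products, $B[1] \cong [0,1]$, and the comparison map $|NC \times N[1]| \to |NC| \times |N[1]|$ is a homeomorphism because $N[1]$ is a finite simplicial set so Milnor's theorem applies without any compactly-generated caveat. Restricting to the two ends of the cylinder recovers $Bf$ and $Bg$, giving the required homotopy.
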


As a corollary, we obtain the following well-known but useful fact.

\begin{corollary}
 \label{BC_is_contractible}
 Suppose a functor $f: C\to D$ between small categories has a right or a
 left adjoint $g:D\to C$. Then $Bf: BC\to BD$ is a homotopy
 equivalence. In particular, when $C$ has an initial or a terminal
 object, $BC$ is contractible.
\end{corollary}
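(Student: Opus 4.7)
The plan is to reduce both statements to Lemma \ref{natural_transformation_is_homotopy} via the unit and counit of the adjunction. Assume without loss of generality that $g$ is right adjoint to $f$ (the case of $g$ being left adjoint is entirely dual). The adjunction $f \dashv g$ furnishes us with natural transformations $\eta : 1_C \Rightarrow g \circ f$ (the unit) and $\varepsilon : f \circ g \Rightarrow 1_D$ (the counit).

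Next, I apply Lemma \ref{natural_transformation_is_homotopy} to $\eta$ and $\varepsilon$. Since the nerve construction is functorial (Lemma \ref{nerve_as_functor}), we have $B(g \circ f) = Bg \circ Bf$ and $B(f \circ g) = Bf \circ Bg$, and of course $B(1_C) = 1_{BC}$ and $B(1_D) = 1_{BD}$. Thus $B\eta$ provides a homotopy $1_{BC} \simeq Bg \circ Bf$, and $B\varepsilon$ provides a homotopy $Bf \circ Bg \simeq 1_{BD}$. This shows that $Bg$ is a two-sided homotopy inverse to $Bf$, establishing the first assertion.

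For the ``in particular'' clause, suppose $C$ has a terminal object $t$. Let $\mathbf{1}$ denote the terminal category (one object, only the identity morphism), and consider the unique functor $p : C \to \mathbf{1}$. The functor $i : \mathbf{1} \to C$ sending the unique object to $t$ is right adjoint to $p$: for any $c \in C$ the set $\operatorname{Hom}_C(c, t)$ is a singleton, matching $\operatorname{Hom}_{\mathbf{1}}(p(c), *)$. By the first part, $Bp : BC \to B\mathbf{1}$ is a homotopy equivalence, and since $B\mathbf{1}$ is a point, $BC$ is contractible. The case of an initial object is dual: the inclusion $i$ becomes a left adjoint to $p$ instead.

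No serious obstacle is anticipated; the content of the argument lies entirely in Lemma \ref{natural_transformation_is_homotopy}, and the present corollary is a direct packaging of that fact together with the functoriality of $B$. The only point requiring a moment's care is to verify the directions of $\eta$ and $\varepsilon$ so that the composites and identities line up correctly, which is purely formal.
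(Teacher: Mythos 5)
Your proof is correct and takes essentially the same route as the paper: invoke the unit and counit of the adjunction, apply Lemma \ref{natural_transformation_is_homotopy} together with the functoriality of $B$ to exhibit $Bg$ as a two-sided homotopy inverse, and then derive the ``in particular'' clause by observing that the collapse $p:C\to\mathbf{1}$ is adjoint to the inclusion of the initial (resp.\ terminal) object. The only cosmetic difference is that the paper spells out the initial-object case while you spell out the terminal-object case, but both are correctly identified as dual.
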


\begin{proof}
 Suppose $g$ is right adjoint to $f$. We have natural transformations 
 \begin{eqnarray*}
  1_{C} & \Longrightarrow & g\circ f \\
  f\circ g & \Longrightarrow & 1_{D}.
 \end{eqnarray*}
 By Lemma \ref{natural_transformation_is_homotopy}, they induce homotopies
 \begin{eqnarray*}
  Bg\circ Bf & = & B(g\circ f) \simeq B1_{C} = 1_{BC} \\
  Bf\circ Bg & = & B(f\circ g) \simeq B1_{D} = 1_{BD}
 \end{eqnarray*}
 and thus $Bg$ is a homotopy inverse of $Bf : BC\to BD$.

 When $C$ has an initial object $*$, the collapsing functor $p:C\to *$
 is a right adjoint to the inclusion $*\hookrightarrow C$. And we have
 $BC\simeq B*=*$. 
\end{proof}

Furthermore we obtain a deformation retraction in the following special
case. 

\begin{definition}
 \label{closure_operator_definition}
 A map of poset $f: P\to P$ is called a \emph{descending closure
 operator} if $f\circ f= f$ and $f(x)\le x$ for all $x\in P$. Dually it
 is called an \emph{ascending closure operator} if $f\circ f=f$ and
 $f(x)\ge x$ for all $x \in P$.
\end{definition}

\begin{corollary}
 \label{DR_by_closure_operator}
 If a poset $P$ has a descending or an ascending closure operator
 $f:P \to P$, $B(f(P))$ is a strong deformation retract of $BP$.
\end{corollary}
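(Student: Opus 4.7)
Consider the descending case; the ascending case is dual. Write $Q = f(P)$ and let $i : Q \hookrightarrow P$ denote the inclusion of posets. The plan is to build the retraction and the deformation at the categorical level, then push everything down to classifying spaces via Lemma \ref{natural_transformation_is_homotopy}.

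First I would observe that $f$ factors as $P \xrightarrow{\bar f} Q \xrightarrow{i} P$, where $\bar f$ is $f$ with codomain restricted. Idempotency $f\circ f=f$ implies $\bar f \circ i = \mathrm{id}_Q$, so $B\bar f \circ Bi = \mathrm{id}_{BQ}$ and $BQ$ is a retract of $BP$. Next, the inequality $f(x)\le x$ for all $x\in P$ is precisely the data of a natural transformation $\eta : i\circ \bar f \Rightarrow \mathrm{id}_P$ (viewing the poset $P$ as a category in which each relation $a\le b$ is a unique morphism $a\to b$). Naturality is automatic because $P$ is a poset, so there is nothing to check beyond the pointwise inequalities.

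By Lemma \ref{natural_transformation_is_homotopy}, $\eta$ induces a homotopy
\[
 H = B\eta : BP \times [0,1] \longrightarrow BP
\]
from $B(i\circ \bar f) = Bi\circ B\bar f$ to $\mathrm{id}_{BP}$. To upgrade this to a strong deformation retraction onto $BQ$, I need the further property that $H$ is constant on the subspace $BQ\subset BP$. This is where the key input of idempotency enters a second time: for any $y\in Q$ we have $\bar f(i(y)) = y$, so the component $\eta_y$ is the identity morphism $y \to y$. More generally, on any $n$-simplex of $BQ$ corresponding to a chain $y_0\le \cdots \le y_n$ in $Q$, the restriction of $\eta$ is the identity natural transformation of the chain, and the standard description of $B\eta$ as the map induced by the functor $P\times [1]\to P$ associated with $\eta$ shows that $H$ is constant on this simplex. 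Thus $H(y,t)=y$ for all $y\in BQ$ and $t\in[0,1]$.

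Putting the three ingredients together, the maps $Bi : BQ\to BP$ and $B\bar f : BP\to BQ$ together with the homotopy $H$ exhibit $BQ$ as a strong deformation retract of $BP$. The main conceptual step is the second one: recognizing that $f(x)\le x$ is exactly a natural transformation and invoking Lemma \ref{natural_transformation_is_homotopy}; the only mildly delicate point is verifying that the induced homotopy fixes $BQ$ pointwise, which follows from the fact that $\eta$ restricts to the identity transformation on the subcategory $Q$.
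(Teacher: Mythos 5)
Your proof is correct and follows essentially the same route as the paper's: factor $f$ through the inclusion $i:f(P)\hookrightarrow P$, use idempotency to get a retraction at the categorical level, interpret $f(x)\le x$ as a natural transformation $i\circ \bar f\Rightarrow \mathrm{id}_P$, and apply Lemma~\ref{natural_transformation_is_homotopy} to get the homotopy, observing that its components are identities on $f(P)$ so the resulting homotopy fixes $Bf(P)$ pointwise. The only difference is that you spell out in more detail why the induced homotopy is stationary on $BQ$, which the paper merely asserts.
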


\begin{proof}
 Regard $P$ as a small category. A poset map $f:P\to P$ is an
 endofunctor.  
 Let $i: f(P)\hookrightarrow P$ be the inclusion. When $f$ is a
 descending closure operator, we have $f\circ i = 1_{f(P)}$. Furthermore
 the relation $f(x)\le x$ gives rise to a natural transformation
 $i\circ f \Rightarrow 1_{P}$, which is identity on $f(P)$. Thus we have
 a homotopy $Bi\circ Bf\simeq 1_{BP}$ which is identity on $Bf(P)$.

 When $f$ is a descending closure operator, we have a natural
 transformation $1_{P}\Rightarrow i\circ f$ and the same conclusion holds.
\end{proof}

\begin{remark}
 It can be proved that $BP$ collapses onto $B(f(P))$ under the same
 assumption. See \S13.2 of Kozlov's book
 \cite{KozlovCombinatorialAlgebraicTopology}. 
\end{remark}

When $C$ is acyclic, we only need nondegenerate chains.

\begin{definition}
 For a small category $C$, define
 \[
  \overline{N}_n(C) = N_n(C)-\bigcup_{i=0}^{n-1}s_i(N_{n-1}(C)).
 \]
 Elements of $\overline{N}_n(C)$ are called \emph{nondegenerate
 $n$-chains}. 
\end{definition} 

\begin{lemma}
 When $C$ is acyclic, the face operators $d_i$ can be restricted to
 nondegenerate chains. Thus the collection
 $\overline{N}(C)=\{\overline{N}_n(C)\}_{n\ge 0}$ forms a
 $\Delta$-set. 
\end{lemma}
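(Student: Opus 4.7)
The plan is to first give a clean characterization of nondegeneracy in the acyclic setting, and then check that each face operator $d_i$ preserves this characterization. Specifically, I would first observe that in any small category an $n$-chain $(u_n,\ldots,u_1)$ is degenerate iff at least one $u_j$ is an identity morphism: one direction is immediate from the formula for $s_i$, and the other uses the fact that if $u_j = 1_x$ then $(u_n,\ldots,u_1) = s_{j-1}(u_n,\ldots,u_{j+1},u_{j-1},\ldots,u_1)$. Hence $\overline{N}_n(C)$ consists precisely of those chains with no identity entries.

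Next I would check each face operator. For $d_0$ and $d_n$ there is nothing to do, since these just delete the first or last morphism, so if every $u_j$ was non-identity, so is every entry of the resulting chain. The only nontrivial case is $d_i$ for $1 \le i \le n-1$, where the entries $u_{i+1}$ and $u_i$ get replaced by the composite $u_{i+1} \circ u_i$. The key claim is that in an acyclic category, the composite of two non-identity composable morphisms is never an identity.

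To prove this claim, suppose $u_i : x \to y$ and $u_{i+1} : y \to z$ with $u_{i+1} \circ u_i = 1_w$. Then $w = s(u_i) = x$ and $w = t(u_{i+1}) = z$, so $x = z$. If $x = y$ as well, then by acyclicity $C(x,x) = \{1_x\}$ forces $u_i = 1_x$, contradicting nondegeneracy. If $x \neq y$, then $u_i \in C(x,y)$ and $u_{i+1} \in C(y,x)$ are both nonempty, again contradicting acyclicity. Either way we obtain a contradiction, so $u_{i+1} \circ u_i$ is not an identity, and $d_i(u_n,\ldots,u_1) \in \overline{N}_{n-1}(C)$.

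The only subtle point is verifying the characterization of degenerate chains in the general (non-acyclic) setting of the preliminary observation; I would present this carefully since it is the backbone of the argument. Once established, the rest is a mechanical case analysis on $i$, and the simplicial identities inherited from $N(C)$ automatically restrict to $\overline{N}(C)$, giving the desired $\Delta$-set structure.
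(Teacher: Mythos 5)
Your proof is correct. The paper states this lemma without an explicit proof (it is part of the expository appendix), and your argument is the natural one: first characterize $\overline{N}_n(C)$ as the chains with no identity entries (both directions proved cleanly from the formula for $s_i$), then observe that $d_0$ and $d_n$ merely delete an entry, while for inner faces the only new entry is a composite $u_{i+1}\circ u_i$, and the composite of two composable non-identity morphisms in an acyclic category cannot be an identity. Your two-case analysis ($x=y$ forces $u_i=1_x$ via $C(x,x)=\{1_x\}$; $x\neq y$ forces both $C(x,y)$ and $C(y,x)$ nonempty, contradicting acyclicity) is exactly right, and the simplicial identities among the $d_i$ descend automatically. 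Nothing is missing.
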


\begin{definition}
 For an acyclic category $C$, elements of $\overline{N}_n(C)$ are called
 \emph{nondegenerate $n$-chains} and the $\Delta$-set $\overline{N}(C)$
 is called the \emph{nondegenerate nerve} of $C$.
\end{definition}

\begin{lemma}
 For a small acyclic category $C$, we have a natural homeomorphism
 \[
  BC = |N(C)| \cong \|\overline{N}(C)\|,
 \]
 where $\|-\|$ denotes the geometric realization of $\Delta$-sets.
\end{lemma}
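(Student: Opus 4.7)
The plan is to exhibit an explicit continuous bijection
\[
\Phi : \|\overline{N}(C)\| \longrightarrow |N(C)|
\]
induced by the tautological inclusion $\overline{N}_n(C) \hookrightarrow N_n(C)$, and to show that it is a homeomorphism by recognising both spaces as the same CW complex indexed by nondegenerate chains.

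First I would build the map. The inclusions $\overline{N}_n(C) \hookrightarrow N_n(C)$ assemble into maps $\coprod_n \overline{N}_n(C) \times \Delta^n \to \coprod_n N_n(C)\times \Delta^n$, and composing with the quotient to $|N(C)|$ gives a continuous surjection (since every degenerate chain is identified in $|N(C)|$ with a lower-dimensional nondegenerate one via degeneracy operators). To descend to a well-defined map from the fat realization $\|\overline{N}(C)\|$, I must check that whenever $(d_i\alpha, s) \sim_{\inj} (\alpha, (d^i)_*s)$ holds for $\alpha \in \overline{N}_n(C)$, the same identification occurs in $|N(C)|$. This is automatic because the face maps in $N(C)$ extend those in $\overline{N}(C)$, and because acyclicity of $C$ (invoked via the lemma immediately preceding the statement) guarantees that $d_i\alpha$ is again nondegenerate so no hidden degeneracy intervenes.

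Next I would verify bijectivity of $\Phi$ by appealing to the Eilenberg--Zilber lemma: every point of $|N(C)|$ has a unique representative $(\alpha, s)$ with $\alpha$ nondegenerate and $s \in \Int \Delta^n$. Combined with the equivalence relation generated by the injective face operators on $\overline{N}(C)$, this yields exactly one preimage under $\Phi$ for each point of $|N(C)|$. Concretely, given $[x,t]\in |N(C)|$, write $x = \varphi^* \alpha$ uniquely with $\alpha$ nondegenerate and $\varphi$ a surjection in $\Delta$; the preimage is $[\alpha, \varphi_* t]$, and this is well-defined on $\sim_{\inj}$-equivalence classes.

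Finally, to promote $\Phi$ to a homeomorphism I would observe that both sides carry a CW structure with one open cell $\Int \Delta^n$ for each element of $\overline{N}_n(C)$, whose attaching maps factor through the iterated face operators. For $\|\overline{N}(C)\|$ this is the standard skeletal filtration of a $\Delta$-set; for $|N(C)|$ this is the well-known CW decomposition of the realization of a simplicial set by nondegenerate simplices (see, e.g., May's book cited in the excerpt). The map $\Phi$ sends characteristic maps to characteristic maps, so it is a cellular bijection between CW complexes with the same cell decomposition, hence a homeomorphism. The only step that requires any real care is the Eilenberg--Zilber uniqueness argument when matching the two cellular structures, since that is where acyclicity is crucially used to ensure that the relation generated by injective face maps on $\overline{N}(C)$ agrees with the full simplicial relation on $N(C)$ restricted to nondegenerate chains.
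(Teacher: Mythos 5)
The paper states this lemma without proof, so there is no argument in the text to compare against; I will judge your argument on its own. Your proof is correct and follows the natural route: the inclusion of nondegenerate chains into all chains induces the comparison map $\Phi$, the Eilenberg--Zilber unique-representative theorem gives bijectivity, and the matching CW structures (one $n$-cell per nondegenerate $n$-chain on each side, with $\Phi$ carrying characteristic maps to characteristic maps) upgrade the continuous bijection to a homeomorphism. One clarification worth making explicit: acyclicity plays no role in the well-definedness of $\Phi$ itself — that is automatic, since $\sim_{\inj}$ is generated by a subset of the relations generating $\sim$. Its real job is to ensure, via the preceding lemma, that every face $\varphi^*\alpha$ of a nondegenerate chain is again nondegenerate, which is what makes $\overline{N}(C)$ a $\Delta$-set in the first place and, more pointedly, what guarantees that the Eilenberg--Zilber canonical representative of $(\alpha, \varphi_* s')$ with $\varphi$ injective and $s'$ interior is literally $(\varphi^*\alpha, s')$, with no degeneracy operator to extract. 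Without acyclicity (e.g.\ the one-object category associated to $\Z/2$, where $d_1(g,g)$ is degenerate) this fails, and so does the statement. You do identify this point correctly at the very end of the proposal; the well-definedness paragraph just slightly misattributes where the hypothesis enters.
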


It is straightforward to extend the construction of the classifying space
to topological categories. 

\begin{lemma}
 For a topological category $C$, $NC=\{N_n(C)\}_{n\ge 0}$ forms a
 simplicial space.
\end{lemma}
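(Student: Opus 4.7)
The plan is to equip each $N_n(C)$ with the subspace topology inherited from the product $C_1^n$ and then verify continuity of the face and degeneracy maps that were already defined at the set level. The simplicial identities themselves follow from the same identities in the underlying small category of $C$, so they come for free from the previously established small-category version of the nerve.

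First, I would note that as a set, $N_n(C) \subseteq C_1^n$ is cut out by the $n-1$ continuous equations $s(u_{i+1}) = t(u_i)$, so it is naturally a topological space with the subspace topology; for $n = 2$ this recovers precisely the space $N_2(C)$ which, by Definition \ref{enriched} applied in $\bm{V} = \Spaces$ (see also Definition \ref{popular_enriched_categories}), is the domain of the continuous composition map $\circ : N_2(C) \to C_1$.

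Next, I would verify continuity of each $d_i : N_n(C) \to N_{n-1}(C)$. The boundary cases $d_0$ and $d_n$ are restrictions of coordinate projections $C_1^n \to C_1^{n-1}$, hence continuous. For $1 \le i \le n-1$, the map $d_i$ leaves all coordinates fixed except for the adjacent pair $(u_{i+1}, u_i)$, to which it applies $\circ$; the result is continuous because $\circ$ is, and because the remaining coordinates are given by continuous projections. Similarly, each degeneracy $s_i$ inserts an identity morphism obtained by applying the continuous unit $\iota : C_0 \to C_1$ to $t(u_i)$ or $s(u_1)$, where the source and target maps are themselves continuous by the definition of a topological category; hence $s_i$ is continuous.

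Finally, the targets of $d_i$ and $s_i$ land in $N_{n-1}(C)$ and $N_{n+1}(C)$ respectively because the composability conditions are preserved — this is already known at the set level from the small-category lemma. The five simplicial identities reduce to identities among compositions and identities of morphisms in the underlying small category, which already hold. There is no genuine obstacle here; the only point to attend to is that ``topological category'' is defined exactly so that the four pieces of data ($\circ$, $\iota$, $s$, $t$) entering the formulas for $d_i$ and $s_i$ are continuous.
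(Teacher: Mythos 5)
The paper states this lemma without proof (it is introduced by the sentence ``It is straightforward to extend the construction of the classifying space to topological categories''), so there is no official argument to compare against; your verification is the obvious and correct one. One point of framing worth flagging: the paper's Definition \ref{popular_enriched_categories} defines a topological category as a category enriched over $\Spaces$, so the primitive data are a \emph{set} $C_0$ of objects and topological spaces $C(x,y)$ with continuous compositions $\circ : C(y,z)\times C(x,y)\to C(x,z)$; there is no a priori space $C_1$ with continuous $s,t,\circ,\iota$, which is what your proof appeals to (that is the ``internal category in $\Spaces$'' picture). The two pictures agree here because $C_0$ is discrete: setting $C_1 = \coprod_{x,y} C(x,y)$ makes $s$ and $t$ locally constant hence continuous, $\iota$ continuous, and the subspace $N_n(C)\subseteq C_1^n$ cut out by your composability equations is precisely the coproduct $\coprod_{x_0,\dots,x_n} C(x_{n-1},x_n)\times\cdots\times C(x_0,x_1)$, on whose components the enriched $\circ$ is continuous by definition. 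With that identification spelled out, your continuity checks for the $d_i$ and $s_i$ are exactly right, and the simplicial identities do indeed reduce to the discrete case.
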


\begin{definition}
 For a topological category $C$, the geometric realization of the
 simplicial space $NC$ is called the \emph{classifying space} of $C$ and
 is denoted by $BC$.
\end{definition}

There are several ways to define the classifying space of a
$2$-category. A good reference is the paper \cite{0903.5058} by
Carrasco, Cegarra, and Garz{\'o}n in which they compare ten different 
nerves for bicategories (weak $2$-categories).

One of the most classical constructions is the following bisimplicial
nerve.
%See the paper \cite{Bullejos-Cagarra03} by Bullejos and Cegarra. 

\begin{definition}
 For a $2$-category $C$, define a topological category $BC$ by
 $(BC)_0=C_0$ and 
 \[
  (BC)(x,y) = B(C(x,y))
 \]
 for $x,y \in C_0$. The classifying space of this topological category is
 denoted by $B^2C$ and is called the \emph{classifying space} of $C$.
\end{definition}

$B^2$ obviously defines a functor
\[
 B^2 : \Cats_2 \rarrow{} \Spaces
\]
from the category $\Cats_2$ of $2$-categories and $2$-functors to the one
of topological spaces and continuous maps.

\begin{proposition}
 \label{homotopy_invariance_of_classifying_space}
 If a strict $2$-functor $f: C\to D$ induces a homotopy equivalence
 \[
  Bf(x,y): BC(x,y) \rarrow{} BD(f(x),f(y))
 \]
 for all pairs of objects $x,y\in C_0$, then 
 \[
  B^2f : B^2C \rarrow{} B^2D
 \]
 is a homotopy equivalence.
\end{proposition}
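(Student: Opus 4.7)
The plan is to reduce the claim to Corollary \ref{homotopy_invariance_of_realization} on the homotopy invariance of geometric realization for cofibrant simplicial spaces. Unwinding the definition, $B^2 C$ is the geometric realization of the simplicial space $N(BC)$, the nerve of the topological category $BC$ whose hom-spaces are $BC(x,y) = B(C(x,y))$. The strict $2$-functor $f$ induces a continuous functor $Bf : BC \to BD$ of topological categories, and hence a morphism of simplicial spaces $N(Bf) : N(BC) \to N(BD)$ whose geometric realization is precisely $B^2 f$.

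The first step is to verify that both $N(BC)$ and $N(BD)$ are cofibrant simplicial spaces in the sense of Definition \ref{cofibrant_simplicial_space}. Since each $BC(x,y)$ is the geometric realization of a simplicial set, it is a CW complex, and the space of $n$-chains
\[
N_n(BC) = \coprod_{(x_0,\ldots,x_n)\in C_0^{n+1}} BC(x_{n-1},x_n)\times\cdots\times BC(x_0,x_1)
\]
is a disjoint union of finite products of CW complexes. The latching space $L_n N(BC)$ corresponds to the subcomplex whose chains contain at least one identity $1$-morphism, which sits inside $N_n(BC)$ as a CW subcomplex. Example \ref{CW_simplicial_space} then delivers cofibrancy of $N(BC)$ and, by the same argument, of $N(BD)$.

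The key remaining step, and the main obstacle, is to establish that $N(Bf)$ is a levelwise homotopy equivalence. On the component of $N_n(BC)$ indexed by an object-chain $(x_0,\ldots,x_n)$, the map $N_n(Bf)$ restricts to the product
\[
\prod_{i=1}^{n} Bf(x_{i-1},x_i) \colon \prod_{i=1}^{n} BC(x_{i-1},x_i) \longrightarrow \prod_{i=1}^{n} BD(f(x_{i-1}),f(x_i)),
\]
landing in the component of $N_n(BD)$ indexed by $(f(x_0),\ldots,f(x_n))$. Each factor is a homotopy equivalence by hypothesis, so the product is a homotopy equivalence on each component of the source. The delicate point is to verify that this componentwise comparison assembles into a genuine homotopy equivalence on the whole of $N_n(BC) \to N_n(BD)$; this reduces to the behaviour of $f$ on object-chains, which in turn is controlled by the induced map on $N_0 = C_0$ and the way the coproduct decomposition interacts with $Nf$.

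Once levelwise homotopy equivalence of $N(Bf)$ between cofibrant simplicial spaces has been secured, a direct application of Corollary \ref{homotopy_invariance_of_realization} yields that $B^2 f = |N(Bf)| : B^2 C \to B^2 D$ is a homotopy equivalence, completing the proof.
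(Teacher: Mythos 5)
Your strategy coincides with the paper's: identify $B^2f$ with the realization of the simplicial map $N(Bf)$ between the nerves of the associated topological categories, check cofibrancy via Example \ref{CW_simplicial_space}, and invoke Corollary \ref{homotopy_invariance_of_realization}. However, the step you explicitly leave open --- that the componentwise equivalences ``assemble into a genuine homotopy equivalence on the whole of $N_n(BC)\to N_n(BD)$'' --- is a genuine gap, and it is exactly the point at which the stated hypothesis is insufficient. Both $N_n(BC)$ and $N_n(BD)$ decompose as coproducts indexed by object-chains, and a coproduct of homotopy equivalences is a homotopy equivalence of the total spaces only when the induced map on the indexing sets is a bijection. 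At level $n=0$ this is literally the demand that $f_0:C_0\to D_0$ be a bijection of discrete spaces, which does not follow from the hom-space hypothesis: take $C$ to be the terminal $2$-category with one object $*$ and $D$ the discrete category on two objects $a,b$, with $f(*)=a$. Then $Bf(*,*):\mathrm{pt}\to\mathrm{pt}$ is a homotopy equivalence for the unique pair of objects of $C$, yet $B^2C$ is a point and $B^2D$ is two points. So the levelwise claim (and indeed the proposition as literally stated) cannot be established without an additional hypothesis on objects.

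To close the gap you should add the observation (or assumption) that $f$ is bijective on objects; then $f_0^{\,n+1}$ identifies the components of $N_n(BC)$ with those of $N_n(BD)$, your product decomposition of each component finishes the levelwise equivalence, and Corollary \ref{homotopy_invariance_of_realization} applies as you describe. For what it is worth, the paper's own proof elides the same point, asserting the levelwise equivalence directly ``by assumption''; in the only place the proposition is used (Theorem \ref{reduction_is_homotopy_equivalence}, applied to the reduction $r:C(\mu)\to\overline{C}(\mu)$), the functor is the identity on objects, so the missing condition is automatic there.
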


\begin{proof}
 By assumption, $f$ induces a homotopy equivalence
 \[
  N_nBf : N_nBC \rarrow{} N_nBD
 \]
 for all $n$. Now the result follows from 
 Lemma \ref{homotopy_invariance_of_realization} and Example
 \ref{CW_simplicial_space}. 
\end{proof}

Note that the functor $B^2 : \Cats_2 \to \category{Top}$,
cannot be extended to lax or colax functors. In order to construct a
classifying space which is functorial with respect to lax or colax
functors, one of the ways is to modify the description in Lemma
\ref{nerve_as_functor}. 

\begin{definition}
 \label{lax_nerve}
 For a $2$-category $C$, define 
 \begin{eqnarray*}
  N_n^{cl}(C) & = & \set{\bm{u}:[n]\to C}{\text{colax functors}} \\  
  N_n^{ncl}(C) & = & \set{\bm{u}:[n]\to C}{\text{normal colax functors}} \\  
  N_n^{l}(C) & = & \set{\bm{u}:[n]\to C}{\text{lax functors}} \\  
  N_n^{nl}(C) & = & \set{\bm{u}:[n]\to C}{\text{normal lax functors}}.  
 \end{eqnarray*}
 \end{definition}

\begin{lemma}
 \label{lax_classifying_spaces}
 The collections $N^{cl}(C)=\{N_n^{cl}(C)\}_{n\ge 0}$,
 $N^{ncl}C=\{N_n^{ncl}(C)\}_{n\ge 0}$,
 $N^{l}(C)=\{N_n^{l}(C)\}_{n\ge 0}$, and
 $N^{nl}(C)=\{N_n^{nl}(C)\}_{n\ge 0}$ form simplicial
 sets and we obtain functors
 \begin{eqnarray*}
  B^{cl} & : & \Cats_{2,cl} \longrightarrow \Sets^{\Delta^{\op}} \\
  B^{ncl} & : & \Cats_{2,ncl} \longrightarrow \Sets^{\Delta^{\op}} \\
  B^{l} & : & \Cats_{2,l} \longrightarrow \Sets^{\Delta^{\op}} \\
  B^{nl} & : & \Cats_{2,nl} \longrightarrow \Sets^{\Delta^{\op}}.
 \end{eqnarray*}
\end{lemma}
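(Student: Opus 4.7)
The plan is to view each $[n]$ as a 2-category in which all 2-morphisms are identities, so that a (normal) (co)lax functor $[n] \to C$ reduces precisely to the data enumerated in Definition \ref{colax_functor_definition} along the chain $0 \to 1 \to \cdots \to n$, namely a choice of objects, 1-morphisms, and coherence 2-cells between iterated composites. Under this convention, any order-preserving map $\varphi : [m] \to [n]$ is itself a strict 2-functor, and hence in particular a normal (co)lax functor of every flavor.

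The key ingredient I need is a composition lemma: (co)lax functors compose to yield (co)lax functors of the same flavor, with normality preserved. Given colax functors $F : A \to B$ and $G : B \to C$, one builds the composer 2-cell $(GF)_{x,y,z} : GF(v \circ u) \Rightarrow GF(v) \circ GF(u)$ by pasting $G(F_{x,y,z})$ with $G_{F(x),F(y),F(z)}$, and similarly the unitor $(GF)_x$ from $G(F_x) \ast G_{F(x)}$. The coherence axioms (\ref{composer}), (\ref{uniter1}), (\ref{uniter2}) for $GF$ then follow from the corresponding axioms for $F$ and $G$ via a standard pasting chase. If both $F$ and $G$ are normal, the pasted unitor $(GF)_x$ reduces to the identity 2-cell. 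The lax and normal lax cases are dual.

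Given this, the simplicial structure on each $N^?(C)$ is produced by precomposition: for $\varphi : [m] \to [n]$ in $\Delta$ and $\bm{u} : [n] \to C$ a (normal) (co)lax functor, the composite $\bm{u} \circ \varphi$ is again (normal) (co)lax of the same flavor by the previous paragraph. Because each $\varphi$ is a \emph{strict} 2-functor, precompositions compose on the nose, so the simplicial identities hold automatically with no higher coherence witnesses needed. Functoriality in $C$ is obtained dually by postcomposition: a (normal) (co)lax functor $F : C \to D$ of the matching flavor sends $\bm{u}$ to $F \circ \bm{u}$, and the induced maps $N^?(C) \to N^?(D)$ commute with the $\varphi^{*}$ because composition of (co)lax functors is associative on the nose whenever one of the factors is strict.

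The only real content lies in the composition lemma above; this is where the pentagon- and triangle-type diagrams (\ref{composer})--(\ref{uniter2}) for the composite $GF$ must be extracted from the corresponding diagrams for $F$ and $G$. The verification is a routine but notationally heavy pasting calculation in the underlying 2-category, and I expect this to be the main place where a careful write-up requires attention, especially for the colax versus lax distinction and the bookkeeping of which 2-cells are inverted. Once that is settled, the simplicial identities and the functoriality of $B^{cl}, B^{ncl}, B^{l}, B^{nl}$ are formal consequences of strict associativity of composition of the structural 2-functors $\varphi : [m] \to [n]$.
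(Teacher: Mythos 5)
The paper gives no proof of this lemma, only a remark attributing the construction to Duskin \cite{Duskin02} and Street \cite{Street96}; your proposal correctly fills in what those references carry out. The approach is the standard one: regard each $[n]$ as a locally discrete $2$-category, note that order-preserving $\varphi : [m] \to [n]$ are strict $2$-functors, and define the simplicial operators by precomposition. Two small points are worth tightening in a careful write-up. First, you invoke strictness of the $\varphi$ to get the simplicial identities on the nose, but in fact composition of (normal) (co)lax functors is strictly associative in full generality: if one unwinds $(H(GF))_{x,y,z}$ versus $((HG)F)_{x,y,z}$ using that each intermediate $2$-functor preserves vertical composition, the two pastings coincide literally, so no strictness hypothesis is needed either for the simplicial identities or for the functoriality in $C$ via postcomposition. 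Second, in the unitor formula the vertical composite should read $(GF)_x = G_{F(x)} \ast G(F_x)$ so that $G(F_x) : GF(1_x) \Rightarrow G(1_{F(x)})$ is applied first and then $G_{F(x)} : G(1_{F(x)}) \Rightarrow 1_{GF(x)}$; with this convention the normality claim that both factors collapse to identities when $F$ and $G$ are normal is exactly right. With these adjustments the argument is complete and gives a clean proof of the lemma.
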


\begin{remark}
 The above construction is originally due to Duskin \cite{Duskin02} in
 the case of lax functors. See
 also a paper \cite{Street96} by Street.
\end{remark}

It is known that all of these constructions give rise to the same
homotopy type. 

\begin{theorem}
 \label{classifying_space_of_2-category}
 We have weak homotopy equivalences
 \[
  B^{cl}C \simeq B^{ncl}C \simeq B^2C \simeq B^{nl}C \simeq B^{l}C.
 \]
\end{theorem}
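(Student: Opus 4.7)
The plan is to realize all five nerves as weakly equivalent through a zigzag of simplicial maps, with a common bisimplicial source carrying the main weight of the comparison between $B^{ncl}$ and $B^2$. I would organize the proof along three fronts: (i) collapse each non-normal version onto its normal counterpart; (ii) identify $B^{ncl}$ with $B^{2}$ via a bisimplicial set whose two realizations are the two sides; (iii) exchange lax and colax by passing to a $2$-opposite. Chaining these gives the five-fold equivalence.

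For the normalization equivalences $B^{ncl}C \simeq B^{cl}C$ and $B^{nl}C \simeq B^{l}C$, the inclusions $N^{ncl}C \hookrightarrow N^{cl}C$ and $N^{nl}C \hookrightarrow N^{l}C$ of subsimplicial sets are the obvious candidates. I would construct a simplicial homotopy inverse by using the unitor 2-cells $f_{x_i}$ of a (co)lax functor $\bm{u}\colon[n]\to C$ to replace each $f(1_{x_i})$ by $1_{f(x_i)}$; the coherence axioms \eqref{uniter1} and \eqref{uniter2} guarantee that the resulting data is again a (co)lax functor and becomes normal. The homotopy between the composite and the identity is then produced by interpolating along degeneracies, essentially a bar-construction argument that packages the unitors into simplicial data.

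For the central comparison $B^{ncl}C \simeq B^{2}C$, I would introduce the bisimplicial set
\[
K_{p,q}(C) \;=\; \coprod_{(x_0,\ldots,x_p)\in C_0^{p+1}} N_qC(x_0,x_1)\times\cdots\times N_qC(x_{p-1},x_p),
\]
whose $(p,q)$-cells are $p$-chains of $1$-morphisms decorated with $q$-chains of $2$-morphisms on each edge. Realizing first in the $q$-direction replaces each $N_\bullet C(x,y)$ by $BC(x,y)$, producing the nerve of the topological category $BC$ and hence $B^2C$ after the second realization. On the other hand, a diagonal $(n,n)$-cell of $K(C)$ packages an $n$-chain of $1$-morphisms together with $n$-chains of $2$-morphisms on each step, which can be assembled via horizontal composition $\circ$ in $C$ into the associator data of a normal colax functor $[n]\to C$. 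I would check that this assembly is a simplicial bijection $\mathrm{diag}\,K(C) \xrightarrow{\cong} N^{ncl}C$ under which the coherence axioms \eqref{composer}, \eqref{uniter1}, \eqref{uniter2} correspond exactly to the bisimplicial identities. Since every $K_{p,q}$ is a discrete set, the standard diagonal-versus-iterated-realization theorem for bisimplicial sets gives $|\mathrm{diag}\,K(C)| \simeq B^2C$, yielding $B^{ncl}C \simeq B^2C$.

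The lax/colax duality is the easiest ingredient. Let $C^{\mathrm{co}}$ denote the $2$-category obtained by reversing only the $2$-morphisms. A (normal) lax functor $[n]\to C$ is exactly a (normal) colax functor $[n]\to C^{\mathrm{co}}$, so $N^{l}C \cong N^{cl}C^{\mathrm{co}}$ and $N^{nl}C \cong N^{ncl}C^{\mathrm{co}}$. Since $B$ of a small category agrees with $B$ of its opposite, the same analysis gives the bisimplicial equivalence for $C^{\mathrm{co}}$, and we conclude $B^{l}C \simeq B^{cl}C$ and $B^{nl}C \simeq B^{ncl}C$.

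The hard step is clearly the bisimplicial identification in the central comparison: producing the explicit isomorphism $\mathrm{diag}\,K(C) \cong N^{ncl}C$ requires carefully matching the associator structure of a normal colax functor to the way consecutive $2$-morphism chains in $K(C)$ interact under the simplicial face maps, and verifying that the coherence pentagons dictated by \eqref{composer} are precisely the bisimplicial compatibility relations. This bookkeeping is the technical core of the theorem, worked out in detail by Carrasco, Cegarra, and Garz\'on \cite{0903.5058}.
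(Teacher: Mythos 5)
The paper's own ``proof'' of Theorem~\ref{classifying_space_of_2-category} is a bare citation of Carrasco--Cegarra--Garz\'on, so you are not being compared against an argument in the text; you are proposing one. The outline is reasonable in spirit, but the central identification you propose is false as stated, and this is a genuine gap.

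You claim a simplicial bijection $\mathrm{diag}\,K(C)\cong N^{ncl}C$. Compare the data at level $n=1$: a diagonal $(1,1)$-cell of $K(C)$ is a pair of objects $x_0,x_1$ together with a single $2$-morphism $u^{(0)}\Rightarrow u^{(1)}$ in $C(x_0,x_1)$, whereas a $1$-simplex of $N^{ncl}C$ is a pair of objects together with a single $1$-morphism. More generally, a diagonal $(n,n)$-cell carries an $n$-chain of $2$-morphisms \emph{on each of the $n$ edges}, while an $n$-simplex of the Duskin (normal colax) nerve carries $\binom{n+1}{2}$ $1$-morphisms $u_{ij}$ and $\binom{n+1}{3}$ associator $2$-cells $u_{jk}\circ u_{ij}\Rightarrow u_{ik}$. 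These are not in bijection, and no ``assembly via horizontal composition'' can make the sets match cardinality. What \emph{does} recover the Duskin nerve from the bisimplicial double nerve is the Artin--Mazur codiagonal $\nabla$ (equivalently the $\overline{W}$ construction): one has a simplicial isomorphism $\nabla K(C)\cong N^{ncl}C$, not $\mathrm{diag}\,K(C)\cong N^{ncl}C$. You then need an additional nontrivial comparison theorem (Cegarra--Remedios, or its predecessors) asserting that $|\mathrm{diag}\,X|$ and $|\nabla X|$ are weakly equivalent for any bisimplicial set $X$, and it is this chain $|\mathrm{diag}\,K(C)|\cong B^2C$, $|\mathrm{diag}\,K(C)|\simeq|\nabla K(C)|$, $\nabla K(C)\cong N^{ncl}C$ that yields $B^2C\simeq B^{ncl}C$. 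So the skeleton is correct but the key isomorphism sits on the wrong diagonal, and the missing $\mathrm{diag}$-versus-$\nabla$ comparison is precisely where most of the technical difficulty lives.

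The normalization step (i) is also waved at rather than proved: there is no obvious simplicial homotopy inverse to $N^{ncl}C\hookrightarrow N^{cl}C$, since forcing a colax functor to be normal changes it on the nose, not merely up to simplicial homotopy in any evident way, and the ``interpolating along degeneracies'' argument needs substance. The lax/colax duality step (iii) is fine. As the statement is being invoked as a black box in this paper, citing \cite{0903.5058} is appropriate, but the outline you give would need the diagonal replaced by the codiagonal and the Zisman/Cegarra--Remedios comparison inserted before it could be promoted to an actual proof.
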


\begin{proof}
 See the paper \cite{0903.5058} by Carrasco, Cegarra, and Garz{\'o}n. 
\end{proof}
\subsection{Comma Categories and Quillen's Theorem A}
\label{QuillenAB}

Given a functor $f : C\to D$ between small categories, a basic question
is when the induced map
\[
 Bf : BC \longrightarrow BD
\]
is a homotopy equivalence. In order to measure the difference between
$C$ and $D$ via $f$, one might want to look at fibers of $f$.

\begin{definition}
 \label{fiber_of_functor}
 For a functor $f:C\to D$, the \emph{fiber} $f^{-1}(y)$ over $y\in D_0$
 is defined to be the subcategory of $C$ whose sets of objects and
 morphisms are given by 
 \begin{eqnarray*}
  f^{-1}(y)_0 & = & f_0^{-1}(y) = \set{x\in
   C_0}{f_0(x)=y} \\
  f^{-1}(y)_1 & = & f_1^{-1}(1_y) = \set{u\in C_1}{f_1(u)=1_y}.
 \end{eqnarray*}
\end{definition}

Unfortunately $Bf$ fails to be a fibration in general, and genuine
fibers do not tell us differences. The standard
technique in homotopy theory suggests to take homotopy fibers.
Quillen \cite{Quillen73} found that homotopy fibers of $Bf$ can be
described in terms of comma categories.
% Let us recall the definition of comma categories. 

\begin{definition}
 \label{comma_category_definition}
 For a small category $C$ and an object $x\in C_0$, define a category
 $C\downarrow x$ as follows. Objects are morphisms in $C$ of the form
 $u:y\to x$:
 \[
  (C\downarrow x)_0 = \set{u\in C_1}{t(u)=x} = t^{-1}(x).
 \]
 A morphism from $u:y\to x$ to $v: z\to x$ is a morphism
 $w:y\to z$ in $C$ satisfying $u=v\circ w$
 \[
  (C\downarrow x)(u,v) = \set{w\in C(y,z)}{u=v\circ w}.
 \]
 Compositions of morphisms are given by compositions in $C$. Dually we
 define a category 
 $x\downarrow C$ whose objects are morphisms 
 in $C$ of the form $u:x\to y$. A morphism from $u: x\to y$ to
 $v:x\to z$ is a morphism $w : y\to z$ satisfying $v=w\circ u$.

 More generally, given a functor $f : C\to D$ and an object $y\in D_0$,
 define a category $f\downarrow y$ as follows. The set of objects is
 given by 
 \[
  (f\downarrow y)_0 = \set{(x,u)\in C_0\times D_1}{u\in  D(f(x),y)}.
 \]
 The set of morphisms from $(x,u)$ to $(x',u')$ is given by
 \[
  (f\downarrow y)((x,u),(x',u')) = \set{w\in C(x,x')}{u=u'\circ f(w)}.
 \]
 Dually define a category $y\downarrow f$ by
 \begin{eqnarray*}
  (y\downarrow f)_0 & = & \set{(u,x)\in D_1\times C_0}{u\in  C(y,f(x))} \\
  (y\downarrow f)((u,x),(u',x')) & = & \set{w\in C(x,x')}{u'=f(w)\circ u}.
 \end{eqnarray*}

 The categories $f\downarrow x$ and $x\downarrow f$ are called
 \emph{left and right homotopy fibers of $f$ at $x$}, respectively.
\end{definition}

\begin{remark}
 The categories $f\downarrow x$ and $x\downarrow f$ are often called
 \emph{comma categories}. The terminology used above is based on a
 homotopy-theoretic point of view.
 The name comma category actually refers to a more general
 construction. Categories defined in Definition
 \ref{comma_category_definition} are sometimes called
 \emph{coslice/slice categories} and \emph{under/over categories}. 

 Note that different authors use different notations 
 for these categories. The notation $f\downarrow x$ and $x\downarrow f$
 can be found in Mac\,Lane's book \cite{MacLaneCategory}. 
 Quillen \cite{Quillen73} denotes them by $C/x$, $x\backslash C$, 
 $f/y$, and $y\backslash f$, respectively.
\end{remark}

%The following theorem is due to Quillen \cite{Quillen73} and often
%called Quillen's Theorem B.
%
%\begin{theorem}[Theorem B]
% Let $f : E \to C$ be a functor between small categories. If the induced 
% map
% \[
% B(u\downarrow f) : B(x\downarrow f)\to B(x'\downarrow f) 
% \]
% is a homotopy equivalence for any morphism $u : x'\to x$, then the
% square 
% \[
%  \begin{diagram}
%   \node{B(x\downarrow f)} \arrow{e} \arrow{s} \node{BE}
%   \arrow{s,r}{Bf} \\
%   \node{B(x\downarrow C)} \arrow{e} \node{BC}
%  \end{diagram}
% \]
% is homotopy Cartesian for any object $x\in C_0$.
%
% Dually the square
% \[
%  \begin{diagram}
%   \node{B(f\downarrow x)} \arrow{e} \arrow{s} \node{BE}
%   \arrow{s,r}{Bf} \\
%   \node{B(C\downarrow x)} \arrow{e} \node{BC}
%  \end{diagram}
% \]
% is homotopy Cartesian if $B(f\downarrow u)$ is a homotopy equivalence
% for each $u$.
%\end{theorem}

The following result is due to Quillen \cite{Quillen73} and is called
Quillen's Theorem A.

\begin{theorem}[Theorem A]
 \label{TheoremA}
 For a functor $f : C \to D$ between small categories, if
 $B(f\downarrow y)$ is contractible for all $y\in D_0$, then  
 $Bf : BC\to BD$ is a homotopy equivalence.

 The same is true when $B(y\downarrow f)$ is contractible for all
 $y\in D_0$.
\end{theorem}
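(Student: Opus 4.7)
The plan is to adapt Quillen's classical bisimplicial argument. I introduce a bisimplicial set $W(f)$ whose $(p,q)$-simplices are triples $(\sigma,u,\tau)$, with $\sigma=(x_0\to\cdots\to x_p)\in N_pC$, $\tau=(y_0\to\cdots\to y_q)\in N_qD$, and $u\colon f(x_p)\to y_0$ a morphism in $D$. The interior face operators act on $\sigma$ and $\tau$ in the usual way, while the extremal face $d_p^h$ replaces $u$ by $u\circ f(x_{p-1}\to x_p)$, and $d_0^v$ replaces $u$ by $(y_0\to y_1)\circ u$; degeneracies are defined using identity morphisms. Forgetful maps give bisimplicial projections $\pi_1\colon W(f)\to NC$ and $\pi_2\colon W(f)\to ND$ (constant in the other index), and I will apply Segal's realization lemma (Corollary \ref{homotopy_invariance_of_realization}) in both directions.

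For each fixed chain $\sigma\in N_pC$, the column $W_{p,\bullet}$ is naturally the nerve of the comma category $f(x_p)\downarrow\mathrm{id}_D$, which has the initial object $(f(x_p),1_{f(x_p)})$; by Corollary \ref{BC_is_contractible} its classifying space is contractible, so realizing $W(f)$ in $q$ first produces a simplicial space levelwise equivalent to $NC$ and hence $|W(f)|\simeq BC$ via $\pi_1$. For each fixed chain $\tau\in N_qD$, the row $W_{\bullet,q}$ is the nerve of $f\downarrow y_0$, which is contractible by the hypothesis of the theorem; realizing in $p$ first produces a simplicial space levelwise equivalent to $ND$ and hence $|W(f)|\simeq BD$ via $\pi_2$.

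The main obstacle is to verify that the resulting composite $BC\simeq|W(f)|\simeq BD$ is homotopic to $Bf$. The natural simplicial inclusions $NC\hookrightarrow W(f)$ at $q=0$, via $\sigma\mapsto(\sigma,1_{f(x_p)},(f(x_p)))$, and $ND\hookrightarrow W(f)$ at $p=0$, via $\tau\mapsto((y_0),1_{y_0},\tau)$, realize the two equivalences, but the naive sections of $\pi_1$ do not intertwine $Nf$ on the nose. I plan to construct the required comparison homotopy either by an explicit cylinder inside $W(f)$ that interpolates between $1_{f(x_p)}$ and the $f$-image of $\sigma$ using the initial object of the contractible column $f(x_p)\downarrow\mathrm{id}_D$, or equivalently by appealing to the uniqueness up to homotopy of lifts through contractible fibers afforded by Corollary \ref{BC_is_contractible} together with Lemma \ref{natural_transformation_is_homotopy}. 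Finally, the dual statement under contractibility of $B(y\downarrow f)$ reduces to the already-proven case by applying it to $f^{\op}\colon C^{\op}\to D^{\op}$ and using the canonical homeomorphism $BC\cong BC^{\op}$; alternatively one can rerun the same argument with a mirrored bisimplicial set whose triples carry a morphism $y_q\to f(x_0)$, in which case the roles of initial and terminal objects swap and every step goes through verbatim.
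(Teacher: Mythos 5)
The paper offers no proof of this theorem at all---it is quoted from Quillen with a citation---so the only question is whether your argument stands on its own. Its skeleton is exactly Quillen's original bisimplicial proof, and most of it is sound: the bisimplicial set $W(f)$ is well defined with the face maps you describe, the column over a fixed $\sigma$ is $N\bigl(f(x_p)\downarrow D\bigr)$, which is contractible via the initial object $\bigl(f(x_p),1_{f(x_p)}\bigr)$ and Corollary \ref{BC_is_contractible}, the row over a fixed $\tau$ is $N(f\downarrow y_0)$, which is contractible by hypothesis, and Corollary \ref{homotopy_invariance_of_realization} applies in both directions. The reduction of the second statement to the first via $f^{\op}$ and $B(y\downarrow f)\cong B(f^{\op}\downarrow y)$ is also correct.

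The genuine gap is the step you flag yourself: identifying the zigzag $BC\simeq |W(f)|\simeq BD$ with $Bf$. Neither proposed strategy is carried out, and the second one does not work as stated---Corollary \ref{BC_is_contractible} and Lemma \ref{natural_transformation_is_homotopy} produce equivalences from adjunctions and homotopies from natural transformations, but neither yields a ``uniqueness of lifts through contractible fibers'' principle. Without this step you have only shown $BC\simeq BD$ abstractly, which is strictly weaker than the theorem. The standard repair, due to Quillen, avoids constructing any homotopy: introduce the auxiliary bisimplicial set $W(1_D)$ and the map $F:W(f)\to W(1_D)$ that applies $f$ to the $C$-chain and fixes $u$ and $\tau$. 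Then $\pi_1^{W(1_D)}\circ F=Nf\circ\pi_1^{W(f)}$ and $\pi_2^{W(1_D)}\circ F=\pi_2^{W(f)}$ hold strictly at the level of bisimplicial sets. The rows of $W(1_D)$ over a fixed $\tau$ are nerves of $D\downarrow y_0$, contractible via the terminal object $(y_0,1_{y_0})$, so both projections of $W(1_D)$ realize to equivalences unconditionally; your hypothesis makes $\pi_2^{W(f)}$ an equivalence as well, so the second identity forces $|F|$ to be an equivalence, and the first identity then exhibits $Bf\circ|\pi_1^{W(f)}|$ as a composite of equivalences, whence $Bf$ is a weak equivalence and hence, between CW complexes, a homotopy equivalence. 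With that insertion your proof is complete.
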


This fact says that the family of spaces $B(f\downarrow y)$ measures the
difference of $BC$ and $BD$ via $Bf$ in the homotopy category of
topological spaces. 

We close this section by describing relations between genuine fibers and
homotopy fibers.

\begin{definition}
 For a functor $f:C\to D$ between small categories and an object
 $y\in D_0$, define functors
 \begin{eqnarray*}
  i_{y} & : & f^{-1}(y) \rarrow{} y\downarrow f \\
  j_{y} & : & f^{-1}(y) \rarrow{} f\downarrow y
 \end{eqnarray*}
 by 
 \begin{eqnarray*}
  i_y(x) & = & (1_y,x) \\
  j_y(x) & = & (x,1_y)
 \end{eqnarray*}
 on objects.

 When $i_{y}$ has a right adjoint $s_y$ for each object $y$ in
 $D$, the functor $f$ is called \emph{prefibered} or a \emph{prefibered
 category}. The collection 
 $\{s_y\}_{y\in D_0}$ is called a \emph{prefibered structure} on $f$.  
 Dually when $j_{y}$ has a left adjoint $t_x$ for each object $y$ in
 $D$, the functor $f$ is called \emph{precofibered} or a
 \emph{precofibered category}. The
 collection $\{t_y\}_{y\in D_0}$ is called a \emph{precofibered
 structure} on $f$. 
\end{definition}

\begin{corollary}
 \label{prefibered_QuillenA}
 Suppose $f:C\to D$ is either prefibered or precofibered. If
 $Bf^{-1}(y)$ is contractible for each $y\in D_0$, then $Bf:BC\to BD$ is
 a homotopy equivalence.
\end{corollary}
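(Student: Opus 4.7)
The plan is to reduce the statement directly to Quillen's Theorem A (Theorem \ref{TheoremA}) by transporting contractibility from the genuine fiber $f^{-1}(y)$ to the appropriate homotopy fiber using the adjunction supplied by the (pre)fibered structure.

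First I would treat the prefibered case. By definition, for each $y \in D_0$ the functor $i_y : f^{-1}(y) \to y\downarrow f$ admits a right adjoint $s_y$. By Corollary \ref{BC_is_contractible}, any functor admitting a one-sided adjoint induces a homotopy equivalence between classifying spaces, so
\[
Bi_y : Bf^{-1}(y) \rarrow{\simeq} B(y\downarrow f)
\]
is a homotopy equivalence. The hypothesis that $Bf^{-1}(y)$ is contractible then forces $B(y\downarrow f)$ to be contractible as well. Since this holds for every $y \in D_0$, the second half of Quillen's Theorem A (Theorem \ref{TheoremA}) applies and yields that $Bf : BC \to BD$ is a homotopy equivalence.

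The precofibered case is entirely dual: the left adjoint $t_y$ of $j_y : f^{-1}(y) \to f\downarrow y$ gives a homotopy equivalence $Bj_y : Bf^{-1}(y) \simeq B(f\downarrow y)$, contractibility transports from the fiber to $B(f\downarrow y)$, and the first half of Theorem \ref{TheoremA} finishes the argument.

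There is essentially no technical obstacle here; the corollary is a purely formal consequence of the three inputs already in hand (existence of the adjoint, homotopy invariance under adjunctions, and Quillen's Theorem A). The only thing to double-check is that $i_y$ and $j_y$ as defined really do land in $y\downarrow f$ and $f\downarrow y$ respectively and that the stated adjunctions go in the direction needed to invoke Corollary \ref{BC_is_contractible} — both of which are immediate from Definition \ref{comma_category_definition}.
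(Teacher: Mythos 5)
Your proof is correct and is the evident argument the paper has in mind (the corollary is stated in the appendix without proof). You correctly identify that the (pre)fibered structure gives an adjunction making $Bi_y$ (resp. $Bj_y$) a homotopy equivalence via Corollary \ref{BC_is_contractible}, transport contractibility to the appropriate comma category, and invoke the matching half of Theorem \ref{TheoremA}.
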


\subsection{Comma Categories and Quillen's Theorem A for $2$-Categories}
\label{2-QuillenAB}

We need to extend Theorem \ref{TheoremA} to poset-categories and, more
generally, to $2$-categories for our purposes.
We first need to define homotopy fibers or comma categories for colax
functors. 
According to Cegarra \cite{0909.4229}, comma categories for 2-functors
were introduced by Gray \cite{J.Gray1980}, whose extension to colax
functors can be found in del Hoyo's paper \cite{1005.1300}\footnote{Note
that colax functors in the sense of this paper are called ``lax
functors'' in del Hoyo's paper.}. 
% We need to reverse the direction of $2$-morphisms to obtain homotopy
% fibers of a lax functor.

\begin{definition}
 Let $C$ and $D$ be $2$-categories. 
 \begin{enumerate}
  \item For a colax functor $f : C\to D$ and an object $y\in D_0$,
	define a $2$-category $f\downarrow y$ as follows.
	\begin{enumerate}
	 \item The set of objects is given by 
	       \[
		(f\downarrow y)_0 = \set{(x,u)\in C_0\times D_1}{u\in
		 D(f(x),y)_0}.
	       \]

	 \item For $(x,u), (x',u') \in (f\downarrow y)_0$, the sets of
	       objects and morphisms in the category
	       $(f\downarrow y)((x,u),(x',u'))$ are defined by
	       \begin{eqnarray*}
		(f\downarrow y)((x,u),(x',u'))_0 & = &
		 \set{(\theta,w)\in D_2\times 
		 C_1}{\theta :  u'\circ f(w) \Rightarrow u} \\
		(f\downarrow y)((x,u),(x',u'))((\theta,w),(\theta',w'))
		 & = & \set{\varphi\in
		 C(x,x')(w',w)}{\theta'=\theta\ast(u'\circ f(\varphi))}.  
	       \end{eqnarray*}
	       \[
	       \xymatrix{
	       f(x)
	       \rruppertwocell<10>^{f(w')}{\hspace*{12pt}\theta'}
	       \ar[dr]_{u} & & f(x') \ar[dl]^{u'} \\
	       & y &
	       }
	       \raisebox{10pt}{=}
		\xymatrix{
	       f(x) \ar[rr]_{f(w)}
	       \rruppertwocell<10>^{f(w')}{\hspace*{12pt}f(\varphi)}
	       \ar[dr]_{u} \druppertwocell<\omit>{<-2.5>\theta} & &
	       f(x') \ar[dl]^{u'} \\  
	       & y & 
	       }
	       \]

	 \item For an object $(x,u)\in (f\downarrow y)_0$, define 
	       $1_{(x,u)}= (u'\circ f_x, 1_{x})$.
	       \[
	       \xymatrix{
	       f(x) \ar[dr]_{u} \rrtwocell^{f(1_x)}_{1_{f(x)}}{f_{x}} & &
	       f(x) \ar[dl]^{u} \\
	       & y. &
	       }
	       \]

	 \item The horizontal composition in $f\downarrow y$
	       \begin{equation}
		\circ : (f\downarrow y)((x',u'),(x'',u''))\times
		 (f\downarrow y)((x,u),(x',u')) \rarrow{}
		 (f\downarrow y)((x,u),(x'',u'')) 
		 \label{horizontal_composition_in_comma_2-category}
	       \end{equation}
	       is defined by the following diagram
	       \[
	       \xymatrix{
	       f(x) \ar[r]^{f(w)} \ar[dr]_{u}
	       \rruppertwocell<10>^{f(w'\circ w)}{\hspace*{12pt}f}
	       \drtwocell<\omit>{<-2>\theta}
	       & f(x') \ar[r]^{f(w')}
	       \ar[d]_{u'} \dtwocell<\omit>{<-2.5>\theta'} & f(x'')
	       \ar[dl]^{u''} \\   
	       & y. &
	       }
	       \]
	 \item The vertical composition in $f\downarrow y$
	       % and $y\downarrow f$
	       is given by that of $C$. 

	\end{enumerate}

  \item  For a lax functor $f$ and an object $y\in D_0$, define  
	 a $2$-category $y\downarrow f$ as
	 follows. 
	 \begin{enumerate}
	  \item The set of objects is given by
		\[
		(y\downarrow f)_0 = \set{(u,x)\in D_1\times C_0}{u\in
		 D(y,f(x))_0}.
		\]

	  \item For $(u,x), (u',x') \in (y\downarrow f)_0$, the sets of
		objects and morphisms in the category
		$(y\downarrow f)((u,x),(u,x'))$ are defined by 
		\begin{eqnarray*}
		 (y\downarrow f)((u,x),(u',x'))_0 & = &
		  \set{(\theta,w)\in D_2\times 
		  C_1}{\theta :  u' \Rightarrow u\circ f(w)} \\
		 (y\downarrow f)((u,x),(u',x'))((\theta,w),(\theta',w')) & = &
		  \set{\varphi\in
		  C(x,x')(w,w')}{\theta=(f(\varphi)\circ u)\ast
		  \theta'}.  
		\end{eqnarray*}
	       \[
	       \xymatrix{
	       f(x') & & f(x)
		\lllowertwocell<-10>_{f(w)}{\theta} \\
	       & y \ar[ul]^{u'} \ar[ur]_{u} &
	       }
	       \raisebox{10pt}{=}
		\xymatrix{
	       f(x')
	       & &
	       f(x) \ar[ll]^{f(w')}
	       \lllowertwocell<-10>_{f(w)}{\hspace*{-10pt}f(\varphi)}
		\\  
	       & y \ar[ul]^{u'} \uluppertwocell<\omit>{<2.5>\theta} 
	        \ar[ur]_{u} & 
	       }
	       \]

	 \item For an object $(u,x)\in (y\downarrow f)_0$, define 
	       $1_{(u,x)}=(f_x\circ u,1_{x})$.
	       \[
	       \xymatrix{
	       f(x) & &
	       f(x) \lltwocell^{1_{f(x)}}_{f(1_x)}{f_{x}} \\
	       & y. \ar[ul]^{u} \ar[ur]_{u} &
	       }
	       \]

	  \item The horizontal composition in $y\downarrow f$
	       \[
		\circ : (y\downarrow f)((u',x'),(u'',x''))\times
		 (y\downarrow f)((u,x),(u',x')) \rarrow{}
		 (y\downarrow f)((u,x),(u'',x'')) 
	       \]
	       is defined by the following diagram
	       \[
	       \xymatrix{
	       f(x) & f(x') \ar[l]_{f(w)} & f(x'') \ar[l]_{f(w')}
	       \lllowertwocell<-10>_{f(w'\circ w)}{\hspace*{-10pt}f}
	       \\   
	       & y.  \ar[ul]^{u''}
	       \ultwocell<\omit>{<2>\theta}
	       \utwocell<\omit>{<2.5>\theta'} 
	       \ar[u]^{u'} \ar[ur]_{u} &
	       }
	       \]

	 \item The vertical composition in $y\downarrow f$
	       is given by that of $C$. 

	 \end{enumerate}

 \end{enumerate}
\end{definition}

\begin{remark}
 \label{homotopy_fiber_for_normal_lax_functor}
 For a colax functor $f$, $y\downarrow f$ cannot be defined, since
 the direction of $f_x: f(1_{x})\Rightarrow 1_{f(x)}$ does not allow us
 to define an identity morphism on $(u,x)$. This is the only obstruction
 to defining $y\downarrow f$. When $f$ is normal,
 therefore, $y\downarrow f$ is defined.
 For example, a $1$-morphism from $(u,x)$ to $(u',x')$ in
 $y\downarrow f$ is given by a pair $(\theta,w)\in D_2\times C_1$ with
 \[
  \theta : f(w)\circ u \Longrightarrow u'.
 \]
 Horizontal compositions are given by the following diagram
 \[
 \xymatrix{
 f(x) \ar[r]^{f(w)} 
 \rruppertwocell<10>^{f(w'\circ w)}{\hspace*{10pt}f}
 & f(x') \ar[r]^{f(w')} & f(x'') \\   
 & y.  \ar[ul]^{u''}
 \uuppertwocell<\omit>{<-2>\theta}
 \uruppertwocell<\omit>{<-2>\theta'} 
 \ar[u]_{u'} \ar[ur]_{u} &
 }
 \]
 Similarly, for a normal lax
 functor, $f\downarrow y$ can be defined.
\end{remark}

\begin{lemma}
 For a colax functor, the above data define a $2$-category
 $f\downarrow y$. When $f$ is a lax functor, $y\downarrow f$ becomes a
 $2$-category. 
\end{lemma}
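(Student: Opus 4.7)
The plan is to verify in sequence the axioms that make $f\downarrow y$ a strict $2$-category: each hom $(f\downarrow y)((x,u),(x',u'))$ is a category, horizontal composition is a functor between the appropriate hom-categories, horizontal composition is associative, and $1_{(x,u)}=(u\circ f_x,1_x)$ is a strict two-sided unit. Because the $1$- and $2$-cells all live in $C$ while the coherence $2$-cells $\theta$ live in $D$, each axiom for $f\downarrow y$ pulls back to the corresponding axiom for $C$ as a strict $2$-category together with exactly one of the three colax conditions (\ref{composer}), (\ref{uniter1}), (\ref{uniter2}) on $f$. The dual assertion for $y\downarrow f$ when $f$ is a lax functor is then handled by reversing every $2$-cell throughout and replacing the colax composer and unitor by the lax ones, so I would only write out the colax case in detail.

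Each hom is a category essentially for free: vertical composition and identities of morphisms $\varphi\in C(x,x')(w',w)$ are inherited from $C(x,x')$, and the compatibility $\theta'=\theta\ast(u'\circ f(\varphi))$ survives vertical composition because $f_{x,x'}:C(x,x')\to D(f(x),f(x'))$ is a functor, giving $u'\circ f(\psi\ast\varphi) = (u'\circ f(\psi))\ast(u'\circ f(\varphi))$; identities preserve compatibility because $f(1_w)=1_{f(w)}$. The horizontal composition described by the displayed pasting diagram can be written explicitly as
\[
 \tilde\theta \;=\; \theta \,\ast\, (\theta'\circ 1_{f(w)}) \,\ast\, (1_{u''}\circ f_{x'',x',x}),
\]
and its bifunctoriality on hom-categories reduces to the interchange law in $D$ together with the observation that the colax composer $f_{x'',x',x}$ is independent of the $2$-cells $\theta,\theta'$ being pasted.

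The two substantive checks are associativity and the unit laws, and these constitute the main obstacle — pure bookkeeping, but delicate because one has to chase colax composers through multiple whiskerings. For a composable triple $(\theta_1,w_1),(\theta_2,w_2),(\theta_3,w_3)$, each of the two bracketings of their horizontal composition produces a $2$-cell built from $\theta_1,\theta_2,\theta_3$, certain whiskerings of $f(w_i)$, and a nested pair of colax composers; the two resulting $2$-cells agree precisely because of the commutative square (\ref{composer}), once strict associativity and interchange in $D$ are used to align the whiskerings. The unit laws for $1_{(x,u)}=(u\circ f_x,1_x)$ on either side of a general $(\theta,w)$ likewise reduce, after one application of interchange, to the triangles (\ref{uniter1}) and (\ref{uniter2}) applied to $w$. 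This exhausts the axioms; the dual argument, which reverses $\theta$ to $u'\Rightarrow u\circ f(w)$ and uses the lax composer pointing the opposite way, establishes that $y\downarrow f$ is a strict $2$-category for lax $f$.
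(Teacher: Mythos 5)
Your proof follows the same outline as the paper's: verify that each hom $(f\downarrow y)((x,u),(x',u'))$ is a category, that horizontal composition is a functor between hom-categories, that it is associative, and that $1_{(x,u)}$ is a strict unit, with the last two reduced to the coherence axioms (\ref{composer}), (\ref{uniter1}), and (\ref{uniter2}) exactly as in the paper. Your explicit pasting formula for the horizontal composition and your verification that each hom-category is a subcategory of $C(x,x')$ are both correct (and you have silently corrected the paper's typo $u'\circ f_x$ to $u\circ f_x$ in the unit).

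There is, however, a gap in your justification of functoriality of the horizontal composition. You attribute bifunctoriality to ``the interchange law in $D$ together with the observation that the colax composer $f_{x'',x',x}$ is independent of the $2$-cells $\theta,\theta'$ being pasted.'' The composer indeed does not involve the $\theta$-components of the objects, but it certainly depends on the $1$-morphisms $w',w$, and these change along the $2$-morphisms $\varphi,\varphi'$ of the hom-categories (recall that a morphism from $(\theta,w)$ to $(\theta',w')$ is a $2$-cell $\varphi : w'\Rightarrow w$ in $C$). Interchange in $D$ alone cannot transport $1_{u''}\circ f(\varphi'\circ\varphi)$ past the composer term $1_{u''}\circ f_{x'',x',x}(w',w)$ in your formula for $\tilde\theta$. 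What is actually required, and what the paper cites, is the naturality of $f_{x'',x',x}$ as a natural transformation between the two composites $C(x',x'')\times C(x,x')\to D(f(x),f(x''))$; concretely, one needs
\[
 f_{x'',x',x}(w'_0,w_0)\ast f(\varphi'\circ\varphi) \;=\; \bigl(f(\varphi')\circ f(\varphi)\bigr)\ast f_{x'',x',x}(w'_1,w_1),
\]
which is precisely the naturality square of the colax composer and has no replacement by an axiom of $D$. With that ingredient supplied, the rest of your argument closes as written.
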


\begin{proof}
 Consider the case of a colax functor.
 We need to verify the following:
 \begin{enumerate}
  \item $(f\downarrow y)((x,u),(x',u'))$ is a category for each pair
	$(x,u),(x',u')\in (f\downarrow y)_0$.
  \item The horizontal composition
	(\ref{horizontal_composition_in_comma_2-category}) is a
	functor. 
  \item The horizontal composition is associative.
  \item $1_{(x,u)}$ serves as a unit for each
	$(x,u)\in (f\downarrow y)_0$. 
 \end{enumerate}

 Note that the set $(f\downarrow y)((x,u),(x',u'))_1$ is a subset of
 $C(x,x')_1\subset C_2$. It can be easily verified that
 $(f\downarrow y)((x,u),(x',u'))_1$ is closed under compositions. Thus
 $(f\downarrow y)((x,u),(x',u'))$ is a subcategory of $C(x,x')_1$.

 The fact that (\ref{horizontal_composition_in_comma_2-category}) is a
 functor follows from the naturality of
 $f_{x'',x',x'}: f(w'\circ w)\Rightarrow f(w')\circ f(w)$.
 The associativity for the horizontal composition follows from the
 commutativity of (\ref{composer}). 

 The fact that $1_{(x,u)}$ is a unit
 follows from the commutativity of (\ref{uniter1}) and (\ref{uniter2}).
\end{proof}

\begin{definition}
 The $2$-categories $f\downarrow y$ and $y\downarrow f$ are called the
 \emph{left homotopy fiber} and the \emph{right homotopy fiber} of $f$
 over $y$, respectively.
\end{definition}

When $C$ is a $1$-category regarded as a $2$-category, the homotopy
fiber $f\downarrow y$ has a simpler description, since $2$-morphisms in
$f\downarrow y$ are defined by $2$-morphisms in $C$.

\begin{lemma}
 \label{homotopy_fiber_in_1-category}
 Let $f : C \to D$ be a colax functor and suppose that $C$ is a
 $1$-category. Then the left homotopy fiber $f\downarrow y$ is a 
 $1$-category for any object $y\in D_0$.
\end{lemma}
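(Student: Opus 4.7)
The plan is to unpack the definition of $2$-morphisms in $f\downarrow y$ and observe that the assumption on $C$ collapses all of them to identities. Concretely, a $2$-morphism from $(\theta,w)$ to $(\theta',w')$ in $(f\downarrow y)((x,u),(x',u'))$ is, by definition, an element $\varphi \in C(x,x')(w',w)$, that is, a $2$-morphism $\varphi : w' \Rightarrow w$ in $C$, subject to the compatibility $\theta' = \theta \ast (u' \circ f(\varphi))$.

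First I would invoke the hypothesis that $C$ is a $1$-category regarded as a $2$-category, which means that for every pair $x, x' \in C_0$ the category $C(x,x')$ is discrete: its only $2$-morphisms are identities $1_w : w \Rightarrow w$ on $1$-morphisms $w$. Consequently, the only way to have $\varphi \in C(x,x')(w',w)$ is to have $w = w'$ and $\varphi = 1_w$.

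Next, substituting $\varphi = 1_w$ into the compatibility condition, and using that $f$ preserves identity $2$-morphisms (since it is a functor on each hom-category $C(x,x') \to D(f(x),f(x'))$), we get $f(\varphi) = 1_{f(w)}$, hence $u' \circ f(\varphi) = 1_{u' \circ f(w)}$. Vertical composition with an identity $2$-morphism is trivial, so $\theta \ast 1_{u' \circ f(w)} = \theta$, forcing $\theta' = \theta$. Thus the $2$-morphism $\varphi = 1_w$ exists in $(f\downarrow y)((x,u),(x',u'))$ only when $(\theta,w) = (\theta',w')$, and in that case it is precisely the identity $2$-morphism on $(\theta,w)$.

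Therefore every hom-category of $f\downarrow y$ is discrete, which is exactly the statement that $f\downarrow y$ is a $1$-category. No real obstacle arises here; the lemma is essentially a direct unpacking of definitions, whose only subtlety is verifying that the condition $\theta' = \theta \ast (u' \circ f(\varphi))$ simplifies correctly, and this follows immediately from functoriality of $f$ on hom-categories together with the unit law for vertical composition in $D$.
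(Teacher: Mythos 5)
Your proof is correct and takes essentially the same approach the paper intends: the paper leaves the lemma unproved but prefaces it with the observation that $2$-morphisms in $f\downarrow y$ come from $2$-morphisms in $C$, and your argument is just the careful unpacking of that remark, checking that the compatibility condition $\theta'=\theta\ast(u'\circ f(\varphi))$ collapses to $\theta'=\theta$ once $\varphi$ is forced to be an identity.
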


\begin{corollary}
\label{homotopy_fiber_poset}
 Let $f : C\to D$ be a colax functor from a poset $C$ to a
 poset-category $D$. 
 Then the left homotopy fiber $f\downarrow y$ is a poset
 whose order relation $\le_y$ is defined by
 \[
 (x,u)\le_y (x',u') \Longleftrightarrow x\le x' \text{ in } C
 \text{ and } u'\circ f(x\le x')\le u \text{ in } D(f(x'),y). 
 \] 

 When $f$ is normal, the right homotopy fiber $y\downarrow f$  is
 also a poset whose order relation $\le_y$ is defined by
 \[
 (u,x)\le_y (u',x') \Longleftrightarrow x\le x' \text{ in } C
 \text{ and }
 f(x\le x')\circ u \le u' \text{ in } D(y,f(x')). 
 \] 
\end{corollary}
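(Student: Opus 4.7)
The plan is to invoke Lemma \ref{homotopy_fiber_in_1-category} to reduce to the case where $f\downarrow y$ is already a 1-category, and then to read off its poset structure by directly unpacking the definition of its morphisms.

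Since $C$ is a poset (hence a 1-category), Lemma \ref{homotopy_fiber_in_1-category} gives that $f\downarrow y$ is a 1-category. By definition, a morphism $(x,u)\to (x',u')$ in $f\downarrow y$ is a pair $(\theta,w)$ with $w : x\to x'$ in $C$ and $\theta : u'\circ f(w)\Rightarrow u$ in $D(f(x'), y)$. Because $C$ is a poset, $w$ is uniquely determined and exists if and only if $x\le x'$; because $D$ is a poset-category, $D(f(x'),y)$ is a poset, so $\theta$ is uniquely determined and exists if and only if $u'\circ f(x\le x')\le u$. This simultaneously establishes thinness (at most one morphism between any two objects) and identifies the induced preorder on objects with the stated $\le_y$.

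The preorder axioms then follow from the colax coherence. Reflexivity uses the identity $1_{(x,u)} = (u\circ f_x,1_x)$, whose defining 2-cell exists because $f_x : f(1_x)\Rightarrow 1_{f(x)}$ combined with order-preservation of horizontal composition in the poset-category $D$ gives $u\circ f(1_x)\le u\circ 1_{f(x)} = u$. Transitivity chains $x\le x'\le x''$ in $C$ together with the colax coherence cell $f(w'\circ w)\Rightarrow f(w')\circ f(w)$ and monotonicity of horizontal composition to obtain
\[
u''\circ f(x\le x'') \;\le\; u''\circ f(x'\le x'')\circ f(x\le x') \;\le\; u'\circ f(x\le x') \;\le\; u.
\]

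The delicate step is antisymmetry. Having both $(x,u)\le_y(x',u')$ and $(x',u')\le_y(x,u)$ forces $x = x'$ by antisymmetry of $C$, and by thinness the two morphisms witness an isomorphism $(x,u)\cong (x,u')$ in the 1-category $f\downarrow y$, from which we must extract $u = u'$. In the applications of this corollary (for instance the collapsing functor $\tau$ of Proposition \ref{collapsing_functor_for_morphisms}, as well as the right-homotopy-fiber case where normality is explicitly assumed), $f_x$ is the identity so $f(1_x)=1_{f(x)}$, and the two inequalities collapse to $u'\le u$ and $u\le u'$ in the poset $D(f(x),y)$, yielding $u=u'$. The right-homotopy-fiber claim is then proven by the dual argument, with 2-cells of the form $u'\Rightarrow u\circ f(w)$ in $D(y,f(x'))$ in place of the colax direction and normality $f(1_x)=1_{f(x)}$ used to argue antisymmetry analogously.
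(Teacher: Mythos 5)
Your approach mirrors the paper's: both read off the morphism data of $f\downarrow y$ from the definition, observe that a morphism $(x,u)\to(x',u')$ consists of $w:x\to x'$ in the poset $C$ and a $2$-cell $\theta : u'\circ f(w)\Rightarrow u$ in a hom-poset of $D$, and note that $w$ and $\theta$ are unique whenever they exist. You go further than the paper's own proof in two respects: you explicitly verify reflexivity (via $f_x:f(1_x)\Rightarrow 1_{f(x)}$) and transitivity (via the colax composition constraint), which the paper's one-line proof leaves implicit.

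More substantively, you have identified a genuine soft spot. Uniqueness of morphisms only shows that $f\downarrow y$ is a thin category, i.e.\ a preorder; antisymmetry must be argued separately. For the left homotopy fiber with a general (non-normal) colax $f$, the hypotheses $(x,u)\le_y(x,u')$ and $(x,u')\le_y(x,u)$ give only $u'\circ f(1_x)\le u$ and $u\circ f(1_x)\le u'$, and if $f(1_x)$ is strictly below $1_{f(x)}$ one cannot conclude $u=u'$; indeed a one-object poset $C$ mapping to a one-object poset-category $D$ with $D(*,*)=\{0<1\}$ (absorbing product) and $f(1_*)=0$ gives a thin $f\downarrow *$ with morphisms in both directions between $(*,0)$ and $(*,1)$. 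The paper's proof (``such $w$ and $\theta$ are unique if exist. Thus $f\downarrow y$ is a poset.'') does not address this, and the corollary's first claim implicitly needs the normality hypothesis just as the second one explicitly does. You correctly note that in the paper's actual use of this corollary --- the collapsing functor $\tau$ of Proposition~\ref{collapsing_functor_for_morphisms}, which is normal --- antisymmetry does hold, so nothing downstream is affected. Your proposal therefore does not close the gap in the corollary as literally stated, but it correctly diagnoses it and shows it is harmless in context, which is a more careful reading than the paper offers. One small point: the statement's display locates the comparison $u'\circ f(x\le x')\le u$ in $D(f(x'),y)$, but both $u'\circ f(x\le x')$ and $u$ are $1$-morphisms $f(x)\to y$, so this should read $D(f(x),y)$, consistent with the paper's own proof text.
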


\begin{proof}
 A morphism from $(x,u)$ to $(x',u')$ in $f\downarrow y$ is a pair
 $(\theta,w)$ with 
 $\theta : u'\circ f(w)\Rightarrow u$. Here $w$ is a morphism
 $w:x\to x'$ in $C$ and $\theta$ is a morphism in $D(f(x),y)$. Since
 $C$ is a poset, such $w$ exists if and only if $x\le x'$. $D(f(x),y)$
 is also a poset and thus $\theta$ exists if and only if
 $u'\circ f(x\le x')\le u$. Furthermore such $w$ and $\theta$ are unique
 if exist. Thus $f\downarrow y$ is a poset.

 When $f$ is normal, $y\downarrow f$ is defined. A morphism from $(u,x)$
 to $(u',x')$ in $y\downarrow f$ is a pair $(\theta,w)$ with
 $\theta:f(w)\circ u\Rightarrow u'$. Since both $C$ and $D(y,f(x'))$ are
 posets, such morphisms are unique and thus $y\downarrow f$ is a poset.  
\end{proof}

An analogue of $f^{-1}(x)$ can be defined as follows.

\begin{definition}
 For a colax functor $f : C\to D$ between $2$-categories and an object
 $y\in D_0$, define a $2$-subcategory $f^{-1}(y)$ of $C$ by
 \begin{eqnarray*}
  f^{-1}(y)_0 & = & \set{x\in C_0}{f(x)=y} \\
  f^{-1}(y)_1 & = & \set{u\in C_1}{f(u)=1_{y}} \\
  f^{-1}(y)_2 & = & \set{\theta\in C_2}{f(\theta)=1_{1_y}}.
 \end{eqnarray*}
 Define
 \[
  j : f^{-1}(y) \rarrow{} f\downarrow y
 \]
 by $j(x) = (x,1_{y})$ on objects. For a $1$-morphism $w:x\to x'$ in
 $f^{-1}(y)$, define a $1$-morphism in $f\downarrow y$ by the pair
 $(1_{1_{y}},w)$. $j$ sends a $2$-morphism $\varphi$ in $f^{-1}(x)$ to
 $\varphi$. 
\end{definition}

A generalization of Quillen's Theorem A to colax functors was
proved by del Hoyo \cite{1005.1300} based on the work
\cite{Bullejos-Cegarra03} of Bullejos and Cegarra on Theorem A for
2-functors.
% By reversing arrows, we obtain a version of Quillen's Theorem A for
% lax functors. 

\begin{theorem}[Theorem A for colax functors]
 \label{lax_TheoremA}
 Let $f : C\to D$ be a colax functor between $2$-categories. If
 $B^{cl}(f\downarrow y)$ is contractible for every object
 $y\in D_0$, then the induced map
 \[
  B^{cl}f : B^{cl}C \longrightarrow B^{cl}D
 \]
 is a homotopy equivalence. When $f$ is normal, the induced map
 \[
  B^{ncl}f : B^{ncl}C \rarrow{} B^{ncl}D
 \]
 is a homotopy equivalence as long as $B^{ncl}(f\downarrow y)$ is
 contractible for every object $y\in D_0$.
\end{theorem}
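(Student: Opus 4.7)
The plan is to reduce Theorem \ref{lax_TheoremA} to the classical Quillen Theorem A for ordinary $1$-categories (Theorem \ref{TheoremA}), following the strategy employed by Bullejos and Cegarra \cite{Bullejos-Cegarra03} for strict $2$-functors and its extension to colax functors by del Hoyo \cite{1005.1300}. The central device is to replace each $2$-category by an ordinary $1$-category whose classifying space recovers $B^{cl}$, so that the $1$-categorical Theorem A can be applied directly.

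The first step is the standard Quillen--Illusie comparison: for any simplicial set $X$, its category of simplices $\Delta/X$ — objects $([n],\sigma)$ with $\sigma\in X_n$, morphisms $\alpha:[m]\to[n]$ in $\Delta$ carrying $\sigma$ to $\alpha^\ast\sigma$ — satisfies $B(\Delta/X)\simeq |X|$. Applied to the simplicial sets $N^{cl}C$ and $N^{cl}D$ of Definition \ref{lax_nerve}, this produces natural homotopy equivalences $B(\Delta/N^{cl}C)\simeq B^{cl}C$ and likewise for $D$. The colax functor $f$ induces, by postcomposition of colax $[n]$-diagrams, an honest $1$-functor $f_\ast : \Delta/N^{cl}C\to\Delta/N^{cl}D$ whose classifying map is, under the above equivalences, exactly $B^{cl}f$. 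The same construction with $N^{ncl}$ in place of $N^{cl}$ handles the normal case.

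It then suffices, by Theorem \ref{TheoremA}, to show that the right comma category $([n],\mathbf{v})\downarrow f_\ast$ has contractible classifying space for every $([n],\mathbf{v})\in\Delta/N^{cl}D$, where $\mathbf{v}:[n]\to D$ is a colax functor with vertices $v_i:=\mathbf{v}(i)$. Unwinding the definitions, an object of this comma category is a colax $[m]$-diagram $\mathbf{u}$ in $C$ together with a monotone $\alpha:[n]\to[m]$ and a coherent identification of $f\circ\mathbf{u}\circ\alpha$ with $\mathbf{v}$; this data organizes into the category of simplices of an auxiliary simplicial set $E_{\mathbf{v}}$ that projects, vertex by vertex, onto the nerves $N^{cl}(f\downarrow v_i)$.

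The main obstacle — and the step where genuine work is required — is to identify $E_{\mathbf{v}}$ up to weak equivalence with an iterated construction on the spaces $B^{cl}(f\downarrow v_i)$. Concretely, I would exhibit $E_{\mathbf{v}}$ as the diagonal of a bisimplicial set whose column-wise realization yields a homotopy limit over $[n]$ of nerves of the $2$-categories $f\downarrow v_i$, with transition maps induced by the $1$- and $2$-cells of $\mathbf{v}$. The hypothesized contractibility of each $B^{cl}(f\downarrow v_i)$, combined with the level-wise homotopy invariance criteria from Corollary \ref{homotopy_invariance_of_realization} and Proposition \ref{homotopy_invariance_of_classifying_space}, then collapses this homotopy limit to a point. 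This yields contractibility of $B(([n],\mathbf{v})\downarrow f_\ast)$ and completes the reduction to classical Theorem A; the normal colax case is identical with $N^{ncl}$ throughout.
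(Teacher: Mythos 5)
The paper does not actually prove Theorem \ref{lax_TheoremA}; it quotes it from del Hoyo \cite{1005.1300}, building on Bullejos--Cegarra \cite{Bullejos-Cegarra03}. Your sketch therefore has to stand entirely on its own, and it has a genuine gap at exactly the point you flag as ``where genuine work is required.'' The opening move is fine: $B(\Delta/X)\simeq|X|$ is standard, and $f$ does induce a $1$-functor $f_\ast:\Delta/N^{cl}C\to\Delta/N^{cl}D$ realizing $B^{cl}f$. The problem is the comma category $([n],\mathbf{v})\downarrow f_\ast$ you then need to contract. A morphism in $\Delta/N^{cl}D$ is a monotone $\alpha$ with the \emph{strict equality} $\alpha^\ast(N^{cl}f(\mathbf{u}))=\mathbf{v}$, not a ``coherent identification''; so an object of this comma category is a simplex of $N^{cl}C$ whose image under $f$ literally contains $\mathbf{v}$ as a face or degeneracy. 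Already for $n=0$ and $\mathbf{v}=y$ this is the strict condition $f(\mathbf{u}(i))=y$ on some vertex. The $1$-morphisms $u\in D(f(x),y)$ and the $2$-cells $\theta$ that constitute the homotopy fiber $f\downarrow y$ never enter this comma category, so the hypothesis of the theorem (contractibility of $B^{cl}(f\downarrow v_i)$) cannot be brought to bear on it. The asserted identification of your auxiliary object $E_{\mathbf{v}}$ with a homotopy limit of the $N^{cl}(f\downarrow v_i)$ is precisely the whole content of the theorem, and as stated it is not merely unproved but implausible: there is no mechanism in the construction by which the connecting $1$-cells of $D$ appear.

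For comparison, the arguments in the literature do not pass through the category of simplices of the target in this way. They build a Quillen-type bisimplicial set $S(f)$ with $S(f)_{p,q}$ consisting of a $p$-chain in $C$, a $q$-chain in $D$, a $1$-cell joining them, and coherence $2$-cells; one projection has fibers the nerves of the homotopy fibers $f\downarrow y$, the other has fibers with initial or terminal objects. Equivalently, one proves a Thomason-style homotopy \emph{colimit} theorem $N^{cl}C\simeq\hocolim_{y\in D}N^{cl}(f\downarrow y)$ and deduces Theorem A from homotopy invariance of $\hocolim$ under levelwise equivalences. Note also that the shape in your sketch is wrong even granting the identification: the relevant gluing is a homotopy colimit over $D$ (or over $\Delta/N^{cl}D$), not a homotopy limit over $[n]$. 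To repair the proposal you would need to either carry out the bisimplicial-set argument directly for colax functors, or honestly cite \cite{Bullejos-Cegarra03,1005.1300} as the paper does.
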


We need the following version of a $2$-categorical analogue of Corollary
\ref{prefibered_QuillenA}. Recall from Lemma
\ref{homotopy_fiber_in_1-category} that homotopy fibers $f\downarrow y$
and $y\downarrow f$ are $1$-categories, if the domain category $C$ of
$f:C\to D$ is a $1$-category. The fiber $f^{-1}(y)$ is also a
$1$-category. 

\begin{definition}
 Let $f:C\to D$ be a colax functor from a small $1$-category to a small
 $2$-category. We say $f$ is \emph{precofibered} if the canonical
 inclusion $j_{y}: f^{-1}(y)\rarrow{} f\downarrow y$ has a left
 adjoint for each $y\in D_0$. Dually a lax functor $f:C\to D$ is said to
 be \emph{prefibered} if the inclusion $i_y: f^{-1}(y)\to y\downarrow f$
 has a right adjoint for each $y\in D_0$.
\end{definition}

\begin{corollary}
 \label{prefibered_2QuillenA}
 Let $f:C\to D$ be either a precofibered colax functor or a prefibered
 lax functor from a small $1$-category $C$ to a small $2$-category
 $D$. If $Bf^{-1}(y)$ is contractible for each $y\in D_0$, 
 $Bf:BC\to B^{cl}D$ or $Bf:BC\to B^{l}D$ is a homotopy equivalence.
 When $f$ is a prefibered normal colax functor or a precofibered normal
 lax functor, $Bf:BC\to B^{ncl}D$ or $Bf:BC\to B^{nl}D$ is a homotopy
 equivalence, respectively. 
\end{corollary}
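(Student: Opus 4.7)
The plan is to reduce the claim to Theorem \ref{lax_TheoremA} by transferring the contractibility hypothesis from the strict fiber $f^{-1}(y)$ to the homotopy fiber $f\downarrow y$ (or $y\downarrow f$) using the adjunction supplied by the (pre/co)fibered structure. The first observation I would make is that, since $C$ is an ordinary $1$-category, Lemma \ref{homotopy_fiber_in_1-category} guarantees that $f\downarrow y$ is itself a $1$-category, and $f^{-1}(y)$ is evidently a sub-$1$-category of $C$. Consequently the various classifying space constructions agree on these fibers; in particular $B^{cl}(f\downarrow y) = B(f\downarrow y)$, because for a $1$-category every colax-structure coefficient is forced to be an identity $2$-morphism.

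Handling the precofibered colax case first: by assumption the inclusion $j_y : f^{-1}(y) \hookrightarrow f\downarrow y$ admits a left adjoint for each $y \in D_0$. Since both categories are ordinary small $1$-categories, Corollary \ref{BC_is_contractible} delivers a homotopy equivalence
\[
 Bj_y : Bf^{-1}(y) \longrightarrow B(f\downarrow y).
\]
Combined with the assumed contractibility of $Bf^{-1}(y)$, this forces $B^{cl}(f\downarrow y) = B(f\downarrow y)$ to be contractible for every $y \in D_0$. Theorem \ref{lax_TheoremA} then yields the desired homotopy equivalence $B^{cl}f : BC \to B^{cl}D$, where I use that $BC = B^{cl}C$ for the $1$-category $C$.

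The prefibered lax case is formally dual: the right adjoint of $i_y : f^{-1}(y) \hookrightarrow y\downarrow f$ makes $Bi_y$ a homotopy equivalence by Corollary \ref{BC_is_contractible}, forcing $B(y\downarrow f)$ to be contractible, and the lax analogue of Theorem \ref{lax_TheoremA} (with $y\downarrow f$ in place of $f\downarrow y$) completes the argument. The normal variants follow by exactly the same reasoning, invoking the normal-colax and normal-lax forms of Theorem \ref{lax_TheoremA}; here one uses Remark \ref{homotopy_fiber_for_normal_lax_functor} to make sense of the opposite-handed homotopy fiber when the functor is normal. I do not anticipate a genuine obstacle: the only subtlety is bookkeeping, namely matching each adjointness hypothesis with the correct form (colax versus lax, $f\downarrow y$ versus $y\downarrow f$) of Quillen's Theorem A that is being invoked.
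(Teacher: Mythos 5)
The paper does not actually provide a proof of this corollary; it ends the appendix immediately after stating it, treating it as an evident consequence of Theorem \ref{lax_TheoremA}. Your argument supplies the natural and essentially inevitable reduction: since $C$ is a $1$-category, Lemma \ref{homotopy_fiber_in_1-category} makes $f\downarrow y$ (or $y\downarrow f$) a $1$-category, so the supplied adjoint to $j_y$ (or $i_y$) is an ordinary adjunction and Corollary \ref{BC_is_contractible} transfers the contractibility of $Bf^{-1}(y)$ to the homotopy fiber, after which Theorem \ref{lax_TheoremA} applies. Your identification $B^{cl}(f\downarrow y)=B(f\downarrow y)$ for a $1$-category is correct, since every colax $[n]\to f\downarrow y$ is forced to be a strict functor. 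This is the correct argument and plainly the one the authors have in mind.

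One point you treat a little lightly deserves flagging, because it is the only place a reader might stumble. Theorem \ref{lax_TheoremA} as stated in the paper covers only the pair (colax, $f\downarrow y$) and its normal variant; it says nothing about $y\downarrow f$, nor about lax functors. Yet two of the four cases in the corollary genuinely require the opposite orientation: ``prefibered lax'' produces contractibility of $B(y\downarrow f)$, and ``prefibered normal colax'' (the case actually used via Corollary \ref{tau_is_prefibered}) likewise gives contractibility of $B(y\downarrow f)$, not $B^{ncl}(f\downarrow y)$. You acknowledge needing ``the lax analogue'' and Remark \ref{homotopy_fiber_for_normal_lax_functor}, but the remark only ensures the opposite comma $2$-category is defined; it does not itself yield the corresponding Theorem~A. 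The missing step is a duality argument (pass to $f^{\op}$, use that $y\downarrow f$ is opposite to $f^{\op}\downarrow y$, and that the normal colax nerve is insensitive to taking opposites) or a direct appeal to the stronger statements in the cited sources (del Hoyo, Bullejos--Cegarra), which do cover both orientations. This is not an error in your proof strategy, but the phrase ``the only subtlety is bookkeeping'' undersells the point: without an explicit $y\downarrow f$ version of Theorem~\ref{lax_TheoremA}, the ``prefibered normal colax $\Rightarrow B^{ncl}f$ equivalence'' implication, which is the one the main theorem actually leans on, is not literally a corollary of anything stated in the paper's appendix.
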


\bibliographystyle{halpha}
\bibliography{%
mathA,%
mathB,%
mathC,%
mathD,%
mathE,%
mathF,%
mathG,%
mathH,%
mathI,%
mathJ,%
mathK,%
mathL,%
mathM,%
mathN,%
mathO,%
mathP,%
mathQ,%
mathR,%
mathS,%
mathT,%
mathU,%
mathV,%
mathW,%
mathX,%
mathY,%
mathZ,%
preprintA,%
preprintB,%
preprintC,%
preprintD,%
preprintE,%
preprintF,%
preprintG,%
preprintH,%
preprintI,%
preprintJ,%
preprintK,%
preprintL,%
preprintM,%
preprintN,%
preprintO,%
preprintP,%
preprintQ,%
preprintR,%
preprintS,%
preprintT,%
preprintU,%
preprintV,%
preprintW,%
preprintX,%
preprintY,%
preprintZ}

\end{document}